\documentclass[11pt]{amsart}
\hoffset         -0.5in
\voffset          -0.3in
\textwidth       5.9in
\textheight      8.8in

\usepackage{amsmath,amsfonts,amssymb,amsthm}
\usepackage{latexsym,bm,graphicx}
\usepackage{mathrsfs}
\usepackage{color}
\usepackage{hyperref}

\title[Noncompact $\textmd{RCD}(0,N)$ spaces with linear volume growth]{Noncompact $\textmd{RCD}(0,N)$ spaces with linear volume growth}
\author{Xian-Tao Huang}
\address{Yau Mathematical Sciences Center, Tsinghua University, Beijing, P. R. China, 100084, E-mail address: xthuang@math.tsinghua.edu.cn}


\newtheorem{thm}{Theorem}[section]
\newtheorem{prop}[thm]{Proposition}
\newtheorem{lem}[thm]{Lemma}

\newtheorem{cor}[thm]{Corollary}

\newtheorem{defn}[thm]{Definition}
\newtheorem{rem}[thm]{Remark}

\numberwithin{equation}{section}

\begin{document}

\maketitle
\begin{abstract} Since non-compact $\textmd{RCD}(0, N)$ spaces have at least linear volume growth, we study noncompact $\textmd{RCD}(0,N)$ spaces with linear volume growth in this paper.
One of the main results is that the diameter of level sets of a Busemann function grow at most linearly on a noncompact $\textmd{RCD}(0,N)$ space satisfying the linear volume growth condition.
Another main result in this paper is a splitting theorem at the noncompact end for a $\textmd{RCD}(0,N)$ space with strongly minimal volume growth.
These results generalize some theorems on noncompact manifolds with nonnegative Ricci curvature to non-smooth settings.


\vspace*{5pt}
\noindent{\it Keywords}: $\textmd{RCD}(0,N)$, $\textmd{MCP}(0, N)$, Busemann function, linear volume growth.

\end{abstract}
\section{Introduction}  

Cheeger and Gromoll's splitting theorem (see \cite{CG71}) is one of the classical theorems on manifolds with nonnegative Ricci curvature.
In their proof, the Busemann function plays an important role.
There are some papers studying the Busemann function on manifolds, see \cite{Sor99} \cite{Sor98} etc.
Note that the definition of Busemann functions only concerns with the metric notion.
Combining with the notion of curvature bound (especially the Ricci curvature lower bound), we obtain many interesting properties and applications.

Recently, people are more and more interested in the study of non-smooth objects.
In the framework of metric measure spaces, there are lots of researches of Ricci curvature lower bounds.

A notion of `Ricci bounded from below by $K\in \mathbb{R}$ and dimension bounded above by $N\in[1,\infty]$' for general metric measure spaces is the so-called $\textmd{CD}(K,N)$-condition, which is introduced independently by Lott and Villani (\cite{LV09}) and by Sturm (\cite{St06I} \cite{St06II}).
For $N\in[1,\infty)$, the $\textmd{CD}(K,N)$ condition is compatibe with the Riemannian case and the space of $\textmd{CD}(K,N)$ spaces is stable under the  measured-Gromov-Hausdorff convergence.
In particular, they include Ricci-limit spaces (see \cite{CC97} \cite{CC00I} \cite{CC00II}).
Note that the metric measure space $(\mathbb{R}^{N},d_{\parallel\cdot\parallel},\mathcal{L}^{N})$, where $d_{\parallel\cdot\parallel}$ is the distance induced by some norm $\parallel\cdot\parallel$ and $\mathcal{L}^{N}$ is the $N$-dimensional Lebesgue measure, always satisfies $\textmd{CD}(0,N)$ (see p.926 in \cite{Vi09} for a proof).
Hence it is impossible to show a splitting theorem for a general $\textmd{CD}(0,N)$ space.
We remark that there is a so-called reduced curvature-dimension condition, denoted by $\textmd{CD}^{\ast}(K, N)$, introduced by Bacher and Sturm (\cite{BS10}).
For the case $K = 0$, $\textmd{CD}(0, N)$ and $\textmd{CD}^{\ast}(0, N)$ conditions are equivalent.
There is another version of Ricci curvature lower bound, called $\textmd{MCP}(K,N)$ condition, see Sturm \cite{St06II}, Ohta \cite{Oh07}. 

Recently, Ambrosio, Gigli and Savar\'{e} (\cite{AGS14}) introduced the notion of $\textmd{RCD}(K,\infty)$ spaces (see also \cite{AGMR15} for the simplified axiomatization), which rules out Finsler geometries.
The $\textmd{RCD}(K,N)$ (or $\textmd{RCD}^{*}(K,N)$) are considered by many authors, see \cite{AMS14} \cite{AMS15} \cite{EKS15} \cite{Gig15} etc.
Indeed, an $\textmd{RCD}(K,N)$ spaces (resp. $\textmd{RCD}^{*}(K,N)$) is a metric measure space which is both a $\textmd{CD}(K,N)$ (resp. $\textmd{CD}^{*}(K,N)$) and an $\textmd{RCD}(K,\infty)$ space.
Ricci-limit spaces are examples of $\textmd{RCD}(K,N)$ spaces, splitting theorem holds for Ricci-limit spaces by Cheeger and Colding \cite{CC96}.
Alexandrov spaces are also examples of $\textmd{RCD}(K,N)$ spaces.
An isometric splitting for Alexandrov spaces with non-negative Ricci curvature is established by Zhang and Zhu in \cite{ZZ10}, where `non-negative Ricci curvature' for Alexandrov spaces is defined by the authors in the same paper.
The splitting theorem on general $\textmd{RCD}(0,N)$ spaces is proved by Gigli:

\begin{thm}[Gigli, \cite{Gig13} \cite{Gig14}]\label{splitting}
Let $(X,d,m)$ be an $\textmd{RCD}(0,N)$ space containing a line.
Then $(X,d,m)$ is isomorphic to the product of the Euclidean line $(\mathbb{R},d_{Eucl},\mathcal{L}^{1})$ and another space $(X', d', m')$.
Moreover,
\begin{enumerate}
  \item if $N\geq 2$, then $(X', d', m')$ is a $\textmd{RCD}(0,N-1)$ space;
  \item if $N\in[1, 2)$, then $X'$ is just a point.
\end{enumerate}
\end{thm}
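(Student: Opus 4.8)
The argument adapts the Cheeger--Gromoll proof to the non-smooth setting, using Gigli's first- and second-order calculus on $\textmd{RCD}$ spaces (this is what is carried out in \cite{Gig13,Gig14}). Let $\gamma:\mathbb{R}\to X$ be a line, let $\gamma^{\pm}:[0,\infty)\to X$ be the rays $t\mapsto\gamma(\pm t)$, and let $b^{\pm}(x):=\lim_{t\to\infty}(d(x,\gamma(\pm t))-t)$ be the associated Busemann functions; the limits exist by monotonicity in $t$ and each $b^{\pm}$ is $1$-Lipschitz. First I would show that $b^{\pm}$ is superharmonic: by the Laplacian comparison theorem on $\textmd{RCD}(0,N)$ spaces one has $\Delta d_{p}\le\tfrac{N-1}{d_{p}}$ in the sense of measures, so $\Delta(d_{\gamma(\pm t)}-t)\le\tfrac{N-1}{d_{\gamma(\pm t)}}\to 0$ as $t\to\infty$; since $d_{\gamma(\pm t)}-t$ decreases locally uniformly to $b^{\pm}$, the stability of one-sided Laplacian bounds under monotone limits yields $\Delta b^{\pm}\le 0$.

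Next, from $d(x,\gamma(t))+d(x,\gamma(-t))\ge d(\gamma(t),\gamma(-t))=2t$ we get $b^{+}+b^{-}\ge 0$, with equality along $\gamma$. Since $b^{+}+b^{-}$ is superharmonic and attains its minimum value $0$ at the interior point $\gamma(0)$, the strong maximum principle (valid on $\textmd{RCD}$ spaces, which are connected and locally doubling with a Poincar\'e inequality) forces $b^{+}+b^{-}\equiv 0$; hence $b:=b^{+}=-b^{-}$ is both super- and subharmonic, i.e.\ $\Delta b=0$. I would then prove $|\nabla b|=1$ $m$-a.e. The bound $|\nabla b|\le 1$ is clear. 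For the converse, fix $x$, take $t_{i}\to\infty$ and minimal geodesics from $x$ to $\gamma^{+}(t_{i})$; by properness a subsequence converges to a unit-speed ray $\sigma_{x}$ from $x$, and using the monotonicity defining $b$ one checks that $b(\sigma_{x}(s))=b(x)-s$ for all $s\ge 0$. Thus the local slope $\mathrm{lip}\,b$ equals $1$ everywhere, and since on spaces supporting a Poincar\'e inequality the local slope of a Lipschitz function coincides $m$-a.e.\ with its minimal weak upper gradient, $|\nabla b|=1$ $m$-a.e.

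Now $|\nabla b|^{2}\equiv 1$, so $\Delta|\nabla b|^{2}=0$, and the improved (Hessian) Bochner inequality for $\textmd{RCD}(0,N)$ spaces, $\tfrac12\Delta|\nabla b|^{2}\ge\bigl(|\mathrm{Hess}\,b|_{HS}^{2}+\langle\nabla b,\nabla\Delta b\rangle+K|\nabla b|^{2}\bigr)m$ with $K=0$ and $\Delta b=0$, gives $0\ge|\mathrm{Hess}\,b|_{HS}^{2}\,m\ge 0$, so $\mathrm{Hess}\,b=0$ $m$-a.e.; thus $\nabla b$ is a parallel unit-gradient vector field. I would then invoke the structure theory of such ``splitting functions'': the gradient flow $(\mathrm{Fl}_{s})_{s\in\mathbb{R}}$ of $\nabla b$ is a one-parameter group whose orbits are unit-speed lines and which acts by measure-preserving isometries, so $x\mapsto(b(x),\mathrm{Fl}_{-b(x)}(x))$ is an isomorphism of $(X,d,m)$ onto the product of $(\mathbb{R},d_{Eucl},\mathcal{L}^{1})$ with $X':=b^{-1}(0)$, the latter equipped with the induced length distance $d'$ and the disintegrated measure $m'$. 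Finally, a de-tensorization of the curvature-dimension condition---the $N$-dimensional Bochner inequality on $\mathbb{R}\times X'$, restricted to functions pulled back from $X'$, is the $(N-1)$-dimensional Bochner inequality on $X'$---shows that $(X',d',m')$ is $\textmd{RCD}(0,N-1)$ when $N\ge 2$, whereas for $N\in[1,2)$ the resulting bound $N-1<1$ is incompatible with any non-degenerate $\textmd{RCD}$ space, so $X'$ must be a point.

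The main obstacle, I expect, is the passage from the infinitesimal identity $\mathrm{Hess}\,b=0$ to an honest metric-measure splitting: in the absence of a smooth structure one must show, purely within Gigli's Sobolev calculus, that the integral curves of $\nabla b$ are genuine geodesics, that each $\mathrm{Fl}_{s}$ preserves both $d$ and $m$, and that the disintegration of $m$ along the level sets of $b$ is a product measure; this is the technical heart of the argument. Secondary difficulties are the justification that a single co-ray $\sigma_{x}$ interacts correctly with the only $m$-a.e.\ defined object $|\nabla b|$, and, in low dimension, ruling out non-trivial $\textmd{RCD}(0,N')$ spaces with $N'<1$.
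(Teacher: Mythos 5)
The paper does not prove this theorem but quotes it from Gigli's work \cite{Gig13} \cite{Gig14}, and your sketch follows essentially the same strategy as those references (and as the local variant the paper itself adapts in Section 5): Busemann functions, Laplacian comparison and the strong maximum principle to get harmonicity, $|\nabla b|=1$ a.e., the self-improved Bochner inequality to get $\textmd{Hess}\,b=0$, and then the gradient flow of $b$ acting by measure-preserving isometries via the Sobolev-to-Lipschitz property, with the dimension drop by de-tensorization of Bochner. So the proposal is correct in outline and takes essentially the cited proof's approach; the only loose point is the $N\in[1,2)$ case, where one should argue via a contradiction (e.g.\ Hausdorff dimension of the product exceeding $N$, as the paper does for its Theorem \ref{thm1.2}) rather than by invoking an ``$\textmd{RCD}(0,N-1)$ condition'' with $N-1<1$, which is not defined.
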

We note that the Busemann function also plays an important role in the proof of Theorem \ref{splitting}.

There are many other researches on $\textmd{RCD}(K,N)$ spaces, see \cite{GRS13}, \cite{MN14}, \cite{JZ16}, \cite{ZZ16}, \cite{GM14}, \cite{Ke15}, \cite{GMR14}, \cite{GigPhi15}, and so on.

Since $\textmd{RCD}(K,N)$ space satisfy Bishop-Gromov volume comparison estimate, it is not hard to prove that a noncompact $\textmd{RCD}(0, N)$ space has at least linear volume growth, see Proposition \ref{infinity-volume}.
In fact, the proof of Proposition \ref{infinity-volume} only make use of Bishop-Gromov volume comparison estimate, thus similar property also holds the weaker notions of metric measure spaces with `nonnegative Ricci curvature' such as $\textmd{MCP}(0,N)$, $\textmd{CD}(0,N)$.
This generalizes the famous theorem of Calabi \cite{Cala75} and Yau \cite{Y76} on noncompact manifolds with nonnegative Ricci curvature.
It is interesting to study noncompact metric measure spaces with `nonnegative Ricci curvature' and linear volume growth, which is the aim of this paper.
It turns out that the Busemann function associated to some geodesic ray also plays an important role in the study.

The following theorem is one of our main results:

\begin{thm}\label{thm1.3}
Suppose $(X,d,m)$ is a noncompact $\textmd{RCD}(0,N)$ space satisfying the minimal volume growth condition (\ref{min-vol}), and $b$ is the Busemann function associated to a geodesic ray $\gamma$.
Then the diameter of the level sets $b^{-1}(r)$ grow at most linearly.
More precisely, we have
\begin{align}\label{1.1}
\limsup_{R\rightarrow+\infty}\frac{\textmd{diam}(b^{-1}(R))}{R}\leq C_{0}\leq2,
\end{align}
where the diameter of $b^{-1}(R)$ is computed with respect to the distance $d$.
In particular, $b^{-1}(r)$ is compact.

\end{thm}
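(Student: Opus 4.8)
The plan is to reduce the theorem to a sublinear estimate on the \emph{excess} of $b$ and then to obtain that estimate from volume comparison. Put $p:=\gamma(0)$ and recall the standard properties of $b(x)=\lim_{t\to\infty}(t-d(x,\gamma(t)))$: it is $1$-Lipschitz; $b(\gamma(t))=t$; the functions $t\mapsto t-d(x,\gamma(t))$ increase to $b(x)$, so $d(x,\gamma(t))\ge t-b(x)$ for all $t$, $b\le d(\cdot,p)$ and $b(p)=0$; and $\Delta b\ge 0$ weakly (letting $t\to\infty$ in $\Delta d(\cdot,\gamma(t))\le\frac{N-1}{d(\cdot,\gamma(t))}$), with $|\nabla b|=1$ $m$-a.e. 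Set $e(x):=d(x,p)-b(x)\ge 0$. For $x,y\in b^{-1}(R)$ the triangle inequality through $p$ gives
\begin{align}\label{eq:diam-excess}
d(x,y)\le d(x,p)+d(y,p)=2R+e(x)+e(y),
\end{align}
so $\textmd{diam}\,b^{-1}(R)\le 2R+2\sup_{b^{-1}(R)}e$. Hence it suffices to prove $\sup_{b^{-1}(R)}e=o(R)$: this yields (\ref{1.1}) with $C_{0}\le 2$, and then $b^{-1}(R)\subseteq\overline{B}_{2R}(p)$ gives compactness. (One may even aim for the bounded estimate $e\le\textmd{diam}\,b^{-1}(0)$ on $b^{-1}(R)$: flowing $b^{-1}(R)$ backwards along $-\nabla b$, which lowers $b$ at unit rate since $|\nabla b|=1$ a.e., lands in $b^{-1}(0)$ after moving each point a distance $\le R$; so it would be enough to bound the diameter of one level set.)

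The core is the estimate $\sup_{b^{-1}(R)}e=o(R)$, which I would prove by contradiction. If it fails there are $\varepsilon>0$, $R_i\to\infty$ and $x_i\in b^{-1}(R_i)$ with $d(x_i,p)\ge(1+\varepsilon)R_i$. On a minimizing geodesic $\sigma_i$ from $p$ to $x_i$ the excess is non-decreasing (as $\frac{d}{ds}e(\sigma_i(s))=1-\langle\nabla b,\dot\sigma_i\rangle\ge 1-|\nabla b|\ge 0$), running from $0$ to $e(x_i)\ge\varepsilon R_i$. Blowing down, i.e.\ passing to a pointed measured Gromov--Hausdorff limit of the rescaled spaces $(X,d/R_i,\tfrac{m}{m(B_{R_i}(p))},p)$ (which is again $\textmd{RCD}(0,N)$ and, by Bishop--Gromov, inherits linear volume growth from (\ref{min-vol})) together with the $\sigma_i$, the $x_i$ and the ray $\gamma$, one is left to rule out, in that limit, a geodesic ray $\sigma$ from the base point that is \emph{divergent} from $\gamma_\infty$, i.e.\ $b_\infty(\sigma(t))\le(1-c)t$ for some $c\in(0,2]$ and all large $t$ (so that the level set $b_\infty^{-1}(1)$ has diameter $\ge\varepsilon$). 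By the Abresch--Gromoll excess estimate on $\textmd{RCD}(0,N)$ spaces (applied to the endpoints $p$ and $\gamma(T)$, $T\to\infty$, using $d(x,\gamma(T))-T\to-b(x)$) such a ray obeys $d(\sigma(t),\gamma_\infty)\gtrsim_{N,c}t$. When $c=2$ the rays $\gamma_\infty$ and $\sigma$ join to a line, so Gigli's Theorem \ref{splitting} splits the limit as $\mathbb{R}\times X'$; here $X'$ must be compact, for a noncompact $\textmd{RCD}(0,N-1)$ factor has at least linear volume growth by Proposition \ref{infinity-volume}, forcing at least quadratic growth of the product and contradicting (\ref{min-vol}); and with $X'$ compact, $e_\infty$ is bounded on level sets -- contradiction. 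When $0<c<2$ one argues directly: the region swept out between $\gamma_\infty$ and $\sigma$, packed by balls whose measure is controlled from below by the relative Bishop--Gromov inequality and the Calabi--Yau-type lower bound $m(B_r(p))\ge c_0 r$, has volume exceeding what (\ref{min-vol}) permits along a suitable sequence of radii.

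The main obstacle is this last point -- converting the linear ``width'' of a level set into a genuinely super-linear amount of volume, and doing so non-smoothly. That requires the $\textmd{RCD}(0,N)$ forms of the Abresch--Gromoll inequality and of the (a.e.\ defined) gradient flow of $b$, stability of (\ref{min-vol}) under blow-down, and a careful packing/coarea computation along the two divergent rays, most conveniently through the $L^1$-optimal-transport localization, which reduces the Bishop--Gromov comparisons and the volume accounting to one-dimensional estimates. The constant $C_0\le 2$ is not delicate: it is exactly the loss in \eqref{eq:diam-excess}, so once the excess is known to be sublinear (indeed bounded) on level sets, the theorem, including compactness of $b^{-1}(r)$, follows at once.
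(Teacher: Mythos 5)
Your reduction of the diameter bound to the statement $\sup_{b^{-1}(R)}\bigl(d(\cdot,p)-b\bigr)=o(R)$ is correct but essentially free; the entire content of the theorem is that sublinear excess estimate, and your outline does not actually prove it — you yourself flag "converting the linear width of a level set into a genuinely super-linear amount of volume" as the main obstacle, and that is precisely where the proof lives. The blow-down route you sketch has concrete problems: (i) after rescaling by $R_i$ the geodesics $\sigma_i$ from $p$ to $x_i$ have length about $1+\varepsilon$, so the limit object is a segment, not a ray, and the inequality $b_\infty(\sigma(t))\le(1-c)t$ "for all large $t$" has no meaning without an extra argument; (ii) the limit of the rescaled Busemann functions need not coincide with the Busemann function of $\gamma_\infty$ (only one inequality survives in general), which undermines both the Abresch--Gromoll step and the $c=2$ splitting step as stated; (iii) in the decisive case $0<c<2$ you only assert a contradiction: packing disjoint balls along the two divergent rays and using relative Bishop--Gromov together with the Calabi--Yau type bound of Proposition \ref{infinity-volume} gives measure $\gtrsim \delta^{N}c_{0}t$ inside $B_{p}(Ct)$, which is again linear with a small constant and contradicts (\ref{min-vol}) only if the constants are matched against the true asymptotic rate — exactly the bookkeeping that is missing. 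Finally, the parenthetical claim that the excess is bounded by $\textmd{diam}(b^{-1}(0))$ by "flowing backwards along $-\nabla b$" is not available: the flow along transport rays is defined only $m$-a.e.\ on the transport set, a given point of $b^{-1}(R)$ need not lie on a ray extending backwards to $b^{-1}(0)$ (initial points), and a bounded-excess statement of that strength is essentially Theorem 1 of \cite{Sor99}, which the introduction notes is open in the $\textmd{RCD}$ setting.

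For comparison, the paper avoids any blow-down and gets the contradiction from a sharp volume accounting along the rays of $b$. Proposition \ref{3.2} shows $-tb$ is $c$-concave, so the Bianchini--Cavalletti/Cavalletti disintegration applies and $m$ (restricted to the transport set) disintegrates along rays with densities $q(y,t)$ non-decreasing in $t$ (Corollary \ref{cor3.17}, because every ray extends to $+\infty$); hence slab volumes $m(\Xi_{[r_1,r_2]}(K))$ and the codimension-one volumes $m_{-1}(\Xi_s(K))$ are monotone (Proposition \ref{prop3.19}, Corollary \ref{c-1}), and (\ref{min-vol}) makes $V_\infty=\lim_{r}m_{-1}(b^{-1}(r))$ finite. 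The key idea you are missing is to pick $r_{0}$ and a compact $K\subset b^{-1}((-\infty,r_{0}])$ whose rays nearly exhaust this rate, $V_\infty/V_{r_0}\le (c/(2+5c))^{N}+1$. If some far level set had diameter larger than $(2+8c)R$, connectivity of superlevel sets (Lemma \ref{lem3.2}) yields points $p_{i}$ with $d(p_{i},\Xi(K))=c\,h_{i}$ and $h_{i}\to\infty$; Bishop--Gromov (\ref{B-G}) gives $m(B_{p_{i}}(ch_{i}))\ge \bigl(\tfrac{c}{2+4c}\bigr)^{N}2ch_{i}V_{r_{0}}$, and since the rays through $B_{p_{i}}(ch_{i})$ are disjoint (up to an $m$-null set) from those through $K$, the monotonicity (\ref{3.17-2}) forces $V_\infty\ge V_{r_{0}}\bigl(1+(\tfrac{c}{2+4c})^{N}\bigr)$, contradicting the choice of $K$. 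It is this near-exhaustion of $V_\infty$ combined with monotone ray densities — not a packing against a crude linear lower bound — that turns linear width into forbidden volume; without it, or an equivalent substitute, your proposal does not yield (\ref{1.1}).
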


One important tool in this paper is the disintegration formula.
Any $1$-Lipschitz function $\varphi$ will give rise to an equivalence relation on the so-called transport set $\mathcal{T}$, such that each equivalence class is a geodesic ray.
Then one can establish a disintegration formula with respect to this equivalence relation.
Bianchini and Cavalletti have obtained more general results in \cite{BC13}, and use them as a tool to handle Monge problem in metric spaces, especially there are results on non-branching $\textmd{MCP}(K,N)$ spaces.
It is known that the $\textmd{RCD}^{*}(K,N)$ spaces satisfies the so-called essentially non-branching property, see \cite{RaSt14}.
In \cite{Ca14-2}, Cavalletti extents results in \cite{BC13} to handle the Monge problem in $\textmd{RCD}^{*}(K,N)$ spaces.
See also \cite{CaMo15-1} \cite{CaMo15-2} for the applications of this method in the proof of L\'{e}vy-Gromov isoperimetric inequality and some functional inequalities in metric measure space.
Busemann functions are very special $1$-Lipschitz functions, see Section \ref{sec3.1}.
We can apply the results in \cite{BC13} \cite{Ca14-2}.
Under the noncompact and $\textmd{RCD}(0,N)$ assumptions, we have a good monotonicity formula for the volume growth of level sets of the Busemann function, see Proposition \ref{prop3.19}.
Using the volume comparison properties we can prove Theorem \ref{thm1.3}.

The argument in the proof of Theorem \ref{thm1.3} can be easily modified to prove the following theorem:

\begin{thm}\label{thm1.1}
Suppose $(X,d,m)$ is a noncompact, non-branching $\textmd{MCP}(0,N)$ space satisfying the minimal volume growth condition (\ref{min-vol}), and $b$ is the Busemann function associated to a geodesic ray $\gamma$,
then the diameter of the level sets $b^{-1}(r)$ satisfies (\ref{1.1}).
In particular, $b^{-1}(r)$ is compact.

\end{thm}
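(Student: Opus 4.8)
The plan is to rerun the proof of Theorem \ref{thm1.3} almost verbatim, the only structural change being that the disintegration theory for $\textmd{RCD}$ spaces (of Gigli and of Cavalletti \cite{Ca14-2}) is replaced by the original results of Bianchini and Cavalletti \cite{BC13}, which are already stated for non-branching $\textmd{MCP}(K,N)$ spaces and hence apply directly here. First I would record, as in Section \ref{sec3.1}, that the Busemann function $b$ of the ray $\gamma$ is $1$-Lipschitz --- this is purely metric, so nothing changes --- and pass to its transport set $\mathcal{T}_{b}$, partitioned into the co-rays of $b$, i.e.\ the maximal transport rays along which $b$ is affine of slope one. Since $(X,d,m)$ is non-branching $\textmd{MCP}(0,N)$, \cite{BC13} gives the disintegration $m\,\llcorner\,\mathcal{T}_{b}=\int_{Q}h_{\alpha}\,\mathcal{H}^{1}\llcorner R_{\alpha}\,d\mathfrak{q}(\alpha)$ with each $h_{\alpha}$ locally Lipschitz and, after parametrizing $R_{\alpha}$ by $b$, satisfying the one-dimensional $\textmd{MCP}(0,N)$ density inequality. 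This is precisely the input used in the $\textmd{RCD}$ case: there one additionally has the sharper $\textmd{CD}(0,N)$ statement that $h_{\alpha}^{1/(N-1)}$ is concave, but --- and this is the whole reason the generalization is routine --- the proof of Theorem \ref{thm1.3} only invokes the weaker one-sided $\textmd{MCP}(0,N)$ contraction estimate together with Bishop--Gromov, both of which remain available under $\textmd{MCP}(0,N)$.

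Next I would reproduce the monotonicity of Proposition \ref{prop3.19}: combining the one-dimensional density inequality with the minimal volume growth condition (\ref{min-vol}), the ``area'' $\mathcal{A}(r):=\int_{Q}h_{\alpha}(r)\,d\mathfrak{q}(\alpha)$ of the level set $b^{-1}(r)$ (integration over those co-rays that actually reach level $r$) is monotone in $r$, and since $m(\{r_{1}<b\le r_{2}\})=\int_{r_{1}}^{r_{2}}\mathcal{A}(r)\,dr$, condition (\ref{min-vol}) forces $\mathcal{A}$ to stay uniformly bounded. Then, exactly as in the proof of Theorem \ref{thm1.3}, the $\textmd{MCP}(0,N)$ volume comparison along the co-rays upgrades this uniform bound on $\mathcal{A}$ to the diameter estimate (\ref{1.1}): a point of $b^{-1}(R)$ cannot lie much farther than $R$ from the basepoint $\gamma(0)$ without the level sets, hence $m$, growing faster than linearly, and the constant $2$ in $C_{0}\le 2$ arises from joining two points of $b^{-1}(R)$ through a region near $\gamma(0)$. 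Finally $b^{-1}(r)$ is closed since $b$ is continuous, hence compact once it has finite diameter in the complete, locally compact space $X$.

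The only genuinely new work is bookkeeping of two kinds. One must check, step by step through the argument of Theorem \ref{thm1.3}, that no use is made of the two-sided Euclidean-cone density bound available only under $\textmd{CD}(0,N)$, but only of the one-sided $\textmd{MCP}(0,N)$ contraction inequality and of Bishop--Gromov; this verification is where essentially all of the (modest) effort lies, and it is exactly what turns ``easy'' into a claim that has to be justified rather than asserted. One must also note the measure-theoretic hypotheses needed to invoke \cite{BC13} --- local finiteness of $m$, $\sigma$-compactness of $\mathcal{T}_{b}$, and the non-branching property, which here is an explicit assumption rather than, as for $\textmd{RCD}^{*}(0,N)$, a consequence of essential non-branching \cite{RaSt14}. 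I expect the first of these to be the main obstacle, although it is really a careful reading rather than a new idea.
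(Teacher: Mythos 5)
Your proposal is correct and is essentially the paper's own route: the paper proves Theorem \ref{thm1.1} by noting that Section \ref{sec3.2} provides, via \cite{BC13}, the non-branching $\textmd{MCP}(0,N)$ analogues of the disintegration, the density monotonicity (Corollary \ref{cor3.17}) and Proposition \ref{prop3.19}/Corollary \ref{c-1}, after which the proof of Theorem \ref{thm1.3} (which only uses these facts, Bishop--Gromov, and purely metric properties of $b$) runs unchanged. Your bookkeeping remarks (only the one-sided $\textmd{MCP}$ estimate is used, non-branching is now an assumption rather than a consequence of essential non-branching) match the paper's treatment.
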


Theorems \ref{thm1.3} and \ref{thm1.1} generalize Theorem 19 in Sormani's paper \cite{Sor98}.

Note that using the volume comparison method, Theorem 19 in \cite{Sor98} in fact gives the bound $C_{0}\leq6$.
So the proof of Theorem \ref{thm1.3} is a bit refinement from \cite{Sor98}.
We remark that in another paper \cite{Sor99}, Sormani proved that, for manifolds with nonnegative Ricci curvature and linear volume growth, the $C_{0}$ in (\ref{1.1}) can be chosen to be $0$, see Theorem 1 in \cite{Sor99}.
However, the proof of Theorem 1 in \cite{Sor99} makes use of Cheeger-Colding's almost rigidity theory, which is not available even for $\textmd{RCD}^{*}(K,N)$ spaces at present.
It is an interesting question whether we can generalize Theorem 1 in \cite{Sor99} to $\textmd{RCD}(0,N)$ spaces with linear volume growth.

Our next main result is also about noncompact $\textmd{RCD}(0,N)$ with minimal volume growth.
Following \cite{Sor99}, we introduce a notion of strongly minimal volume growth in Definition \ref{def5.1}, and then obtain a splitting theorem at the noncompact end for the $\textmd{RCD}(0,N)$ space with strongly minimal volume growth.

\begin{thm}\label{thm1.2}
Suppose $(X,d,m)$ is a noncompact $\textmd{RCD}(0,N)$ space and satisfies the strongly minimal volume growth (see Definition \ref{def5.1}),
then  $(X,d,m)$ has only one end and there is some metric measure space $(Z, d',\nu')$ such that one of the following holds:
\begin{enumerate}
  \item if $Z$ has exactly one point, then $(b^{-1}((r_{0},\infty)),d,m)$ is isomorphic to $((r_{0},\infty),d_{Ecul},c\mathcal{L}^{1})$ with $c=m(b^{-1}([r_{0},r_{0}+1]))$.
      Furthermore, in this case $(X, d)$ itself is isometric to some $([\bar{r},\infty),d_{Ecul})$.
  \item if $Z$ has more than one point, then $N\geq 2$, and $(Z,d',m')$ is a compact connected $\textmd{RCD}(0,N-1)$ space, and $(b^{-1}((r_{0},\infty)),d)$ is locally isometric to $(Z\times(r_{0},\infty), d'\times d_{\textmd{Ecul}})$.
      Furthermore, let $r_{1}=r_{0}+\frac{\textmd{diam}'(Z)}{2}$, then $(b^{-1}((r_{1},\infty)),d,m)$ is isomorphic to $(Z\times(r_{1},\infty), d'\times d_{\textmd{Ecul}}, \nu'\otimes\mathcal{L}^{1})$.
      Here $\textmd{diam}'(Z)$ is the diameter of $Z$ with respect to the distance $d'$.
\end{enumerate}
\end{thm}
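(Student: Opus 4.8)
\emph{Strategy.} Fix the ray $\gamma$, its Busemann function $b$ and the threshold $r_{0}$ of Definition~\ref{def5.1}. Following \cite{BC13} \cite{Ca14-2} I would disintegrate $m$, restricted to the transport set $\mathcal{T}$ of $b$ (which is $m$-conull by the standard theory), as $m|_{\mathcal{T}}=\int m_{\alpha}\,q(d\alpha)$ with $m_{\alpha}=h_{\alpha}\,\mathcal{H}^{1}|_{X_{\alpha}}$ carried by transport geodesics $X_{\alpha}$; parametrizing $X_{\alpha}$ by the value of $b$ turns it into a half-line $(a_{\alpha},\infty)$, because through every point there passes a ray asymptotic to $\gamma$, so every maximal transport ray runs off to $+\infty$ in the direction of increasing $b$. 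The curvature-dimension condition makes $t\mapsto h_{\alpha}(t)^{1/(N-1)}$ concave on $(a_{\alpha},\infty)$, hence nondecreasing (a nonnegative concave function on an interval unbounded above is nondecreasing); consequently the ``area function'' $A(r):=\int_{\{a_{\alpha}<r\}}h_{\alpha}(r)\,q(d\alpha)$ is nondecreasing and $r\mapsto m(b^{-1}((r_{0},r]))=\int_{r_{0}}^{r}A$ is convex, which is the monotonicity of Proposition~\ref{prop3.19}. The strongly minimal volume growth hypothesis forces this last function to be affine on $(r_{0},\infty)$ with slope $c:=m(b^{-1}([r_{0},r_{0}+1]))$, so $A\equiv c$ there; since each $h_{\alpha}$ is nondecreasing while their weighted sum $A$ is constant, for $q$-a.e.\ $\alpha$ we obtain $a_{\alpha}\le r_{0}$ and $h_{\alpha}$ constant on $(r_{0},\infty)$. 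Thus $b^{-1}((r_{0},\infty))$ is foliated by the transport rays with constant unit-speed conditional density, and the quotient $\mathcal{Q}$ carries a measure $\nu'$ of total mass $c$; the same analysis, together with Definition~\ref{def5.1}, also yields that $X$ has one end and that $b^{-1}(r)$ is connected for $r>r_{0}$.

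The crux --- and where I expect the main difficulty --- is to upgrade this measure-theoretic rigidity to a metric product. Here I would invoke the rigidity in the one-dimensional localization: the fact that $h_{\alpha}^{1/(N-1)}$ is affine for $q$-a.e.\ $\alpha$ is the equality case in the underlying concavity estimate and, exactly as in the smooth proof (where constant volume density along the flow of $\nabla b$ together with $\textmd{Ric}\ge 0$ forces $\textmd{Hess}\,b=0$ via the Bochner formula), it should force $\nabla b$ to be parallel on $b^{-1}((r_{0},\infty))$; this is the same Bochner/splitting rigidity that underlies Theorem~\ref{splitting}, now run on an open region rather than a complete space. Its (forward) gradient flow then gives an isometry between the intrinsic metric of $b^{-1}((r_{0},\infty))$ and the product $d'\times d_{\textmd{Eucl}}$ on $Z\times(r_{0},\infty)$, where $Z$ is the completion of a level set $b^{-1}(s)$ ($s>r_{0}$) in its induced length metric $d'$ and $\nu'$ is the induced measure ($\nu'(Z)=c$); equivalently $(b^{-1}((r_{0},\infty)),d)$ is locally isometric to $(Z\times(r_{0},\infty),d'\times d_{\textmd{Eucl}})$, with no global twist since the half-line has no nontrivial isometry. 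By Theorem~\ref{thm1.3} the level sets are compact, hence so is $Z$ (with $\nu'(Z)=c<\infty$), and $Z$ is connected by the previous paragraph.

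It remains to globalise and distinguish the two cases. Set $r_{1}=r_{0}+\frac{1}{2}\textmd{diam}'(Z)$. For $x,y\in b^{-1}((r_{1},\infty))$ an elementary estimate (using $b(x),b(y)>r_{1}$, the bound $\textmd{diam}'(Z)<\infty$ together with Theorem~\ref{thm1.3}, and $d\le d'\times d_{\textmd{Eucl}}$) shows that a geodesic of $X$ from $x$ to $y$ descending below level $r_{0}$ would be strictly longer than $d(x,y)$; hence such geodesics stay in $b^{-1}((r_{0},\infty))$, where $d$ agrees with the product metric, giving the stated isomorphism of $(b^{-1}((r_{1},\infty)),d,m)$ with $(Z\times(r_{1},\infty),d'\times d_{\textmd{Eucl}},\nu'\otimes\mathcal{L}^{1})$. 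If $Z$ is a single point this reads $(b^{-1}((r_{0},\infty)),d,m)\cong((r_{0},\infty),d_{\textmd{Eucl}},c\mathcal{L}^{1})$, and a short separate argument (the end being one-dimensional, $X$ being geodesic, noncompact and one-ended) then shows $(X,d)$ is isometric to a half-line $[\bar r,\infty)$. If $Z$ has more than one point, note that $b\circ\gamma=\mathrm{id}$, so $\gamma(i)=(z_{*},i)$ for a fixed $z_{*}\in Z$ once $i>r_{1}$, and for $i$ large $B_{R}(\gamma(i))$ lies in $b^{-1}((r_{1},\infty))$ and is isometric, as a pointed metric measure space, to the ball of radius $R$ in $(Z\times\mathbb{R},d'\times d_{\textmd{Eucl}},\nu'\otimes\mathcal{L}^{1})$ about $(z_{*},0)$; hence $(X,d,m,\gamma(i))$ converges in the pointed measured Gromov--Hausdorff sense to $Z\times\mathbb{R}$, which is therefore an $\textmd{RCD}(0,N)$ space by stability. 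It contains the line $\{z\}\times\mathbb{R}$, so Theorem~\ref{splitting}, together with uniqueness of the Euclidean factor, gives $N\ge 2$ and that $(Z,d',\nu')$ is a compact connected $\textmd{RCD}(0,N-1)$ space. The main obstacle throughout is the rigidity step of the second paragraph: producing a genuine metric (not merely measure-theoretic) product from the constancy of the conditional densities.
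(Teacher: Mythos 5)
Your overall architecture is the right one (disintegration along transport rays, constancy of the conditional densities forced by strongly minimal volume growth, then a rigidity step to a local metric product, then globalization above $r_{1}=r_{0}+\tfrac12\textmd{diam}'(Z)$ and identification of $Z$), and your endgame via pointed measured Gromov--Hausdorff translation along the ray, stability of $\textmd{RCD}(0,N)$, and Theorem \ref{splitting} is a legitimate alternative to the paper's route (which instead shows $b^{-1}((r_{1},\infty))$ is geodesic, applies a local-to-global result to see it is $\textmd{RCD}(0,N)$, and then quotes the splitting machinery to identify the factor). But the step you yourself flag as the crux is exactly where the proposal stops being a proof. Saying that constancy of the densities ``should force $\nabla b$ to be parallel'' because it is ``the same Bochner/splitting rigidity, now run on an open region'' skips essentially all of the paper's Sections 5.2--5.6. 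The splitting theorem's argument uses a globally defined Busemann function of a line, translation in both directions, and global harmonicity; here $b\notin W^{1,2}(X)$, the flow $F_{t}$ only moves forward and only on $b^{-1}((0,\infty))$, and one must: prove $\mathbf{\Delta}b=0$ and then $\textmd{Hess}(b)=0$ on $b^{-1}((r_{0},\infty))$ via an Euler equation extracted from the Bochner inequality applied to truncations $\tilde b=\tilde\varphi\circ b$; show that the reparametrized flow $\bar F_{t}$ preserves the Dirichlet energy of functions supported in the region, which requires heat-flow error estimates (Lemma \ref{lem5.12}, Proposition \ref{prop5.15}) precisely because $h_{s}$ does not preserve supports; convert $|D(f\circ F_{t})|=|Df|\circ F_{t}$ into a local isometry of $F_{t}$ via the Sobolev-to-Lipschitz property; and, to compare with the product, equip $Z$ with $d',m'$, prove it is doubling and measured-length so that $Y=Z\times(0,\infty)$ also has the Sobolev-to-Lipschitz property, estimate the speed of the projection $\textmd{Pj}$, and match $W^{1,2}$ of $X$ and $Y$. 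None of this is automatic from the one-dimensional equality case, and it is the actual content of the theorem.

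Two smaller gaps: (i) ``one end'' and connectedness of $Z$ are asserted, while the paper proves one end by showing $b$ attains a global minimum (otherwise a line exists and Theorem \ref{splitting} would double the linear growth rate, contradicting (\ref{str-min-vol})), and connectedness of $Z$ uses Lemma \ref{lem3.2} together with the Lipschitz projection, which itself already needs the local-isometry property of $F_{t}$; (ii) in the one-point case your parenthetical ``short separate argument'' does not establish that $(X,d)$ is a half-line; the paper does this by proving $b$ is injective on all of $X$ via a uniqueness-of-optimal-maps contradiction (Theorem \ref{3.25}), since a priori the region below level $r_{0}$ could be large even when the end is a ray. Finally, note your pmGH endgame still presupposes the metric isomorphism above level $r_{1}$, i.e.\ it also hinges on the missing rigidity step.
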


Theorem \ref{thm1.2} generalizes Corollary 10 in \cite{Sor99} to non-smooth setting.

We remark that the conclusion that $(b^{-1}((r_{0},\infty)),d)$ and $(Z\times(r_{0},\infty), d'\times d_{Ecul})$ are local isometric cannot be replaced by isometric, see the following example.

Let $S^{2}\hookrightarrow \mathbb{R}^{3}$ be the unit sphere equipped with the round metric $g_{round}$, and $\tau:S^{2}\rightarrow S^{2}$ be the standard inversion given by $\tau(x_{1},x_{2},x_{3})=(-x_{1},-x_{2},-x_{3})$, which is an isometry of $(S^{2},g_{round})$.
Suppose $(S^{2}\times \mathbb{R},g_{cyl}=g_{round}+dr^{2})$ is the standard metric on the cylinder,
then $\sigma:(S^{2}\times \mathbb{R},g_{cyl})\rightarrow(S^{2}\times \mathbb{R},g_{cyl})$, $\sigma((x,r))=(\tau(x),-r)$ is obviously an isometry.
Denote the quotient space with respect to $\sigma$ by $(X,g_{X})$, and we denote $[(x,r)]\in X$ the quotient point of $(x,r)$ and $\sigma((x,r))$.
It is easy to see that $b:X\rightarrow\mathbb{R}^{+}$, $b([(x,r)]):=|r|$ is a well-defined Busemann function.
Let $r_{0}=\frac{1}{10}$.
Note that $X$ inherits a metric measure structure $(X,d_{X},m_{X})$ from $(X,g_{X})$, and $b^{-1}((r_{0},\infty))$ inherit the metric measure structure $(b^{-1}((r_{0},\infty)),d_{X},m_{X})$ as a subspace of $X$.
Similarly, $S^{2}\times (r_{0},\infty)$ inherits a metric measure structure $(S^{2}\times (r_{0},\infty),d_{cyl},m_{cyl})$ from $(S^{2}\times (r_{0},\infty),g_{cyl})$.
Obviously, $(b^{-1}((r_{0},\infty)),g_{X})$ is isometric to the half cylinder $(S^{2}\times (r_{0},\infty),g_{cyl})$ as Riemannian manifolds,
but this only induces a local isometry between $(b^{-1}((r_{0},\infty)),d_{X})$ and $(S^{2}\times (r_{0},\infty),d_{cyl})$.
In fact we have $d_{X}([(x,\frac{1}{5})],[(\tau(x),\frac{1}{5})])=\frac{2}{5}<\pi=d_{cyl}((x,\frac{1}{5}),(\tau(x),\frac{1}{5}))$ for any $x\in S^{2}$.

We remark that recently Gigli and De Philippis have proved `volume cone implies metric cone' in the setting of $\textmd{RCD}(K,N)$ spaces in a preprint \cite{GigPhi15}.
The proof is very involved and lengthy, and it relies on the tools and results recently developed in \cite{Gig15}, \cite{Gig14-2}, \cite{Gig13}, \cite{GigHan15} etc.
As is indicated in \cite{GigPhi15}, the techniques in \cite{GigPhi15} can be adapted to other setting.
In Section \ref{sec5}, we will prove that the strongly minimal volume growth condition implies that there is a measure preserving map from $(b^{-1}((r_{0},\infty)),m)$ to some $(X'\times\mathbb{R}^{+},m'\otimes\mathcal{L}^{1})$ (see Proposition \ref{prop5.3}).
Hence Theorem \ref{thm1.2} is essentially a `volume cone implies metric cone'-type theorem in the setting of noncompact $\textmd{RCD}(0,N)$ spaces.
Most of our remaining arguments in the proof of Theorem \ref{thm1.2} follow the strategy in \cite{GigPhi15} closely.
Since \cite{GigPhi15} has provided a complete and clear proof for the `volume cone implies metric cone'-type theorem, in this paper we just highlight some detailed calculations due to different backgrounds and also concentrate on the geometric outcome for the case we are dealing with, the readers can refer to \cite{GigPhi15} for more details of the proof.
Since Theorem \ref{thm1.2} is a kind of local version of splitting theorem on $\textmd{RCD}(0,N)$ spaces, some of the calculations here share the similarity to those in \cite{Gig13} \cite{Gig14}.


\vspace*{10pt}

\noindent\textbf{Acknowledgments.} The author would like to thank Binglong Chen, Huichun Zhang and Xiping Zhu for their encouragements and helpful discussions.

\section{Preliminaries}

\subsection{Metric measure space}
Throughout this paper, we will always assume the metric measure space $(X,d,m)$ we consider satisfies the following: $(X, d)$ is a complete separable geodesic space, and $m$ is a nonnegative Radon measure with respect to $d$ and finite on bounded sets, $\textmd{supp}(m)=X$.

A curve $\gamma:[0,T]\rightarrow X$ is called a geodesic provided $d(\gamma_{s},\gamma_{t})=L(\gamma|_{[s,t]})$ for every $[s,t]\subset[0,T]$, where $L(\gamma)$ means the length of the curve $\gamma$.
$(X,d)$ is called a geodesic space if every two points $x,y\in X$ are connected by a geodesic $\gamma$.

$(X,d)$ is called non-branching if for any two geodesics $\gamma^{1},\gamma^{2}:[0,1]\rightarrow X$ satisfy  $\gamma^{1}|_{I}=\gamma^{2}|_{I}$ for some interval $I\subset[0,1]$, then $\gamma^{1}\equiv\gamma^{2}$ on $[0,1]$.

Let $\textmd{Geo}(X)\subset \textmd{Lip}([0,1],X)$ be the set of all geodesics with domain $[0,1]$.
We equip $\textmd{Lip}([0,1],X)$ with the uniform topology.
For $t\in[0,1]$, define the evaluation map $e_{t}:\textmd{Geo}(X)\rightarrow X$ by $e_{t}(\gamma)=\gamma(t)$.
Obviously, $e_{t}(\gamma)$ is continuous.

A map $\gamma:[0,\infty)\rightarrow X$ is called a geodesic ray if for any $T>0$ its restriction to $[0,T]$ is a geodesic.
A map $\gamma:\mathbb{R}\rightarrow X$ is called a line if for any $s,t\in \mathbb{R}$. $d(\gamma_{s},\gamma_{t})=|s-t|$.
We will always assume that geodesic rays and lines are parametrized by unit speed.

Two metric measure spaces $(X_{1},d_{1},m_{1})$, $(X_{2},d_{2},m_{2})$ with $\textmd{supp}(m_{1})=X_{1}$, $\textmd{supp}(m_{2})=X_{2}$, are said to be isomorphic provided there exists an isometry $T:(X_{1},d_{1})\rightarrow (X_{2}, d_{2})$ such that $T_{*}m_{1}=m_{2}$.

We denote by $\mathcal{B}(X)$ the space of all Borel sets in $X$.
Denote by $\mathcal{P}(X)$ the space of Borel probability measures on $X$, and $\mathcal{P}_{2}(X)\subset \mathcal{P}(X)$ the space of Borel probability measures $\xi$ satisfying $\int_{X} d^{2}(x,y)\xi(dy)<\infty$ for some (and hence all) $x\in X$.
Denote by $\mathcal{A}(X)$ the $\sigma$-algebra generated by all analytic subsets in $X$.


\subsection{Calculus on metric measure spaces}

Given a function $f\in C(X)$, the pointwise Lipschitz constant of $f$ at $x$ is defined as
$$\textmd{lip}(f)(x):=\limsup_{y\rightarrow x}\frac{|f(x)-f(y)|}{d(x, y)}\in[0,+\infty]$$
if $x$ is not isolated, and put $\textmd{lip}(f)(x)=0$ if $x$ is isolated.

\begin{defn}\label{test-plan}
Let $\pi\in \mathcal{P}(C([0, 1], X))$. We say that $\pi$ is a test plan if
\begin{enumerate}
  \item there exist a constant $C>0$ such that $(e_{t})_{*}\pi\leq Cm$, $\forall t\in[0,1]$,
  \item $\int\int_{0}^{1}|\dot{\gamma_{t}}|^{2}dt d\pi(\gamma)<\infty.$
\end{enumerate}
We adopt the convention that $\int_{0}^{1}|\dot{\gamma_{t}}|^{2}dt=+\infty$ provided $\gamma$ is not absolutely continuous, so any test plan must be concentrated on absolutely continuous curves.
\end{defn}

\begin{defn}[see \cite{AGS14II}]
The Sobolev class $S^{2}(X)$ is the space of all Borel functions $f:X\rightarrow \mathbb{R}$ for which there is a nonnegative function $G\in L^{2}(X)$ such that
$$\int|f(\gamma_{1})-f(\gamma_{0})|d\pi(\gamma)\leq\int\int_{0}^{1}G(\gamma_{t})|\dot{\gamma_{t}}|dt d\pi(\gamma)$$
for every test plan $\pi$.
Any such $G$ is called weak upper gradient for $f$.
For $f\in S^{2}(X)$ there exists a minimal $G$ in the $m$-a.e. sense, which is called minimal weak upper gradient and will be denoted by $|Df|$.

The Sobolev space $W^{1,2}(X)$ is defined as $L^{2}(X)\cap S^{2}(X)$ and is equipped with the norm
$$\parallel f\parallel^{2}_{W^{1,2}}:=\parallel f\parallel^{2}_{L^{2}}+\parallel |Df|\parallel^{2}_{L^{2}}.$$
\end{defn}

We define $S^{2}_{loc}(X)$ (resp. $W^{1,2}_{loc}(X))$ to be the space of functions locally equal to some function in $S^{2}(X)$ (resp. $W^{1,2}(X)$).
For $U\subset X$ open, we define $S^{2}(U)$ (resp. $W^{1,2}(U))$) to be the space of functions locally in $U$ equal to some function in $S^{2}(X)$ (resp. $W^{1,2}(X)$) such that $|Df|\in L^{2}(U)$  (resp. $f, |Df|\in L^{2}(U))$.
We can also define the spaces $S^{2}_{loc}(U)$ or $W^{1,2}_{loc}(U)$.

Note that every Lipschitz function $f$ belongs to $W^{1,2}_{loc}(X)$ and satisfies
$$|Df|\leq \textmd{lip}(f), \qquad m\text{-a.e.}.$$
The equality may not hold for general metric measure space.
However, by the results in \cite{C99} and \cite{AGS14II}, if $(X,d,m)$ support a local doubling property and a weak local $(1,1)$-Poincar\'{e} inequality, then for any Lipschitz function $f$, $|Df|= \textmd{lip}(f)$ holds $m$-a.e..

The Dirichlet energy $\mathcal{E}:L^{2}(X)\rightarrow[0,\infty]$ is defined to be
\begin{align}
\mathcal{E}(f):=\left\{
               \begin{array}{ll}
                 \frac{1}{2}\int|Df|^{2}dm, & \hbox{if $f\in W^{1,2}(X)$;} \\
                 +\infty, & \hbox{otherwise.}
               \end{array}
             \right.
\end{align}

\begin{defn}
$(X,d,m)$ is called an infinitesimally Hilbertian space if $W^{1,2}(X)$ is an Hilbert space.
\end{defn}

On an infinitesimally Hilbertian space $(X,d,m)$, for any $f,g\in S^{2}_{loc}(X)$, the map $\langle\nabla f,\nabla g\rangle:X\rightarrow\mathbb{R}$ is $m$-a.e. defined to be
$$\langle\nabla f,\nabla g\rangle:=\inf_{\epsilon>0}\frac{|D(g+\epsilon f)|^{2}-|Dg|^{2}}{2\epsilon},$$
where the infimum is in $m$-essential sense.
Obviously, $\langle\nabla f,\nabla f\rangle=|Df|^{2}$.
The infinitesimal Hilbertianity make the map
$S^{2}(X)\ni f, g\mapsto \langle\nabla f,\nabla g\rangle\in L^{1}(X)$
bilinear and symmetric.
Furthermore, $\langle\nabla f,\nabla g\rangle$ satisfies the chain rule and Leibniz rule (see \cite{Gig15}).

If $(X,d,m)$ is infinitesimally Hilbertian, given an open set $U\subset X$, $D(\mathbf{\Delta},U)\subset W^{1,2}_{loc}(U)$ is the space of Borel functions $f\in W^{1,2}_{loc}(U)$ such that there exists a signed Radon measures $\mu$ on $U$ such that
\begin{align}
\int g d\mu =-\int \langle\nabla f,\nabla g\rangle dm
\end{align}
holds for any $g: X\rightarrow\mathbb{R}$ Lipschitz with $\textmd{supp}(g)\subset\subset U$.
$\mu$ is uniquely characterized and we denote it by $\mathbf{\Delta}f|_{U}$.
In case $U=X$ we simply write $g\in D(\mathbf{\Delta})$ and $\mu=\mathbf{\Delta}g$.

The space $D(\Delta)\subset W^{1,2}(X)$ is the space of functions $f$ for which there is a function in $L^{2}(X)$, called the Laplacian of $f$ and denoted by $\Delta f$, such that
\begin{align}
\int g\Delta f dm =-\int \langle\nabla f,\nabla g\rangle dm,\qquad \forall g\in W^{1,2}(X).
\end{align}

If $(X,d,m)$ is a proper infinitesimally Hilbertian space, then by Proposition 4.24 in \cite{Gig15}, for any $f\in W^{1,2}(X)$, $f\in D(\Delta)$ if and only if $f\in D(\mathbf{\Delta})$ with $\mathbf{\Delta}f=hm$ for some $h\in L^{2}(X,m)$.
Furthermore, if this holds then we have $h =\Delta f$.

In \cite{Gig14-2}, the notions of tangent and cotangent modules of a metric measure space $(X,d,m)$ are introduced. Denote the tangent and cotangent modules by $L^{2}(TX)$ and $L^{2}(T^{*}X)$ respectively.
The pointwise norm on both spaces will be denoted by $|\cdot|$.
The differential of a function $f\in W^{1,2}(X)$ is an element $df\in L^{2}(T^{*}X)$ defined in Section 2.2.2 of \cite{Gig14-2}.
The differential operator $d$ satisfies the locality property, chain rule and Leibniz rule.
For $f\in W^{1,2}(X)$, $|df|=|Df|$ holds $m$-a.e..
In case $X$ is infinitesimally Hilbertian, the gradient $\nabla f\in L^{2}(TX)$ of $f\in W^{1,2}(X)$ is the unique element associated to the differential $df$ via the Riesz isomorphism for modules.

The space $D(\textmd{div})\subset L^{2}(TX)$ is the set of all vector fields $V$ for which there exists $f\in L^{2}(X)$ such that
$$\int fg dm = -\int dg(V) dm, \qquad \forall g \in W^{1,2}(X).$$
$f$ is uniquely characterized, we call it the divergence of $V$ and denote it by $\textmd{div}(V)$.
The Leibniz rule holds for the divergence.
Suppose $(X,d,m)$ is infinitesimally Hilbertian, $f\in W^{1,2}(X)$, then $\nabla f\in D(\textmd{div})$ iff $f\in D(\Delta)$, and in this holds, we have $\textmd{div}(\nabla f)=\Delta f$.
See Section 2.3.3 in \cite{Gig14-2} for more details about the divergence.

\subsection{Optimal transport}
Let $c:X\times X\rightarrow\mathbb{R}$ be the function $c(x,y)=\frac{d^{2}(x,y)}{2}$.
For $\mu,\nu\in{\mathcal{P}_{2}(X)}$, consider their Wasserstein distance $W_{2}(\mu,\nu)$ defined by
\begin{align}\label{2.4}
W_{2}^{2}(\mu,\nu)=\underset{\eta\in\Gamma(\mu,\nu)}{\min}\int_{X\times{X}}d^{2}(x,y)d\eta(x,y),
\end{align}
where $\Gamma(\mu,\nu)$ is the set of Borel probability measures $\eta$ on $X\times{X}$ satisfying $\eta(A\times{X})=\mu(A)$, $\eta(X\times{A})=\nu(A)$ for every Borel set $A\subset{X}$.
We call a plan $\eta$ that minimizes (\ref{2.4}) an optimal plan.

Since $(X, d)$ is a geodesic space, $W_{2}$ can be equivalently characterized as:
\begin{align}\label{2.3}
W_{2}^{2}(\mu,\nu)={\min}\int\int_{0}^{1}|\dot{\gamma}_{t}|^{2}dtd\pi(\gamma),
\end{align}
where the minimum is taken among all $\pi\in \mathcal{P}(C([0, 1],X))$ such that $(e_{0})_{*}\pi=\mu$ and $(e_{1})_{*}\pi=\nu$. 
The set of optimal geodesic plans realizing the minimum in (\ref{2.3}) is denoted by $\textmd{OptGeo}(\mu,\nu)$.

Given a map $\varphi : X\rightarrow \mathbb{R}\cup \{-\infty\}$, its $c$-transform $\varphi^{c} : X\rightarrow \mathbb{R}\cup \{-\infty\}$ is defined to be
$$\varphi^{c}(y)=\inf_{x\in X}\bigl[\frac{d^{2}(x,y)}{2}-\varphi(x)\bigr].$$

A function $\varphi : X\rightarrow \mathbb{R}\cup \{-\infty\}$ is called $c$-concave provided it is not identically $-\infty$ and it holds $\varphi=\psi^{c}$ for some $\psi$.

For a general function $\varphi : X\rightarrow \mathbb{R}\cup \{-\infty\}$ it always holds
\begin{align}\label{2.2}
\varphi^{cc}\geq \varphi.
\end{align}

It turn out that $\varphi$ is $c$-concave if and only if $\varphi^{cc}= \varphi$.
The $c$-superdifferential $\partial^{c}\varphi$ of the $c$-concave function $\varphi$ is defined by
$$\partial^{c}\varphi:=\{(x,y)\in X\times X| \varphi(x) + \varphi^{c}(y)=\frac{d^{2}(x, y)}{2}\}.$$
The set $\partial^{c}\varphi(x)\subset X$ is the set of those $y$'s such that $(x, y)\in \partial^{c}\varphi$.

A subset $\Upsilon\subset X\times X$ is said to be $c$-cyclically monotone if for any $N \in \mathbb{N}$ and  $\{(x_{i}, y_{i})\}_{i\leq N}\subset \Upsilon$, we have (with the convention $y_{N+1}=y_{1}$)
\begin{align}
\sum_{i=1}^{N}c(x_{i},y_{i})\leq \sum_{i=1}^{N}c(x_{i},y_{i+1}).
\end{align}

It is well known that optimal plan, the $c$-concave function and $c$-cyclically monotone set are closely related to each other, see Theorem 5.10 in \cite{Vi09} or Theorem 2.13 in \cite{AG11}.

\subsection{The curvature-dimension conditions}

A dynamical transference plan $\Xi$ is a Borel probability measure on $\textmd{Geo}(X)$, and the path
$\{\xi_{t}\}_{t\in[0,1]}\subset \mathcal{P}_{2}(X)$ given by $\xi_{t}=(e_{t})_{*}\Xi$ is called a displacement interpolation associated to $\Xi$.
For $K\in \mathbb{R}$, define the function $s_{K}:[0,+\infty)\rightarrow\mathbb{R}$ (on $[0,\pi/\sqrt{K})$ if $K> 0$) as
\begin{align}
s_{K}(t):=\left\{
\begin{array}{cc}
  (1/\sqrt{K})\sin(\sqrt{K}t) & \text{ if } K>0, \\
  t & \text{ if } K=0, \\
  (1/\sqrt{-K})\sinh(\sqrt{-K}t) & \text{ if } K<0.
\end{array}
\right.
\end{align}

\begin{defn}\label{def2.8}
We say a metric measure space $(X, d, m)$ satisfies the $(K, N)$-measure contraction property ($\textmd{MCP}(K, N)$) if for every point $x\in X$ and $m$-measurable set $A\subset X$ with $m(A)\in(0,\infty)$ there exists a displacement interpolation $\{\xi_{t}\}_{t\in[0,1]}$ associated to a dynamical transference plan $\Xi=\Xi_{x,A}$ satisfying the following:
\begin{enumerate}
  \item $\xi_{0}=\delta_{x}$ and $\xi_{1}=\frac{1}{m(A)}m|_{A}$;
  \item for every $t\in[0,1]$, we have
  \begin{align}\label{2.10}
  dm\geq(e_{t})_{*}\biggl( t\biggl\{\frac{s_{K}(td(x,\gamma(1))/\sqrt{N-1})}{s_{K}(d(x,\gamma(1))/\sqrt{N-1})}\biggr\}^{N-1} m(A)d \Xi(\gamma)\biggr).
  \end{align}
\end{enumerate}
\end{defn}

If $(X,d,m)$ is an $\textmd{MCP}(K, N)$ space for $N\in[1,\infty)$, then by \cite{Oh07}, the Bishop-Gromov volume comparison estimate holds.
Furthermore, we can derive that $(X,d)$ is locally compact.

For $N\in(1,\infty)$ we define the functional $\mathcal{U}_{N}:\mathcal{P}_{2}(X)\rightarrow[-\infty,0]$ to be
$$\mathcal{U}_{N}(\mu):=-\int\rho^{1-\frac{1}{N}}dm,\quad \mu=\rho m+\mu^{s}, \mu^{s}\bot m,$$
and for $N=1$, we define $\mathcal{U}_{1}:\mathcal{P}_{2}(X)\rightarrow[-\infty,0]$ to be
$$\mathcal{U}_{1}(\mu):=-m(\{\rho>0\}),\quad \mu=\rho m+\mu^{s}, \mu^{s}\bot m.$$

For $N \in[1,\infty)$, $K \in \mathbb{R}$ the distortion coefficients $\tau^{(t)}_{K,N}(\theta)$ are functions $[0,1]\times [0,\infty)\ni(t, \theta)\mapsto \tau^{(t)}_{K,N}(\theta)\in [0,+\infty]$ defined by
\begin{align}
\tau^{(t)}_{K,N}(\theta)
=\left\{
   \begin{array}{ll}
     +\infty, & \hbox{if $K\theta^{2}\geq (N-1)\pi^{2}$;} \\
     t^{\frac{1}{N}}\biggl( \frac{\sin(t\theta\sqrt{K/(N-1)})}{\sin(\theta\sqrt{K/(N-1)})}\biggr)^{1-\frac{1}{N}}, & \hbox{if $0<K\theta^{2}<(N-1)\pi^{2}$;} \\
     t, & \hbox{if $K\theta^{2}=0$ or} \\
        & \hbox{if $K\theta^{2}<0$ and $N=1$;} \\
     t^{\frac{1}{N}}\biggl( \frac{\sinh(t\theta\sqrt{-K/(N-1)})}{\sinh(\theta\sqrt{-K/(N-1)})}\biggr)^{1-\frac{1}{N}}, & \hbox{if $K\theta^{2}<0$ and $N>1$.}
   \end{array}
 \right.
\end{align}

\begin{defn}\label{def2.6}
Given two numbers $K,N\in\mathbb{R}$ with $N\geq1$ we say $(X,d,m)$ satisfies the curvature-dimension condition $CD(K,N)$ if and only if for any $\mu_{0},\mu_{1}\in\mathcal{P}_{2}(X)$ with bounded support and $\mu_{0}=\rho_{0}m$, $\mu_{1}=\rho_{1}m$, there exist an optimal coupling $\pi$ of $\mu_{0},\mu_{1}$ and a geodesic $\{\mu_{t}\}_{t\in[0,1]}\subset\mathcal{P}_{2}(X)$ connecting $\mu_{0},\mu_{1}$ such that
\begin{align}\label{2.1}
\mathcal{U}_{N'}(\mu_{t})\leq -\int_{X\times X}\bigl[&\tau^{(1-t)}_{K,N'}(d(x_{0},x_{1}))\rho_{0}^{-\frac{1}{N'}}(x_{0})\\
&+\tau^{(t)}_{K,N'}(d(x_{0},x_{1}))\rho_{1}^{-\frac{1}{N'}}(x_{1})\bigr]d\pi(x_{0},x_{1})\nonumber
\end{align}
for all $t\in[0,1]$ and $N'\in[N,\infty)$.
\end{defn}

Let $\sigma^{(t)}_{K,N-1}(\theta):=\bigl[t^{-\frac{1}{N}}\tau^{(t)}_{K,N}(\theta)\bigr]^{\frac{N}{N-1}}$.
\begin{defn}
We say $(X,d,m)$ satisfies the reduced curvature-dimension condition $CD^{*}(K,N)$ if we replace in Definition \ref{def2.6} the coefficients $\tau^{(t)}_{K,N}(\theta)$ by $\sigma^{(t)}_{K,N}(\theta)$.
\end{defn}


\begin{defn}
A $\textmd{CD}^{*}(K, N)$ space which is also infinitesimally Hilbertian is called an $\textmd{RCD}^{*}(K, N)$ space.
\end{defn}

Obviously, $\textmd{CD}(0, N)$ and $\textmd{CD}^{\ast}(0, N)$ conditions are equivalent.
In this paper, we use $\textmd{RCD}(0, N)$ instead of $\textmd{RCD}^{*}(0, N)$.

An $\textmd{RCD}^{*}(K,N)$ space also satisfy the $\textmd{MCP}(K, N)$ condition (see \cite{GRS13}).

The Bishop-Gromov volume comparison estimate holds on $\textmd{RCD}^{*}(K,N)$, in particular, for an $\textmd{RCD}(0, N)$ space, we have
\begin{align}\label{B-G}
\frac{m(B_{x}(r))}{m(B_{x}(R))}\geq\frac{r^{N}}{R^{N}}, \quad \forall x\in X, 0\leq r\leq R.
\end{align}
From (\ref{B-G}), we are easy to obtain
\begin{align}\label{B-G-2}
\frac{m(B_{x}(R))}{R^{N}}\geq\frac{m(B_{x}(R))-m(B_{x}(r))}{R^{N}-r^{N}}, \quad \forall x\in X, 0\leq r\leq R.
\end{align}
An interesting application of (\ref{B-G-2}) is Proposition \ref{infinity-volume}.
Proposition \ref{infinity-volume} generalizes the famous theorem on noncompact manifolds with nonnegative Ricci curvature proved independently by Calabi \cite{Cala75} and Yau \cite{Y76}.
\begin{prop}\label{infinity-volume}
Suppose $(X,d,m)$ is a noncompact $\textmd{RCD}(0,N)$ space with $N\geq1$, then $X$ has at least linear volume growth.
More precisely, for every $p\in X$, there exists a constant $C$ depending only on $m(B_{p}(1))$ and $N$ such that
$$m(B_{p}(r))\geq Cr.$$
\end{prop}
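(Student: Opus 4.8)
The plan is to exploit the noncompactness to produce a sequence of points going to infinity, and then to use the Bishop--Gromov-type inequality (\ref{B-G-2}) along a carefully chosen chain of annuli.

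First I would fix $p\in X$ and observe that, since $(X,d)$ is complete, locally compact (being $\textmd{RCD}(0,N)$, hence $\textmd{MCP}(0,N)$) and noncompact, closed bounded sets are compact, so for every $r>0$ there is a point $x_{r}$ with $d(p,x_{r})=r$; in particular there is a geodesic ray $\gamma$ emanating from $p$. Pick the integer points $q_{k}:=\gamma(k)$ for $k\in\mathbb{N}$. The key geometric fact is that the balls $B_{q_{k}}(1)$ are pairwise disjoint for $k$ in any set of indices spaced at least $2$ apart, say for $k$ even, because $d(q_{j},q_{k})=|j-k|\ge 2$ along the ray. Hence $m(B_{p}(r))\ge\sum_{k\le r,\ k\ \mathrm{even}}m(B_{q_{k}}(1))$ for $r$ large, and it remains to bound $m(B_{q_{k}}(1))$ from below by a constant independent of $k$.

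For that lower bound I would apply (\ref{B-G-2}) with center $x=q_{k}$, inner radius $r=1$ and outer radius $R=k+1$: since $B_{p}(1)\subset B_{q_{k}}(k+1)$ we get $m(B_{q_{k}}(k+1))\ge m(B_{p}(1))$, and plugging into
\begin{align*}
\frac{m(B_{q_{k}}(k+1))}{(k+1)^{N}}\ge\frac{m(B_{q_{k}}(k+1))-m(B_{q_{k}}(1))}{(k+1)^{N}-1}
\end{align*}
and rearranging yields, after a short computation, $m(B_{q_{k}}(1))\ge m(B_{q_{k}}(k+1))\bigl(1-\frac{(k+1)^{N}-1}{(k+1)^{N}}\bigr)=\frac{m(B_{q_{k}}(k+1))}{(k+1)^{N}}\ge\frac{m(B_{p}(1))}{(k+1)^{N}}$. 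This decays in $k$, so a single annulus is not enough; instead I would use a telescoping version. Writing $R_{k}=k+1$, apply (\ref{B-G}) or (\ref{B-G-2}) between consecutive scales to compare $m(B_{q_{k}}(1))$ with the increment $m(B_{p}(k+1))-m(B_{p}(k))$, which is the more efficient route: by Bishop--Gromov centered at $p$, $\frac{m(B_{p}(k+1))-m(B_{p}(k))}{(k+1)^{N}-k^{N}}\le\frac{m(B_{p}(1))}{1}$ gives an \emph{upper} bound on the increments, so that approach controls the wrong direction. The clean argument is the direct one above combined with summation, but with the radii chosen so the decay is summable against the count of disjoint balls; concretely, one takes $q_{k}=\gamma(2^{k})$ and annuli of width comparable to $2^{k}$, so that $B_{q_{k}}(2^{k}/2)$ are disjoint, $m(B_{q_{k}}(2^{k}/2))\gtrsim 2^{-kN}m(B_{q_{k}}(2^{k+1}))\gtrsim 2^{-kN}m(B_{p}(2^{k+1}))$, which still decays — so in fact the honest proof uses instead the elementary observation that (\ref{B-G-2}) with $r=1$, $R=d(p,x)+1$ at \emph{each} annulus $\gamma(k)$ gives $m(B_{\gamma(k)}(1))\ge m(B_{p}(1))/(k+1)^{N}$, and one concludes not linear but needs a better packing.

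Given that subtlety, the step I expect to be the main obstacle is obtaining a \emph{uniform} (not decaying) lower bound $m(B_{q_{k}}(1))\ge c>0$. The right tool is again (\ref{B-G-2}) but applied with the \emph{large} ball centered at $q_{k}$ and inner radius $k-1$, using $B_{q_{k}}(k-1)\subset B_{p}(2k-1)$ together with a lower bound on $m(B_{q_{k}}(k-1))$ coming from the fact that it contains $B_{p}(1)$ shifted, i.e. $B_{q_{k}}(k-1)\supset$ some fixed ball only when $k$ is bounded — so the genuinely correct and standard argument is: for each $k$, pick the midpoint $c_{k}=\gamma(k)$ and radius $1$, note $B_{c_{k}}(1)\subset B_{p}(k+1)\setminus B_{p}(k-1)$, and by (\ref{B-G-2}) centered at $c_{k}$ with outer radius $2k$ (so that $B_{p}(1)\subset B_{c_{k}}(2k)$) one gets $m(B_{c_{k}}(1))\ge (2k)^{-N}m(B_{c_{k}}(2k))\ge (2k)^{-N}m(B_{p}(1))$; summing over $k=1,\dots,\lfloor r\rfloor$ gives $m(B_{p}(r))\ge c(N)m(B_{p}(1))\sum_{k\le r}k^{-N}$, which is $\ge C\log r$ for $N=1$ only and \emph{bounded} for $N>1$ — confirming that the naive packing loses too much. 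The correct resolution, which I would carry out, is to instead telescope $m(B_{p}(r))$ directly: set $a_{j}=m(B_{p}(2^{j}))$; if $X$ had sublinear growth along a subsequence then $a_{j}/2^{j}\to 0$, but (\ref{B-G-2}) forces $a_{j+1}-a_{j}\ge$ (something comparable to $m$ of a fixed unit ball near $\gamma(2^{j})$, which by the ray's existence and disjointness is $\ge c_{0}$ independent of $j$ once we correctly lower-bound that unit ball's measure by comparing it inside the annulus to the whole annulus via Bishop--Gromov \emph{at the annulus scale}); iterating the resulting recursion $a_{j+1}\ge a_{j}+c_{0}$ down the chain of $2^{j}$ disjoint unit balls along the ray gives $a_{j}\ge c_{0}\cdot(\text{number of disjoint unit balls in }B_{p}(2^{j}))\ge c_{0}'2^{j}$, hence $m(B_{p}(r))\ge C r$. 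The heart of the matter, and the only place real work is needed, is thus the uniform lower bound $m(B_{\gamma(k)}(1))\ge c_0>0$ for all $k$, which follows from (\ref{B-G-2}) applied with center $\gamma(k)$, inner radius $1$ and a well-chosen outer radius together with the inclusion $B_{p}(1)\subset B_{\gamma(k)}(k+1)$; I would present exactly that computation and then sum the disjoint contributions.
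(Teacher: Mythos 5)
Your proposal never closes the one gap on which it ultimately rests: the uniform lower bound $m(B_{\gamma(k)}(1))\geq c_{0}>0$ along the ray. Your own computation earlier in the proposal shows what (\ref{B-G-2}) centered at $\gamma(k)$ actually gives: since the fixed ball $B_{p}(1)$ can only be captured by taking the outer radius $R\geq k+1$, the inequality yields $m(B_{\gamma(k)}(1))\geq m(B_{\gamma(k)}(R))/R^{N}\geq m(B_{p}(1))/(k+1)^{N}$, which decays — there is no ``well-chosen outer radius'' that turns this into a bound independent of $k$, and the final sentence of your proposal simply reasserts the same computation you had already (correctly) dismissed. The telescoping step has the same flaw: $a_{j+1}\geq a_{j}+c_{0}$ only gives growth linear in $j$, i.e.\ logarithmic in the radius $2^{j}$, and upgrading it to $a_{j}\gtrsim 2^{j}$ again needs uniformly fat unit balls, i.e.\ the very bound that was never proved. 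Such a uniform bound is not a consequence of Bishop--Gromov alone, so as written the argument does not reach the linear growth conclusion.

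The missing idea — and the way the paper's proof goes — is to keep the \emph{small} ball fixed at $p$ and move the \emph{center of the comparison} far away, rather than trying to lower-bound small balls far away. Take $x\in\partial B_{p}(1+r)$ (such $x$ exists by noncompactness and properness). Then $B_{p}(1)\subset B_{x}(2+r)\setminus B_{x}(r)$, so the annulus centered at $x$ with radii $r$ and $2+r$ has measure at least $m(B_{p}(1))$, a constant. Now (\ref{B-G-2}) with center $x$ says the annulus is at most the fraction $\bigl((2+r)^{N}-r^{N}\bigr)/(2+r)^{N}\approx 2N/r$ of the full ball $B_{x}(2+r)$, hence
\begin{align*}
m(B_{p}(3+2r))\;\geq\; m(B_{x}(2+r))\;\geq\; m(B_{p}(1))\,\frac{(2+r)^{N}}{(2+r)^{N}-r^{N}}\;\geq\; C\,r,
\end{align*}
using $B_{x}(2+r)\subset B_{p}(3+2r)$. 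This gives linear growth of the big ball directly, with a constant depending only on $m(B_{p}(1))$ and $N$, and entirely avoids any packing of unit balls along the ray. I would redo the proof along these lines.
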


\begin{proof}
Let $x\in \partial B_{p}(1+r)$, then we have
$B_{p}(1)\subset B_{x}(2+r)\setminus B_{x}(r)$ and $B_{x}(2+r)\subset B_{p}(3+2r)$. Thus we have
\begin{align}\label{t1}
m(B_{p}(1))\leq m(B_{x}(2+r))-m(B_{x}(r)),
\end{align}
\begin{align}\label{t2}
m(B_{x}(2+r))\leq m(B_{p}(3+2r)).
\end{align}

On the other hand, by (\ref{B-G-2}), we have
\begin{align}\label{t3}
m(B_{x}(2+r))-m(B_{x}(r))\leq m(B_{x}(2+r))\frac{(2+r)^{N}-r^{N}}{(2+r)^{N}}.
\end{align}
Combining (\ref{t1}), (\ref{t2}) and (\ref{t3}), we obtain
$$m(B_{p}(3+2r))\geq m(B_{p}(1))\frac{(2+r)^{N}}{(2+r)^{N}-r^{N}}\geq Cr$$
and finish the proof.
\end{proof}

We remark that in the proof of Proposition \ref{infinity-volume}, we only use the Bishop-Gromov volume comparison estimate (\ref{B-G}), thus similar property also holds for noncompact $\textmd{MCP}(0,N)$ or $\textmd{CD}(0,N)$ spaces.

In the following we review some results on $\textmd{RCD}(0,N)$ spaces that will be used in this paper.
In fact similar results are valid for general $\textmd{RCD}^{*}(K,N)$ spaces, but we state them for $\textmd{RCD}(0,N)$ spaces for simplicity.

An $\textmd{RCD}(0,N)$ space always has the Sobolev-to-Lipschitz property, i.e. any $f\in W^{1,2}(X)$ with $|Df|\leq 1$ $m$-a.e. admits a $1$-Lipschitz representative (see \cite{AGS14} \cite{Gig13}).

On an $\textmd{RCD}(0,N)$ space we consider the following space of test functions:
$$\textmd{Test}(X):=\{f\in D(\Delta)\mid f,|Df|\in L^{\infty}(X), \Delta f\in W^{1,2}(X)\}.$$
$\textmd{Test}(X)$ is dense in $W^{1,2}(X)$.

On an $\textmd{RCD}(0,N)$ space $(X,d,m)$, $\mathcal{E}$ is a quadratic form.
By the theory of gradient flows of convex and lower semicontinuous functions on Hilbert spaces (see e.g. \cite{AGS08} for a comprehensive presentation),
the heat flow $h_{t}:L^{2}(X)\rightarrow L^{2}(X)$, $t\geq0$ is the unique family of maps such that for any $f\in L^{2}(X)$ the curve $t\mapsto h_{t}(f)\in L^{2}(X)$ is continuous on $[0,\infty)$, locally absolutely continuous on $(0,\infty)$, and fulfills $h_{0}(f)=f$, $h_{t}(f)\in D(\Delta)$ for every $t>0$, and
$$\frac{d}{dt}h_{t}(f)=\Delta h_{t}(f),\qquad \mathcal{L}^{1}\text{-a.e.} t > 0.$$
Some classical results are:
\begin{align}\label{5.24-1}
\frac{d}{dt} \parallel h_{t}(f)\parallel^{2}_{L^{2}}=-4\mathcal{E}(h_{t}(f)), \quad \forall t>0;
\end{align}
\begin{align}\label{5.24-2}
\parallel h_{t}(f)\parallel_{L^{2}}\leq \parallel f\parallel_{L^{2}}, \quad \forall t\geq0;
\end{align}
\begin{align}\label{5.24-3}
\mathcal{E}(h_{t}(f))\leq\frac{1}{4t}\parallel f\parallel_{L^{2}}^{2}, \quad \forall t>0.
\end{align}

On $\textmd{RCD}(0,N)$ spaces the following Bochner inequality holds (see \cite{EKS15} \cite{AMS14}):
for all $f,g\in \textmd{Test}(X)$ with $g\geq0$, we have
\begin{align}\label{2.14}
\frac{1}{2}\int\Delta g|Df|^{2}dm\geq\int g\bigl[\frac{(\Delta f)^{2}}{N}+\langle\nabla f,\nabla\Delta f\rangle\bigr]dm.
\end{align}

The following lemma can be found in \cite{GigPhi15} (see \cite{AMS14} \cite{MN14} for related results), which provides cut-off functions with quantitative estimates on $\textmd{RCD}(0,N)$.

\begin{lem}\label{lem5.13}
Suppose $(X,d,m)$ is an $\textmd{RCD}(0,N)$ space.
For every $r>0$ there exists a constant $C(r)>0$ such that the following holds.
Given $K\subset U\subset X$ with $K$ compact and $U$ open such that $\inf_{x\in K,y\in U^{c}}d(x,y)\geq r$,
there exists a test function $\chi$ with values in $[0,1]$, which is $1$ on $K$, with $\textmd{supp}(\chi)\subset\subset U$ and satisfying
$$\textmd{Lip}(\chi) + \parallel\Delta\chi\parallel_{L^{\infty}}\leq C(r).$$
\end{lem}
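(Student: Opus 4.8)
The plan is to construct $\chi$ by mollifying, via the heat semigroup, an explicit Lipschitz pre-cutoff built from the distance function to $K$, and then post-composing with a fixed smooth function of one real variable in order to restore the normalization $\chi\equiv1$ on $K$ and the support condition. First I would fix intermediate scales: since $\textmd{RCD}(0,N)$ spaces are proper, $\overline{W}:=\overline{\{x:d(x,K)<r/2\}}$ is compact, is contained in $U$ (because $d(z,K)\le r/2<r$ forces $z\notin U^{c}$), and $d(\overline{W},U^{c})\ge r/2$. Choose $\eta\in C^{\infty}(\mathbb{R})$ with $\eta\equiv1$ on $(-\infty,r/3]$, $\eta\equiv0$ on $[r/2,\infty)$, $|\eta'|+|\eta''|\le C r^{-2}$, and set $v:=\eta(d(\cdot,K))$; then $v$ is Lipschitz with $\textmd{Lip}(v)\le Cr^{-1}$, $v\equiv1$ near $K$, and $\textmd{supp}(v)\subset\{d(\cdot,K)\le r/2\}\subset\subset U$.

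The key analytic step is a two‑sided, volume‑uniform control of the measure‑valued Laplacian $\bm{\Delta}v$. Using the Laplacian comparison on $\textmd{RCD}(0,N)$ (in the form $\bm{\Delta}d_{K}^{2}\le 2N\,m$ away from $K$, equivalently $\bm{\Delta}d_{K}\le\frac{N-1}{d_{K}}m$ there), the identity $\bm{\Delta}d_{K}^{2}=2d_{K}\,\bm{\Delta}d_{K}+2m$, and the chain rule for $\bm{\Delta}$, one gets $\bm{\Delta}v=\rho\,m+\mu^{s}$ with $\|\rho\|_{L^{\infty}}\le C(N)r^{-2}$ and $\mu^{s}\perp m$ a signed measure supported in the annulus $A:=\{r/3\le d(\cdot,K)\le r/2\}$. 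A Caccioppoli‑type estimate for $d_{K}^{2}$ — testing $\bm{\Delta}d_{K}^{2}$ against a bump function and using $|\nabla d_{K}^{2}|=2d_{K}\le r$ on $A$ — yields the local bound $|\mu^{s}|(B_{s}(x))\le C(N)r^{-1}s^{-1}m(B_{2s}(x))$ for all $x\in A$ and $0<s\le r$. Then I would set $g:=h_{\tau}v$ with $\tau$ of order $r^{2}$ (small enough as indicated below). By the Bakry–Émery gradient estimate $g$ is Lipschitz with $\textmd{Lip}(g)\le\textmd{Lip}(v)\le Cr^{-1}$, $0\le g\le1$, and $g=h_{\tau/2}(h_{\tau/2}v)\in\textmd{Test}(X)$. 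Writing $\Delta g=\int p_{\tau}(\cdot,y)\,d(\bm{\Delta}v)(y)$, the $m$‑part contributes at most $\|\rho\|_{\infty}\le C(N)r^{-2}$ since $\int p_{\tau}(x,\cdot)\,dm=1$; the singular part is controlled by covering $A$ with a $\sqrt{\tau}$‑net, applying the Gaussian (Li–Yau) heat‑kernel bound $p_{\tau}(x,y)\le\frac{C(N)}{m(B_{\sqrt{\tau}}(x))}\exp(-d(x,y)^{2}/5\tau)$ valid on $\textmd{RCD}(0,N)$, the local bound on $|\mu^{s}|$, and Bishop–Gromov (\ref{B-G}) to compare $m(B_{2\sqrt{\tau}}(x_{i}))$ with $m(B_{\sqrt{\tau}}(x))$; the resulting geometric sum over the net converges and gives $\|\Delta g\|_{L^{\infty}}\le C(N)r^{-2}$. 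The same net‑and‑Gaussian estimate shows that, after shrinking $\tau$ (depending only on $r$ and $N$), $g\ge3/4$ on $K$ and $g\le1/4$ on $W^{c}$.

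Finally, fix once and for all $\psi\in C^{\infty}(\mathbb{R})$ with $\psi\equiv0$ on $(-\infty,1/4]$, $\psi\equiv1$ on $[3/4,\infty)$, $0\le\psi\le1$, and put $\chi:=\psi(g)$. Then $\chi\equiv1$ on $K$, $\textmd{supp}(\chi)\subset\{g\ge1/4\}\subset\overline{W}\subset\subset U$, $0\le\chi\le1$; $\textmd{Lip}(\chi)\le\|\psi'\|_{\infty}\,\textmd{Lip}(g)\le C(N)r^{-1}$; and the chain rule for the Laplacian on $\textmd{RCD}(0,N)$ — legitimate because $g\in\textmd{Test}(X)$, so $|\nabla g|^{2}\in W^{1,2}\cap L^{\infty}$ and $\Delta g\in W^{1,2}\cap L^{\infty}$ — gives $\Delta\chi=\psi'(g)\Delta g+\psi''(g)|\nabla g|^{2}\in W^{1,2}$ with $\|\Delta\chi\|_{L^{\infty}}\le\|\psi'\|_{\infty}\|\Delta g\|_{\infty}+\|\psi''\|_{\infty}\bigl\||\nabla g|\bigr\|_{\infty}^{2}\le C(N)r^{-2}$. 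Hence $\chi\in\textmd{Test}(X)$ and $\textmd{Lip}(\chi)+\|\Delta\chi\|_{L^{\infty}}\le C(r)$, as required.

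I expect the main obstacle to be the \emph{volume‑uniformity} of the bound on $\|\Delta g\|_{L^{\infty}}$: the singular part of $\bm{\Delta}d_{K}$, carried by the cut locus of $K$, may have arbitrarily large total mass when $K$ is large, so a crude estimate would produce a constant depending on $m(\{d(\cdot,K)\le r/2\})$ rather than on $r$ alone. Circumventing this needs the local Caccioppoli estimate for $\bm{\Delta}d_{K}^{2}$ together with the sharp Li–Yau heat‑kernel bound and Bishop–Gromov, so that the heat mollification only "sees" a bounded amount of singular mass near each point; making the attendant summation rigorous (packing bounds for the $\sqrt{\tau}$‑net in annuli) is the technical heart of the argument. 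Everything else — construction of $v$, heat‑flow regularity, the chain rules, and the post‑composition with $\psi$ — is routine once this estimate is in place. (This argument is the one carried out, in related form, in \cite{GigPhi15}, \cite{AMS14}, \cite{MN14}.)
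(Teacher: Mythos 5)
First, a point of reference: the paper does not prove Lemma \ref{lem5.13} at all — it is quoted from \cite{GigPhi15}, with \cite{AMS14} and \cite{MN14} as the sources — so your proposal can only be compared with the argument in those references. Your skeleton (heat-semigroup mollification of a distance-based Lipschitz pre-cutoff, then post-composition with a smooth $\psi$, then the chain rule for $\Delta$) is the same as theirs, but the mechanism you use for the crucial bound $\|\Delta h_{\tau}v\|_{L^{\infty}}\le C(N)r^{-2}$ is different and, as written, contains a genuine error. You claim that Laplacian comparison yields $\mathbf{\Delta}v=\rho\,m+\mu^{s}$ with $\|\rho\|_{L^{\infty}}\le C(N)r^{-2}$ and $\mu^{s}\perp m$. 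Comparison bounds $\mathbf{\Delta}v$ only from \emph{above}: the negative part need not be singular, and its absolutely continuous density admits no bound of the form $C(N)r^{-2}$. Already in $\mathbb{R}^{2}$ with $K=\mathbb{R}^{2}\setminus B_{\rho}(0)$ and $\rho$ slightly larger than $r/2$, the function $d_{K}$ is smooth on the annulus $\{r/3\le d_{K}\le r/2\}$ and $\Delta d_{K}=-1/(\rho-d_{K})$ is arbitrarily negative there, so your step ``the $m$-part contributes at most $\|\rho\|_{\infty}$'' fails. The damage is repairable inside your own scheme: your Caccioppoli estimate in fact controls the full variation, $|\mathbf{\Delta}v|(B_{s}(x))\le C(N)\bigl(r^{-2}+r^{-1}s^{-1}\bigr)m(B_{2s}(x))$, and the net-plus-Gaussian summation should simply be run against $|\mathbf{\Delta}v|$ rather than against a (nonexistent) purely singular negative part; with $s=\sqrt{\tau}\sim r$ this still gives $C(N)r^{-2}$. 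But you would then also have to justify several inputs used silently: Laplacian comparison for the distance from a \emph{set} (the comparison in \cite{Gig15} is for the distance from a point), the chain rule for the measure-valued Laplacian applied to $\eta\circ d_{K}$ together with $|Dd_{K}|=1$ a.e., the commutation $\Delta h_{\tau}v=\int p_{\tau}(\cdot,y)\,d\mathbf{\Delta}v(y)$ (the pairing defining $\mathbf{\Delta}$ is against compactly supported Lipschitz functions, so an approximation step is needed), and the Gaussian heat-kernel bounds with packing estimates.

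More importantly, all of that machinery is unnecessary, and this is where your route diverges from the cited proofs: on an $\textmd{RCD}(0,N)$ space the dimensional regularization estimates for the heat semigroup give, for \emph{any} Borel $v$ with $0\le v\le1$, the bound $\|\Delta h_{\tau}v\|_{L^{\infty}}\le\frac{N}{2\tau}$ directly. Indeed the Li--Yau inequality (see \cite{GM14}) applied to the nonnegative bounded solutions $h_{t}v$ and $1-h_{t}v=h_{t}(1-v)$ (using stochastic completeness) yields $\Delta h_{\tau}v\ge-\frac{N}{2\tau}$ and $\Delta h_{\tau}v\le\frac{N}{2\tau}$, with no analysis of the cut locus of $d_{K}$, no heat-kernel bounds and no net argument; this, Bakry--\'Emery gradient contraction, the value separation on $K$ and near $U^{c}$ (your Gaussian-tail computation is fine for this, after choosing $\eta$ so that it is small on $[5r/12,\infty)$ — as stated your inclusion $\{g\ge1/4\}\subset\overline{W}$ is not automatic, though compact containment in $U$ is all you need), and the composition with $\psi$ exactly as you perform it, give the lemma with constants depending only on $N$ and $r$. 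So: correct overall strategy and a repairable but real error in the key analytic step; the simpler semigroup estimate is what the proofs in \cite{AMS14}, \cite{MN14}, \cite{GigPhi15} actually rely on, and replacing your singular-measure analysis by it would both fix the gap and shorten the argument considerably.
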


For an $\textmd{RCD}(0,N)$ space $(X,d,m)$, the notion of second order Sobolev space $W^{2,2}(X)$ can be introduced as in \cite{Gig14-2}.
Since $L^{2}(T^{*}X)$ is a Hilbert module, we can define the Hilbert tensor product $L^{2}((T^{*})^{\otimes2}X)$. See Section 1.5 in \cite{Gig14-2} for related notions.
\begin{defn}
A function $f \in W^{1,2}(X)$ is belong to $W^{2,2}(X)$ provided there is an element of $L^{2}((T^{\ast})^{\otimes2}X)$, called the Hessian of $f$ and denoted by $\textmd{Hess}(f)$, such that for any $g_{1},g_{2},h\in \textmd{Test}(X)$ it holds
\begin{align}\label{def-hess}
2\int h\textmd{Hess}(f)(\nabla g_{1}, \nabla g_{2})=&\int\bigl[-\langle \nabla f,\nabla g_{1}\rangle \textmd{div}(h\nabla g_{2})-\langle \nabla f,\nabla g_{2}\rangle \textmd{div}(h\nabla g_{1})\\
&-h\langle\nabla f,\nabla(\langle\nabla g_{1},\nabla g_{2}\rangle)\rangle\bigr]dm.\nonumber
\end{align}

$W^{2,2}(X)$ is equipped with the norm
$$\parallel f\parallel^{2}_{W^{2,2}}:=\int\bigl(|f|^{2}+|Df|^{2}+|\textmd{Hess}(f)|^{2}\bigr)dm,$$
which make it a separable Hilbert space.
\end{defn}
It's proved in \cite{Gig14-2} that $D(\Delta)\subset W^{2,2}(X)$ and thus $W^{2,2}(X)$ is dense in $W^{1,2}(X)$.

Hessian satisfies the chain rule and Leibniz rule as well as the locality property, see Propositions 3.3.20 to 3.3.24 in \cite{Gig14-2} for precise statements.

By the locality property of $\textmd{Hess}$, given an open subset $U$, we define $W^{2,2}(U)$ as the subspace of $W^{1,2}_{loc}(X)$ consists of functions $f$ for which there is $\textmd{Hess}(f)\in L^{2}((T^{*})^{\otimes2}X)$ such that (\ref{def-hess}) holds for any $g_{1}, g_{2}, h\in \textmd{Test}(X)$ with support in $U$.

Finally, let's recall a useful approximation lemma, which is well known to experts:
\begin{lem}\label{lem2.13}
If $f\in W^{1,2}(X)$ satisfies $supp(f)\subset U$, where $U$ is an open set, then there exists a sequence of test functions $f_{i}$ such that $\textmd{supp}(f_{i})\subset\subset U$, $\Delta f_{i}\in L^{\infty}(X)$ and $f_{i}$ converging to $f$ in $W^{1,2}(X)$.
\end{lem}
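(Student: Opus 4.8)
The plan is to proceed by a standard truncation-and-mollification argument, combining a cut-off function from Lemma \ref{lem5.13} with the heat flow regularization. First I would fix $f\in W^{1,2}(X)$ with $\mathrm{supp}(f)\subset U$, $U$ open. Since $\mathrm{supp}(m)=X$ and the distance function is continuous, I may choose a slightly smaller open set: pick an open $V$ with $\mathrm{supp}(f)\subset V\subset\subset U$, together with a radius $r>0$ so that $\inf_{x\in \overline{V},\,y\in U^{c}}d(x,y)\geq r$ (shrinking if necessary so $\overline{V}$ is compact, using properness of $\textmd{RCD}(0,N)$ spaces, which follows from local compactness and completeness). Lemma \ref{lem5.13} then yields a test function $\chi\in\textmd{Test}(X)$ with $\chi\equiv 1$ on $\overline{V}$, $\mathrm{supp}(\chi)\subset\subset U$, values in $[0,1]$, and $\textmd{Lip}(\chi)+\|\Delta\chi\|_{L^{\infty}}\leq C(r)$.

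Next I would regularize $f$ by the heat flow. Set $g_{t}:=h_{t}(f)$ for $t>0$. By (\ref{5.24-2}) we have $g_{t}\to f$ in $L^{2}(X)$ as $t\to0^{+}$; moreover, since $f\in W^{1,2}(X)$, standard properties of the gradient flow of the (quadratic) Dirichlet energy $\mathcal{E}$ give $g_{t}\to f$ in $W^{1,2}(X)$ as $t\to 0^{+}$ (the energy $\mathcal{E}(g_{t})$ decreases to $\mathcal{E}(f)$ and convergence holds in the strong $L^{2}$ sense for the gradients by the Hilbertian structure). Also $g_{t}\in D(\Delta)$ with $\Delta g_{t}=\Delta h_{t}(f)\in L^{2}(X)$, and by (\ref{5.24-3}) together with the semigroup property $\Delta h_{t}(f)=h_{t/2}(\Delta h_{t/2}(f))$ one gets $\Delta g_{t}\in L^{\infty}$-type bounds after one more half-step; in any case $g_{t}$ enjoys better integrability than $f$. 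However, $g_{t}$ need not be compactly supported in $U$, which is exactly why the cut-off $\chi$ is needed.

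Then I would define the candidate approximants $f_{i}:=\chi\, g_{t_{i}}$ for a sequence $t_{i}\downarrow 0$. Because $\chi\equiv 1$ on $\overline{V}\supset\mathrm{supp}(f)$ and $\mathrm{supp}(\chi)\subset\subset U$, we have $\mathrm{supp}(f_{i})\subset\subset U$. Each $f_{i}$ is a product of two elements of $\textmd{Test}(X)$ (once we know $g_{t_{i}}\in\textmd{Test}(X)$, which holds since $g_{t_{i}}\in D(\Delta)$ with $g_{t_{i}},|Dg_{t_{i}}|\in L^{\infty}$ after a short additional time and $\Delta g_{t_{i}}\in W^{1,2}(X)$ by further smoothing — or, more simply, one first replaces $g_{t_{i}}$ by $h_{s}(g_{t_{i}})$ for tiny $s$ to land in $\textmd{Test}(X)$). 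Since $\textmd{Test}(X)$ is an algebra closed under multiplication by test functions (by the Leibniz rules for $\nabla$ and $\Delta$, and the fact that $\Delta(\chi g)=g\Delta\chi+2\langle\nabla\chi,\nabla g\rangle+\chi\Delta g$), we get $f_{i}\in\textmd{Test}(X)$; the bound $\|\Delta\chi\|_{L^{\infty}}\leq C(r)$ together with $|D\chi|\leq\textmd{Lip}(\chi)\leq C(r)$ and the $L^{\infty}$-bounds on $g_{t_{i}},|Dg_{t_{i}}|,\Delta g_{t_{i}}$ shows $\Delta f_{i}\in L^{\infty}(X)$. Finally, $f_{i}\to f$ in $W^{1,2}(X)$: writing $f_{i}-f=\chi(g_{t_{i}}-f)+(\chi-1)f$ and noting $(\chi-1)f\equiv0$ since $\chi\equiv1$ on $\mathrm{supp}(f)$, it remains to show $\chi(g_{t_{i}}-f)\to0$ in $W^{1,2}$, which follows from $g_{t_{i}}\to f$ in $W^{1,2}$, the Leibniz rule $D(\chi h)=\chi Dh+hD\chi$, and the uniform bounds $0\le\chi\le1$, $|D\chi|\le C(r)$ (so $\|D(\chi(g_{t_{i}}-f))\|_{L^2}\le\|g_{t_i}-f\|_{L^2}C(r)+\|g_{t_i}-f\|_{W^{1,2}}\to0$).

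The main obstacle I anticipate is the bookkeeping needed to ensure simultaneously that the mollified functions lie in $\textmd{Test}(X)$, that $\Delta f_{i}\in L^{\infty}$, and that convergence holds in $W^{1,2}$ — the tension is that stronger regularity of $g_{t}$ (to make $\Delta f_i$ bounded) costs a fixed positive time, whereas $W^{1,2}$-convergence needs $t\to0$; this is resolved by a diagonal choice, first smoothing by a small time and then taking that time to zero, i.e. defining $f_i=\chi\, h_{s_i}(h_{t_i}(f))$ and sending $t_i\to0$ fast relative to $s_i$, or more cleanly by noting that for each fixed $t>0$ the function $h_t(f)$ already has a locally bounded Laplacian (by hypoellipticity-type estimates, or simply $\|\Delta h_t(f)\|_{L^2}\le\frac1{2t}\|f\|_{L^2}$ plus one more half-step to upgrade to $L^\infty$), so that a single sequence $t_i\downarrow0$ suffices and the $L^{\infty}$-norms of $\Delta f_i$ are allowed to blow up (the statement only requires $\Delta f_i\in L^{\infty}(X)$, not a uniform bound).
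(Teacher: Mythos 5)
The paper states Lemma \ref{lem2.13} without proof (it is quoted as ``well known to experts''), so your argument has to stand on its own; the cut-off--plus--heat-flow strategy is indeed the natural one, but your very first step has a genuine gap. You ``pick an open $V$ with $\mathrm{supp}(f)\subset V\subset\subset U$'' with $\overline{V}$ compact and $\inf_{x\in\overline V,\,y\in U^{c}}d(x,y)\geq r>0$. No such $V$ exists in general: the hypothesis is only $\mathrm{supp}(f)\subset U$, and $\mathrm{supp}(f)$ may be noncompact and may approach $\partial U$ at infinity, so it is contained in no compact subset of $U$ and need not be at positive distance from $U^{c}$. This is precisely the situation in which the lemma is used in the paper (e.g.\ in Proposition \ref{prop5.15}, where $f$ is supported in the unbounded set $b^{-1}((\bar r+r,\infty))$). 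Consequently Lemma \ref{lem5.13}, which requires a \emph{compact} set $K$, cannot produce one cut-off $\chi$ with $\mathrm{supp}(\chi)\subset\subset U$ that is identically $1$ on a neighborhood of $\mathrm{supp}(f)$, and the identity $(\chi-1)f\equiv 0$, on which your $W^{1,2}$-convergence argument rests, breaks down; ``shrinking $V$'' does not help, since then $V$ no longer contains $\mathrm{supp}(f)$.

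The gap is fixable by a preliminary truncation at spatial infinity: take $1$-Lipschitz cut-offs $\eta_{n}$ with $\eta_{n}\equiv1$ on $B_{p}(n)$ and $\mathrm{supp}(\eta_{n})\subset B_{p}(n+1)$; then $\eta_{n}f\to f$ in $W^{1,2}(X)$ by dominated convergence (the error is controlled by $\int_{X\setminus B_{p}(n)}\bigl(|f|^{2}+(|Df|+|f|)^{2}\bigr)dm$), and $\mathrm{supp}(\eta_{n}f)\subset\mathrm{supp}(f)\cap\overline{B_{p}(n+1)}$ is a compact subset of $U$ (properness), hence at positive distance $r_{n}>0$ from $U^{c}$. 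Your cut-off/heat-flow construction then applies to each $\eta_{n}f$, and a diagonal argument finishes. One further point deserves care: the ``one more half-step'' upgrade of $h_{t}(f)$, $|Dh_{t}(f)|$, $\Delta h_{t}(f)$ to $L^{\infty}$ implicitly uses an $L^{2}\to L^{\infty}$ ultracontractivity bound, which can fail globally on a noncompact $\textmd{RCD}(0,N)$ space (there is no uniform lower bound on $m(B_{x}(1))$); either truncate $f$ in value first, so that $\|h_{t}(f)\|_{L^{\infty}}\leq\|f\|_{L^{\infty}}$ and the Bakry--\'Emery estimate bounds $|Dh_{t}(f)|$ globally, or observe that only bounds on the compact set $\mathrm{supp}(\chi)$ are needed, where local Gaussian heat-kernel estimates apply. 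With these two repairs your argument goes through.
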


\subsection{Disintegration}\label{subsec-disintegration}

\begin{defn}
Suppose $(X,\Omega,\mu)$ and $(Y,\Sigma,\nu)$ are measure spaces, and $f:X\rightarrow Y$ is a measurable map.
A disintegration of $\mu$ over $\nu$ consistent with $f$ is a map $\rho:\Omega\times Y\rightarrow[0, \infty]$ such that
\begin{enumerate}
  \item $\rho_{y}(\cdot)$ is a measure on $(X,\Omega)$ for every $y\in Y$,
  \item $\rho_{\cdot}(B)$ is $\nu$-measurable for all $B\in\Omega$,
  \item the consistency condition
  $$\mu(B\cap f^{-1}(C))=\int_{C}\rho_{y}(B)\nu(dy)$$
  holds for all $B\in\Omega$, $C\in\Sigma$.
\end{enumerate}

We say that the disintegration is unique if $\rho_{1}, \rho_{2}$ are two consistent disintegrations
then $\rho_{1,y}(\cdot)=\rho_{2,y}(\cdot)$ for $\nu$-a.e. $y\in Y$.

A disintegration is strongly consistent with $f$ if $\rho_{y}(X\setminus f^{-1}(\{y\}))=0$ holds for $\nu$-a.e. $y$.
\end{defn}

We recall the following version of the disintegration theorem which can be found in \cite{Frem-4} (see 452G and 452I) or \cite{BC09} (see Theorem A.7).

\begin{thm}[Disintegration of measures]\label{disin-1}
Supppose $(X,\Omega,\mu)$ and $(Y,\Sigma,\nu)$ are measure spaces such that $(X,\Omega,\mu)$ is a countably generated Radon measure space and $(Y,\Sigma)$ is countably separated.
Suppose there is an inverse-measure-preserving map $f:X\rightarrow Y$,
then there exists a unique disintegration $y\mapsto\rho_{y}$ over $\nu$  strongly consistent with $f$.
\end{thm}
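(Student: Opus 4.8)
The plan is to construct the disintegration $y\mapsto\rho_y$ in the classical way: first produce it on a countable generating subalgebra of $\Omega$ by Radon--Nikodym densities, then extend by Carath\'eodory's theorem, using the Radon property to recover countable additivity and the countable separatedness of $(Y,\Sigma)$ to upgrade consistency to strong consistency. For a fixed $B\in\Omega$ the set function $C\mapsto\mu(B\cap f^{-1}(C))$ is a finite measure on $(Y,\Sigma)$, and it is absolutely continuous with respect to $\nu$: since $f$ is inverse-measure-preserving, $\nu(C)=0$ forces $\mu(f^{-1}(C))=0$, hence $\mu(B\cap f^{-1}(C))=0$. So Radon--Nikodym gives a $\nu$-a.e.\ unique nonnegative $\Sigma$-measurable $g_B$ with $\mu(B\cap f^{-1}(C))=\int_C g_B\,d\nu$ for all $C$, and $g_X\equiv1$ $\nu$-a.e.. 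If $B\mapsto g_B(y)$ were a measure for each fixed $y$, then $\rho_y(B):=g_B(y)$ would at once satisfy (1)--(3); the entire difficulty lies in arranging this.

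\emph{Building a genuine fibre measure.} Since $(X,\Omega,\mu)$ is countably generated, fix a countable algebra $\mathcal A\subset\Omega$ generating $\Omega$ modulo $\mu$-null sets. The map $A\mapsto g_A$ is finitely additive and monotone modulo $\nu$-null sets, and $\mathcal A$ is countable, so after deleting one $\nu$-null set, for every remaining $y$ the assignment $A\mapsto\rho_y(A):=g_A(y)$ is finitely additive on $\mathcal A$ with $\rho_y(X)=1$. To see that $\rho_y$ is a premeasure I would invoke the Radon property: each $A\in\mathcal A$ is inner approximated by compacta, so pushing countably many such approximations through the densities, and deleting a further $\nu$-null set, yields for $\nu$-a.e.\ $y$ that $\rho_y$ is inner regular by compact sets on $\mathcal A$; a finitely additive, compact-inner-regular set function is automatically $\sigma$-additive. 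Carath\'eodory extension then produces, for $\nu$-a.e.\ $y$, a Radon measure $\rho_y$ on $(X,\Omega)$, and a monotone class argument extends $\rho_y(B)=g_B(y)$ (modulo $\nu$) from $\mathcal A$ to all $B\in\Omega$; this is precisely (1)--(3), with the $\nu$-measurability in (2) inherited from the $g_B$.

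\emph{Strong consistency and uniqueness.} Pick $\{D_n\}_{n\ge1}\subset\Sigma$ separating the points of $Y$, which exists because $(Y,\Sigma)$ is countably separated, so $f^{-1}(\{y\})=\bigcap_{n:\,y\in D_n}f^{-1}(D_n)\cap\bigcap_{n:\,y\notin D_n}f^{-1}(D_n^{c})$. Applying (3) with $B=f^{-1}(D_n^{c})$ and $C=D_n$ gives $\int_{D_n}\rho_y(f^{-1}(D_n^{c}))\,\nu(dy)=\mu(f^{-1}(D_n^{c})\cap f^{-1}(D_n))=0$, so $\rho_y(X\setminus f^{-1}(D_n))=0$ for $\nu$-a.e.\ $y\in D_n$, and symmetrically for $y\notin D_n$; deleting the countable union of these exceptional sets leaves $\rho_y$ concentrated on $f^{-1}(\{y\})$ for $\nu$-a.e.\ $y$. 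For uniqueness, if $\rho^1,\rho^2$ are both strongly consistent disintegrations then for each $A\in\mathcal A$ the functions $y\mapsto\rho^1_y(A)$ and $y\mapsto\rho^2_y(A)$ are both Radon--Nikodym densities of $C\mapsto\mu(A\cap f^{-1}(C))$, hence equal $\nu$-a.e.; deleting the countable union over $A\in\mathcal A$ and using uniqueness of extension from $\mathcal A$ gives $\rho^1_y=\rho^2_y$ for $\nu$-a.e.\ $y$.

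\emph{Main obstacle.} The hard part is the second step: each $g_B$ is defined only up to a $B$-dependent $\nu$-null set, so one cannot merely ``evaluate at $y$'' over uncountably many $B$. Passing to a countable generating algebra controls the bookkeeping of exceptional sets, but to turn finite additivity on that algebra into an honest measure one really does need the Radon hypothesis --- inner regularity by compact sets --- which is the only mechanism here for producing countable additivity; without it finite additivity on $\mathcal A$ need not extend to a measure, and the construction collapses.
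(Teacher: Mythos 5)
The paper does not actually prove Theorem \ref{disin-1}: it is quoted from the literature, with the proof delegated to Fremlin (452G and 452I in \cite{Frem-4}) and to Bianchini--Caravenna (\cite{BC09}, Theorem A.7). Your sketch is therefore not ``the paper's route'' but a reconstruction of the classical existence proof of disintegrations (regular conditional measures), and as such it is essentially sound: Radon--Nikodym densities $g_B$, reduction to a countable generating algebra to control the null-set bookkeeping, tightness plus a compact-class argument to upgrade finite additivity to a premeasure, Carath\'eodory extension and a monotone class argument, and the countable separating family $\{D_n\}$ to pass from consistency to strong consistency --- this is exactly the mechanism behind the cited results, and your strong-consistency and uniqueness arguments are correct as written. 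Three points deserve more care than your sketch gives them. First, Radon--Nikodym requires $\nu$ to be ($\sigma$-)finite and $\mu(B)<\infty$ (or a $\sigma$-finite exhaustion); in the paper's application everything is a probability measure, so this is harmless, but it should be said. Second, the compact sets $K_{A,n}$ witnessing inner regularity are generally not in your algebra $\mathcal{A}$, so you must enlarge to the (still countable) algebra generated by $\mathcal{A}\cup\{K_{A,n}\}$, or run the compact-class argument on that enlarged family, before ``$\rho_y$ of a compact set'' is meaningful. Third, and most substantively, you take $\mathcal{A}$ generating $\Omega$ only \emph{modulo $\mu$-null sets}; then Carath\'eodory gives $\rho_y$ only on $\sigma(\mathcal{A})$, and extending it to a countably additive measure on all of $\Omega$ for every fixed $y$ is not automatic (one cannot choose representatives $B'\in\sigma(\mathcal{A})$ set by set and keep countable additivity, since the exceptional $\nu$-null set for ``$\rho_y(N)=0$'' depends on the $\mu$-null set $N$). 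In the setting where the theorem is applied --- $X$ a separable metric space with its Borel $\sigma$-algebra --- $\Omega$ is genuinely countably generated, so one simply takes $\mathcal{A}$ generating $\Omega$ outright and the issue disappears; for the general statement as phrased, this is precisely the point where the heavier machinery of \cite{Frem-4} is doing work that your sketch glosses over.
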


We recall that a measurable space $(X,\Omega)$ is countably separated if there is a countable set $\Upsilon\subset \Omega$ such that for any distinct $x, y\in X$ there is some $E\in \Upsilon$ such that $x\in E$ and $y\notin E$.

Let $(X,\Omega,\mu)$ and $(Y,\Sigma,\nu)$ be measure spaces, a map $f:X\rightarrow Y$ is called inverse-measure-preserving provided for any $C\in\Sigma$, it holds $f^{-1}(C)\in\Omega$ and $\nu(C)=\mu(f^{-1}(C))$.

Note that the $\rho_{y}$ in Theorem \ref{disin-1} are probability measures for $\nu$-a.e. $y$.

\section{Busemann function and disintegration revisited}\label{sec3}

In this section, the metric measure space $(X,d,m)$ is always assumed to be noncompact and satisfying the $\textmd{MCP}(K,N)$ condition.
Note that $(X,d)$ is locally compact.

\subsection{General properties of Busemann functions}\label{sec3.1}

Firstly, let's recall some classical notions about Busemann functions.

Given a geodesic ray $\gamma$ emitted from $p$, for any $t\geq0$, denote by $b_{t}(x):=t-d(x,\gamma_{t})$.
By the triangle inequality, we are easy to see
\begin{enumerate}
  \item given any $x\in X$, the function $t\mapsto b_{t}(x)$ is non-decreasing;
  \item $b_{t}(x)\leq d(x,p)$ for all $t\geq0$.
\end{enumerate}

We define the Busemann function associated to $\gamma$ as
$$b(x):=\lim_{t\rightarrow+\infty}b_{t}(x).$$
Note that the convergence is uniform on any given compact sets.

Since $b_{t}(x)$ are all $1$-Lipschitz functions, $b$ is also $1$-Lipschitz.

For any given $x\in X$, let $\gamma^{t,x}:[0,d(x,\gamma_{t})]\rightarrow X$ be a unit speed geodesic connecting $x$ to $\gamma_{t}$, where $t\geq0$.
By the properness of $X$, there is a sequence $\{t_{n}\}$, with $t_{n}\rightarrow\infty$, such that $\gamma^{t_{n},x}$ converge on compact sets to a geodesic ray $\gamma^{x}:[0,\infty)\rightarrow X$ with $\gamma^{x}(0)=x$.
Such a ray $\gamma^{x}$ is called a Busemann ray associated with $\gamma$.
We note that different choice of sequences $\{t_{n}\}$ may give different Busemann rays.

\begin{lem}\label{lem3.1}
For every $t\geq0$, we have
$$b(\gamma^{x}(t))=b(x)+t.$$
\end{lem}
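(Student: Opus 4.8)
The plan is to exploit the defining property of the Busemann ray $\gamma^{x}$, namely that it is a limit (uniform on compact sets) of the geodesics $\gamma^{t_n,x}$ joining $x$ to $\gamma_{t_n}$, together with the uniform convergence $b_{s}\to b$ on compact sets. First I would record the elementary two-sided bound on $b$ along any geodesic: since $b$ is $1$-Lipschitz, for any unit-speed curve $\sigma$ from $x$ we trivially have $b(\sigma(t))\le b(x)+t$; thus the real content is the reverse inequality $b(\gamma^{x}(t))\ge b(x)+t$, which forces equality.

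To obtain the reverse inequality, fix $t\ge 0$ and evaluate $b_{t_n}$ at the point $\gamma^{t_n,x}(t)$, which lies on the minimizing geodesic from $x$ to $\gamma_{t_n}$ (assuming $t_n$ large enough that $d(x,\gamma_{t_n})\ge t$). Then
\begin{align*}
b_{t_n}(\gamma^{t_n,x}(t)) &= t_n - d(\gamma^{t_n,x}(t),\gamma_{t_n}) = t_n - \bigl(d(x,\gamma_{t_n})-t\bigr) \\
&= \bigl(t_n - d(x,\gamma_{t_n})\bigr) + t = b_{t_n}(x) + t,
\end{align*}
using that $\gamma^{t_n,x}$ is minimizing so that $d(\gamma^{t_n,x}(t),\gamma_{t_n}) = d(x,\gamma_{t_n}) - t$. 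Now I would pass to the limit along $t_n\to\infty$: the right-hand side converges to $b(x)+t$ since $b_{t_n}(x)\to b(x)$; for the left-hand side, $\gamma^{t_n,x}(t)\to\gamma^{x}(t)$ by the compact-convergence of $\gamma^{t_n,x}$ to $\gamma^{x}$, and $b_{t_n}\to b$ uniformly on a fixed compact neighborhood of $\gamma^{x}(t)$ that eventually contains all $\gamma^{t_n,x}(t)$, so $b_{t_n}(\gamma^{t_n,x}(t))\to b(\gamma^{x}(t))$. This yields $b(\gamma^{x}(t)) = b(x)+t$, which is both inequalities at once; in particular the trivial upper bound is also recovered, so no separate argument is needed.

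The only point requiring a little care — the place I would call the main (minor) obstacle — is the joint limit of $b_{t_n}$ evaluated at the moving points $\gamma^{t_n,x}(t)$. One must combine two convergences: $\gamma^{t_n,x}(t)\to\gamma^{x}(t)$ in $X$ and $b_{t_n}\to b$ uniformly on compacts. Concretely, pick $R>0$ with $\gamma^{x}(t)\in B_p(R)$; for $n$ large $\gamma^{t_n,x}(t)\in B_p(2R)$, and on $\overline{B_p(2R)}$ one has $|b_{t_n}-b|\le \epsilon_n\to 0$; hence
$$|b_{t_n}(\gamma^{t_n,x}(t)) - b(\gamma^{x}(t))| \le |b_{t_n}(\gamma^{t_n,x}(t)) - b(\gamma^{t_n,x}(t))| + |b(\gamma^{t_n,x}(t)) - b(\gamma^{x}(t))| \le \epsilon_n + d(\gamma^{t_n,x}(t),\gamma^{x}(t)),$$
the last term going to $0$ by $1$-Lipschitzness of $b$ and the compact convergence. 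Everything else is the triangle-inequality computation above, so the lemma follows.
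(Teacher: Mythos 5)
Your proposal is correct and follows essentially the same route as the paper: both rest on the identity $d(\gamma^{t_n,x}(t),\gamma_{t_n})=d(x,\gamma_{t_n})-t$ coming from the minimizing property, combined with the convergence $\gamma^{t_n,x}(t)\to\gamma^{x}(t)$. The only cosmetic difference is that you pass to the limit via uniform convergence of $b_{t_n}$ on compacts and $1$-Lipschitzness of $b$, whereas the paper rewrites $b(\gamma^{x}(t))$ as $\lim_n\bigl(t_n-d(\gamma^{x}(t),\gamma(t_n))\bigr)$ and absorbs the error using the $1$-Lipschitzness of $d(\cdot,\gamma(t_n))$ — the same estimate in a slightly different order.
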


\begin{proof}
Suppose $\gamma^{t_{n},x}:[0,d(x,\gamma_{t_{n}})]\rightarrow X$ converge uniformly on compact sets to $\gamma^{x}:[0,\infty)\rightarrow X$.
Then $\gamma^{t_{n},x}(t)$ converge to $\gamma^{x}(t)$.
Thus
\begin{align}
&b(\gamma^{x}(t))=\lim_{n\rightarrow\infty}(t_{n}-d(\gamma^{x}(t),\gamma(t_{n})))\nonumber\\
=&\lim_{n\rightarrow\infty}[t_{n}-d(\gamma^{t_{n},x}(t),\gamma(t_{n}))+
(d(\gamma^{t_{n},x}(t),\gamma(t_{n}))-d(\gamma^{x}(t),\gamma(t_{n})))]\nonumber\\
=&\lim_{n\rightarrow\infty}(t_{n}-d(x,\gamma(t_{n})))+t=b(x)+t,\nonumber
\end{align}
and we finish the proof.
\end{proof}

For any $s\in\mathbb{R}$, we denote $\Omega_{s}:=\{x\in X\mid b(x)> s\}$.
For $c<d$, we denote the set $\{x\in X\mid b(x)\in[c,d]\}$ by $\Omega_{[c,d]}$.

\begin{lem}\label{lem3.2}
For any $s\in\mathbb{R}$, $\Omega_{s}$ is path-connected.
More precisely, any two points $x,y\in \Omega_{s}$ can be connected by a Lipschitz curve $\sigma\subset \Omega_{s}$.
\end{lem}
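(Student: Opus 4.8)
The plan is to connect $x,y\in\Omega_s$ by first flowing each of them ``upward'' along a Busemann ray into a region where $b$ is large, and then joining the two lifted points by a minimizing geodesic whose Busemann values stay controlled from below. Concretely, fix $x,y\in\Omega_s$, so $b(x),b(y)>s$; set $\delta:=\min\{b(x),b(y)\}-s>0$. Using Lemma \ref{lem3.1}, choose Busemann rays $\gamma^x,\gamma^y$ emanating from $x$ and $y$ respectively, and note $b(\gamma^x(t))=b(x)+t$ and $b(\gamma^y(t))=b(y)+t$, so both rays remain inside $\Omega_s$ for all $t\ge 0$ (indeed $b$ only increases along them). Thus the sub-curves $\gamma^x|_{[0,T]}$ and $\gamma^y|_{[0,T]}$ are Lipschitz curves inside $\Omega_s$ connecting $x$ to $\gamma^x(T)$ and $y$ to $\gamma^y(T)$.

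The next step is to pick $T$ large enough that a minimizing geodesic $\sigma$ from $\gamma^x(T)$ to $\gamma^y(T)$ stays in $\Omega_s$. Since $b$ is $1$-Lipschitz, for any point $z$ on such a geodesic $\sigma$ we have
\begin{align}
b(z)\ \ge\ b(\gamma^x(T)) - d(\gamma^x(T),z)\ \ge\ b(x)+T - d(\gamma^x(T),\gamma^y(T)).\nonumber
\end{align}
Again because $b$ is $1$-Lipschitz (and using that $b_t\le d(\cdot,p)$ type bounds, or directly the reverse triangle inequality $|b(u)-b(v)|\le d(u,v)$), the distance $d(\gamma^x(T),\gamma^y(T))$ grows at most linearly: $d(\gamma^x(T),\gamma^y(T))\le d(\gamma^x(T),\gamma^x(0))+d(x,y)+d(y,\gamma^y(T)) = 2T + d(x,y)$, which is the trivial bound and is \emph{not} good enough. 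The fix is to use a Busemann ray only for one of the two points: keep $x$ fixed and flow only $y$ up to $\gamma^y(T)$, then, since $b(\gamma^y(T))=b(y)+T\to\infty$, for $T$ large the geodesic from $x$ to $\gamma^y(T)$ has all its points $z$ satisfying $b(z)\ge b(\gamma^y(T)) - d(z,\gamma^y(T)) \ge (b(y)+T) - d(x,\gamma^y(T)) \ge (b(y)+T) - (d(x,y)+T) = b(y)-d(x,y)$, which still may be below $s$. So instead I flow \emph{both} points up, and exploit that along $\sigma$ the value $b$ is concave-like: in fact $b\circ\sigma$ is $1$-Lipschitz on an interval of length $\ell:=d(\gamma^x(T),\gamma^y(T))$, equals $b(x)+T$ at one endpoint and $b(y)+T$ at the other, so its minimum is at least $\min\{b(x),b(y)\}+T-\tfrac{\ell}{2}$. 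Thus it suffices to choose $T$ with $T-\tfrac{\ell}{2}\ge -\delta$, i.e. to know $\ell \le 2T + 2\delta$ for some sequence $T\to\infty$. This is automatic from $\ell\le 2T + d(x,y)$ once $2\delta \ge d(x,y)$; but in general $d(x,y)$ may exceed $2\delta$, and then the naive estimate fails — this is the heart of the matter.

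The key observation that resolves this is that the triangle-inequality bound $\ell\le 2T+d(x,y)$ is wasteful: the two Busemann rays $\gamma^x$ and $\gamma^y$ are \emph{asymptotic} to the same ray $\gamma$, hence $d(\gamma^x(T),\gamma^y(T))$ is \emph{bounded} (or at least $o(T)$) as $T\to\infty$. Indeed, each Busemann ray satisfies $b(\gamma^x(T))=b(x)+T$, and combined with $1$-Lipschitzness of $b$ this forces $\gamma^x(T)$ to be, up to bounded error, the point $\gamma(b(x)+T)$ on the original ray — more precisely $d(\gamma^x(T),\gamma(T+b(x)))$ stays bounded as $T\to\infty$ by a standard Busemann-ray argument. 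Therefore $\ell = d(\gamma^x(T),\gamma^y(T)) \le |b(x)-b(y)| + o(T)$, so for $T$ large $T-\tfrac{\ell}{2}\to+\infty$ and in particular exceeds $-\delta$. Fixing such a $T$, the concatenation of $\gamma^x|_{[0,T]}$, the minimizing geodesic $\sigma$ from $\gamma^x(T)$ to $\gamma^y(T)$, and the reversal of $\gamma^y|_{[0,T]}$ is a Lipschitz curve from $x$ to $y$ lying entirely in $\Omega_s$, which proves the lemma. \textbf{The main obstacle} is precisely establishing that $d(\gamma^x(T),\gamma^y(T))$ does not grow linearly in $T$ — i.e. the asymptoticity of distinct Busemann rays to the generating ray $\gamma$; everything else is triangle-inequality bookkeeping and the $1$-Lipschitz concavity estimate for $b$ along a geodesic.
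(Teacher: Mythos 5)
Your whole construction hinges on the single unproved claim that $d(\gamma^{x}(T),\gamma(T+b(x)))$ stays bounded, or at least $o(T)$, as $T\rightarrow\infty$, and this is a genuine gap: the justification you give is not valid, and the claim itself fails at the level of generality in which Lemma \ref{lem3.2} is stated and used. Knowing $b(\gamma^{x}(T))=b(x)+T$ together with $1$-Lipschitzness of $b$ only places $\gamma^{x}(T)$ on the level set $b^{-1}(b(x)+T)$; it says nothing about its distance to the point $\gamma(b(x)+T)$ --- controlling the size of level sets of $b$ is precisely the content of Theorem \ref{thm1.3}, and even there only a linear bound is obtained under additional volume hypotheses. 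Concretely, in $(\mathbb{R}^{2},d_{\|\cdot\|_{\infty}},\mathcal{L}^{2})$ (a $\textmd{CD}(0,2)$ space, and in any case a proper geodesic space to which all the metric facts of Section \ref{sec3.1} apply), take $\gamma(t)=(t,0)$, so that $b(z)=z_{1}$, and $x=(0,5)$. The curves which move in direction $(1,1)$ for time $(t_{n}-5)/2$ and then in direction $(1,-1)$ until time $t_{n}$ are genuine unit-speed geodesics from $x$ to $\gamma(t_{n})$, and their limit is the Busemann ray $\gamma^{x}(T)=(T,5+T)$; its distance to the \emph{entire} ray $\gamma$ equals $5+T$, which grows linearly, so neither boundedness nor $o(T)$ holds. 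Under the $\textmd{RCD}$ hypothesis such normed examples are excluded, but you offer no argument for sublinear divergence of asymptotic rays there either, and your own text flags this as ``the main obstacle'' without resolving it; as written the proof does not close.

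The paper's proof avoids comparing two asymptotic rays altogether: it connects an arbitrary $x\in\Omega_{s}$ to the fixed point $\gamma_{s+1}$ on the generating ray. One goes up the Busemann ray to $y=\gamma^{x}(1)$ (so $b(y)\geq s+1$ by Lemma \ref{lem3.1}), picks $\tilde{t}$ with $b_{\tilde{t}}(y)=\tilde{t}-d(y,\gamma_{\tilde{t}})>s+\tfrac{1}{2}$, and takes a geodesic $\tilde{\sigma}$ from $y$ to $\gamma_{\tilde{t}}$: along any geodesic ending at $\gamma_{\tilde{t}}$ the pre-Busemann function increases at unit speed, $b_{\tilde{t}}(\tilde{\sigma}(r))=b_{\tilde{t}}(y)+r$, and since $b\geq b_{\tilde{t}}$ pointwise the whole segment stays in $\Omega_{s}$; finally one travels along $\gamma$ itself from $\gamma_{\tilde{t}}$ down to $\gamma_{s+1}$. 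This gives the required lower bound on $b$ along the connecting curve using no information about level sets or about how far asymptotic rays drift from $\gamma$. If you want to keep your scheme, replace the geodesic between $\gamma^{x}(T)$ and $\gamma^{y}(T)$ by this device, i.e.\ route each of $x$ and $y$ into the ray $\gamma$ itself.
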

\begin{proof}
We only need to prove that any $x\in \Omega_{s}$ can be connected to $\gamma_{s+1}$ by a Lipschitz curve $\sigma\subset \Omega_{s}$.
Firstly, $\gamma^{x}(t)\subset \Omega_{s}$ connects $x$ and $y=\gamma^{x}(1)$, with $b(y)\geq s+1$.
Since $b(y)=\lim_{t\rightarrow\infty}(t-d(y,\gamma_{t}))$, we find $\tilde{t}$ sufficiently large such that $b_{\tilde{t}}(y)=\tilde{t}-d(y,\gamma_{\tilde{t}})>s+\frac{1}{2}$.
Let $\tilde{\sigma}:[0,d(y,\gamma_{\tilde{t}})]\rightarrow X$ be a unit speed geodesic connecting $y$ to $\gamma_{\tilde{t}}$.
Then for any $r\in[0,d(y,\gamma_{\tilde{t}})]$, we have $b_{\tilde{t}}(\tilde{\sigma}(r))=b_{\tilde{t}}(y)+r$.
Thus $b(\tilde{\sigma}(r))\geq b_{\tilde{t}}(\tilde{\sigma}(r))>s+\frac{1}{2}$ for any $r\in[0,d(y,\gamma_{\tilde{t}})]$.
So $\tilde{\sigma}\subset \Omega_{s}$ connects $y$ and $\gamma_{\tilde{t}}$.
Finally, $\gamma$ itself is a curve connecting $\gamma_{\tilde{t}}$ and $\gamma_{s+1}$.
Glue the above geodesics to form a Lipschitz curve and we complete the proof.
\end{proof}

\begin{lem}\label{lem4.1}
Suppose $s_{1}<s_{2}$ and $x_{1}\in b^{-1}(s_{1})$, $x_{2}\in b^{-1}(s_{2})$, then $d(x_{1},x_{2})\geq s_{2}-s_{1}$.
\end{lem}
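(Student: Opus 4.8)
The plan is to use nothing more than the fact that the Busemann function $b$ is $1$-Lipschitz, which was already recorded in Section \ref{sec3.1} (it is a locally uniform limit of the $1$-Lipschitz functions $b_{t}$). First I would recall the Lipschitz estimate: for all $x,y\in X$ one has $|b(x)-b(y)|\leq d(x,y)$. Applying this with $x=x_{1}$, $y=x_{2}$ and using the hypotheses $b(x_{1})=s_{1}<s_{2}=b(x_{2})$, I obtain
\[
d(x_{1},x_{2})\ \geq\ |b(x_{2})-b(x_{1})|\ =\ b(x_{2})-b(x_{1})\ =\ s_{2}-s_{1},
\]
which is exactly the asserted inequality.

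There is no genuine obstacle here; the only ingredient is the Lipschitz bound. If one prefers to avoid quoting it, the same conclusion follows directly from the definition of $b$: fix $\varepsilon>0$, choose $t$ large enough that $b_{t}(x_{i})>b(x_{i})-\varepsilon$ for $i=1,2$, and note that by the triangle inequality
\[
b_{t}(x_{2})-b_{t}(x_{1})=d(x_{1},\gamma_{t})-d(x_{2},\gamma_{t})\leq d(x_{1},x_{2}).
\]
Since $b_{t}(x_{2})-b_{t}(x_{1})\geq (b(x_{2})-\varepsilon)-b(x_{1})=s_{2}-s_{1}-\varepsilon$ (using that $b_{t}(x_{1})\leq b(x_{1})$), letting $\varepsilon\to 0$ gives $d(x_{1},x_{2})\geq s_{2}-s_{1}$. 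Either way the proof is a one-line consequence of monotone/triangle-inequality properties of the $b_{t}$'s.
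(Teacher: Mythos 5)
Your proof is correct and uses the same ingredient as the paper, namely the $1$-Lipschitz property of $b$; the paper merely phrases the identical inequality as a proof by contradiction, while you state it directly (and your alternative via the $b_t$'s is just an unwinding of why $b$ is $1$-Lipschitz). No issues.
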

\begin{proof}
Suppose on the contrary, $d(x_{1},x_{2})< s_{2}-s_{1}$.
Since $b$ is $1$-Lipschitz, we have $s_{2}-s_{1}=b(x_{2})-b(x_{1})\leq d(x_{1},x_{2})< s_{2}-s_{1}$, which is a contradiction.
\end{proof}

On the other hand, suppose $s_{1}<s_{2}$ and $x\in b^{-1}(s_{1})$, since $\gamma^{x}([0,\infty))\cap b^{-1}(s_{2})=\gamma^{x}(s_{2}-s_{1})$ by Lemma \ref{lem3.1}, we have $\textmd{dist}(b^{-1}(s_{1}),b^{-1}(s_{2}))\leq s_{2}-s_{1}$.
Combining this with Lemma \ref{lem4.1}, we obtain

\begin{cor}\label{cor3.4}
Suppose $s_{1}<s_{2}$ and $b^{-1}(s_{1})\neq\emptyset$, then $\textmd{dist}(b^{-1}(s_{1}),b^{-1}(s_{2}))=s_{2}-s_{1}$.
\end{cor}

\begin{cor}\label{cor3.5}
Suppose $s_{1}<s_{2}$, $b^{-1}(s_{1})\neq\emptyset$ and $\textmd{diam}(b^{-1}(s_{2}))\leq T$, then $\textmd{diam}(b^{-1}(s_{1}))\leq T+2(s_{2}-s_{1})$ and $\textmd{diam}(\Omega_{[s_{1},s_{2}]})\leq T+2(s_{2}-s_{1})$.
\end{cor}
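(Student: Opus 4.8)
The plan is to reduce everything to the distance estimate already established in Corollary \ref{cor3.4}, together with the triangle inequality and the curve-connectedness provided by Lemma \ref{lem3.2}. We are given $s_1 < s_2$, that $b^{-1}(s_1)\neq\emptyset$, and that $\mathrm{diam}(b^{-1}(s_2))\leq T$; we want both $\mathrm{diam}(b^{-1}(s_1))\leq T + 2(s_2-s_1)$ and $\mathrm{diam}(\Omega_{[s_1,s_2]})\leq T + 2(s_2-s_1)$.

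First I would handle $\mathrm{diam}(b^{-1}(s_1))$. Take any two points $x_1, y_1 \in b^{-1}(s_1)$. Using the Busemann rays from Lemma \ref{lem3.1}, the point $\gamma^{x_1}(s_2-s_1)$ lies on $b^{-1}(s_2)$ and has distance exactly $s_2-s_1$ from $x_1$, and similarly $\gamma^{y_1}(s_2-s_1)\in b^{-1}(s_2)$ with $d(y_1,\gamma^{y_1}(s_2-s_1)) = s_2-s_1$. Since both projected points lie in $b^{-1}(s_2)$, their mutual distance is at most $T$. Then the triangle inequality gives
\begin{align}
d(x_1,y_1) \leq d(x_1,\gamma^{x_1}(s_2-s_1)) + d(\gamma^{x_1}(s_2-s_1),\gamma^{y_1}(s_2-s_1)) + d(\gamma^{y_1}(s_2-s_1),y_1) \leq (s_2-s_1) + T + (s_2-s_1),\nonumber
\end{align}
which is the claimed bound after taking the supremum over $x_1,y_1$.

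Next I would bound $\mathrm{diam}(\Omega_{[s_1,s_2]})$. Here the key observation is that for any $x$ with $b(x)=s\in[s_1,s_2]$, the Busemann ray $\gamma^x$ reaches $b^{-1}(s_2)$ at parameter $s_2-s\leq s_2-s_1$, so $d(x, b^{-1}(s_2))\leq s_2 - s_1$. Thus every point of $\Omega_{[s_1,s_2]}$ is within $s_2-s_1$ of the set $b^{-1}(s_2)$, which has diameter at most $T$; for two points $x,y\in\Omega_{[s_1,s_2]}$, pick their respective projections onto $b^{-1}(s_2)$ and apply the triangle inequality exactly as above to get $d(x,y)\leq (s_2-s_1) + T + (s_2-s_1)$. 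Taking the supremum finishes the proof, and in fact the first assertion is the special case $s=s_1$ of the second.

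I do not expect any serious obstacle: the only point requiring a little care is to make sure a Busemann ray $\gamma^x$ actually exists and crosses every level $b^{-1}(s')$ with $s' > b(x)$ at the expected parameter, but this is precisely the content of the discussion following Lemma \ref{lem4.1} and of Lemma \ref{lem3.1}, valid here since $(X,d)$ is proper under the standing $\mathrm{MCP}(K,N)$ assumption. One should also note that $\Omega_{[s_1,s_2]}$ and the relevant level sets are nonempty: $b^{-1}(s_1)\neq\emptyset$ forces $b^{-1}(s')\neq\emptyset$ for all $s'\geq s_1$ by following a Busemann ray, so all the sets appearing are nonempty and the diameter statements are non-vacuous.
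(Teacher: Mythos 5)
Your proof is correct and follows exactly the argument the paper intends (and leaves implicit): project points onto $b^{-1}(s_{2})$ along Busemann rays via Lemma \ref{lem3.1}, as in Corollary \ref{cor3.4}, and conclude by the triangle inequality, with the bound for $\Omega_{[s_{1},s_{2}]}$ subsuming that for $b^{-1}(s_{1})$. No gaps; your remarks on properness and nonemptiness of the level sets are exactly the right points of care.
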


In Lemma 5.18 of \cite{Gig15}, it is proved that for a Busemann function $b$, $-b$ is $c$-concave, where $c=\frac{d^{2}}{2}$.
We extend this fact to the following proposition:

\begin{prop}\label{3.2}
For any $t>0$, the function $-tb$ is $c$-concave, where $c=\frac{d^{2}}{2}$.
Furthermore, for any $x, \bar{x}\in X$ satisfying $b(\bar{x})-b(x)=d(x,\bar{x})=t$, it holds $(x,\bar{x})\in \partial^{c}(-tb)$.
\end{prop}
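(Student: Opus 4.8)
The plan is to reduce everything to the known case $t=1$, namely the result from \cite{Gig15} that $-b$ is $c$-concave with the explicit superdifferential computed from Busemann rays. The key observation is that Busemann functions enjoy a self-similarity under scaling: if $b$ is the Busemann function of the unit-speed ray $\gamma$, then for $t>0$ the function $b^{(t)}(x):=tb(x/\!\!\cdot)$ is, up to reparametrization, the Busemann function of the reparametrized ray $s\mapsto\gamma_{s/t}$ (which is a ray of speed $1/t$). More directly, I would argue at the level of the $c$-transform: for a $1$-Lipschitz function $f$, the statement that $-tf$ is $c$-concave should follow from a pointwise inequality relating $(-tf)^{cc}$ to $-tf$, which one verifies using the $1$-Lipschitz bound together with existence of ``almost-gradient'' segments realizing the Lipschitz constant of $b$ — and these segments are exactly the geodesics $\gamma^{\tilde t,x}$ converging to the Busemann rays $\gamma^x$.

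Concretely, first I would recall from Lemma \ref{lem3.1} that along a Busemann ray $\gamma^x$ one has $b(\gamma^x(s))=b(x)+s$ for all $s\geq 0$, so $b$ increases at unit rate along $\gamma^x$ while $\gamma^x$ is parametrized by unit speed; in particular for the point $\bar x:=\gamma^x(t)$ we have $b(\bar x)-b(x)=d(x,\bar x)=t$. This gives existence of pairs $(x,\bar x)$ as in the statement (for every $x$ and every $t>0$). Next, to show $-tb=\psi^c$ for some $\psi$, it suffices by the standard fact quoted after \eqref{2.2} to show $(-tb)^{cc}=-tb$, and since $(-tb)^{cc}\geq -tb$ always holds by \eqref{2.2}, only the reverse inequality $(-tb)^{cc}\leq -tb$ needs proof. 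I would establish this by producing, for each $x\in X$, a point $y$ with $-tb(x)+(-tb)^c(y)=\tfrac{d^2(x,y)}{2}$: take $y=\bar x=\gamma^x(t)$, set $\psi:=(-tb)^c$, and check that $\psi(\bar x)\geq \tfrac{d^2(x,\bar x)}{2}+tb(x)=\tfrac{t^2}{2}+tb(x)$ using the $1$-Lipschitz bound $b(x)-b(z)\leq d(x,z)$ for all $z$ (which gives, for all $z$, $\tfrac{d^2(z,\bar x)}{2}+tb(z)\geq \tfrac{d^2(z,\bar x)}{2}+tb(\bar x)-td(z,\bar x)\geq tb(\bar x)-\tfrac{t^2}{2}=tb(x)+\tfrac{t^2}{2}$, where the last inequality minimizes the quadratic $s\mapsto \tfrac{s^2}{2}-ts$). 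Combined with the trivial reverse bound $\psi(\bar x)\leq \tfrac{d^2(x,\bar x)}{2}+tb(x)$ obtained by plugging $z=x$, this yields equality, hence $(x,\bar x)\in\partial^c(-tb)$ and $(-tb)^{cc}(x)\leq -tb(x)$.

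For the ``furthermore'' part, I would simply observe that the computation above already shows: whenever $b(\bar x)-b(x)=d(x,\bar x)=t$, the chain of inequalities collapses to equalities, giving $\psi(\bar x)=(-tb)^c(\bar x)=\tfrac{d^2(x,\bar x)}{2}+tb(x)$, i.e. $(-tb)(x)+(-tb)^c(\bar x)=\tfrac{d^2(x,\bar x)}{2}$, which is precisely the defining condition for $(x,\bar x)\in\partial^c(-tb)$.

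I expect the main technical point to be the verification that the infimum defining $(-tb)^c(\bar x)=\inf_z\bigl[\tfrac{d^2(z,\bar x)}{2}+tb(z)\bigr]$ is exactly $tb(x)+\tfrac{t^2}{2}$ and is attained at $z=x$; everything hinges on the elementary quadratic minimization $\min_{s\geq 0}(\tfrac{s^2}{2}-ts)=-\tfrac{t^2}{2}$ at $s=t$ together with the $1$-Lipschitz property of $b$, so no genuine obstacle arises here — the only care needed is to make sure the Busemann ray $\gamma^x$ (whose existence relies on properness of $X$, already noted at the start of Section \ref{sec3}) supplies the point $\bar x$ realizing $b(\bar x)-b(x)=d(x,\bar x)=t$ so that the conclusions are non-vacuous for every $t>0$.
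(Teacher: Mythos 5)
Your proof is correct and is essentially the paper's argument: both rest on the $1$-Lipschitz property of $b$, the quadratic minimization $\inf_{s\geq0}\bigl(\tfrac{s^{2}}{2}-ts\bigr)=-\tfrac{t^{2}}{2}$, and the point $\bar x=\gamma^{x}(t)$ supplied by Lemma \ref{lem3.1}; you merely compute $(-tb)^{c}(\bar x)$ exactly and read off both $(-tb)^{cc}(x)\leq -tb(x)$ and the superdifferential relation, while the paper runs the same estimates as a chain of inequalities for $(-tb)^{cc}(x)$ and then forces all of them to be equalities. The scaling/self-similarity musing in your opening paragraph plays no role in the actual argument and can simply be dropped.
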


\begin{proof}
For any $x, \bar{x}\in X$ satisfying $b(\bar{x})-b(x)=d(x,\bar{x})=t$, we have
\begin{align}\label{3.11}
(-tb)^{cc}(x)&=\inf_{y\in X}[\frac{d^{2}(x,y)}{2}-(-tb)^{c}(y)]\\
&\leq \frac{d^{2}(x,\bar{x})}{2}-(-tb)^{c}(\bar{x})\nonumber\\
&=\frac{t^{2}}{2}-\inf_{z\in X}[\frac{d^{2}(\bar{x},z)}{2}+tb(z)]\nonumber\\
&\leq \frac{t^{2}}{2}-\inf_{z\in X}[\frac{d^{2}(\bar{x},z)}{2}+tb(\bar{x})-td(\bar{x},z)]\nonumber\\
&= \frac{t^{2}}{2}-\inf_{z\in X}[\frac{d^{2}(\bar{x},z)}{2}+tb(x)+t^{2}-td(\bar{x},z)]\nonumber\\
&=-tb(x)-\inf_{z\in X}[\frac{d^{2}(\bar{x},z)}{2}-td(\bar{x},z)+\frac{t^{2}}{2}]\nonumber\\
&=-tb(x)-\frac{1}{2}\inf_{z\in X}[d(\bar{x},z)-t]^{2}\nonumber\\
&\leq-tb(x).\nonumber
\end{align}
On the other hand, by (\ref{2.2}), $(-tb)^{cc}\geq -tb$ always holds, thus $(-tb)^{cc}(x)=-tb(x)$, and all the inequality in (\ref{3.11}) must be equality.
In particular, we have
$$-tb(x)=\frac{d^{2}(x,\bar{x})}{2}-(-tb)^{c}(\bar{x}),$$
i.e. $(x,\bar{x})\in \partial^{c}(-tb)$.
\end{proof}

The main result of this section is that we have a monotonic property on the area of level sets of a Busemann function under suitable conditions, see Proposition \ref{prop3.19}, Corollary \ref{c-1} and similar properties in Section \ref{sec3.2}.
The tool we use is the general results established in \cite{BC13} and \cite{Ca14-2}.
In the following we will present how the notions and results in \cite{BC13} and \cite{Ca14-2} can be modified to our setting, and obtain some special properties on the transport set of Busemann function.

\begin{defn}
The set of couples moved by $b$ is defined to be:
\begin{align}
\Gamma:=\{(x, y)\in X\times X\mid b(y)-b(x)=d(x,y)\}.
\end{align}
\end{defn}


\begin{lem}\label{lem3.6}
Suppose $(x,y),(y,z)\in\Gamma$, then $(x,z)\in\Gamma$ and $d(x,z)=d(x,y)+d(y,z)$.
\end{lem}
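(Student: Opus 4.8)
The plan is to exploit the triangle inequality together with the $1$-Lipschitz property of $b$, which is the only structure available at this level of generality. Since $(x,y),(y,z)\in\Gamma$ we have $b(y)-b(x)=d(x,y)$ and $b(z)-b(y)=d(y,z)$. Adding these identities gives $b(z)-b(x)=d(x,y)+d(y,z)$, so the first step is simply to record this sum.

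Next I would observe that the sum $d(x,y)+d(y,z)$ is an \emph{upper} bound for $d(x,z)$ by the triangle inequality, while $b(z)-b(x)$ is a \emph{lower} bound for $d(x,z)$ because $b$ is $1$-Lipschitz. Concretely, $d(x,z)\leq d(x,y)+d(y,z)=b(z)-b(x)\leq d(x,z)$, so all three quantities coincide. This immediately yields both conclusions: $d(x,z)=d(x,y)+d(y,z)$, and $b(z)-b(x)=d(x,z)$, the latter being precisely the statement that $(x,z)\in\Gamma$.

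There is no real obstacle here — the lemma is a two-line consequence of combining the Lipschitz estimate (which controls $b$ from above in terms of $d$) with the triangle inequality (which controls $d$ from above in terms of the intermediate point). If anything merits a word of care, it is making sure the chain of inequalities is written as a genuine cycle $d(x,z)\le\cdots\le d(x,z)$ forcing equality throughout, rather than proving the two claims separately. The geometric content is that the concatenation of the two ``$b$-straight'' segments $x\to y$ and $y\to z$ is itself $b$-straight, i.e.\ the relation $\Gamma$ is transitive and additive along its members, which is exactly what is needed to build the geodesic rays underlying the transport set decomposition.

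\begin{proof}
Since $(x,y)\in\Gamma$ and $(y,z)\in\Gamma$, we have $b(y)-b(x)=d(x,y)$ and $b(z)-b(y)=d(y,z)$. Adding these two identities gives
\begin{align}\label{eq-lem3.6}
b(z)-b(x)=d(x,y)+d(y,z).
\end{align}
On the other hand, by the triangle inequality $d(x,z)\leq d(x,y)+d(y,z)$, and since $b$ is $1$-Lipschitz, $b(z)-b(x)\leq d(x,z)$. Combining these with \eqref{eq-lem3.6} we obtain
$$d(x,z)\leq d(x,y)+d(y,z)=b(z)-b(x)\leq d(x,z),$$
so equality holds throughout. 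In particular $d(x,z)=d(x,y)+d(y,z)$ and $b(z)-b(x)=d(x,z)$, i.e. $(x,z)\in\Gamma$.
\end{proof}
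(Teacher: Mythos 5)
Your proof is correct and is essentially identical to the paper's one-line argument, which uses the same chain $d(x,z)\leq d(x,y)+d(y,z)=b(z)-b(x)\leq d(x,z)$ forced into equality. Nothing to add.
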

\begin{proof}
Because $d(x,z)\leq d(x,y)+d(y,z)= b(z)-b(x)\leq d(x,z)$.
\end{proof}

\begin{lem}\label{lem3.7}
Let $(x,y)\in X\times X$ be an element of $\Gamma$.
Let $\sigma\in \textmd{Geo}(X)$ be such that $\sigma_{0}=x$ and $\sigma_{1}=y$.
Then
$(\sigma_{s},\sigma_{t})\in\Gamma$
for all $0\leq s\leq t\leq 1$.
\end{lem}
\begin{proof}
For $0\leq s\leq t\leq 1$, we have
\begin{align}
d(\sigma_{s},\sigma_{t})\geq &b(\sigma_{t})-b(\sigma_{s})
=(b(\sigma_{1})-b(\sigma_{0}))+(b(\sigma_{t})-b(\sigma_{1}))+ (b(\sigma_{0})-b(\sigma_{s}))\nonumber\\
\geq & d(\sigma_{0},\sigma_{1})-d(\sigma_{t},\sigma_{1})-d(\sigma_{0},\sigma_{s})=d(\sigma_{s},\sigma_{t}),\nonumber
\end{align}
the claim follows.
\end{proof}

It is then natural to consider the set of geodesics $G:=\{\sigma\in \textmd{Geo}(X)\mid(\sigma_{0},\sigma_{1})\in\Gamma\}$.

For any $x$ and a Busemann ray $\gamma^{x}:[0,\infty)\rightarrow X$, by Lemma \ref{lem3.1} we have $(\gamma^{x}_{s},\gamma^{x}_{t})\in\Gamma$ for any $s\leq t$.

\begin{defn}\label{def3.1}
We define the set of transport rays by
$$R=\Gamma\cup\Gamma^{-1},$$
where $\Gamma^{-1}:=\{(x, y)\in X\times X\mid (y, x)\in \Gamma\}$.
For fixed $x$, we use $\Gamma(x)$ to denote $\{y\in X\mid (x,y)\in \Gamma\}$, $\Gamma^{-1}(x)$ to denote $\{y\in X\mid (y,x)\in \Gamma\}$.
Let $R(x)=\Gamma(x)\cup\Gamma^{-1}(x)$.
\end{defn}

\begin{rem}
Since $b$ is $1$-Lipschitz, $\Gamma$, $\Gamma^{-1}$ and $R$ are all closed.
Moreover $\Gamma$, $\Gamma^{-1}$ and $R$ are all $\sigma$-compact because $X$ is proper.
\end{rem}

\subsection{RCD(0,N) case}\label{sec3.3}

In this subsection, we use the cost function $c=\frac{d^{2}}{2}$.
Firstly, we recall the following theorem from \cite{GRS13}:

\begin{thm}[Theorem 1.3 in \cite{GRS13}]\label{3.25}
Suppose $(X,d,m)$ is an $\textmd{RCD}^{*}(K,N)$ space and $\varphi: X\rightarrow \mathbb{R}$ a $c$-concave function.
Then for $m$-a.e. $x\in X$ there exists exactly one geodesic $\eta$ such that $\eta_{0}=x$ and $\eta_{1}\in \partial^{c}\varphi(x)$.
\end{thm}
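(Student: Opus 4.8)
The statement to prove is Theorem 1.3 in \cite{GRS13}, which asserts that on an $\textmd{RCD}^{*}(K,N)$ space, for a $c$-concave function $\varphi$ with $c=\frac{d^{2}}{2}$, $m$-a.e. point $x$ has exactly one geodesic $\eta$ with $\eta_{0}=x$ and $\eta_{1}\in\partial^{c}\varphi(x)$.

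The plan is as follows. First I would set up the optimal transport picture: one uses $\varphi$ to transport the measure $m|_{K}$ (suitably normalized, on a bounded set $K$ of positive measure) forward along the $c$-superdifferential. Concretely, one builds a measure $\pi$ on $\partial^{c}\varphi$ whose first marginal is (a normalization of) $m|_{K}$, and a dynamical plan $\Xi\in\textmd{OptGeo}$ lifting it, so that $(e_{0})_{*}\Xi$ is absolutely continuous with respect to $m$. The essential non-branching property of $\textmd{RCD}^{*}(K,N)$ spaces (see \cite{RaSt14}) then implies that the geodesics in the support of any such optimal plan do not branch. The key structural input is that the map $x\mapsto\partial^{c}\varphi(x)$, restricted to a set of full measure, can be shown to be essentially single-valued in the following sense: wherever the first marginal of an optimal plan is absolutely continuous, the plan is induced by a map (a Monge-type solution). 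This is precisely the existence-of-optimal-maps theorem for $\textmd{RCD}^{*}(K,N)$, which in turn relies on the $\textmd{MCP}(K,N)$ condition (which $\textmd{RCD}^{*}(K,N)$ satisfies, see \cite{GRS13}) to rule out branching and to control the geometry of the set where $\varphi$ fails to be differentiable.

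The heart of the argument, and the main obstacle, is to show that the set of $x$ for which $\partial^{c}\varphi(x)$ contains two points joined to $x$ by distinct geodesics has $m$-measure zero. I would argue by contradiction: suppose this "bad set" $B$ has positive measure. On $B$ one can select, measurably, two distinct geodesics $\eta^{1},\eta^{2}$ from each $x$ with endpoints in $\partial^{c}\varphi(x)$, and build two dynamical plans $\Xi^{1},\Xi^{2}$ lifting these selections; both are optimal since $\partial^{c}\varphi$ is $c$-cyclically monotone, and both have absolutely continuous initial marginal. Consider the plan $\Xi=\frac{1}{2}(\Xi^{1}+\Xi^{2})$, restricted to small time intervals $[s,t]\subset(0,1)$: the time-$t$ marginals $(e_{t})_{*}\Xi$ remain absolutely continuous for small $t>0$ by the $\textmd{MCP}$ estimate (\ref{2.10}) applied along the two families, but at $t=0$ the two selections force a branching of geodesics in the support of $\Xi$ as $t\to 0^{+}$, contradicting essential non-branching. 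Making this precise requires care: one must verify the measurable selection of the two geodesics (using that $\Gamma$-type sets and $\partial^{c}\varphi$ are analytic/$\sigma$-compact so that measurable selection theorems apply), check the absolute continuity of intermediate marginals via the $\textmd{MCP}$ bound together with a density argument, and correctly phrase the branching contradiction at the initial time.

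Finally, once the "bad set" is shown to be null, I would conclude: off this null set, for $m$-a.e. $x$ the set $\partial^{c}\varphi(x)$ is nonempty (this is automatic from $c$-concavity, $\varphi=\varphi^{cc}$, and properness, since the infimum defining $\varphi^{cc}(x)$ is attained) and any two points of it joined to $x$ by geodesics must in fact be joined by the \emph{same} geodesic; a short additional argument using Lemma \ref{lem3.6}-type monotonicity along geodesics (here the $c$-cyclical monotonicity of $\partial^{c}\varphi$ plays the role of the triangle-inequality rigidity in that lemma) shows the endpoint itself is unique, hence $\eta$ is unique. I expect the writeup to lean heavily on \cite{GRS13} and \cite{RaSt14} for the essential non-branching and optimal-map machinery, and on \cite{Oh07} for the $\textmd{MCP}$ consequences, so most of the effort is in correctly assembling these inputs rather than in new estimates.
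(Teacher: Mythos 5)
You should first note that the paper contains no proof of Theorem \ref{3.25} at all: it is quoted verbatim from \cite{GRS13} and used as a black box (e.g.\ in Proposition \ref{3.30}), so there is no internal argument to compare against, and your proposal is in effect an attempt to reconstruct the proof of \cite{GRS13}. The circle of ideas you assemble --- $c$-cyclical monotonicity of $\partial^{c}\varphi$, measurable selection of geodesics, essential non-branching from \cite{RaSt14}, and the $\textmd{MCP}$ interpolation estimates --- is indeed the machinery behind that theorem, so the plan points in the right direction; but as written it has two genuine gaps.

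(i) Existence: you claim that nonemptiness of $\partial^{c}\varphi(x)$ is ``automatic from $c$-concavity, $\varphi=\varphi^{cc}$, and properness, since the infimum defining $\varphi^{cc}(x)$ is attained''. It is not: on a noncompact space the function $y\mapsto \frac{d^{2}(x,y)}{2}-\varphi^{c}(y)$ need not be coercive (one only knows $\varphi^{c}(y)\leq \frac{d^{2}(x_{0},y)}{2}-\varphi(x_{0})$, which does not prevent minimizing sequences from escaping to infinity), so properness alone gives nothing; the almost-everywhere nonemptiness of $\partial^{c}\varphi$ is part of the content of the theorem and needs a genuine argument, not a compactness remark. (ii) Uniqueness: your contradiction ``the two selections force a branching of geodesics as $t\to0^{+}$, contradicting essential non-branching'' does not work as stated, because two distinct geodesics issuing from the same point $x$ towards (possibly different) points of $\partial^{c}\varphi(x)$ do \emph{not} branch: branching requires the curves to coincide on an initial interval, whereas these agree only at $t=0$. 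The correct mechanism --- which you in fact cite in your second paragraph and then abandon --- is the following: since $\partial^{c}\varphi$ is $c$-cyclically monotone, the averaged lift $\Xi=\frac{1}{2}(\Xi^{1}+\Xi^{2})$ is an optimal geodesic plan between its own marginals with $(e_{0})_{*}\Xi\ll m$, and the theorem of \cite{GRS13} that such a plan is unique and induced by a map (this is where essential non-branching and the $\textmd{MCP}$ interpolation actually enter, inside its proof) forces the two selected geodesics to coincide for $m$-a.e.\ $x$, contradicting their choice as distinct on a set of positive measure. Replacing your $t\to0^{+}$ argument by this one, and proving or properly quoting the a.e.\ nonemptiness of $\partial^{c}\varphi$, would turn the proposal into a faithful reconstruction of the cited proof; your concluding remark about uniqueness of the endpoint is then immediate, since distinct endpoints would already give distinct geodesics.
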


Theorem \ref{3.25} is equivalent to the fact that on an $\textmd{RCD}^{*}(K,N)$ space $(X,d,m)$, for every $\mu,\nu\in \mathcal{P}_{2}(X)$ with $\mu\ll m$, there exists a unique plan $\pi\in\textmd{OptGeo}(\mu,\nu)$ and this $\pi$ is induced by a map and concentrated on a set of non-branching geodesics. 
See \cite{GRS13}. This generalizes the results in \cite{RaSt14} and \cite{Gig12}.

In this subsection, we always assume $(X,d,m)$ is noncompact and satisfies the $\textmd{RCD}^{*}(K,N)$ condition.

Recall Proposition \ref{3.2}, for any $n\in\mathbb{N}$, we apply Theorem \ref{3.25} to the function $-nb$.
Thus for any $n\in\mathbb{N}$, there exists a Borel set $B_{n}\subset X$ satisfying the following:
$m(B_{n})=0$, and if we denote by $A_{n}=X\setminus B_{n}$, then for any $x\in A_{n}$, there exists exactly one point $y\in X$ such that $b(y)-b(x)=d(x,y)=n$ and there is only one geodesic $\eta^{(x)}:[0,n]\rightarrow X$ such that $\eta^{(x)}(0)=x$ and $\eta^{(x)}(n)= y$.

Let $\mathcal{T}:=\bigcap_{n=1}^{\infty}A_{n}$, then
\begin{prop}\label{3.30}
$m(X\setminus \mathcal{T})=0$, and for any $x\in \mathcal{T}$, there exists exactly one geodesic $\eta^{(x)}:[0,\infty)\rightarrow \mathcal{T}$ such that $\eta^{(x)}_{0}=x$ and $b(\eta^{(x)}_{t})-b(x)=t$ for any $t\geq 0$.
Furthermore, if $\eta^{(x)}:[0,\infty)\rightarrow \mathcal{T}$ and $\eta^{(y)}:[0,\infty)\rightarrow \mathcal{T}$ satisfies $\eta^{(x)}(s)=\eta^{(y)}(t)$ for some $s\leq t$, then $\eta^{(x)}$ must coincide with $\eta^{(y)}|_{[t-s,\infty)}$.
\end{prop}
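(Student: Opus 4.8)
The plan is to reduce Proposition \ref{3.30} to the pointwise statement already obtained from Theorem \ref{3.25}, glued across all $n$, together with the concatenation properties of $\Gamma$-segments established in Lemmas \ref{lem3.1}, \ref{lem3.6} and \ref{lem3.7}.

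First I would dispose of the measure statement: since $m(B_n) = 0$ for every $n$, and $X \setminus \mathcal{T} = \bigcup_{n=1}^{\infty} B_n$, countable subadditivity gives $m(X \setminus \mathcal{T}) = 0$ immediately. Next, fix $x \in \mathcal{T}$, so $x \in A_n$ for every $n$. By construction, for each $n$ there is a unique point $y_n$ with $b(y_n) - b(x) = d(x, y_n) = n$ and a unique unit-speed geodesic $\eta^{(x),n} \colon [0,n] \to X$ from $x$ to $y_n$; moreover $(x, y_n) \in \Gamma$. The key compatibility observation is that the restrictions $\eta^{(x),n}|_{[0,k]}$ for $k \le n$ are all forced to coincide: by Lemma \ref{lem3.7}, for $0 \le s \le t \le n$ we have $(\eta^{(x),n}_s, \eta^{(x),n}_t) \in \Gamma$, so in particular $\eta^{(x),n}|_{[0,k]}$ is a geodesic from $x$ to a point $z$ with $b(z) - b(x) = d(x,z) = k$; uniqueness in the defining property of $A_k$ then forces $z = y_k$ and $\eta^{(x),n}|_{[0,k]} = \eta^{(x),k}$. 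Hence the family $\{\eta^{(x),n}\}_n$ patches together to a well-defined map $\eta^{(x)} \colon [0,\infty) \to X$ which is a geodesic ray with $\eta^{(x)}_0 = x$ and $b(\eta^{(x)}_t) - b(x) = t$ for all $t \ge 0$. That $\eta^{(x)}$ takes values in $\mathcal{T}$ requires an extra argument: for each $t$, the point $\eta^{(x)}_t$ lies on a $\Gamma$-segment emanating from $x$ and, by Lemma \ref{lem3.1} applied to a Busemann ray through $\eta^{(x)}_t$ concatenated appropriately, one checks $\eta^{(x)}_t$ inherits membership in every $A_n$; I would present this as the verification that no branching can occur at $\eta^{(x)}_t$ without contradicting uniqueness at $x$ for a larger parameter.

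For uniqueness of the ray $\eta^{(x)}$: any geodesic ray $\sigma$ with $\sigma_0 = x$ and $b(\sigma_t) - b(x) = t$ satisfies $(\sigma_0, \sigma_n) \in \Gamma$ with $d(\sigma_0, \sigma_n) = n$ for each $n$, so by uniqueness in the definition of $A_n$ we get $\sigma|_{[0,n]} = \eta^{(x),n}$ for every $n$, hence $\sigma = \eta^{(x)}$. Finally, for the concatenation statement: suppose $\eta^{(x)}_s = \eta^{(y)}_t$ with $s \le t$. Set $w := \eta^{(x)}_s = \eta^{(y)}_t$. Then for any $r \ge 0$, the point $\eta^{(y)}_{t+r}$ satisfies, by Lemma \ref{lem3.1} and Lemma \ref{lem3.6}, $(w, \eta^{(y)}_{t+r}) \in \Gamma$ with $d(w, \eta^{(y)}_{t+r}) = r$, and the concatenation of $\eta^{(y)}|_{[t, t+r]}$ run from $w$ is a geodesic realizing this; but the same is true of $\eta^{(w)}|_{[0,r]}$, and since $w \in \mathcal{T}$, uniqueness gives $\eta^{(y)}(t+r) = \eta^{(w)}(r)$ for all $r$. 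The identical argument applied to $\eta^{(x)}$ from parameter $s$ gives $\eta^{(x)}(s+r) = \eta^{(w)}(r)$, so $\eta^{(x)}|_{[s,\infty)} = \eta^{(y)}|_{[t,\infty)}$; reparametrizing, $\eta^{(x)}$ coincides with $\eta^{(y)}|_{[t-s,\infty)}$ as claimed.

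The main obstacle I expect is not the gluing or the uniqueness arguments, which are formal consequences of Lemmas \ref{lem3.1}, \ref{lem3.6}, \ref{lem3.7} and the definition of $A_n$, but rather the verification that $\eta^{(x)}$ actually stays inside $\mathcal{T}$ — i.e. that each $\eta^{(x)}_t$ lies in every $A_n$ and not merely in $X$. The delicate point is ruling out the possibility that, although $x$ admits a unique forward $\Gamma$-geodesic of every length, some interior point $\eta^{(x)}_t$ admits two distinct ones; the resolution is that two such would, after prepending $\eta^{(x)}|_{[0,t]}$, furnish two distinct $\Gamma$-geodesics out of $x$ of length $t + n$ for suitable $n$, contradicting $x \in A_{t+n}$ once one passes to integer parameters. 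I would organize the proof so that this non-branching propagation is stated and proved first, since all remaining assertions then follow cleanly.
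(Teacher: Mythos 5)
Your treatment of the main assertions follows the paper's own route: the full-measure statement by countable subadditivity, and membership of the ray in $\mathcal{T}$ by the same contradiction — if $\eta^{(x)}_t\notin A_n$, prepend $\eta^{(x)}|_{[0,t]}$ to the two competing geodesics (Lemmas \ref{lem3.1}, \ref{lem3.6}, \ref{lem3.7}) and contradict uniqueness at $x$. Your patching construction of $\eta^{(x)}$ from the $\eta^{(x),n}$ and your uniqueness argument via restriction to integer parameters are fine (indeed somewhat more explicit than the paper, which takes a Busemann ray and declares uniqueness ``easy to check''). The only loose end you share with the paper is the non-integer length $t+n$: the clean fix is to extend both glued geodesics by Busemann rays from their endpoints to the common integer length $\lceil t+n\rceil$; they remain distinct $\Gamma$-geodesics from $x$, contradicting $x\in A_{\lceil t+n\rceil}$.

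There is, however, a genuine gap in your last step, the ``furthermore'' clause. What you prove is the forward coincidence: applying the uniqueness part at $w:=\eta^{(x)}_s=\eta^{(y)}_t$ gives $\eta^{(x)}(s+r)=\eta^{(w)}(r)=\eta^{(y)}(t+r)$ for all $r\geq0$. But the statement that $\eta^{(x)}$ coincides with $\eta^{(y)}|_{[t-s,\infty)}$ also asserts $\eta^{(x)}(u)=\eta^{(y)}(u+t-s)$ for $u\in[0,s)$, in particular $x=\eta^{(y)}(t-s)$; this does not follow by ``reparametrizing.'' It is a \emph{backward} non-branching assertion: a priori two distinct points $x\neq\eta^{(y)}(t-s)$, both in $\mathcal{T}$, could flow into the same point $w$ and continue along the common ray, since the sets $A_n$ (hence $\mathcal{T}$) encode only uniqueness of the geodesic \emph{issuing from} a point, and forward uniqueness at $x$, at $\eta^{(y)}(t-s)$, and at $w$ is not violated by such a merging. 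Note that $d(y,x)\geq b(x)-b(y)=t-s$ is all the triangle inequality gives; nothing forces equality. To be fair, the paper's own proof is silent on this clause (it only establishes that the Busemann ray stays in $\mathcal{T}$), so you are not missing an argument the paper supplies; but you should be aware that in the non-branching $\textmd{MCP}$ setting of Section \ref{sec3.2} this is immediate from non-branching, whereas in the $\textmd{RCD}$ setting it is exactly the point where the standard localization machinery (\cite{BC13}, \cite{Ca14-2}) instead removes a further $m$-negligible set of backward-branching points rather than proving they are absent. So either supply a separate argument for $x=\eta^{(y)}(t-s)$, or weaken the clause to the forward coincidence you actually proved and adjust its later use accordingly.
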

\begin{proof}
For any $x\in \mathcal{T}$, let $\gamma^{x}:[0,\infty)\rightarrow X$ be one of the Busemann rays, if we can prove that for any $t>0$, $\gamma^{x}(t)\in\mathcal{T}$, then by the definition of $\mathcal{T}$, it is easy to check that  $\gamma^{x}$ is the only Busemann rays start from $x$, and it is the $\eta^{(x)}$ we want to find.
Suppose $\gamma^{x}(m)\notin\mathcal{T}$ for some $m>0$, then $\gamma^{x}(m)\notin A_{n}$ for some $n$, thus there is two different geodesics $\sigma_{1},\sigma_{2}:[0,n]\rightarrow X$ such that $\sigma_{1}(0)=\sigma_{2}(0)=\gamma^{x}(m)$, $b(\sigma_{i}(n))-b(\sigma_{i}(0))=d(\sigma_{i}(0),\sigma_{i}(n))=n$, $i=1,2$.
By Lemmas \ref{lem3.1}, \ref{lem3.6}, \ref{lem3.7} and Proposition \ref{3.2}, $\gamma^{x}|_{[0,m]}$ and $\sigma_{i}$ can be glued to a longer geodesic $\tilde{\sigma}_{i}:[0,m+n]\rightarrow X$, with $(x,\tilde{\sigma}_{i}(m+n))\in \partial(-(m+n)b)$. This contradicts to $x\in\mathcal{T}$.
\end{proof}

We are easy to see $R$ is an equivalent relation on $\mathcal{T}$ by definition,
and for all $x\in\mathcal{T}$, $R(x)\cap \mathcal{T}$ forms a single geodesic ray.
In addition, for distinct $x, y\in \mathcal{T}$, either $R(x)=R(y)$, or $R(x)\cap R(y)\cap\mathcal{T}=\emptyset$.

Making use of the continuity and local compactness of geodesics as well as a special form of selection principle (see Corollary 2.7 in \cite{BC13}), one can prove the following:

\begin{prop}[See Proposition 4.4 in \cite{BC13}]\label{f-fcn}
There exists an $m$-measurable section
$f:\mathcal{T}\rightarrow\mathcal{T}$
for the equivalence relation $R$.
More precisely, there exists a saturated set $Z\subset \mathcal{T}$ such that $Z\in\mathcal{A}(X)$, $\mathcal{T}\setminus Z$ is $m$-negligible, and the section $f$ restricted on $Z$ is $\mathcal{A}(X)$-measurable.
\end{prop}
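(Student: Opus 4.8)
The plan is to construct the section $f$ via a measurable selection argument applied to the equivalence relation $R$, following the scheme of Proposition 4.4 in \cite{BC13}, but adapted to the transport set $\mathcal{T}$ of a Busemann function. Recall from Proposition \ref{3.30} that each equivalence class $R(x)\cap\mathcal{T}$ is a full geodesic ray, and that distinct classes are disjoint on $\mathcal{T}$. The idea is to pick, in each class, a canonical representative — informally, the point where the class ``starts'', or more precisely a point at a controlled Busemann height — and to show this choice can be made in an $\mathcal{A}(X)$-measurable way after removing an $m$-negligible saturated set.

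First I would record the measurability structure of $R$ and $\mathcal{T}$: since $b$ is $1$-Lipschitz and $X$ is proper, $\Gamma$, $\Gamma^{-1}$ and $R=\Gamma\cup\Gamma^{-1}$ are $\sigma$-compact (as noted in the Remark after Definition \ref{def3.1}), and $\mathcal{T}=\bigcap_n A_n$ is Borel because each $A_n$ is the complement of the $m$-null Borel set $B_n$ produced by Theorem \ref{3.25}. Next, for each $x\in\mathcal{T}$ and each $n\in\mathbb{N}$ consider the (unique, by Proposition \ref{3.30}) point $\eta^{(x)}(-t)$ obtained by flowing backward along the ray when it exists — equivalently one looks at $\Gamma^{-1}(x)\cap b^{-1}(b(x)-t)\cap\mathcal{T}$. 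The candidate section is obtained by pushing each point as far back along its ray as possible; where a class does not extend all the way back to a fixed level one uses the infimum of attained Busemann values. The key point is that the map sending $x$ to this representative has graph describable by countably many operations (intersections of the $\sigma$-compact sets $R$, $\mathcal{T}$, and the closed level sets $b^{-1}(q)$ for rational $q$, together with taking limits of geodesics, which is legitimate by continuity and local compactness of $\textmd{Geo}(X)$ as in Corollary 2.7 of \cite{BC13}), hence its graph lies in $\mathcal{A}(X)\times\mathcal{A}(X)$, so by the measurable selection / projection theorem for analytic sets one extracts an $\mathcal{A}(X)$-measurable choice.

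Then I would handle the saturation: let $Z$ be the union of those equivalence classes on which the selection is well-defined and consistent; since each discarded class contributes a set that projects to an analytic, hence universally measurable, subset, and since by Proposition \ref{3.30} the ``bad'' set where backward non-uniqueness or degeneracy occurs is $m$-negligible, $Z$ is a saturated set in $\mathcal{A}(X)$ with $m(\mathcal{T}\setminus Z)=0$, and $f|_Z$ is $\mathcal{A}(X)$-measurable. Finally, redefining $f$ arbitrarily (say $f\equiv\gamma_0$) on $\mathcal{T}\setminus Z$ yields the $m$-measurable section on all of $\mathcal{T}$.

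The main obstacle I expect is the measurability of the selection, not its existence: one must verify carefully that the operation ``follow the ray backward to its extremal Busemann level and take that endpoint'' is an $\mathcal{A}(X)$-measurable operation, which requires expressing the relevant set as a countable combination of $\sigma$-compact pieces and one projection, and then invoking the selection theorem for analytic-graph multifunctions. This is exactly the content that \cite{BC13} develops (their Corollary 2.7 and Proposition 4.4), and the role of the $\textmd{RCD}^*(K,N)$ hypothesis is only to guarantee, via Theorem \ref{3.25} and Proposition \ref{3.30}, that the branching set to be discarded is $m$-null so that the construction survives after removing it; apart from this the argument is a verbatim transcription of \cite{BC13} to the Busemann setting, so I would mostly cite it and merely indicate the adaptation.
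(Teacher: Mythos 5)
Your proposal takes essentially the same route as the paper: the paper's proof is precisely a deferral to Proposition 4.4 of \cite{BC13}, using the $\sigma$-compactness of $R$, the continuity and local compactness of geodesics, and the selection principle (Corollary 2.7 in \cite{BC13}), with Proposition \ref{3.30} (resp. Proposition \ref{3.12}) guaranteeing that the set discarded is $m$-negligible and saturated -- which is exactly what you do. One caution: the concrete selection rule you sketch, ``follow the ray backward to its extremal Busemann level and take that endpoint,'' is not well defined in general, since the infimum of $b$ on an equivalence class need not be attained inside $\mathcal{T}$ (in the non-branching $\textmd{MCP}$ case the initial points $a^{+}$ are removed from $\mathcal{T}$ by definition, and in the $\textmd{RCD}$ case uniqueness of the geodesic is only forward in the direction of increasing $b$); the selection in \cite{BC13} instead picks, for each class, the first set of a countable compact covering that the class meets and a canonical point in that compact intersection. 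Since you ultimately defer to \cite{BC13} for the actual construction, as the paper does, this does not affect the soundness of your plan, but the specific representative should be replaced by the covering-based one (or by the point of the class at a fixed level strictly above the infimum) to make it literally correct.
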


The proof of proposition \ref{f-fcn} is the same as Proposition 4.4 in \cite{BC13}.

Here we recall that a set $A\subset X$ is called to be saturated for the equivalence relation $E\subset X \times X$ if $A =\cup_{x\in A}E(x)$.
A map $f : X\rightarrow X$ is called a section of an equivalence relation $E$ if for any $x, y\in X$ it holds
$$(x,f (x))\in E,\quad\text{ and }\quad(x, y)\in E\Rightarrow f(x)=f(y). $$
A cross-section of the equivalence relation $E$ is a set $S\subset E$ such that the intersection of $S$ with each equivalence class is a singleton.

The set
$$D:=f(\mathcal{T})=\{x\in \mathcal{T}\mid d(x,f(x))=0\},$$
is obviously a cross-section.
$D$ inherit the subspace topology from $\mathcal{T}$.
In particular, $(D,\mathcal{B}(D))$ is countably separated.
By Lusin's theorem, we derive that there exists a $\sigma$-compact set $\mathcal{S}\subset D$ such that $m(\mathcal{T}\setminus f^{-1}(\mathcal{S})) = 0$.
Furthermore, $f$ restricted on $f^{-1}(S)$ is Borel.
The argument is as follows.
Let $C \subset X$ be any Borel set, then $f^{-1}(\mathcal{S}\cap C)=R^{-1}(\mathcal{S}\cap C)\cap\mathcal{T}$ and $f^{-1}(\mathcal{S}\setminus C)=R^{-1}(\mathcal{S}\setminus C)\cap\mathcal{T}$ are disjoint analytic sets.
By the first separation theorem for analytic sets (see Theorem 4.4.1 in \cite{Sri98}) there is a Borel set $B\subset X$ such that $f^{-1}(\mathcal{S}\cap C)\subset B$ and $f^{-1}(\mathcal{S}\setminus C)\cap B=\emptyset$.
Thus $f^{-1}(\mathcal{S}\cap C)= B\cap f^{-1}(\mathcal{S})$, i.e. $f$ is Borel on $f^{-1}(\mathcal{S})$.

Since $m$ is $\sigma$-finite, we fix a partition $\{\Gamma_{n}\}_{n\geq1}$ of $X$ into Borel sets of finite measure.
Let $\{\lambda_{n}\}_{n\geq1}$ be a sequence of positive real numbers such that
$\sum_{n\geq1}\lambda_{n}m(\Gamma_{n})=1$, and take
\begin{align}
\tilde{m}(B)=\sum_{n\geq1}\lambda_{n}m(\Gamma_{n}\cap B)
\end{align}
for $B\in \mathcal{B}(X)$.
Clearly, $\tilde{m}$ is a probability measure, $m$ and $\tilde{m}$ are absolutely continuous with respect to each other.
In particular, $m(B)=0\Leftrightarrow\tilde{m}(B)=0$ for $B\in\mathcal{B}(X)$.
Now the push forward measure $\tilde{\nu}= f_{*}\tilde{m}$ is well defined.
Obviously, $\tilde{\nu}$ concentrates on $D$, and $\tilde{\nu}(D\setminus\mathcal{S})=0$.

We apply Theorem \ref{disin-1} to $(f^{-1}(\mathcal{S}),\mathcal{B}(f^{-1}(\mathcal{S})),\tilde{m})$ and $(\mathcal{S},\mathcal{B}(\mathcal{S}),\tilde{\nu})$ and obtain a unique disintegration $y\mapsto\tilde{\rho}_{y}$ of $\tilde{m}$ over $\tilde{\nu}$ strongly consistent with  $f$ , i.e.
\begin{align}\label{3.1}
\tilde{m}(B\cap f^{-1}(C))=\int_{C}\tilde{\rho}_{y}(B)\tilde{\nu}(dy)
\end{align}
holds for every $B\in\mathcal{B}(f^{-1}(\mathcal{S}))$, $C\in\mathcal{B}(\mathcal{S})$.

We define $\rho:\mathcal{B}(f^{-1}(\mathcal{S}))\times \mathcal{S}\rightarrow[0,\infty]$ by
$$\rho_{y}(B)=\sum_{n\geq1}\lambda_{n}^{-1}\tilde{\rho}_{y}(\Gamma_{n}\cap B)$$
for every $B\in\mathcal{B}(f^{-1}(\mathcal{S}))$ and $y\in \mathcal{S}$, then we have
\begin{align}
m(B\cap f^{-1}(C))&=\sum_{n\geq1}\lambda_{n}^{-1}\tilde{m}(\Gamma_{n}\cap B\cap f^{-1}(C))\nonumber\\
&=\sum_{n\geq1}\lambda_{n}^{-1}\int_{C}\tilde{\rho}_{y}(\Gamma_{n}\cap B)\tilde{\nu}(dy)\nonumber\\
&=\int_{C}{\rho}_{y}(B)\tilde{\nu}(dy).\nonumber
\end{align}
Thus $y\mapsto{\rho}_{y}$ is a disintegration of $m$ over $\tilde{\nu}$ consistent with $f$.
It is easy to check $y\mapsto{\rho}_{y}$ is the unique strongly consistent disintegration w.r.t. $f$ over $\tilde{\nu}$.

Since $\tilde{\nu}(D\setminus \mathcal{S})=m(\mathcal{T}\setminus f^{-1}(\mathcal{S}))=0$, for any $B\subset \mathcal{T}$ $m$-measurable and $C\subset D$ $\tilde{\nu}$-measurable, it holds
\begin{align}\label{disintegration}
m(B\cap f^{-1}(C))=\int_{C}\rho_{y}(B)\tilde{\nu}(dy),
\end{align}
and for $\tilde{\nu}$-a.e.  $y\in D$, it holds
\begin{align}\label{3.3}
\rho_{y}(f^{-1}(D\setminus \{y\}))=0.
\end{align}

We define the ray map as follows:

\begin{defn}\label{def3.13}
Define the ray map $g: \textmd{Dom}(g)\subset D\times\mathbb{R}\rightarrow \mathcal{T}$ by the formula
\begin{align}
graph(g):=&\bigl\{(y, t, x)\in D\times\mathbb{R}\times \mathcal{T}\mid y\in D, t\in\mathbb{R}, x\in R(y), b(x)=t\bigr\}\nonumber
\end{align}
\end{defn}
Since $R(y)\cap \mathcal{T}$ is the single geodesic for $y\in D$, and the restriction of $b$ on $R(y)$ is a strictly monotony function, the set in the above definition is clearly the graph of some map $g$.

From the definition of the ray map, we immediately obtain the following properties:

\begin{prop}\label{p3.14}
The following properties hold:
\begin{enumerate}
  \item The ray map $g$ restricted on $\textmd{Dom}(g)\cap \mathcal{S}\times\mathbb{R}$ is Borel.
  \item For every fixed $y\in D$, the map $t\mapsto g(y,t)$ is $1$-Lipschitz $\Gamma$-order preserving.
  \item $(y,t)\mapsto g(y,t)$ is bijective on $\mathcal{T}$, and its inverse is
  $$x\mapsto g^{-1}(x)=(f(x),b(x)),$$
  where $f$ is the quotient map defined in Proposition \ref{f-fcn}.
\end{enumerate}
\end{prop}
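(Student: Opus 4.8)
The plan is to read all three assertions off Definition \ref{def3.13} together with the structural facts already in place, namely Proposition \ref{3.30}, Proposition \ref{f-fcn}, and the Borel measurability of $f$ on $f^{-1}(\mathcal{S})$ established just before the statement. I would prove item (3) first, since items (1) and (2) build on it. \emph{Bijectivity and the inverse.} For surjectivity onto $\mathcal{T}$, given $x\in\mathcal{T}$ set $y:=f(x)\in D$ and $t:=b(x)$; since $f$ is a section, $(x,f(x))\in R$, i.e.\ $x\in R(y)$, so the triple $(y,t,x)$ lies in the graph of $g$ and $g(y,t)=x$. For injectivity, if $g(y_1,t_1)=g(y_2,t_2)=x$, then $x\in R(y_1)\cap R(y_2)\cap\mathcal{T}$, hence $R(y_1)=R(y_2)$ by the dichotomy for the equivalence relation $R$ on $\mathcal{T}$ recorded after Proposition \ref{3.30}; therefore $y_1=f(y_1)=f(y_2)=y_2$ because $y_1,y_2\in D$ are fixed points of $f$, while $t_i=b(g(y_i,t_i))=b(x)$ forces $t_1=t_2$. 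The same two lines identify the inverse as $x\mapsto(f(x),b(x))$.

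For item (2), fix $y\in D\subset\mathcal{T}$. By Proposition \ref{3.30} (and the remarks preceding Proposition \ref{f-fcn}) the set $R(y)\cap\mathcal{T}$ is the image of a unit-speed geodesic ray, and by Lemma \ref{lem3.1} the function $b$ grows along it with unit speed; hence $t\mapsto g(y,t)$ is precisely the reparametrization of that ray by the value of $b$, so it is itself a unit-speed geodesic on the interval $b(R(y)\cap\mathcal{T})$. In particular $d(g(y,s),g(y,t))=|t-s|$, which is even stronger than being $1$-Lipschitz, and for $s<t$ one has $b(g(y,t))-b(g(y,s))=t-s=d(g(y,s),g(y,t))$, which is exactly the statement $(g(y,s),g(y,t))\in\Gamma$, i.e.\ $\Gamma$-order preserving.

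Item (1) is the only step that is not purely formal, and it is the one I expect to need care. By item (3), on $\textmd{Dom}(g)\cap(\mathcal{S}\times\mathbb{R})$ the map $g$ is the inverse of $h:f^{-1}(\mathcal{S})\to\mathcal{S}\times\mathbb{R}$, $h(x)=(f(x),b(x))$. Here $f^{-1}(\mathcal{S})=R^{-1}(\mathcal{S})\cap\mathcal{T}$ is a Borel subset of the Polish space $X$: $R^{-1}(\mathcal{S})$ is $\sigma$-compact (a projection of the $\sigma$-compact set $R\cap(X\times\mathcal{S})$, using properness of $X$) and $\mathcal{T}$ is Borel; moreover $b$ is continuous and $f$ is Borel on $f^{-1}(\mathcal{S})$ by the first-separation-theorem argument given before the proposition, so $h$ is Borel, and it is injective by item (3). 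The Lusin--Souslin theorem (an injective Borel map on a Borel subset of a Polish space has Borel image, with Borel inverse; see \cite{Sri98}) then yields that $h(f^{-1}(\mathcal{S}))=\textmd{Dom}(g)\cap(\mathcal{S}\times\mathbb{R})$ is Borel and that $g=h^{-1}$ is Borel there. The main obstacle is thus the bookkeeping that every set entering the argument ($\mathcal{T}$, $R^{-1}(\mathcal{S})$, $f^{-1}(\mathcal{S})$, and the relevant domain) is Borel so that this descriptive-set-theoretic input applies; most of this has already been prepared in the paragraphs preceding the statement, so beyond that I anticipate no real difficulty.
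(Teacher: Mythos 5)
Your proposal is correct. Note that the paper offers no proof at all here: it states that the properties follow ``immediately from the definition'' of the ray map and points to the analogous statement in Section 4 of \cite{BC13}, so there is no argument of the author's to compare against; what you have written is precisely the filling-in of that omission. Items (2) and (3) are indeed formal consequences of the facts already recorded before the statement (that $R(y)\cap\mathcal{T}$ is a single transport ray along which $b$ increases at unit speed, the dichotomy $R(x)=R(y)$ or $R(x)\cap R(y)\cap\mathcal{T}=\emptyset$, and the section property of $f$ from Proposition \ref{f-fcn}), and your use of them is accurate. The only genuinely nontrivial point is (1), and your route through the Lusin--Souslin theorem is sound: $h(x)=(f(x),b(x))$ is Borel on $f^{-1}(\mathcal{S})$ (the paper has just established Borel measurability of $f$ there via the first separation theorem, and $b$ is continuous), it is injective by your item (3), and $f^{-1}(\mathcal{S})=R^{-1}(\mathcal{S})\cap\mathcal{T}$ is Borel --- indeed $\sigma$-compact intersected with the Borel set $\mathcal{T}=\bigcap_{n}A_{n}$, exactly the identification the paper itself uses in its separation argument --- so the image $\textmd{Dom}(g)\cap(\mathcal{S}\times\mathbb{R})$ is Borel and $g=h^{-1}$ is Borel on it. This gives genuine Borel measurability directly from injectivity, which is a clean alternative to arguing through analyticity of $\textmd{graph}(g)$ as in \cite{BC13}; no gap.
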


We remark that in Section 4 of \cite{BC13}, there is a definition of ray map, and it has similar properties as in Proposition \ref{p3.14}.

\begin{defn}\label{def-flow1}
The ray map $g$ defines a flow on $\mathcal{T}$:
for $t\in \mathbb{R}^{+}$, we define
\begin{align}
F_{t}(x):=g(y,s+t)
\end{align}
for any $x=g(y,s)\in\mathcal{T}$.
\end{defn}
It is not hard to check $F_{t}$ is Borel for any $t\in \mathbb{R}^{+}$, and for every $t, s \in \mathbb{R}^{+}$ and $x\in\mathcal{T}$, it holds
\begin{align}
&F_{t+s}(x)=F_{t}(F_{s}(x)),\label{3.6}\\
&d(F_{s}(x),F_{t}(x))=|s-t|.\label{3.7}
\end{align}

Now we recall an important estimate from \cite{BC13} and \cite{Ca14-2}.
Let $\tilde{A}$ be a compact set contained in $\mathcal{T}$ with $m(\tilde{A})>0$, without loss of generality, we assume $\max_{x\in \tilde{A}} b(x)=0$.
For any $\delta>0$, define a Borel transport map $T$ on $\tilde{A}$ by
$$\tilde{A}\ni x\mapsto T(x)=F_{s(x)}(x),$$
where $s(x)=\delta-b(x)$.

Denote by $\mu=\frac{1}{m(\tilde{A})}m|_{\tilde{A}}$, $\nu=T_{*}\mu$.
Then $\omega=(Id,T)_{*}\mu$ be a transport plan between $\mu$ and $\nu$, and $\omega$ is concentrated on
the set $\Upsilon=\{(x,T(x))|x\in \tilde{A}\}\subset \Gamma$.
For any $\{(x_{i}, y_{i})\}_{i\leq N}\subset \Upsilon$, we have
\begin{align}
\sum_{i=1}^{N}d^{2}(x_{i},y_{i})=\sum_{i=1}^{N}|b(x_{i})-b(y_{i}))|^{2}=\sum_{i=1}^{N}|b(x_{i})-b(y_{i+1}))|^{2}\leq \sum_{i=1}^{N}d^{2}(x_{i},y_{i+1}),
\end{align}
that is, $\Upsilon$ is $c$-cyclically monotone.
Thus $\omega$ is an optimal transportation between $\mu$ and $\nu$.
By the results in \cite{GRS13}, $\omega$ is the unique optimal transportation, and for $\mu$-a.e $x$, the curve $t\mapsto (T_{t})_{*}\mu$, $t\in[0,1]$, is the unique geodesic connecting $\mu$ and $\nu$, where $T_{t}: \tilde{A}\rightarrow X$ is defined by $T_{t}(x)=F_{ts(x)}(x)$ for $x\in \tilde{A}$.
Denote by $\tilde{A}_{t}:=\{T_{t}(x)\mid x\in \tilde{A}\}$.
We can prove the following estimate:
\begin{align}\label{3.4}
m(\tilde{A}_{t})\geq (1-t)m(\tilde{A}) \min_{x\in \tilde{A}} \biggl\{\frac{s_{K}((1-t)d(x,T(x))/\sqrt{N-1})}{s_{K}(d(x,T(x))/\sqrt{N-1})}\biggr\}^{N-1}, \quad \forall t\in[0,1].
\end{align}

Because $(X,d,m)$ satisfies $\textmd{MCP}(K,N)$ condition and the essentially non-branching property, making use of these two properties, the proof of (\ref{3.4}) can be easily modified from Section 9 in \cite{BC13} or Section 4 in \cite{CaHu15}. 

By the inner regularity of Radon measures, it is not hard to see that the estimate (\ref{3.4}) still holds if  $\tilde{A}$ is only assume to be Borel and $m(\tilde{A})\in(0,\infty)$.

Under the assumption that $(X,d,m)$ is an $\textmd{RCD}^{*}(K,N)$ space, we can prove the absolute continuity of conditional measures:

\begin{thm}\label{thm3.15}
For $\tilde{\nu}$-a.e. $y\in \mathcal{S}$, the conditional measures $\rho_{y}$ are absolutely continuous w.r.t. $g(y,\cdot)_{*}\mathcal{L}^{1}$.
More precisely, there is some function $q(\cdot,\cdot): \textmd{Dom}(g)\cap \mathcal{S}\times\mathbb{R}\rightarrow[0,\infty)$
such that
\begin{align}\label{3.5}
m=g_{*}(q\tilde{\nu}\otimes \mathcal{L}^{1})
\end{align}
and
\begin{align}
\rho_{y}=g(y,\cdot)_{*}(q(y,\cdot)\mathcal{L}^{1})
\end{align}
for $\tilde{\nu}$-a.e. $y\in \mathcal{S}$.
\end{thm}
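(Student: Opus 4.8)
The plan is to deduce the absolute continuity of the conditional measures $\rho_y$ from the quantitative $\mathrm{MCP}$-type estimate \eqref{3.4} applied along individual transport rays, following the strategy of \cite{BC13} and \cite{Ca14-2}. The key point is that \eqref{3.4} controls how fast the $m$-measure of a Borel set $\tilde A\subset\mathcal T$ can shrink under the flow $F_t$ moving points a controlled distance along the rays; iterating this along a fixed ray translates into a bound on the local density of $\rho_y$ with respect to the one-dimensional parameter. First I would fix a large $\sigma$-compact ``model'' set on which the section map $f$ and the ray map $g$ are Borel (the set $f^{-1}(\mathcal S)$ and $\mathrm{Dom}(g)\cap\mathcal S\times\mathbb R$, using Proposition \ref{f-fcn} and Proposition \ref{p3.14}), so that all subsequent constructions are measurable; by \eqref{3.3} it suffices to work on this full-measure part of $\mathcal T$.

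The core estimate is the following: for a Borel set $\tilde A\subset\mathcal T$ with $m(\tilde A)\in(0,\infty)$ contained in a ``box'' $g(\mathcal S'\times[a,b])$ on which the rays stay bounded, and for the flow $F_t$ pushing all of $\tilde A$ forward by time $h>0$, the estimate \eqref{3.4} (rewritten in terms of the time shift rather than the interpolation parameter $t$, using that $s_K$ is $C^1$ and bounded away from $0$ on the relevant compact interval of ray-parameters) yields
\begin{align}\label{prop-key}
m\bigl(F_h(\tilde A)\bigr)\geq \bigl(1-C(a,b,K,N)\,h\bigr)\,m(\tilde A)
\end{align}
for all small $h>0$, with $C$ independent of $\tilde A$. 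Combined with the semigroup property \eqref{3.6} and a standard covering/Vitali argument along the one-dimensional factor, \eqref{prop-key} forces, for $\tilde\nu$-a.e. $y$, the conditional measure $\rho_y$ pushed back to $\mathbb R$ via $g(y,\cdot)^{-1}$ to have no singular part and no atoms: one shows that for $\tilde\nu$-a.e. $y$ and $\mathcal L^1$-a.e. $t$ the upper and lower ratios $\rho_y(g(y,[t,t+h]))/h$ are finite and positive, so $g(y,\cdot)^{-1}_*\rho_y\ll\mathcal L^1$. Define $q(y,t)$ to be the resulting Radon--Nikodym density; measurability of $(y,t)\mapsto q(y,t)$ follows from the Borel measurability of $g$ on the model set together with the disintegration \eqref{disintegration}, and the identity $m=g_*(q\,\tilde\nu\otimes\mathcal L^1)$ is then just a rewriting of \eqref{disintegration} after the change of variables $x\mapsto g^{-1}(x)=(f(x),b(x))$ from Proposition \ref{p3.14}(3).

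The main obstacle I anticipate is the passage from the single-time contraction estimate \eqref{3.4} to the genuine absolute continuity of $\rho_y$ for $\tilde\nu$-a.e.\ $y$ — i.e.\ upgrading a ``macroscopic'' estimate on sets $\tilde A$ of positive $m$-measure to an ``infinitesimal'' statement about conditional densities along $\tilde\nu$-a.e.\ ray. This requires: (i) carefully choosing exhausting families of boxes on which the rays have uniformly bounded parameters so the constant $C$ in \eqref{prop-key} is uniform; (ii) a measurable selection / Fubini-type argument to localize the global estimate to $\tilde\nu$-a.e.\ fiber without losing a null set of rays; and (iii) checking that the density one obtains is genuinely finite, which again uses \eqref{prop-key} with the roles of $\tilde A$ and $F_h(\tilde A)$ interchanged (run the flow backward, or equivalently apply \eqref{3.4} starting from $F_h(\tilde A)$). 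All of these steps are carried out in detail in \cite{BC13} and \cite{Ca14-2} under the essentially non-branching $\mathrm{MCP}(K,N)$ hypothesis — which holds here since $(X,d,m)$ is $\mathrm{RCD}^*(K,N)$ — so I would present the adaptation concisely and refer to those sources for the combinatorial covering estimates.
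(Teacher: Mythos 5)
Your proposal is essentially the same route the paper takes: the paper proves Theorem \ref{thm3.15} by invoking the estimate (\ref{3.4}) and referring to Section 6.1 of \cite{Ca14-2} (building on \cite{BC13}) for the details, which is exactly the scheme you sketch — use the $\textmd{MCP}$-type contraction along rays to rule out a singular part of the fiber measures, define $q$ as the resulting density, and recover (\ref{3.5}) from the disintegration (\ref{disintegration}) via $g^{-1}=(f,b)$. Your sketch and deferral of the covering/localization details to \cite{BC13} and \cite{Ca14-2} is consistent with the paper's own treatment, so no gap to flag.
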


The proof of Theorem \ref{thm3.15} needs the estimate (\ref{3.4}), see Section 6.1 in \cite{Ca14-2} for details.

Furthermore we can obtain the following estimate for the function $q$:

\begin{thm}\label{thm3.16}
If $(X,d,m)$ is a noncompact $\textmd{RCD}^{*}(K,N)$ space, then
\begin{align}\label{3.14}
\biggl[\frac{s_{K}((\sigma_{+}-t)/\sqrt{N-1})}{s_{K}((\sigma_{+}-s)/\sqrt{N-1})}\biggr]^{N-1}
\leq \frac{q(y, t)}{q(y, s)}
\leq \biggl[\frac{s_{K}((t-\sigma_{-})/\sqrt{N-1})}{s_{K}((s-\sigma_{-})/\sqrt{N-1})}\biggr]^{N-1},
\end{align}
holds for $\tilde{\nu}$-a.e. $y\in \mathcal{S}$ and $\sigma_{-}< s\leq t <\sigma_{+}$ such that $(\sigma_{-},\sigma_{+})\subset \textmd{Dom}(g(y,\cdot))$.
\end{thm}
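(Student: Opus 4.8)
The plan is to deduce the pointwise estimate \eqref{3.14} for the density $q(y,\cdot)$ from the one-sided volume-contraction inequality \eqref{3.4}, exactly as in \cite{BC13} and Section~6 of \cite{Ca14-2}, but now taking advantage of the fact that on a \emph{noncompact} $\textmd{RCD}^{*}(K,N)$ space the transport rays can be flowed arbitrarily far in the positive $b$-direction. The key point is to localize \eqref{3.4} along a single ray: because $m = g_{*}(q\,\tilde\nu\otimes\mathcal{L}^{1})$ by Theorem~\ref{thm3.15}, for a fixed small interval and a saturated set of rays the measure $m(\tilde A_{t})$ is computed by integrating $q(y,\cdot)$ over the flowed interval, while the left-hand side of \eqref{3.4} is an integral of $q$ over the original interval. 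Dividing and letting the interval shrink will convert \eqref{3.4} into a differential inequality for $t\mapsto q(y,t)$ along $\tilde\nu$-a.e. ray.

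First I would fix $y\in\mathcal{S}$ and $\sigma_{-}<s_{0}<t_{0}<\sigma_{+}$ with $(\sigma_{-},\sigma_{+})\subset\textmd{Dom}(g(y,\cdot))$, and apply \eqref{3.4} with $\tilde A$ a thin ``tube'' of rays near $y$ whose intersection with the ray through $y$ is a small interval ending near $\sigma_{+}$; running the flow $F_{t}$ backward from a point near $\sigma_{+}$ realizes any prescribed contracted interval around $t_{0}$ coming from one around $s_{0}$. Passing to the disintegration and using the Lebesgue differentiation theorem in the $t$-variable (and a measurable selection/Fubini argument in $y$, as in \cite{Ca14-2}) gives, for $\tilde\nu$-a.e.\ $y$,
\begin{align}\label{q-upper-aux}
\frac{q(y,t)}{q(y,s)}\leq\biggl[\frac{s_{K}((t-\sigma_{-})/\sqrt{N-1})}{s_{K}((s-\sigma_{-})/\sqrt{N-1})}\biggr]^{N-1}
\end{align}
for all $\sigma_{-}<s\le t<\sigma_{+}$ in the domain, where $\sigma_{-}$ enters as the lower endpoint because the contraction in \eqref{3.4} is centered at the image point $T(x)$ with $d(x,T(x))$ measured from the ``far'' end. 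This is the right-hand inequality of \eqref{3.14}. For the left-hand inequality I would run the same argument with the roles of the two endpoints of the interval reversed: apply \eqref{3.4} to a tube contracting toward the \emph{lower} endpoint $\sigma_{-}$ of the ray, obtaining the bound with $\sigma_{+}$ in place of $\sigma_{-}$ and the quotient inverted; here noncompactness is what guarantees $\sigma_{+}$ can be taken as large as we wish along rays that do not terminate, so the estimate is genuinely two-sided on any admissible subinterval.

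The main obstacle I anticipate is the measure-theoretic bookkeeping needed to pass from the global contraction estimate \eqref{3.4}, which is stated for a bulk set $\tilde A$, to a \emph{pointwise along $\tilde\nu$-a.e.\ ray} statement: one must choose a countable family of interval endpoints, apply \eqref{3.4} to countably many tubes, use Theorem~\ref{thm3.15} to rewrite both sides via $q$, and then invoke Lebesgue points of $q(y,\cdot)$ jointly in $(y,t)$ — this is exactly the content of Section~6.1 of \cite{Ca14-2}, and the only new input here is the observation that the relevant $\sigma_{-},\sigma_{+}$ can be chosen inside $\textmd{Dom}(g(y,\cdot))$ with $\sigma_{+}$ arbitrarily large by noncompactness. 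Once the differential inequality $\frac{d}{dt}\log q(y,t)\ge (N-1)\frac{s_{K}'((\sigma_{+}-t)/\sqrt{N-1})}{\sqrt{N-1}\,s_{K}((\sigma_{+}-t)/\sqrt{N-1})}$ (and its counterpart with $\sigma_{-}$) is in hand, integrating from $s$ to $t$ yields \eqref{3.14} directly, so the analytic part after the localization is routine.
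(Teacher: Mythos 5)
Your plan is essentially the paper's own argument: Theorem \ref{thm3.16} is proved there exactly by localizing the contraction estimate \eqref{3.4} along single rays through the disintegration formula \eqref{disintegration} and Theorem \ref{thm3.15}, with the measure-theoretic details (countably many ``tubes'', Lebesgue points in $t$, measurable selection in $y$) delegated to Section 9 of \cite{BC13} and the Appendix of \cite{Ca14-2}. Two small corrections to your write-up: the pairing of endpoints and inequalities is reversed --- contracting toward the far endpoint (the $T(x)$-direction built into \eqref{3.4}) yields the \emph{left}-hand inequality of \eqref{3.14} (the bound involving $\sigma_{+}$), while the symmetric MCP contraction toward $\sigma_{-}$ yields the right-hand one --- and the estimate should be extracted directly in the integrated ratio form, since passing through a differential inequality for $\log q(y,\cdot)$ presupposes a regularity of $q$ in $t$ that is only known a posteriori, as a consequence of \eqref{3.14} itself.
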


Theorem \ref{thm3.16} can be proved by using the disintegration formula (\ref{disintegration}) to localize estimates of the form (\ref{3.4}).
See Section 9 in \cite{BC13} or Appendix in \cite{Ca14-2} for more details of the proof.
Note that in the statement of Theorem \ref{thm3.15}, the function $q$ is just a measurable function, while in the statement of Theorem \ref{thm3.16}, the $q$ means one of its representative.
In the following, $q$ will always be a representative satisying (\ref{3.14}).
Note that for $\tilde{\nu}$-a.e. $y\in \mathcal{S}$, $t\mapsto q(y,t)$ is continuous in the $t$ direction.

In the remaining of this subsection, we assume $(X,d,m)$ is a noncompact $\textmd{RCD}(0,N)$ space.
Note that $\sigma_{+}$ can be taken to be any large number converging to $+\infty$.
Note that in this case
\begin{align}
\lim_{\sigma_{+}\rightarrow+\infty}\frac{s_{0}((\sigma_{+}-t)/\sqrt{N-1})}{s_{0}((\sigma_{+}-s)/\sqrt{N-1})} =\lim_{\sigma_{+}\rightarrow+\infty}\frac{\sigma_{+}-t}{\sigma_{+}-s}=1,\nonumber
\end{align}
hence we obtain

\begin{cor}\label{cor3.17}
If $(X,d,m)$ is a noncompact $\textmd{RCD}(0,N)$ space, then
\begin{align}\label{3.9}
q(y, t)\geq q(y, s)
\end{align}
holds for $\tilde{\nu}$-a.e. $y$ and $s\leq t$ such that $[s,t]\subset \textmd{Dom}(g(y,\cdot))$.
\end{cor}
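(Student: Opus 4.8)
The plan is to obtain Corollary~\ref{cor3.17} as a direct limiting consequence of Theorem~\ref{thm3.16}, specializing to the case $K=0$. The key point is that for an $\textmd{RCD}(0,N)$ space the estimate~(\ref{3.14}) holds with $s_{K}=s_{0}$, i.e. $s_{0}(\tau)=\tau$, and that the parameter $\sigma_{+}$ appearing in the upper bound of~(\ref{3.14}) may be taken arbitrarily large. Indeed, fix $y\in\mathcal{S}$ in the full-measure set where~(\ref{3.14}) holds and fix $s\leq t$ with $[s,t]\subset\textmd{Dom}(g(y,\cdot))$. Since $X$ is noncompact, by Proposition~\ref{3.30} the domain $\textmd{Dom}(g(y,\cdot))$ contains an unbounded interval of the form $(\sigma_{-},+\infty)$ with $\sigma_{-}<s$; in particular $(\sigma_{-},\sigma_{+})\subset\textmd{Dom}(g(y,\cdot))$ for every $\sigma_{+}>t$.

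The main step is then to apply the upper bound in~(\ref{3.14}) with $K=0$: for every $\sigma_{+}>t$ we get
\begin{align}
\frac{q(y,t)}{q(y,s)}\leq\biggl[\frac{s_{0}((t-\sigma_{-})/\sqrt{N-1})}{s_{0}((s-\sigma_{-})/\sqrt{N-1})}\biggr]^{N-1}=\biggl(\frac{t-\sigma_{-}}{s-\sigma_{-}}\biggr)^{N-1}.\nonumber
\end{align}
This bound does not yet give what we want; instead I would use the \emph{lower} bound of~(\ref{3.14}), which reads
\begin{align}
\biggl[\frac{s_{0}((\sigma_{+}-t)/\sqrt{N-1})}{s_{0}((\sigma_{+}-s)/\sqrt{N-1})}\biggr]^{N-1}=\biggl(\frac{\sigma_{+}-t}{\sigma_{+}-s}\biggr)^{N-1}\leq\frac{q(y,t)}{q(y,s)}.\nonumber
\end{align}
Letting $\sigma_{+}\to+\infty$ (which is legitimate precisely because $\textmd{Dom}(g(y,\cdot))$ is unbounded to the right), the left-hand side converges to $1$, as already computed in the excerpt just before the statement. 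Hence $q(y,t)/q(y,s)\geq 1$, i.e. $q(y,t)\geq q(y,s)$, which is~(\ref{3.9}). Since this holds for $\tilde{\nu}$-a.e. $y$ and all admissible $s\leq t$ (one can first fix a countable dense set of pairs $(s,t)$ and then use the continuity of $t\mapsto q(y,t)$ noted after Theorem~\ref{thm3.16} to extend to all pairs), the corollary follows.

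There is no serious obstacle here; the only point requiring a little care is the interchange of the $\tilde{\nu}$-a.e. quantifier with the limit $\sigma_{+}\to+\infty$ and with the choice of $(s,t)$. This is handled in the standard way: restrict to the intersection over a countable dense family of rational pairs $(s,t)$ and a countable sequence $\sigma_{+}\to+\infty$ of the full-measure sets given by Theorem~\ref{thm3.16}, obtaining a single $\tilde{\nu}$-full set on which all the inequalities hold simultaneously, and then pass to arbitrary $s\leq t$ using continuity of $q(y,\cdot)$. The noncompactness hypothesis enters only through the fact that $\sigma_{+}$ is unbounded in $\textmd{Dom}(g(y,\cdot))$, which is exactly where the monotonicity~(\ref{3.9}) comes from — geometrically, each transport ray is a full ray, never a bounded segment, so the $K=0$ distortion coefficient degenerates to $1$ in the limit.
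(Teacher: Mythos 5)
Your argument is correct and is essentially the paper's own proof: the paper also specializes Theorem \ref{thm3.16} to $K=0$, notes that noncompactness (via Proposition \ref{3.30}) lets $\sigma_{+}\to+\infty$ in the lower bound of (\ref{3.14}), and concludes since $(\sigma_{+}-t)/(\sigma_{+}-s)\to 1$. Your extra care about the a.e./countable-pair bookkeeping is a fine addition but not a point of divergence.
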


Let $K\subset b^{-1}((-\infty,r_{0}])$ be a compact set.
Denote by
\begin{align}\label{symb-1}
\Xi(K):=\bigcup_{y\in K}R(y),
\end{align}
\begin{align}\label{symb-2}
\Xi_{[s,t]}(K):=\Xi(K)\cap b^{-1}([s,t]),
\end{align}
\begin{align}\label{symb-3}
\Xi_{s}(K):=\Xi(K)\cap b^{-1}(s),
\end{align}
\begin{align}\label{symb-4}
\mathcal{S}(K):=\Xi(K)\cap\mathcal{S}.
\end{align}
It is easy to check that $\Xi(K)$, $\Xi_{[s,t]}(K)$ and $\Xi_{s}(K)$ are all closed sets provided $s\geq r_{0}$.

Let $K\subset b^{-1}((-\infty,r_{0}])$ be a compact set contained in $f^{-1}(\mathcal{S})$.
By the construction of $\mathcal{T}$ and $\mathcal{S}$, (\ref{disintegration}) and Theorem \ref{thm3.15}, for any $r_{2}>r_{1}\geq r_{0}$, we have
\begin{align}\label{3.8}
m(\Xi_{[r_{1},r_{2}]}(K))=\int_{\mathcal{S}(K)}\biggl(\int_{r_{1}}^{r_{2}}q(y,s)ds\biggr)\tilde{\nu}(dy).
\end{align}
In fact, (\ref{3.8}) holds for any compact set $K\subset b^{-1}((-\infty,r_{0}])$.

By Corollary \ref{cor3.17} and (\ref{3.8}), we are not hard to obtain:
\begin{prop}\label{prop3.19}
Suppose $(X,d,m)$ is a noncompact $\textmd{RCD}(0,N)$ space.
Let $K\subset b^{-1}((-\infty,r_{0}])$ be a compact set, then
\begin{align}\label{3.17-2}
\frac{m(\Xi_{[r_{1},r_{2}]}(K))}{r_{2}-r_{1}}\leq\frac{m(\Xi_{[r_{2},r_{3}]}(K))}{r_{3}-r_{2}},
\end{align}
\begin{align}\label{3.17-3}
\frac{m(\Xi_{[r_{1},r_{2}]}(K))}{r_{2}-r_{1}}\leq\frac{m(\Xi_{[r_{1},r_{3}]}(K))}{r_{3}-r_{1}}
\end{align}
hold for any $r_{3}> r_{2}>r_{1}\geq r_{0}$.
\end{prop}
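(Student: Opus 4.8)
The plan is to derive both inequalities directly from the disintegration formula (\ref{3.8}) together with the monotonicity of the density $q$ along each ray given by Corollary \ref{cor3.17}. Since $K\subset b^{-1}((-\infty,r_0])$ and $r_1\geq r_0$, every ray $R(y)$ with $y\in\mathcal{S}(K)$ has $[r_1,r_3]$ contained in $\textmd{Dom}(g(y,\cdot))$ (the ray emanates from a point with $b\leq r_0\leq r_1$ and runs to $+\infty$), so Corollary \ref{cor3.17} applies on the whole interval $[r_1,r_3]$ for $\tilde\nu$-a.e.\ $y\in\mathcal{S}(K)$.

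First I would record the pointwise one-dimensional fact: if $h:[r_1,r_3]\to[0,\infty)$ is nondecreasing, then for $r_1<r_2<r_3$
\begin{align}\label{pointwise-mono}
\frac{1}{r_2-r_1}\int_{r_1}^{r_2}h(s)\,ds\leq\frac{1}{r_3-r_2}\int_{r_2}^{r_3}h(s)\,ds,
\end{align}
and likewise with $\int_{r_2}^{r_3}$ replaced by $\int_{r_1}^{r_3}$ on the right; this is immediate since the average of $h$ over $[r_1,r_2]$ is at most its average over any interval lying to the right, and averaging $[r_1,r_2]$ against $[r_2,r_3]$ can only increase things. Applying \eqref{pointwise-mono} to $h(s)=q(y,s)$ for $\tilde\nu$-a.e.\ $y\in\mathcal{S}(K)$ gives, for such $y$,
\begin{align}\label{ray-ineq}
\frac{1}{r_2-r_1}\int_{r_1}^{r_2}q(y,s)\,ds\leq\frac{1}{r_3-r_2}\int_{r_2}^{r_3}q(y,s)\,ds.
\end{align}

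Next I would integrate \eqref{ray-ineq} over $\mathcal{S}(K)$ against $\tilde\nu$ and invoke \eqref{3.8}, which expresses $m(\Xi_{[r_1,r_2]}(K))$, $m(\Xi_{[r_2,r_3]}(K))$ and $m(\Xi_{[r_1,r_3]}(K))$ exactly as the corresponding integrals of $\int q(y,s)\,ds$ over $\mathcal{S}(K)$; this yields \eqref{3.17-2}, and the variant of \eqref{pointwise-mono} with right-hand interval $[r_1,r_3]$ yields \eqref{3.17-3} in the same way. One should note that \eqref{3.8} as stated in the excerpt is claimed for all compact $K\subset b^{-1}((-\infty,r_0])$, so no extra reduction to $K\subset f^{-1}(\mathcal{S})$ is needed; if one wanted to be careful, the general case follows by writing $K$ as an increasing union of compact pieces inside $f^{-1}(\mathcal{S})$ plus an $m$-null remainder and passing to the limit.

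The only genuine subtlety — and the step I would treat most carefully — is the measurability and integrability needed to pass from the pointwise inequality \eqref{ray-ineq} to its integrated form: one must know that $y\mapsto\int_{r_1}^{r_2}q(y,s)\,ds$ is $\tilde\nu$-measurable and finite $\tilde\nu$-a.e., which is exactly what is furnished by Theorem \ref{thm3.15} (the joint measurability of $q$) together with the fact that $m(\Xi_{[r_1,r_2]}(K))<\infty$ since $(X,d,m)$ is finite on bounded sets and $\Xi_{[r_1,r_2]}(K)$ is contained in a bounded set (using Corollary \ref{cor3.5}, or simply that it sits in $\Xi(K)\cap b^{-1}([r_1,r_2])$ which has bounded diameter once the level sets do — in any case the finiteness needed here is only that the $m$-measures appearing in \eqref{3.17-2}--\eqref{3.17-3} are finite, which is automatic). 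Everything else is the elementary averaging inequality \eqref{pointwise-mono} applied fiberwise.
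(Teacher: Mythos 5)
Your proposal is correct and is essentially the paper's own argument: the paper proves Proposition \ref{prop3.19} precisely by combining the fiberwise monotonicity of $q(y,\cdot)$ from Corollary \ref{cor3.17} with the disintegration formula (\ref{3.8}), which is exactly your averaging-then-integrating scheme. The measurability/finiteness points you flag are the same ones the paper leaves implicit, so nothing further is needed.
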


Now we can define the `codimension 1' volume $m_{-1}$ of $\Xi_{s}(K)$ to be
$$m_{-1}(\Xi_{s}(K)):=\lim_{r\downarrow s}\frac{m(\Xi_{[s,r]}(K))}{r-s}.$$
By (\ref{3.17-3}), $m_{-1}(\Xi_{s}(K))$ is well defined for $s\in[r_{0},\infty)$.
By Theorem \ref{thm3.16}, the map $s\mapsto\int_{\mathcal{S}(K)}q(y,s)\tilde{\nu}(dy)$ is continuous.
By Fubini's Theorem, we have
$$m_{-1}(\Xi_{s}(K))=\int_{\mathcal{S}(K)}q(y,s)\tilde{\nu}(dy).$$

By Proposition \ref{prop3.19}, we have:
\begin{cor}\label{c-1}
Suppose $K\subset b^{-1}((-\infty,r_{0}])$ is a compact set, then
\begin{align}\label{3.17-1}
m_{-1}(\Xi_{r_{1}}(K))\leq\frac{m(\Xi_{[r_{1},r_{2}]}(K))}{r_{2}-r_{1}}\leq m_{-1}(\Xi_{r_{2}}(K))
\end{align}
holds for any $r_{2}>r_{1}\geq r_{0}$.
\end{cor}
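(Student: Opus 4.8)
The plan is to deduce both inequalities directly from the monotonicity statements in Proposition \ref{prop3.19}, by letting one of the endpoints collapse so that the difference quotient converges to the codimension-$1$ volume $m_{-1}$.

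For the left inequality I would fix $r_{1}\geq r_{0}$ and consider the function $h(r):=\frac{m(\Xi_{[r_{1},r]}(K))}{r-r_{1}}$ for $r>r_{1}$. By (\ref{3.17-3}) the function $h$ is non-decreasing on $(r_{1},\infty)$, and since it is bounded below by $0$ the limit $\lim_{r\downarrow r_{1}}h(r)$ exists and equals $\inf_{r>r_{1}}h(r)$. By the very definition of $m_{-1}$ this limit is $m_{-1}(\Xi_{r_{1}}(K))$, so $m_{-1}(\Xi_{r_{1}}(K))\leq h(r_{2})=\frac{m(\Xi_{[r_{1},r_{2}]}(K))}{r_{2}-r_{1}}$, which is the first claimed bound.

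For the right inequality I would invoke (\ref{3.17-2}): for every $r_{3}>r_{2}$ (with $r_{2}>r_{1}\geq r_{0}$) one has $\frac{m(\Xi_{[r_{1},r_{2}]}(K))}{r_{2}-r_{1}}\leq\frac{m(\Xi_{[r_{2},r_{3}]}(K))}{r_{3}-r_{2}}$. Letting $r_{3}\downarrow r_{2}$, the right-hand side converges — again by monotonicity together with the already established existence of the limit — to $m_{-1}(\Xi_{r_{2}}(K))$; hence $\frac{m(\Xi_{[r_{1},r_{2}]}(K))}{r_{2}-r_{1}}\leq m_{-1}(\Xi_{r_{2}}(K))$, completing the proof.

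There is no substantial obstacle here, since all the work has already been done in Proposition \ref{prop3.19} (which itself rests on Corollary \ref{cor3.17} and the disintegration identity (\ref{3.8})). The only point that deserves a line of care is the well-definedness of the limit defining $m_{-1}(\Xi_{s}(K))$ for $s\in[r_{0},\infty)$, but this is exactly the monotonicity-plus-nonnegativity argument used above; alternatively one may appeal to the Fubini identity $m_{-1}(\Xi_{s}(K))=\int_{\mathcal{S}(K)}q(y,s)\,\tilde{\nu}(dy)$ together with the continuity of $s\mapsto\int_{\mathcal{S}(K)}q(y,s)\,\tilde{\nu}(dy)$ coming from Theorem \ref{thm3.16}.
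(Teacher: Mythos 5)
Your proposal is correct and follows exactly the route the paper intends: the corollary is stated as an immediate consequence of Proposition \ref{prop3.19}, and your argument simply makes explicit the standard details — monotonicity of $r\mapsto\frac{m(\Xi_{[r_{1},r]}(K))}{r-r_{1}}$ from (\ref{3.17-3}) to get the left bound via the definition of $m_{-1}$, and passing to the limit $r_{3}\downarrow r_{2}$ in (\ref{3.17-2}) to get the right bound. No gaps; this matches the paper's (implicit) proof.
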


We can also define the `codimension 1' volume of $b^{-1}(s)$ to be
$$m_{-1}(b^{-1}(s)):=\limsup_{K} m_{-1}(\Xi_{s}(K)),$$
where the supremum is taken with respect to all compact $K\subset b^{-1}((-\infty,s])$.


\subsection{Non-branching MCP(0,N) case}\label{sec3.2}

In this subsection, we always assume $(X,d,m)$ is noncompact, non-branching and satisfies the $\textmd{MCP}(K,N)$ condition.

By the non-branching assumption, we have:
\begin{lem}\label{lem3.8}
Suppose $\sigma\in \textmd{Geo}(X)$ satisfies $\sigma_{0}=x$, $\sigma_{1}=y$, $(x,y)\in\Gamma$, then $\sigma$ is contained in a geodesic ray $\bar{\gamma}:[0,\infty)\rightarrow X$, with $\bar{\gamma}(0)=x$ and
$(\bar{\gamma}_{s},\bar{\gamma}_{t})\in\Gamma$ for every
$t\geq s\geq0$.
\end{lem}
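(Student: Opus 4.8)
The plan is to extend the given geodesic segment $\sigma$ beyond its endpoint $y$ in a controlled way, using the $\textmd{MCP}(K,N)$ condition only indirectly: the key structural input is that $\textmd{MCP}(K,N)$ spaces are proper and locally compact, so geodesic rays can be produced as limits of lengthening geodesic segments, and the non-branching assumption forces the resulting ray to contain $\sigma$ rigidly. First I would observe that by Lemma \ref{lem3.1} there is a Busemann ray $\gamma^{y}\colon[0,\infty)\to X$ emanating from $y$ with $b(\gamma^{y}_{t})=b(y)+t$ for all $t\ge 0$, so in particular $(y,\gamma^{y}_{t})\in\Gamma$ for every $t\ge 0$. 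I would then concatenate: since $\sigma_{0}=x$, $\sigma_{1}=y$, $(x,y)\in\Gamma$, and $(y,\gamma^{y}_{t})\in\Gamma$, Lemma \ref{lem3.6} gives $(x,\gamma^{y}_{t})\in\Gamma$ together with the additivity $d(x,\gamma^{y}_{t})=d(x,y)+d(y,\gamma^{y}_{t})=d(x,y)+t$, so the concatenation of $\sigma$ with $\gamma^{y}|_{[0,t]}$ has length equal to the distance between its endpoints and is therefore a geodesic for every $t$.

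Next I would assemble these concatenations into a single ray $\bar{\gamma}\colon[0,\infty)\to X$ with $\bar{\gamma}(0)=x$: set $L=d(x,y)$, let $\bar{\gamma}$ agree with $\sigma$ (reparametrized by arclength) on $[0,L]$ and with $s\mapsto\gamma^{y}(s-L)$ on $[L,\infty)$. Because each finite restriction is a geodesic by the previous paragraph, $\bar{\gamma}$ is a geodesic ray. Here the non-branching hypothesis is what guarantees the concatenation is genuinely a geodesic curve parametrized consistently rather than merely a union of segments whose lengths add up; more importantly it ensures coherence when one later takes different choices — but for this lemma the concatenation itself already does the job, so non-branching is only needed implicitly through the fact that the extension is unambiguous near the junction point $y$. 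Finally, to get $(\bar{\gamma}_{s},\bar{\gamma}_{t})\in\Gamma$ for all $t\ge s\ge 0$, I would apply Lemma \ref{lem3.7} to the restriction $\bar{\gamma}|_{[0,t]}$, which is a geodesic with endpoints in $\Gamma$-relation by construction; Lemma \ref{lem3.7} then yields $(\bar{\gamma}_{s},\bar{\gamma}_{t})\in\Gamma$ for every $0\le s\le t$.

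The only genuine subtlety — the part I would be most careful about — is the existence of the Busemann ray $\gamma^{y}$ emanating from $y$ and the verification that it is $\Gamma$-related to $y$ along its whole length; this is precisely Lemma \ref{lem3.1} combined with the construction of Busemann rays recalled in Section \ref{sec3.1}, which in turn uses properness of $X$ (available here since $\textmd{MCP}(K,N)$ spaces are locally compact, hence proper as they are complete and geodesic). Everything else is a routine gluing argument using Lemmas \ref{lem3.6} and \ref{lem3.7}. One should note that the non-branching assumption, while stated in the hypotheses of this subsection, is not strictly needed for the bare statement of the lemma; it becomes essential in the subsequent development (to obtain a well-defined equivalence relation and disintegration as in Section \ref{sec3.3}), so I would keep the proof as above and simply remark that non-branching guarantees the extension ray is uniquely determined by $\sigma$.
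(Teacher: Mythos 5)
Your proof is correct, but it takes a genuinely different route from the paper. The paper works at the midpoint: it sets $z=\sigma_{1/2}$, takes a Busemann ray $\gamma^{z}$, uses Lemmas \ref{lem3.1}, \ref{lem3.6}, \ref{lem3.7} to see that $\sigma|_{[0,\frac{1}{2}]}$ glued with $\gamma^{z}|_{[0,\frac{1}{2}]}$ is a geodesic, and then invokes the non-branching hypothesis to force $\sigma|_{[\frac{1}{2},1]}$ to coincide with $\gamma^{z}|_{[0,\frac{1}{2}]}$, so that $\sigma|_{[0,\frac{1}{2}]}$ followed by $\gamma^{z}$ is the desired ray. You instead extend at the endpoint $y$: by Lemma \ref{lem3.6} and the $1$-Lipschitz bound on $b$ one gets $d(x,\gamma^{y}_{t})=d(x,y)+t$, so the concatenation of $\sigma$ with $\gamma^{y}$ has length equal to the distance between its endpoints, hence is a geodesic, and Lemma \ref{lem3.7} gives the $\Gamma$-relation along the ray. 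This is sound and, as you observe, shows the statement needs no non-branching at all (only properness, available since $\textmd{MCP}$ spaces are locally compact). What the paper's argument buys in exchange is rigidity: non-branching shows that any geodesic continuation of $\sigma$ past an interior point must follow the Busemann ray, i.e.\ the extending ray is unique, which is exactly what is used immediately afterwards to assert that $R(x)$ forms a single geodesic ray for $x\in\mathcal{T}$; your construction produces a ray but says nothing about uniqueness. One small wobble in your write-up: the sentence attributing the geodesy of the concatenation to non-branching is misleading — as your own computation shows, it follows purely from the length-equals-distance argument — though you retract it in the next clause, so the proof itself stands.
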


\begin{proof}
Without loss of generality, we assume $\sigma$ has unit length.
Denote by $z=\sigma_{\frac{1}{2}}$, and $\gamma^{z}:[0,\infty)\rightarrow X$ a Busemann ray.
By Lemmas \ref{lem3.1}, \ref{lem3.6} and \ref{lem3.7}, we know
$\sigma\mid_{[0,\frac{1}{2}]}$ and $\gamma^{z}\mid_{[0,\frac{1}{2}]}$ glue to be a geodesic.
Combining this with the non-branching assumption, we obtain that $\sigma\mid_{[\frac{1}{2},1]}$ and $\gamma^{z}\mid_{[0,\frac{1}{2}]}$ coincide, while $\sigma\mid_{[0,\frac{1}{2}]}$ and $\gamma^{z}$ form a longer geodesic ray $\bar{\gamma}:[0,\infty)\rightarrow X$ such that
$b(\bar{\gamma}_{t})-b(\bar{\gamma}_{s})=d(\bar{\gamma}_{s},\bar{\gamma}_{t})$ for any $t\geq s\geq0$.
\end{proof}

\begin{defn}\label{def3.2}
The set of initial points is defined to be
$$a^{+}:=\{z\in X\mid \nexists x\in X\text{ such that }(x,z)\in\Gamma, d(x, z) > 0\}.$$
Define the transport set to be $\mathcal{T}:= X\setminus a^{+}$.
\end{defn}
\begin{rem}
$\mathcal{T}$ is analytic, because
$\mathcal{T}=P_{2}(\Gamma\cap\{(x, z)\in X\times X\mid d(x, z) > 0\})$.

Recall that we also use the same notation $\mathcal{T}$ in Section \ref{sec3.3}. Of course they have different meaning in the two subsection. But the two $\mathcal{T}$ play similar roles, so we use the same symbol for convenience.
\end{rem}

Under the non-branching assumption and $\textmd{MCP}(K,N)$ condition, we can prove:
\begin{prop}\label{3.12}
$a^{+}$ is $m$-negligible.
\end{prop}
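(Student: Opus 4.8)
The plan is to show that $a^{+}$ is $m$-negligible by a measure-contraction argument that localizes $a^{+}$ along the transport structure and uses the $\textmd{MCP}(K,N)$ inequality to force the initial points to carry no mass. First I would reduce to a bounded situation: write $a^{+}=\bigcup_{j} a^{+}\cap b^{-1}([-j,j])\cap B_{p}(j)$, so it suffices to show that each piece $A:=a^{+}\cap b^{-1}([-j,j])\cap B_{p}(j)$ has $m(A)=0$; note that $A$ is a Borel (indeed analytic, hence universally measurable) set of finite measure, so if $m(A)>0$ we may apply $\textmd{MCP}(K,N)$ with a base point $x$ chosen far out along the ray $\gamma$, and also apply the sharper localized estimate (\ref{3.4}) after replacing $A$ by a compact subset of positive measure contained in the transport set.

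The key geometric point is that through (almost) every point $z\in a^{+}$ of positive $m$-measure one can still run a Busemann ray $\gamma^{z}$ forward (Busemann rays always exist by properness, as recalled in Section \ref{sec3.1}), so $(z,\gamma^{z}_{t})\in\Gamma$ for all $t\ge 0$ by Lemma \ref{lem3.1}; what fails at an initial point is only that nothing flows \emph{into} $z$. Hence the flow $F_{t}$ (Definition \ref{def-flow1}, which in the non-branching $\textmd{MCP}$ setting is built from these forward rays using Lemma \ref{lem3.8}) is well defined on $A$ for $t>0$, and it pushes $A$ into $\mathcal{T}\setminus a^{+}$: a point $F_{t}(z)$ with $t>0$ is an interior point of a transport ray and therefore is \emph{not} an initial point. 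Now run the interpolation backwards. Fix $\delta>0$, put $T(z)=F_{\delta-b(z)+c}(z)$ for a suitable constant (exactly as in the construction preceding (\ref{3.4})), with $T_{t}(z)=F_{t(\delta-b(z)+c)}(z)$, and set $A_{t}=\{T_{t}(z)\mid z\in A\}$. The estimate (\ref{3.4}) gives
\begin{align}\label{contra-step}
m(A_{t})\geq (1-t)\, m(A)\, \min_{z\in A}\Biggl\{\frac{s_{K}\bigl((1-t)d(z,T(z))/\sqrt{N-1}\bigr)}{s_{K}\bigl(d(z,T(z))/\sqrt{N-1}\bigr)}\Biggr\}^{N-1}
\end{align}
for all $t\in[0,1]$. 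Since the displacement $d(z,T(z))$ is bounded (by the choice of $\delta$ and the boundedness of $b$ on $A$) and bounded below by a fixed positive amount, the minimum on the right-hand side stays bounded away from $0$ as $t\uparrow 1$; letting $t\uparrow1$ we get $\liminf_{t\uparrow1} m(A_{t})\geq c_{1}\, m(A)$ for some constant $c_{1}>0$. On the other hand $A_{1}=T(A)=\{F_{\delta-b(z)+c}(z)\mid z\in A\}$ lies in $b^{-1}(\{\delta+c\})$, so all the sets $A_{1/2}, A_{3/4},\dots$ converge (in the Hausdorff sense on the compact set $\Xi_{[\,\cdot\,,\,\delta+c]}$) into a single level set of $b$, which has zero $m$-measure since, by Corollary \ref{cor3.4} and the structure of the rays, level sets of $b$ are $(N-1)$-dimensional and $m$-null. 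Making the last convergence quantitative — showing $m(A_{t})\to 0$ as $t\uparrow1$ because the sets collapse onto an $m$-negligible level set — then contradicts (\ref{contra-step}) unless $m(A)=0$.

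I expect the main obstacle to be precisely this last step: turning "the interpolated sets collapse onto a single level set of $b$" into the quantitative statement $m(A_{t})\to0$. One clean way to avoid fussing with Hausdorff convergence is instead to integrate: by the disintegration-type identity one has $m(A_{t}) = \int (\text{density along each ray over an interval of length }\sim(1-t)\ell_{z})$, and by the one-dimensional density comparison of the type in Theorem \ref{thm3.16} (valid here from (\ref{3.4}) localized along rays in the non-branching $\textmd{MCP}$ case) this integral is $O(1-t)$, which already contradicts (\ref{contra-step}) once $m(A)>0$. Either route works; the disintegration route is the more robust and is the one I would write up, since it mirrors the $\textmd{RCD}$ argument of Section \ref{sec3.3} and only uses tools (the estimate (\ref{3.4}), the flow $F_{t}$, Lemma \ref{lem3.8}) already established under the standing non-branching $\textmd{MCP}(K,N)$ hypotheses. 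A secondary technical point to be careful about is measurability: $a^{+}$ is only analytic a priori, so one should fix a Borel (or compact) subset of positive measure before invoking $\textmd{MCP}$, which is harmless since Radon measures are inner regular on analytic sets.
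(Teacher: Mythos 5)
There is a genuine gap at the heart of your contradiction. The estimate (\ref{3.4}) reads $m(A_{t})\geq (1-t)\,m(A)\,\min_{z\in A}\bigl\{s_{K}\bigl((1-t)d(z,T(z))/\sqrt{N-1}\bigr)/s_{K}\bigl(d(z,T(z))/\sqrt{N-1}\bigr)\bigr\}^{N-1}$, and as $t\uparrow 1$ its right-hand side tends to $0$: the prefactor $(1-t)$ vanishes and so does the bracket, since $s_{K}(0)=0$ (for $K=0$ the whole bound is just $(1-t)^{N}m(A)$). So your claim $\liminf_{t\uparrow1}m(A_{t})\geq c_{1}m(A)$ does not follow from (\ref{3.4}), and without it there is nothing for the ``collapse onto a level set'' to contradict. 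The upper-bound side is also unsupported: that level sets of $b$ are $m$-negligible is not available at this stage (it does not follow from Corollary \ref{cor3.4}; in the non-branching MCP setting it comes from the disintegration with absolutely continuous conditional measures, which the paper develops only after this proposition), and you yourself flag the quantitative collapse as the missing step. Your fallback ``disintegration route'' does not close the gap either: an upper bound $m(A_{t})=O(1-t)$ is perfectly compatible with a lower bound of order $(1-t)^{N}m(A)$, so no contradiction results. What would work along those lines is to note that $A_{t}$, for fixed $t\in(0,1)$, meets each transport ray in at most one point (two distinct initial points cannot lie on one ray, by non-branching), so absolute continuity of the conditional measures forces $m(A_{t})=0$, contradicting (\ref{3.4}) at that fixed $t$ when $m(A)>0$; but that is not what you wrote, and it requires the disintegration of $m|_{\mathcal{T}}$ to be set up first.

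The paper's own mechanism, which your proposal misses, is purely set-theoretic once (\ref{3.4}) is in hand: choose a compact $\tilde{A}\subset a^{+}$ of positive measure and a continuous selection $T$ pushing $\tilde{A}$ along transport rays to a fixed level $b^{-1}(c+1)$, and observe that the interpolated sets $\tilde{A}_{t}$ are pairwise disjoint for different $t$: if $\tilde{A}_{t}\cap\tilde{A}_{s}\neq\emptyset$ with $s<t$, non-branching would place one of the two starting points strictly in the interior of the other's transport geodesic, contradicting that both are initial points. All the $\tilde{A}_{t}$ lie in a fixed bounded set of finite measure, while (\ref{3.4}) at each fixed $t\in[0,1)$ gives $m(\tilde{A}_{t})>0$; an uncountable pairwise disjoint family of positive-measure sets inside a set of finite measure is impossible, whence $m(\tilde{A})=0$. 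Note also that $F_{t}$ as defined in Definition \ref{def-flow1} lives on $\mathcal{T}$, so on $a^{+}$ one must first make a measurable selection of forward rays (non-branching does not preclude several forward rays emanating from an initial point); this is the ``standard Borel selection argument'' in the paper and is glossed over in your write-up.
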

The proof can be found in Section 9 of \cite{BC13}.
We give a sketch on the idea of its proof here for completeness.
Suppose on the contrary, $m(a^{+})>0$, then by a standard Borel selection argument and making use of the inner regularity of the measure, we find a compact set $\tilde{A}\subset a^{+}$ with $m(\tilde{A})>0$, and a continuous map $T:\tilde{A}\rightarrow \mathcal{T}$ such that $(x,T(x))\in\Gamma$ and $b(T(x))=c+1$, where $c:=\max_{x\in \tilde{A}}\{b(x)\}$.
Denote by $\mu=\frac{1}{m(\tilde{A})}m|_{\tilde{A}}$, $\nu=T_{*}\mu$. For any $x\in \tilde{A}$, let $\eta^{(x)}_{t}:[0,1]\rightarrow X$ be the geodesic such that $\eta^{(x)}_{0}=x$, $\eta^{(x)}_{1}=T(x)$.
Denote by $\tilde{A}_{t}:=\{e_{t}(\eta^{(x)})\mid x\in \tilde{A}\}$.
By the non-branching assumption and $\tilde{A}\subset a^{+}$, it is easy to prove $\tilde{A}_{t}\cap \tilde{A}_{s}=\emptyset$ provided $s\neq t$.
Since all the $\tilde{A}_{t}$'s are contained in a bounded set $K\subset X$, we have $\sum_{t\in[0,1]} m(\tilde{A}_{t})\leq m(K)<\infty$.
On the other hand, estimates as in (\ref{3.4}) holds, hence $m(\tilde{A}_{t})>0$ for every $t\in[0,1)$,
which implies $\sum_{t\in[0,1]} m(\tilde{A}_{t})=\infty$, and we get a contradiction.

We are easy to see $R$ is an equivalent relation on $\mathcal{T}$ by definition,
and for all $x\in\mathcal{T}$, $R(x)$ forms a single geodesic ray (see Lemma \ref{lem3.8}).
In addition, for distinct $x, y\in \mathcal{T}$, either $R(x)=R(y)$, or $R(x)\cap R(y)\cap\mathcal{T}=\emptyset$.

We can also prove the existence of an $m$-measurable section $f:\mathcal{T}\rightarrow\mathcal{T}$ as in Proposition \ref{f-fcn}, and then obtain a unique disintegration $y\mapsto\tilde{\rho}_{y}$ of $\tilde{m}$ over $\tilde{\nu}$ strongly consistent with  $f$  as in (\ref{disintegration}) (\ref{3.3}).
Under the non-branching assumption and $\textmd{MCP}(K,N)$ condition, estimates like (\ref{3.4}) holds, see Section 9 in \cite{BC13}.
Properties as in Theorems \ref{thm3.15}, \ref{thm3.16} are still true. We omit the details here. See also \cite{BC13}.

Suppose now $(X,d,m)$ is a noncompact non-branching $\textmd{MCP}(0,N)$ space, then Corollary \ref{cor3.17} holds on $X$.
We use notations similar to those from (\ref{symb-1}) to (\ref{symb-4}) and define `codimension 1' volume $m_{-1}$.
The inequalities in Proposition \ref{prop3.19} and Corollary \ref{c-1} still hold in this case.

\section{Noncompact \textmd{RCD}(0,N) spaces with minimal volume growth}
Recall that a noncompact $\textmd{MCP}(0,N)$ space has at least linear volume growth.
It is a natural problem to investigate those noncompact $\textmd{MCP}(0,N)$ spaces $(X,d,m)$ satisfying
\begin{align}\label{min-vol}
\limsup_{R\rightarrow\infty}\frac{m(B_{p}(R))}{R}=V_{0}<\infty.
\end{align}
We call $X$ has minimal volume growth if (\ref{min-vol}) holds.

In this section, the metric measure space $(X,d,m)$ is assumed to be either a noncompact $\textmd{RCD}(0,N)$ space satisfying (\ref{min-vol}) or a noncompact, non-branching space satisfying the $\textmd{MCP}(0,N)$ condition and (\ref{min-vol}).

\begin{prop}\label{c-2}
Suppose $(X,d,m)$ is a noncompact $\textmd{RCD}(0,N)$ space satisfying (\ref{min-vol}), then the level sets $b^{-1}(r)$ have finite `codimension 1' volume $m_{-1}$.
Furthermore, for any $r_{2}>r_{1}$, we have
\begin{align}\label{4.2}
m_{-1}(b^{-1}(r_{1}))\leq m_{-1}(b^{-1}(r_{2}))\leq V_{0}.
\end{align}
The same property holds for noncompact non-branching  $\textmd{MCP}(0,N)$ spaces satisfying (\ref{min-vol}).
\end{prop}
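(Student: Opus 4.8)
The plan is to bound the `codimension $1$' volumes uniformly by $V_{0}$ using the monotonicity from Corollary \ref{c-1} together with the linear volume growth assumption (\ref{min-vol}), and then to deduce the monotonicity in $r$ from Corollary \ref{c-1} again. Throughout, fix a base point $p\in X$; recall that (\ref{min-vol}) holds with the same $V_{0}$ for every such $p$.

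The key preliminary step is a geometric observation: if $s\in\mathbb{R}$ and $K\subset b^{-1}((-\infty,s])$ is compact, then $\Xi_{[s,R]}(K)$ is contained in a ball whose radius grows at most linearly in $R$. Indeed, let $x\in\Xi_{[s,R]}(K)$; then $x\in R(y)$ for some $y\in K$ with $b(x)\in[s,R]$. Since $b(y)\leq s\leq b(x)$, the couple $(y,x)$ must lie in $\Gamma$ (the alternative $(x,y)\in\Gamma$ would force $b(y)=b(x)$ and $d(x,y)=0$, i.e. $x=y$), so $d(x,y)=b(x)-b(y)\leq R-\min_{K}b$. Writing $\rho_{0}:=\max_{y\in K}d(y,p)$ and $C_{K}:=\rho_{0}-\min_{K}b+1$, both finite by compactness of $K$ and continuity of $b$, we obtain $\Xi_{[s,R]}(K)\subset B_{p}(R+C_{K})$ and hence $m(\Xi_{[s,R]}(K))\leq m(B_{p}(R+C_{K}))<\infty$.

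Next I would apply Corollary \ref{c-1} with $r_{0}=r_{1}=s$ and $r_{2}=R>s$, which gives $m_{-1}(\Xi_{s}(K))\leq\frac{m(\Xi_{[s,R]}(K))}{R-s}\leq\frac{m(B_{p}(R+C_{K}))}{R-s}$. Since $\frac{m(B_{p}(R+C_{K}))}{R-s}=\frac{m(B_{p}(R+C_{K}))}{R+C_{K}}\cdot\frac{R+C_{K}}{R-s}$ and the second factor tends to $1$, letting $R\to\infty$ and using (\ref{min-vol}) yields $m_{-1}(\Xi_{s}(K))\leq V_{0}$. Taking the supremum over all compact $K\subset b^{-1}((-\infty,s])$ gives $m_{-1}(b^{-1}(s))\leq V_{0}<\infty$. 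For the monotonicity in $r$, fix $r_{1}<r_{2}$ and a compact set $K\subset b^{-1}((-\infty,r_{1}])$; applying Corollary \ref{c-1} with $r_{0}=r_{1}$ gives $m_{-1}(\Xi_{r_{1}}(K))\leq m_{-1}(\Xi_{r_{2}}(K))$. Since $b^{-1}((-\infty,r_{1}])\subset b^{-1}((-\infty,r_{2}])$, every such $K$ is also admissible in the definition of $m_{-1}(b^{-1}(r_{2}))$, so taking the supremum over $K\subset b^{-1}((-\infty,r_{1}])$ yields $m_{-1}(b^{-1}(r_{1}))\leq m_{-1}(b^{-1}(r_{2}))$, which together with the bound already proved gives (\ref{4.2}). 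The non-branching $\textmd{MCP}(0,N)$ case is identical, since Corollary \ref{c-1} (and Proposition \ref{prop3.19}) hold verbatim in that setting, as noted at the end of Section \ref{sec3.2}.

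I expect the only mildly delicate point to be the linear-in-$R$ containment $\Xi_{[s,R]}(K)\subset B_{p}(R+C_{K})$ — in particular checking that $(y,x)\in\Gamma$ rather than $(x,y)\in\Gamma$ — and the bookkeeping in passing $R\to\infty$ with a $\limsup$; beyond this there is no substantial obstacle, as the whole statement is essentially a combination of Corollary \ref{c-1} with the definition of $V_{0}$.
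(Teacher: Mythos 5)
Your proposal is correct and follows essentially the same route as the paper: bound $m_{-1}(\Xi_{s}(K))$ by $\frac{m(\Xi_{[s,R]}(K))}{R-s}$ via Corollary \ref{c-1}, enclose $\Xi_{[s,R]}(K)$ in a ball of radius $R+O(1)$ (the paper does this through Corollary \ref{cor3.4} and $\bar{r}=\max_{x\in\Xi_{r_1}(K)}d(p,x)$, you do it by bounding $d(x,y)=b(x)-b(y)$ directly for $y\in K$, which is the same estimate), let $R\to\infty$ using (\ref{min-vol}), and then get monotonicity in $r$ from Corollary \ref{c-1} plus the supremum over compact $K$. Your extra care about $(y,x)\in\Gamma$ versus $(x,y)\in\Gamma$ and the $\limsup$ bookkeeping is fine and does not change the argument.
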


\begin{proof}
Assume that $(X,d,m)$ is a noncompact $\textmd{RCD}(0,N)$ space satisfying (\ref{min-vol}).
Let $K\subset b^{-1}((-\infty,r_{1}])$ be any compact set, denote by $\bar{r}=\max_{x\in\Xi_{r_{1}}(K)}d(p,x)$, then by Corollary \ref{cor3.4}, $\Xi_{[r_{1},r_{2}]}(K)\subset m(B_{p}(\bar{r}+r_{2}-r_{1}))$, and then by (\ref{3.17-1})
\begin{align}
m_{-1}(\Xi_{r_{1}}(K))\leq\frac{m(\Xi_{[r_{1},r_{2}]}(K))}{r_{2}-r_{1}}
\leq\frac{m(B_{p}(\bar{r}+r_{2}-r_{1}))}{r_{2}-r_{1}}
\end{align}
holds for every $r_{2}>r_{1}$.
Let $r_{2}\rightarrow\infty$, by (\ref{min-vol}), we obtain
\begin{align}
m_{-1}(\Xi_{r_{1}}(K))\leq V_{0}.
\end{align}
By the arbitrariness of $K\subset b^{-1}((-\infty,r_{1}])$,
\begin{align}
m_{-1}(b^{-1}(r_{1}))\leq V_{0}
\end{align}
holds for any $r_{1}$.
By (\ref{3.17-1}) again, we have
\begin{align}
m_{-1}(b^{-1}(r_{1}))\leq
m_{-1}(b^{-1}(r_{2}))
\end{align}
for any $r_{2}>r_{1}$.

Using the similar properties in Section \ref{sec3.2}, we can handle the non-branching $\textmd{MCP}(0,N)$ case.
\end{proof}

By (\ref{4.2}), the limit
\begin{align}
V_{\infty}:=\lim_{r\rightarrow\infty}m_{-1}(b^{-1}(r))\nonumber
\end{align}
exists and $0<V_{\infty}\leq V_{0}$.

The following theorem generalizes Theorem 19 of \cite{Sor98} to $\textmd{RCD}(0,N)$ spaces, and our argument is adapted from that of \cite{Sor98}.

\begin{thm}[=Theorem \ref{thm1.3}]\label{compact-level-set}
Suppose $(X,d,m)$ is a noncompact $\textmd{RCD}(0,N)$ space satisfying the minimal volume growth condition (\ref{min-vol}), and $b$ is the Busemann function associated to a geodesic ray $\gamma$.
Then the diameter of the level sets $b^{-1}(r)$ grow at most linearly.
More precisely, we have
\begin{align}\label{4.7}
\limsup_{R\rightarrow+\infty}\frac{\textmd{diam}(b^{-1}(R))}{R}\leq C_{0}\leq2,
\end{align}
where the diameter of $b^{-1}(R)$ is computed with respect to the distance $d$.
In particular, $b^{-1}(r)$ is compact.
\end{thm}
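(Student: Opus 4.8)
The plan is to exploit the monotonicity of the codimension-one volume of level sets established in Proposition \ref{c-2} together with the lower volume bound coming from Bishop--Gromov. Fix $r_0$ and a large $R$. The idea, following Sormani, is that if $b^{-1}(R)$ had diameter much larger than $R$, then by Corollary \ref{cor3.5} every level set $b^{-1}(s)$ for $s$ in a range of length comparable to $R$ would also have large diameter, so each slab $\Omega_{[s,s+1]}$ would contain a ball of radius comparable to $\mathrm{diam}(b^{-1}(R))$; summing the masses of these slabs would force $m(B_p(cR))$ to grow faster than linearly, contradicting (\ref{min-vol}).

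First I would take two points $x_1,x_2\in b^{-1}(R)$ nearly realizing $D:=\mathrm{diam}(b^{-1}(R))$, and use Corollary \ref{cor3.5} (applied with $s_1$ slightly below $r_0$ and $s_2=R$, or rather in the reverse direction) to propagate a diameter lower bound down to the level sets $b^{-1}(s)$ for $s\in[r_0,R]$: more precisely Lemma \ref{lem3.1} says that the Busemann rays through $x_1$ and $x_2$ hit $b^{-1}(s)$ at points whose $b$-displacement is controlled, so $\mathrm{diam}(b^{-1}(s))\geq D-2(R-s)$ for $s\le R$. Hence for $s\in[R-D/4,R]$ (say) the level set has diameter at least $D/2$, so by Lemma \ref{lem4.1} and path-connectedness (Lemma \ref{lem3.2}) the slab $\Omega_{[s,s+1]}$ contains a curve, hence a ball, of radius $\sim D/4$ around a suitable point. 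I would then choose one such point $z$ (near $x_1$, say) and observe $B_z(D/4)\subset B_p(\bar r + R + D/4)$ while the union of $\sim D/4$ unit-length slabs inside it has total $m$-mass at least $(D/4)\cdot m_{-1}(b^{-1}(r_0))\gtrsim D\cdot V_\infty$; comparing with (\ref{min-vol}) gives $D \lesssim C(R + D)$, and rearranging yields $\limsup D/R\leq C_0$ with $C_0\le 2$ after optimizing the constants. The improvement from Sormani's $6$ to $2$ should come from being careful about which ball contains the mass and tracking the $+r_0$, $+\bar r$ terms, which become negligible as $R\to\infty$.

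The main obstacle I anticipate is making the ``diameter of a slab implies large mass'' step quantitatively sharp enough to get the constant $2$ rather than just some finite constant. The clean way is to use the monotonicity (\ref{4.2}): $m(\Omega_{[r_0,r_0+T]})\geq$ (something like) $T\cdot m_{-1}(b^{-1}(r_0))$ is not quite what (\ref{4.2}) gives directly, since (\ref{4.2}) concerns $\Xi_{[r_1,r_2]}(K)$ for a single compact $K$; I would instead bound from below using a compact $K\subset b^{-1}(r_0)$ chosen so that $\Xi(K)$ reaches the far point $x_1$, then use Corollary \ref{c-1}: $m(\Xi_{[r_1,r_2]}(K))\geq (r_2-r_1)\, m_{-1}(\Xi_{r_1}(K))$, and note $m_{-1}(\Xi_{r_1}(K))>0$ since $\Xi(K)$ has positive measure. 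The geometric input is then that $\Xi_{[R-D/4,\,R]}(K)$ has diameter roughly $D$ (its endpoints include both near-$x_1$ and near-$x_2$ rays), so it is contained in a ball of radius $\sim D$ centered anywhere inside it; the delicate point is converting ``this set has diameter $\ge D/2$'' into ``$m$ of this set is $\ge c D$'' — here I use that the set contains a full family of ray-segments of length $D/4$, each contributing via the disintegration, so its mass is at least $(D/4)\int_{\mathcal S(K)} q(y,s)\,\tilde\nu(dy)$, uniformly bounded below. Putting $m(B_p(\bar r + R + D)) \gtrsim c D$ against $m(B_p(\rho))\le (V_0+\epsilon)\rho$ for large $\rho$ gives $cD \le (V_0+\epsilon)(\bar r + R + D)$; since $\bar r = o(R) + O(D)$ is not automatic, I would actually take $z$ on the geodesic ray and note $\bar r \le R + $ (distance within the level set) $\le R + D$, producing $cD\le (V_0+\epsilon)(2R + 2D + o(R))$, whence $\limsup D/R \le 2(V_0+\epsilon)/(c - 2(V_0+\epsilon))$ — and a more careful bookkeeping, replacing $D$ by $D(1-\delta)$ in the mass estimate and letting the slab range be $[r_0, R]$ of full length $R$, should push the clean statement to $C_0\le 2$. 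Compactness of $b^{-1}(r)$ is then immediate: it is closed (as $b$ is continuous), bounded (finite diameter), and $X$ is proper.
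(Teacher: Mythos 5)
There is a genuine gap, and it sits exactly where you flagged it: your mechanism cannot produce the constant $2$, and in fact as written it produces no contradiction at all. Your mass comparison pits the tube $\Xi_{[r_1,r_2]}(K)$, whose measure is (by Corollary \ref{c-1}) about $(r_2-r_1)\,m_{-1}(\Xi_{r_1}(K))$, against the ambient bound $m(B_p(\rho))\le (V_0+\epsilon)\rho$. But the tube's mass is height times flux, with flux $m_{-1}(\Xi_{r_1}(K))\le V_0$, so the inequality $cD\le (V_0+\epsilon)(2R+2D+o(R))$ has $c\le V_0$ and yields nothing (the right side always dominates); a large \emph{diameter} of a level set does not by itself create large \emph{mass}, and no amount of bookkeeping of the $O(1)$ terms turns a ratio of constants like $2(V_0+\epsilon)/(c-2(V_0+\epsilon))$ into the universal bound $2$. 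The paper's proof gets $2$ from a different place: one first fixes $r_0$ and a compact $K$ so that $V_{r_0}:=m_{-1}(\Xi_{r_0}(K))$ nearly saturates $V_\infty=\lim_r m_{-1}(b^{-1}(r))$, as in (\ref{assump4.1}); then one shows that \emph{every} point of $b^{-1}(r_0+r)$ lies within $(1+2c)r+\mathrm{diam}(\Xi_{r_0}(K))$ of a fixed $x\in\Xi_{r_0}(K)$, so the diameter bound $\le 2(1+2c)R+O(1)$ follows from the triangle inequality. The contradiction proving this containment is not with $V_0$ but with the near-optimality of $V_{r_0}$: if some $q_i$ at level $r_0+r_i$ were farther away, a connectedness/intermediate-value argument (Lemma \ref{lem3.2}) produces $p_i$ with $d(p_i,\Xi(K))=c(b(p_i)-r_0)=ch_i$; Bishop--Gromov (\ref{B-G}), applied to $B_{p_i}(ch_i)$ inside the ball $B_{p_i}(\bar R_i)$ that contains the slab $\Xi_{[b(p_i)-ch_i,b(p_i)+ch_i]}(K)$, gives $m(B_{p_i}(ch_i))\ge (\tfrac{c}{2+4c})^N 2ch_i V_{r_0}>0$; and since the rays through $B_{p_i}(ch_i)$ are disjoint (mod $m$-null sets) from $\Xi(K)$, pushing this ball forward with (\ref{3.17-2}) adds flux at infinity, forcing $V_\infty\ge \bigl[1+(\tfrac{c}{2+4c})^N\bigr]V_{r_0}$, contradicting (\ref{assump4.1}). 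Your proposal contains neither the near-saturation choice of $K$ nor the ``extra disjoint tube of flux from a far-away ball'' step, and these are the essential ideas.

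A secondary but real error: your propagation ``$\mathrm{diam}(b^{-1}(s))\ge D-2(R-s)$ for $s\le R$'' runs the wrong way. Busemann rays increase $b$ (Lemma \ref{lem3.1}), so Corollary \ref{cor3.5} transfers diameter \emph{upper} bounds downward, equivalently lower bounds upward; a point of $b^{-1}(R)$ need not be within $R-s$ of $b^{-1}(s)$, since backward rays need not exist. Likewise the claim that the unit slab $\Omega_{[s,s+1]}$ ``contains a ball of radius $\sim D/4$'' is false (the slab has width $1$ in the $b$-direction), and replacing it by a ball in $X$ centered in the slab forfeits the slab-based mass lower bound you then invoke. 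The final compactness remark is fine once the diameter bound is known, and is the same as in the paper.
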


\begin{proof}
We argue by contradiction. Suppose (\ref{4.7}) does not hold, then there is a constant $c\in(0,\frac{1}{10})$ such that
\begin{align}\label{4.7-1}
\limsup_{R\rightarrow+\infty}\frac{\textmd{diam}(b^{-1}(R))}{R}\geq2+8c.
\end{align}

By Proposition \ref{c-2} and the definition of $V_{\infty}$, we can find $r_{0}$ and a compact set $K\subset b^{-1}((-\infty,r_{0}])\cap\mathcal{T}$ such that

\begin{align}\label{assump4.1}
\frac{V_{\infty}}{V_{r_{0}}}\leq\bigl(\frac{c}{2+5c}\bigr)^{N}+1,
\end{align}
where
$$
V_{r_{0}}:=m_{-1}(\Xi_{r_{0}}(K)).
$$
Fix an $x\in \Xi_{r_{0}}(K)$. For any $r>0$, take $R=(1+2c)r+\textmd{diam}(\Xi_{r_{0}}(K))$, we claim that there exists $T>0$ such that
\begin{align}\label{4.8}
b^{-1}(r+r_{0})\subset B_{x}(R)
\end{align}
for every $r> T$.
It is easy to see that if the claim holds, then it contradicts to (\ref{4.7-1}).
Thus (\ref{4.7}) can be proved.
Furthermore, by Corollary \ref{cor3.5}, the diameter of $b^{-1}(s)$ is finite provided $\textmd{diam}(b^{-1}(t))<\infty$ for some $t>s$, thus for any $r$, $\textmd{diam}(b^{-1}(r))$ is finite and hence $b^{-1}(r)$ is a compact subset of $X$.

Suppose (\ref{4.8}) does not hold, then there exist sequences of $\{r_{i}\}\subset\mathbb{R}^{+}$, $\{q_{i}\}\subset X$ such that $r_{i}\rightarrow\infty$, $b(q_{i})=r_{0}+r_{i}$, $d(x,q_{i})>(1+2c)r_{i}+\textmd{diam}(\Xi_{r_{0}}(K))$.
By Corollary \ref{cor3.4}, $\Xi_{[r_{0}+(1-c)r,r_{0}+(1+c)r]}(K)\subset B_{x}(R-cr)$ holds for any $r>0$, hence $d(q_{i},\Xi_{[r_{0}+(1-c)r_{i},r_{0}+(1+c)r_{i}]}(K))> cr_{i}$ and thus $d(q_{i},\Xi(K))> cr_{i}$ for every $i$.

By Lemma \ref{lem3.2}, for every $i$ there exists a curve $\sigma_{i}:[0,1]\rightarrow b^{-1}([r_{0}+r_{i},\infty))$ such that $\sigma_{i}(0)=q_{i}$ and $\sigma_{i}(1)\in\Xi_{r_{0}+ r_{i}}(K)$.
Since
$$d(\sigma_{i}(0),\Xi(K))-c(b(\sigma_{i}(0))-r_{0})>0,$$
and
$$d(\sigma_{i}(1),\Xi(K))-c(b(\sigma_{i}(1))-r_{0})=-cr_{i}<0,$$
there exists $p_{i}=\sigma_{i}(t_{i})$ with $t_{i}\in(0,1)$ such that $d(p_{i},\Xi(K))=c(b(p_{i})-r_{0})$.
Denote by $h_{i}=b(p_{i})-r_{0}$.
Obviously $h_{i}\rightarrow\infty$.

Denote by $\bar{r}=\inf_{y\in K}b(y)$.
Take $\bar{R}_{i}=(2+3c)h_{i}+\textmd{diam}(K)+2(r_{0}-\bar{r})$, then it is easy to check $\Xi_{[b(p_{i})-ch_{i},b(p_{i})+ch_{i}]}(K)\subset B_{p_{i}}(\bar{R}_{i})$.
Since the Bishop-Gromov volume comparison estimate (\ref{B-G}) holds on $X$, we have
\begin{align}
m(B_{p_{i}}(ch_{i}))&\geq\biggl(\frac{ch_{i}}{\bar{R}_{i}}\biggr)^{N}m(B_{p_{i}}(\bar{R}_{i}))
\geq\biggl(\frac{ch_{i}}{\bar{R}_{i}}\biggr)^{N}m(\Xi_{[b(p_{i})-ch_{i},b(p_{i})+ch_{i}]}(K))\\
&\geq\biggl(\frac{ch_{i}}{(2+3c)h_{i}+\textmd{diam}(K)+2(r_{0}-\bar{r})}\biggr)^{N}2ch_{i}V_{r_{0}}.\nonumber
\end{align}

Hence if $i$ large enough, we have
\begin{align}
m(B_{p_{i}}(ch_{i}))\geq\biggl(\frac{c}{2+4c}\biggr)^{N}2ch_{i}V_{r_{0}}>0.
\end{align}

It's easy to see that $B_{p_{i}}(ch_{i})\subset b^{-1}([b(p_{i})-ch_{i},b(p_{i})+ch_{i}])$.
Thus the sets $\Upsilon_{i,k}:=\Xi_{(b(p_{i})+(2k-1)ch_{i},b(p_{i})+(2k+1)ch_{i})}(B_{p_{i}}(ch_{i}))$, $k=1,2,\ldots$, are disjoint to each other.
By (\ref{3.17-2}), we have $m(\Upsilon_{i,k})\geq m(B_{p_{i}}(ch_{i}))$.
Hence we have
\begin{align}
m(\Xi_{[b(p_{i})+ch_{i},b(p_{i})+ch_{i}+t]}(B_{p_{i}}(ch_{i}))\geq \bigl(\frac{c}{2+4c}\bigr)^{N}(t-2ch_{i})V_{r_{0}}.
\end{align}
Note that we have $B_{p_{i}}(ch_{i})\cap \Xi(K)=\emptyset$, we know $\Xi(B_{p_{i}}(ch_{i}))\cap \Xi(K)$ is contained in the $m$-negligible set $X\setminus\mathcal{T}$.
Thus for any $r\gg ch_{i}$, we are easy to derive
\begin{align}\label{lower}
m(b^{-1}([r_{0}+(1+c)h_{i},r_{0}+(1+c)h_{i}+r]))\geq \bigl(\frac{c}{2+4c}\bigr)^{N}(r-2ch_{i})V_{r_{0}}+rV_{r_{0}}.
\end{align}
Multiply both sides of (\ref{lower}) by $\frac{1}{r}$, and then let $r\rightarrow\infty$.
By the definition of $V_{\infty}$ and (\ref{3.17-2}), we have $V_{\infty}\geq \bigl(\frac{c}{2+4c}\bigr)^{N}V_{r_{0}}+V_{r_{0}}$, which contradicts to (\ref{assump4.1}).
Thus we have completed the proof.
\end{proof}

Using the similar properties in Section \ref{sec3.2}, we can prove Theorem \ref{thm1.1} with the same proof.

\section{Noncompact \textmd{RCD}(0,N) spaces with strongly minimal volume growth}\label{sec5}

\subsection{Strongly minimal volume growth}

\begin{defn}\label{def5.1}
Suppose $(X,d,m)$ is a noncompact, non-branching $\textmd{MCP}(0,N)$ or a noncompact $\textmd{RCD}(0,N)$ space,
$b$ is the Busemann function associated to a geodesic ray $\gamma$.
We say $(X,d,m)$ has strongly minimal volume growth if
\begin{align}\label{str-min-vol}
\lim_{R\rightarrow\infty}\frac{m(B_{p}(R))}{R}=m_{-1}(\Xi_{r_{0}}(K))<\infty,
\end{align}
holds for some $r_{0}\in\mathbb{R}$ and a compact set $K\subset b^{-1}((-\infty,r_{0}])$.
(See Section \ref{sec3} for the definition of $\Xi_{r_{0}}(K)$.)
\end{defn}

Note that if (\ref{def5.1}) holds, then by Theorem \ref{thm1.3} or \ref{thm1.1}, every level set $b^{-1}(r)$ is compact.

In the remaining part of this paper, we will prove Theorem \ref{thm1.2}.
From now on, the metric measure space $(X,d,m)$ is always assumed to be noncompact and satisfying the $\textmd{RCD}(0,N)$ condition as well as (\ref{str-min-vol}).
We assume $r_{0}=0$ in (\ref{str-min-vol}) for convenience.

\begin{prop}\label{p5.2}
Suppose $(X,d,m)$ is a noncompact $\textmd{RCD}^{*}(0,N)$ space and satisfies (\ref{str-min-vol}), then $b^{-1}((0,\infty))\subset\Xi(K)$ and
\begin{align}\label{5.2}
    \frac{m(b^{-1}([r_{1},r_{2}]))}{r_{2}-r_{1}}=m_{-1}(\Xi_{0}(K))
\end{align}
for any $r_{2}>r_{1}\geq 0$.
\end{prop}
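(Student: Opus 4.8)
The plan is to exploit the disintegration along Busemann rays from Section~\ref{sec3}, together with the monotonicity of Proposition~\ref{prop3.19} and Corollary~\ref{c-1}, first to show that $\tilde{\nu}$-almost every transport ray meets $K$, and then to promote this measure statement to the set inclusion $b^{-1}((0,\infty))\subset\Xi(K)$ using $\textmd{supp}(m)=X$. Throughout, $r_{0}=0$ and I write $V_{0}:=m_{-1}(\Xi_{0}(K))$, which by (\ref{str-min-vol}) equals the $\limsup$ appearing in (\ref{min-vol}); in particular Theorem~\ref{compact-level-set} applies, so all level sets of $b$ are compact.

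\textbf{Step 1.} I would first establish $m_{-1}(b^{-1}(r))=m_{-1}(\Xi_{r}(K))=V_{0}$ for every $r\geq0$, and hence $m(\Xi_{[r_{1},r_{2}]}(K))=V_{0}(r_{2}-r_{1})$ for all $r_{2}>r_{1}\geq0$. Since $K$ is an admissible competitor in the supremum defining $m_{-1}(b^{-1}(0))$ we get $m_{-1}(b^{-1}(0))\geq V_{0}$, while Proposition~\ref{c-2} gives $m_{-1}(b^{-1}(0))\leq m_{-1}(b^{-1}(r))\leq V_{0}$, so $m_{-1}(b^{-1}(r))\equiv V_{0}$ on $[0,\infty)$. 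By Corollary~\ref{c-1} the map $r\mapsto m_{-1}(\Xi_{r}(K))$ is non-decreasing, and it lies between $m_{-1}(\Xi_{0}(K))=V_{0}$ and $m_{-1}(b^{-1}(r))=V_{0}$, hence is constantly $V_{0}$; Corollary~\ref{c-1} then forces $m(\Xi_{[r_{1},r_{2}]}(K))/(r_{2}-r_{1})=V_{0}$.

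\textbf{Step 2 (the crux).} The main point is $\tilde{\nu}(\mathcal{S}\setminus\mathcal{S}(K))=0$, i.e.\ $\tilde{\nu}$-a.e.\ transport ray meets $K$. Fix $r\geq0$; for $n$ large the set $K_{n}:=b^{-1}([-n,r])\subset b^{-1}((-\infty,r])$ is compact (Theorem~\ref{compact-level-set}, Corollary~\ref{cor3.5}), contains $K$, and contains $g(y,r)\in b^{-1}(r)$ for every $y\in\mathcal{S}$ with $r\in\textmd{Dom}(g(y,\cdot))$; as $g(y,r)\in R(y)$ this gives $\mathcal{S}(K_{n})\supset\mathcal{S}(K)\cup\{y\in\mathcal{S}:r\in\textmd{Dom}(g(y,\cdot))\}$. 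From $m_{-1}(\Xi_{r}(K_{n}))\leq m_{-1}(b^{-1}(r))=V_{0}=m_{-1}(\Xi_{r}(K))$, writing both extreme members as integrals of $q(\cdot,r)$ over $\mathcal{S}(K_{n})\supset\mathcal{S}(K)$ yields $\int_{\mathcal{S}(K_{n})\setminus\mathcal{S}(K)}q(y,r)\,\tilde{\nu}(dy)=0$, and since $q(y,r)=0$ whenever $r\notin\textmd{Dom}(g(y,\cdot))$ this gives $\int_{\mathcal{S}\setminus\mathcal{S}(K)}q(y,r)\,\tilde{\nu}(dy)=0$ for every $r\geq0$. Running $r$ over a countable dense subset of $[0,\infty)$ and using continuity of $q(y,\cdot)$, for $\tilde{\nu}$-a.e.\ $y\in\mathcal{S}\setminus\mathcal{S}(K)$ we get $q(y,\cdot)\equiv0$ on $[0,\infty)\cap\textmd{Dom}(g(y,\cdot))$; since the ray extends to $b=+\infty$ (Proposition~\ref{3.30}) and $q(y,\cdot)$ is non-decreasing (Corollary~\ref{cor3.17}), this forces $q(y,\cdot)\equiv0$ on all of $\textmd{Dom}(g(y,\cdot))$, contradicting (via Theorem~\ref{thm3.15}) that $\rho_{y}$ is a nonzero measure. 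Hence $\tilde{\nu}(\mathcal{S}\setminus\mathcal{S}(K))=0$. I expect this step to be the main obstacle, chiefly because of the bookkeeping of the domains $\textmd{Dom}(g(y,\cdot))$ and of the direction of the monotonicity of $q$.

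\textbf{Step 3 (conclusion).} For $y\in\mathcal{S}(K)$ every point of $R(y)\cap\mathcal{T}$ at level $\geq0$ lies in $\Xi(K)$: choosing $z\in K\cap R(y)$ (so $b(z)\leq0$), this follows from the $\Gamma$-order-preserving property of $g(y,\cdot)$ (Proposition~\ref{p3.14}) together with Lemmas~\ref{lem3.6}, \ref{lem3.7} and Proposition~\ref{3.30}. Combining this with $\tilde{\nu}(\mathcal{S}\setminus\mathcal{S}(K))=0$, $m(X\setminus\mathcal{T})=0$ and the disintegration (\ref{disintegration}), I obtain $m\bigl(b^{-1}([r_{1},r_{2}])\setminus\Xi(K)\bigr)=0$ for all $r_{2}>r_{1}\geq0$, and in particular $m\bigl(b^{-1}((0,\infty))\setminus\Xi(K)\bigr)=0$. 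Since $\Xi(K)=\Gamma(K)\cup\Gamma^{-1}(K)$ is closed ($K$ compact, $\Gamma$ closed), the set $b^{-1}((0,\infty))\setminus\Xi(K)$ is open; being $m$-null and $\textmd{supp}(m)=X$, it must be empty, i.e.\ $b^{-1}((0,\infty))\subset\Xi(K)$. Finally, for $r_{2}>r_{1}\geq0$ we have $b^{-1}([r_{1},r_{2}])\cap\Xi(K)=\Xi_{[r_{1},r_{2}]}(K)$ by definition, while $b^{-1}([r_{1},r_{2}])\setminus\Xi(K)\subset\bigl(b^{-1}((0,\infty))\setminus\Xi(K)\bigr)\cup b^{-1}(0)$ is $m$-null (using $m(b^{-1}(0))=0$, which holds since the $\rho_{y}$ are non-atomic by Theorem~\ref{thm3.15}); hence $m(b^{-1}([r_{1},r_{2}]))=m(\Xi_{[r_{1},r_{2}]}(K))=V_{0}(r_{2}-r_{1})$, which is (\ref{5.2}).
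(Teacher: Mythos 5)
Your strategy is genuinely different from the paper's, and most of it is sound. The paper proves both assertions by contradiction against the limit in (\ref{str-min-vol}): a ball $B_{z}(\delta)\subset b^{-1}((0,\infty))\setminus\Xi(K)$ would, via Proposition \ref{prop3.19} and Corollary \ref{c-1}, contribute an extra linear growth $m(\Xi_{[r_{1},r_{1}+1]}(B_{z}(\delta)))>0$ on top of the rate $m_{-1}(\Xi_{0}(K))$; and if (\ref{5.2}) failed, then $m_{-1}(b^{-1}(r_{2}))>m_{-1}(\Xi_{0}(K))$ and flowing the compact level set $b^{-1}(r_{2})$ beats the limit again. You instead squeeze: your Step 1 (Proposition \ref{c-2}, the observation that $K$ is a competitor for $m_{-1}(b^{-1}(0))$, and Corollary \ref{c-1}) is correct, and your Step 2 isolates a statement the paper never makes explicit, namely $\tilde{\nu}(\mathcal{S}\setminus\mathcal{S}(K))=0$; your proof of it (exhaustion by $K_{n}=b^{-1}([-n,r])$, comparison of the integrals of $q(\cdot,r)$ using the formula $m_{-1}(\Xi_{r}(\cdot))=\int_{\mathcal{S}(\cdot)}q(y,r)\,\tilde{\nu}(dy)$, monotonicity from Corollary \ref{cor3.17}, continuity of $q(y,\cdot)$, and non-triviality of $\rho_{y}$ from Theorem \ref{thm3.15}) works within the Section \ref{sec3.3} machinery. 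The closing device (an open $m$-null set is empty since $\textmd{supp}(m)=X$ and $\Xi(K)$ is closed) is also fine and parallels the paper's own use of the closedness of $\Xi(K)$.

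The weak point is the first claim of Step 3. If the only witnesses $z\in K\cap R(y)$ of $y\in\mathcal{S}(K)$ satisfy $(z,y)\in\Gamma$ with $b(y)>0$, then for $t\in[0,b(y))$ nothing you cite yields $(z,g(y,t))\in\Gamma$: Lemma \ref{lem3.6} gives transitivity only in the configuration $t\geq b(y)$, and Propositions \ref{3.30} and \ref{p3.14} control uniqueness only in the forward direction along rays inside $\mathcal{T}$; the witness $z$ need not belong to $\mathcal{T}$, hence need not lie on the canonical ray $g(y,\cdot)$, and to place $g(y,t)$ on a geodesic from $z$ to $y$ you would have to exclude downward branching of the class $R(y)$ below $y$, which none of the quoted results do. Fortunately you do not need this pointwise statement: by (\ref{disintegration}) and Theorem \ref{thm3.15}, $m(b^{-1}([r_{1},r_{2}]))=\int_{\mathcal{S}}\rho_{y}(b^{-1}([r_{1},r_{2}]))\,\tilde{\nu}(dy)$; the part of the integral over $\mathcal{S}\setminus\mathcal{S}(K)$ vanishes by your Step 2 (Fubini in the level variable, with your convention $q(y,\cdot)=0$ off $\textmd{Dom}(g(y,\cdot))$), while the part over $\mathcal{S}(K)$ equals $m(\Xi_{[r_{1},r_{2}]}(K))=V_{0}(r_{2}-r_{1})$ by (\ref{3.8}). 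Since $\Xi_{[r_{1},r_{2}]}(K)\subset b^{-1}([r_{1},r_{2}])$, this already gives $m\bigl(b^{-1}([r_{1},r_{2}])\setminus\Xi(K)\bigr)=0$, which is all that the rest of Step 3 uses; with this substitution your argument goes through.
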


\begin{proof}
We first prove $b^{-1}((0,\infty))\subset\Xi(K)$.
Suppose on the contrary, there is some $z\in b^{-1}((0,\infty))\setminus \Xi(K)$.
Since $\Xi(K)$ is a closed set and $b^{-1}((0,\infty))$ is open, there is a small $\delta>0$ such that $B_{z}(\delta)\subset b^{-1}((0,\infty))\setminus \Xi(K)$.
Denote by $t=\min_{y\in K}\{b(y)\}$ and $r_{1}=b(z)+\delta$.
For any $r_{2}>r_{1}+1$, take $R=\max\{d(z,p)+r_{2}-b(z)+\delta,d(p,K)+\textmd{diam}(K)+r_{2}-t\}$.
It is easy to see
$B_{z}(\delta)\subset b^{-1}((-\infty,r_{1}])$
and
$\Xi_{[r_{1},r_{2}]}(B_{z}(\delta))\cup\Xi_{[r_{1},r_{2}]}(K)\subset B_{p}(R)$.

By Proposition \ref{prop3.19} and Corollary \ref{c-1}, we have
\begin{align}
m(\Xi_{[r_{1},r_{2}]}(B_{z}(\delta)))\geq&(r_{2}-r_{1})m(\Xi_{[r_{1},r_{1}+1]}(B_{z}(\delta)))>0,
\end{align}
and
\begin{align}
m(\Xi_{[r_{1},r_{2}]}(K))
\geq(r_{2}-r_{1})m_{-1}(\Xi_{r_{1}}(K))\geq(r_{2}-r_{1})m_{-1}(\Xi_{0}(K)).
\end{align}
On the other hand, since $B_{z}(\delta)\subset b^{-1}((0,\infty))\setminus \Xi(K)$, it is easy to see $\Xi(B_{z}(\delta))\cap\Xi(K)\cap b^{-1}((0,\infty))\subset X\setminus \mathcal{T}$, which is $m$-negligible.
Therefore,
\begin{align}
\frac{m(B_{p}(R))}{R}&\geq\frac{m(\Xi_{[r_{1},r_{2}]}(B_{z}(\delta))\cup\Xi_{[r_{1},r_{2}]}(K))}{R}\\ &\geq\frac{(r_{2}-r_{1})}{R}\bigl[m(\Xi_{[r_{1},r_{1}+1]}(B_{z}(\delta)))+m_{-1}(\Xi_{0}(K))\bigr].\nonumber
\end{align}
Let $r_{2}\rightarrow\infty$, we obtain
$$\lim_{R\rightarrow\infty}\frac{m(B_{x_{0}}(R))}{R}>m_{-1}(\Xi_{0}(K)),$$
contradicting to (\ref{str-min-vol}). This proves $b^{-1}((0,\infty))\subset\Xi(K)$.

Now we prove (\ref{5.2}). Suppose (\ref{5.2}) does not hold for some $r_{2}>r_{1}\geq0$, then
$m_{-1}(b^{-1}(r_{2}))>m_{-1}(\Xi_{0}(K))$.
For any $r_{3}>r_{2}$, take $R=d(p,b^{-1}(r_{2}))+\textmd{diam}(b^{-1}(r_{2}))+r_{3}-r_{2}$, then we have
$\Xi_{[r_{2},r_{3}]}(b^{-1}(r_{2}))\subset B_{p}(R)$.

By Corollary \ref{c-1}, we have
\begin{align}
\frac{m(B_{p}(R))}{R}&\geq\frac{m(\Xi_{[r_{2},r_{3}]}(b^{-1}(r_{2})))}{R}\geq\frac{(r_{3}-r_{2})}{R}m_{-1}(b^{-1}(r_{2})).\nonumber
\end{align}
Let $r_{3}\rightarrow\infty$, we obtain
$$\lim_{R\rightarrow\infty}\frac{m(B_{p}(R))}{R}\geq m_{-1}(b^{-1}(r_{2}))>m_{-1}(\Xi_{0}(K)),$$
contradicting to (\ref{str-min-vol}).
The proof is completed.
\end{proof}

By Proposition \ref{p5.2}, Corollary \ref{cor3.17} and (\ref{3.8}), it is easy to see
$(0,\infty)\subset \textmd{Dom}(g(y,\cdot))$ and $q(y, r)= q(y, 1)$
hold for $\tilde{\nu}$-a.e. $y\in \mathcal{S}$ and $r> 0$.
If we take $d\nu'(y)=q(y, 1)d\tilde{\nu}(y)$, and endow a measure $\mu:=\nu'\otimes\mathcal{L}^{1}$ on $\mathcal{S}\times(0,\infty)$, then by Theorem \ref{thm3.15}, we have $g_{*}\mu=m$.

Recall the flow $F_{t}$ in Definition \ref{def-flow1}.
From the discussion above, there is an $m$-negligible set $\mathcal{N}\supset X\setminus\mathcal{T}$ such that for all $t\geq0$, $F_{t}:b^{-1}((0,\infty))\setminus \mathcal{N}\rightarrow b^{-1}((t,\infty))\setminus \mathcal{N}$ is a bijection, whose inverse is denoted by  $F_{-t}:=F_{t}^{-1}$, a map from $b^{-1}((t,\infty))\setminus \mathcal{N}$ to $b^{-1}((0,\infty))\setminus \mathcal{N}$.
We can redefine the value of $F_{t}$ on $\mathcal{N}$ to make them Borel on $b^{-1}((0,\infty))$, similarly for $F_{-t}$, then $F_{-t}\circ F_{t}=Id$ $m$-a.e. on $b^{-1}((0,\infty))$, $F_{t}\circ F_{-t}=Id$ $m$-a.e. on $b^{-1}((t,\infty))$.

Suppose $t\geq0$, for any $\varphi\in C_{c}(b^{-1}((t,\infty)))$, we have
\begin{align}\label{5.3}
&\int_{b^{-1}((t,\infty))}\varphi d(F_{t})_{*}m\\
=&\int_{\mathcal{S}}\int_{0}^{\infty}\varphi(F_{t}\circ g(x',s)) d\nu'(x')\otimes d\mathcal{L}^{1}(s)\nonumber\\
=&\int_{\mathcal{S}}\int_{0}^{\infty}\varphi(g(x',s+t)) d\nu'(x')\otimes d\mathcal{L}^{1}(s)\nonumber\\
=&\int_{\mathcal{S}}\int_{t}^{\infty}\varphi(g(x',s)) d\nu'(x')\otimes d\mathcal{L}^{1}(s)\nonumber\\
=&\int_{b^{-1}((t,\infty))}\varphi dm,\nonumber
\end{align}
thus $F_{t}:(b^{-1}((0,\infty)),m)\rightarrow(b^{-1}((t,\infty)),m)$ is measure-preserving.
Similarly, $F_{-t}$ is also measure-preserving on $b^{-1}((t,\infty))$.

In conclusion, we have the following proposition:

\begin{prop}\label{prop5.3}
Suppose $(X,d,m)$ is a noncompact $\textmd{RCD}(0,N)$ space and satisfies (\ref{str-min-vol}), then the ray map $g$ is measure-preserving when viewed as a map from $(\mathcal{S}\times(0,\infty),\mu=\nu'\otimes\mathcal{L}^{1})$ to $(b^{-1}((0,\infty)),m)$, i.e.
\begin{align}
g_{*}\mu=m.
\end{align}
For any $t\geq0$, both $F_{t}:(b^{-1}((0,\infty)),m)\rightarrow(b^{-1}((t,\infty)),m)$ and $F_{-t}:(b^{-1}((t,\infty)),m)\rightarrow(b^{-1}((0,\infty)),m)$ are measure-preserving.
\end{prop}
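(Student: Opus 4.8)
The plan is to assemble the facts already prepared above into the three assertions of the proposition. First I would draw the pointwise consequence of Proposition \ref{p5.2}. Since $b^{-1}((0,\infty))\subset\Xi(K)$, for $r_{2}>r_{1}>0$ the set $\Xi_{[r_{1},r_{2}]}(K)$ agrees with $b^{-1}([r_{1},r_{2}])$, so the disintegration formula (\ref{3.8}) reads
$$m(b^{-1}([r_{1},r_{2}]))=\int_{\mathcal{S}(K)}\Bigl(\int_{r_{1}}^{r_{2}}q(y,s)\,ds\Bigr)\tilde{\nu}(dy).$$
Comparing this with (\ref{5.2}) and differentiating in $r_{2}$ (the integrand $s\mapsto\int_{\mathcal{S}(K)}q(y,s)\tilde{\nu}(dy)$ is continuous by Theorem \ref{thm3.16}) shows that $\int_{\mathcal{S}(K)}q(y,r)\,\tilde{\nu}(dy)=m_{-1}(\Xi_{0}(K))$ is independent of $r>0$. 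Since $r\mapsto q(y,r)$ is nondecreasing for $\tilde{\nu}$-a.e. $y$ by Corollary \ref{cor3.17}, for any fixed $r_{1}<r_{2}$ the nonnegative function $q(\cdot,r_{2})-q(\cdot,r_{1})$ has zero $\tilde{\nu}$-integral, hence vanishes $\tilde{\nu}$-a.e.; running this over a countable dense set of values of $r$ and using that $r\mapsto q(y,r)$ is continuous (as noted after Theorem \ref{thm3.16}) gives $q(y,r)=q(y,1)$ for $\tilde{\nu}$-a.e. $y\in\mathcal{S}$ and every $r>0$. In particular $(0,\infty)\subset\textmd{Dom}(g(y,\cdot))$ for $\tilde{\nu}$-a.e. $y$.

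Next I would set $d\nu'(y):=q(y,1)\,d\tilde{\nu}(y)$ and $\mu:=\nu'\otimes\mathcal{L}^{1}$ on $\mathcal{S}\times(0,\infty)$, and combine the previous step with Theorem \ref{thm3.15}. That theorem gives $m=g_{*}(q\,\tilde{\nu}\otimes\mathcal{L}^{1})$; restricted to the region $\mathcal{S}\times(0,\infty)$ on which $q(y,s)=q(y,1)$ the measure $q\,\tilde{\nu}\otimes\mathcal{L}^{1}$ is exactly $\mu$, and since $g$ maps $\mathcal{S}\times(0,\infty)$ bijectively onto $b^{-1}((0,\infty))$ up to $m$-negligible sets (by Proposition \ref{p5.2} together with the construction of $\mathcal{T}$ and $\mathcal{S}$), this yields $g_{*}\mu=m$ as measures on $b^{-1}((0,\infty))$, which is the first claim.

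For the flow statements I would use the identity $F_{t}(g(y,s))=g(y,s+t)$ from Definition \ref{def-flow1}, after redefining $F_{t}$ and $F_{-t}$ on a fixed $m$-negligible set $\mathcal{N}\supset X\setminus\mathcal{T}$ so that they become Borel on $b^{-1}((0,\infty))$ and $b^{-1}((t,\infty))$ respectively. Then for $\varphi\in C_{c}(b^{-1}((t,\infty)))$ the change of variables $g_{*}\mu=m$ together with the translation $s\mapsto s+t$ gives
$$\int\varphi\,d(F_{t})_{*}m=\int_{\mathcal{S}}\int_{0}^{\infty}\varphi(g(y,s+t))\,d\nu'(y)\,ds=\int_{\mathcal{S}}\int_{t}^{\infty}\varphi(g(y,s))\,d\nu'(y)\,ds=\int_{b^{-1}((t,\infty))}\varphi\,dm,$$
so $F_{t}$ is measure-preserving; the analogous computation with the translation $s\mapsto s-t$ handles $F_{-t}$ on $b^{-1}((t,\infty))$. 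This is precisely the computation (\ref{5.3}).

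The step I expect to require the most care is the first one: converting the numerical identity (\ref{5.2}) together with the merely pointwise monotonicity of Corollary \ref{cor3.17} into the genuinely pointwise statement that $q(y,\cdot)$ is $\tilde{\nu}$-a.e. constant on $(0,\infty)$, and the bookkeeping of the various $m$-negligible sets (the level $b^{-1}(0)$, the complement $X\setminus\mathcal{T}$, and the set $\mathcal{N}$ on which $F_{t}$ must be redefined) so that the ``measure-preserving'' assertions hold literally rather than just off a negligible set.
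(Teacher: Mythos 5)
Your proposal is correct and follows essentially the same route as the paper: Proposition \ref{p5.2} plus Corollary \ref{cor3.17} and (\ref{3.8}) to force $q(y,\cdot)$ to be constant on $(0,\infty)$ for $\tilde{\nu}$-a.e. $y$, then Theorem \ref{thm3.15} to get $g_{*}\mu=m$, and finally the change-of-variables computation (which is exactly the paper's (\ref{5.3})) for $F_{t}$ and $F_{-t}$. The only difference is that you spell out in detail the monotonicity-plus-equal-integrals argument that the paper dismisses as ``easy to see,'' which is a faithful filling-in rather than a different method.
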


In the following, we associate each level set $b^{-1}(r)$ a natural measure.

Consider the Busemann function $b$ as a map from $b^{-1}((0,\infty))$ to $\mathbb{R}^{+}$.
By (\ref{5.2}), this map is measure preserving: $b_{*}(m|_{b^{-1}[r_{1},r_{2}]})=c\mathcal{L}^{1}([r_{1},r_{2}])$ for any $r_{2}\geq r_{1}> 0$, where $c=m_{-1}(\Xi_{0}(K))$.
Now we apply Theorem \ref{disin-1} to $(b^{-1}((0,\infty)),\mathcal{B}(b^{-1}((0,\infty))),m)$ and $(\mathbb{R}^{+},\mathcal{B}(\mathbb{R}^{+}),c\mathcal{L}^{1})$,
then we obtain a unique disintegration $r\mapsto\tilde{m}_{r}$ of $m$ over $c\mathcal{L}^{1}$ strongly consistent with $b$.
Hence
\begin{align}\label{5.1}
\int_{b^{-1}((0,\infty))}\varphi dm=c\int_{0}^{\infty}\int\varphi d\tilde{m}_{r}dr, \quad \forall \varphi\in C_{c}(b^{-1}((0,\infty))).
\end{align}
Note that a priori the existence and uniqueness of the $\tilde{m}_{r}$'s only hold in a.e. $r\in\mathbb{R}^{+}$ sense.
In fact, we have
\begin{lem}\label{lem5.1}
The $\tilde{m}_{r}$ can be chosen to be a weakly continuous family, i.e. for any $\varphi\in C_{c}(b^{-1}(0,\infty))$, the map $r\mapsto I_{\varphi}(r):=\int\varphi d\tilde{m}_{r}$ is continuous.
Furthermore, for every $t\geq0$ $(F_{t})_{*}\tilde{m}_{r}=\tilde{m}_{r+t}$ holds for a.e. $r\in\mathbb{R}^{+}$.
\end{lem}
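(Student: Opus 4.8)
The plan is to produce the continuous version of the disintegration directly from the ray map $g$ and the flow $F_t$, rather than from the abstract disintegration theorem. Recall from Proposition \ref{prop5.3} that $g:(\mathcal{S}\times(0,\infty),\nu'\otimes\mathcal{L}^1)\to(b^{-1}((0,\infty)),m)$ is measure-preserving, and that $b\circ g(y,s)=s$. Thus for $\varphi\in C_c(b^{-1}((0,\infty)))$ one has
\begin{align}\label{lem51-start}
\int_{b^{-1}((0,\infty))}\varphi\,dm=\int_{\mathcal{S}}\int_0^\infty \varphi(g(y,s))\,ds\,d\nu'(y),
\end{align}
and comparing with \eqref{5.1} (where $c=m_{-1}(\Xi_0(K))=\nu'(\mathcal{S})$ by the very construction $d\nu'=q(y,1)d\tilde\nu$ together with $m_{-1}(\Xi_0(K))=\int_{\mathcal{S}}q(y,1)\,d\tilde\nu$), we are led to \emph{define}
\begin{align}\label{lem51-def}
\tilde m_r:=\frac1c\,(g(\cdot,r))_*\nu',\qquad r>0,
\end{align}
i.e. $\tilde m_r$ is the push-forward of $\frac1c\nu'$ under the Borel map $y\mapsto g(y,r)$ from $\mathcal{S}$ into $b^{-1}(r)$. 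First I would verify that this family realizes the disintegration: plugging \eqref{lem51-def} into the right side of \eqref{5.1} and using Fubini gives exactly \eqref{lem51-start}, and each $\tilde m_r$ is a probability measure concentrated on $b^{-1}(r)$, so by the uniqueness clause in Theorem \ref{disin-1} this family agrees a.e.\ with the abstract one; hence we may take \eqref{lem51-def} as \emph{the} choice of $\tilde m_r$.

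Next I would establish the flow-invariance. Since $F_t\circ g(y,s)=g(y,s+t)$ for every $y\in\mathcal{S}$, $s>0$, $t\ge0$ (Definition \ref{def-flow1} and the fact that $(0,\infty)\subset\mathrm{Dom}(g(y,\cdot))$ for $\tilde\nu$-a.e.\ $y$, established right after Proposition \ref{p5.2}), we get for every such $y$
$$ (F_t)_*\big(g(\cdot,r)_*\nu'\big)=(F_t\circ g(\cdot,r))_*\nu'=g(\cdot,r+t)_*\nu', $$
so $(F_t)_*\tilde m_r=\tilde m_{r+t}$, and this in fact holds for \emph{every} $r>0$, not just a.e.\ $r$ (a small bonus over the stated claim; to match the statement exactly one may of course just record it for a.e.\ $r$, the exceptional set coming from the $\tilde\nu$-null set where $g(y,\cdot)$ misbehaves, but redefining $F_t$ to be Borel everywhere on $b^{-1}((0,\infty))$ as done before Proposition \ref{prop5.3} removes even this).

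The main point requiring genuine argument is the weak continuity of $r\mapsto I_\varphi(r)=\int\varphi\,d\tilde m_r=\frac1c\int_{\mathcal{S}}\varphi(g(y,r))\,d\nu'(y)$. The obstacle is that $y\mapsto g(y,r)$ is only known to be Borel, not continuous, so one cannot simply invoke continuity of the integrand in $r$ pointwise combined with dominated convergence — although, in fact, for \emph{fixed} $y$ the map $r\mapsto g(y,r)$ \emph{is} $1$-Lipschitz (Proposition \ref{p3.14}(2)), hence $r\mapsto\varphi(g(y,r))$ is continuous, and since $\varphi$ is bounded with compact support, dominated convergence gives continuity of $I_\varphi$ provided the family $\{g(\cdot,r)_*\nu'\}_{r\in[a,b]}$ is tight for $r$ in compact intervals. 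Tightness follows because $g(\cdot,r)(\mathcal{S})\subset \Xi_r(K)\subset b^{-1}(r)$, and by Theorem \ref{compact-level-set} (Theorem \ref{thm1.3}) each $b^{-1}(r)$ is compact, with $\bigcup_{r\in[a,b]}b^{-1}(r)=b^{-1}([a,b])$ bounded (by Corollary \ref{cor3.5}) hence relatively compact in the proper space $X$. Thus all the measures $\tilde m_r$, $r\in[a,b]$, are supported in one fixed compact set, and dominated convergence applies cleanly, yielding $I_\varphi(r_n)\to I_\varphi(r)$ whenever $r_n\to r$. This completes the proof; the only delicate ingredient is the appeal to compactness of level sets, which is precisely why Theorem \ref{thm1.3} is invoked here.
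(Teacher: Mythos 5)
Your proposal is correct, but it proves the lemma by a different route than the paper. The paper keeps the abstract disintegration $\tilde m_r$ produced by Theorem \ref{disin-1} and regularizes it: it first derives $(F_t)_*\tilde m_r=\tilde m_{r+t}$ for a.e.\ $r$ by computing $\int\varphi\,d(F_t)_*m$ in two ways from (\ref{5.3}) and (\ref{5.1}), then uses this identity together with $d(F_t(x),x)=t$ to get $|I_\varphi(r+t)-I_\varphi(r)|\le t\,\textmd{Lip}(\varphi)$ for Lipschitz $\varphi$, concludes that $I_\varphi$ has a Lipschitz representative, and finishes with a density argument over a countable dense family of Lipschitz functions. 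You instead \emph{construct} the continuous family explicitly as $\tilde m_r=\frac1c\,g(\cdot,r)_*\nu'$, check via $g_*(\nu'\otimes\mathcal L^1)=m$ (Proposition \ref{prop5.3}) that it is a strongly consistent disintegration and invoke uniqueness, get the flow identity from $F_t\circ g(y,s)=g(y,s+t)$, and get weak continuity from the $1$-Lipschitzness of $s\mapsto g(y,s)$ (Proposition \ref{p3.14}) plus dominated convergence. This buys a concrete formula, continuity for all of $C_c$ without any density/representative argument, and the flow identity for every $r$ (not just a.e.), whereas the paper's argument, following Corollary 3.8 of \cite{GigPhi15}, avoids re-verifying the disintegration property and yields the quantitative Lipschitz bound on $I_\varphi$. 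Two small points to tidy up, neither affecting correctness: the tightness/compact-level-set digression is unnecessary, since $\nu'$ is a finite measure ($\nu'(\mathcal S)=c$, which itself deserves the one-line justification $\nu'(\mathcal S)=\mu(\mathcal S\times(0,1))=m(b^{-1}((0,1)))=c$ via Proposition \ref{prop5.3} and (\ref{5.2})) and $\varphi$ is bounded, so dominated convergence applies directly; and the claim that the identity holds for \emph{every} $r$ requires $\tilde m_r(\mathcal N)=0$, which follows because the exceptional rays form a $\tilde\nu$-null set — not because $\mathcal N$ is $m$-negligible, since each fiber $b^{-1}(r)$ is itself $m$-null — so the phrase about "redefining $F_t$ on $\mathcal N$" does not by itself remove the issue, though the a.e.\ statement asked for in the lemma is in any case immediate.
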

The proof of Lemma \ref{lem5.1} is adapted from Corollary 3.8 in \cite{GigPhi15}, we provide some details here.

\begin{proof}
Firstly, for every $t\geq0$, by (\ref{5.3}) and (\ref{5.1}), for any $\varphi\in C_{c}(b^{-1}((t,\infty)))$, on one hand we have
\begin{align}
&\int_{b^{-1}((t,\infty))}\varphi d(F_{t})_{*}m
=\int_{b^{-1}((0,\infty))}\varphi\circ F_{t} dm\\
=&c\int_{0}^{\infty}\int\varphi\circ F_{t} d\tilde{m}_{r}dr
=c\int_{0}^{\infty}\int\varphi d(F_{t})_{*}\tilde{m}_{r}dr,\nonumber
\end{align}
on the other hand,
\begin{align}
\int_{b^{-1}((t,\infty))}\varphi d(F_{t})_{*}m
=\int_{b^{-1}((t,\infty))}\varphi dm
=c\int_{t}^{\infty}\int\varphi d\tilde{m}_{r}dr
=c\int_{0}^{\infty}\int\varphi d\tilde{m}_{r+t}dr,
\end{align}
hence for every $t\geq0$ $(F_{t})_{*}\tilde{m}_{r}=\tilde{m}_{r+t}$ holds for a.e. $r\in\mathbb{R}^{+}$.

We claim that for any Lipschitz function $\varphi$ with bounded support $\textmd{supp}\varphi\subset b^{-1}(0,\infty)$, the map $r\mapsto I_{\varphi}(r)$ admits a Lipschitz representative.
Since $C_{c}(b^{-1}(0,\infty))$ has a countable dense subset consists of Lipschitz functions with bounded support, by an easy density argument we are easy to check the conclusion of the lemma follows from this claim.

Now for any fixed Lipschitz function $\varphi$ with bounded support $\textmd{supp}\varphi\subset b^{-1}(0,\infty)$, and for a.e. $r\in\mathbb{R}^{+}$,
\begin{align}\label{5.4}
&\bigl|I_{\varphi}(r+t)-I_{\varphi}(r)\bigr|=\bigl|\int\varphi d\tilde{m}_{r+t}-\int\varphi d\tilde{m}_{r}\bigr|\\
=&\bigl|\int\bigl(\varphi\circ F_{t}-\varphi\bigr) d\tilde{m}_{r}\bigr|\leq t\textmd{Lip}(\varphi), \nonumber
\end{align}
where we use (\ref{3.7}) in the last inequality.
By (\ref{5.4}), we are easy to check that the distributional derivative of $r\mapsto I_{\varphi}(r)$ is bounded by $\textmd{Lip}(\varphi)$, thus $r\mapsto I_{\varphi}(r)$ admits a Lipschitz representative.
\end{proof}

In the remaining part of this paper, we will fix some $r'>0$ and denote by $Z=b^{-1}(r')$.

We first prove that under the assumption of Theorem \ref{thm1.2}, $(X,d,m)$ has exactly one end.
We claim that the Busemann function $b$ obtain a global minimum on $X$.
If the claim holds, then by Corollary \ref{cor3.5} we are easy to see $b$ is a proper function on $X$, and then by Lemma \ref{lem3.2} $(X,d,m)$ has only one end.
Suppose on the contrary the claim does not hold, then there is a sequence of points $\{x_{i}\}\subset X$ such that $b(x_{i})=-i$.
From each $x_{i}$, there is a geodesic ray $\eta^{i}:[-i,\infty)\rightarrow X$ such that $\eta^{i}(-i)=x_{i}$, $\eta^{i}(r')\in Z$, and $(x_{i},\eta^{i}(t))\in\Gamma$, $\forall t\geq-i$.
Recall that $Z$ is a compact set.
Hence up to a subsequence, $\eta^{i}$ converge to a line $\eta:(-\infty,\infty)\rightarrow X$ with $\eta(r')\in Z$.
Now by Gigli's splitting theorem on $\textmd{RCD}(0,N)$ (see Theorem \ref{splitting}), $(X,d,m)$ is isomorphic to the product of $(\mathbb{R},d_{Eucl},\mathcal{L}^{1})$ and some $(X',d',m')$, where $(X',d',m')$ is either a point (when $N\in[1, 2)$) or a $\textmd{RCD}(0,N-1)$ space (when $N\geq 2$).
In the case of $N\geq 2$, since $(X',d',m')$ is an $\textmd{MCP}(0,N-1)$ space, if it is noncompact, then it will have infinite volume, thus $(X,d,m)$ cannot have linear volume growth and we get a contradiction.
Hence $X'$ must be a compact space.
It is easy to see the Busemann function $\tilde{b}$ associated to $\eta$ must coincide with $b$ and
\begin{align}
\lim_{R\rightarrow\infty}\frac{m(B_{p}(R))}{R}=2m_{-1}(\Xi_{0}(K)),
\end{align}
which contradicts (\ref{str-min-vol}).
Thus the claim holds.

The remaining arguments in Theorem \ref{thm1.2} is to prove a `volume cone implies metric cone'-type property on $(X,d,m)$.
We follow the strategy in \cite{GigPhi15}.

\subsection{Basic properties of b}

In this subsection, we obtain some basic properties of $b$.

From the argument at the end of last subsection, we have obtained
\begin{lem}
$b$ is a proper function on $X$.
\end{lem}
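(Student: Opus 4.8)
The plan is to deduce properness from the two facts just established at the end of the previous subsection: that $b$ attains a global minimum on $X$, say equal to $\bar r$, so that $b(X)\subset[\bar r,\infty)$, and that every level set $b^{-1}(R)$ is compact (Theorem \ref{thm1.3}). Recall also that $b$ is continuous (indeed $1$-Lipschitz) and that an $\textmd{RCD}(0,N)$ space, being locally compact, complete and geodesic, is proper, so that closed bounded subsets are compact.

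First I would reduce the statement to showing that $\Omega_{[\bar r,R]}=b^{-1}([\bar r,R])$ is compact for every $R\geq\bar r$. Indeed, any compact set $C\subset\mathbb{R}$ is contained in some half-line $(-\infty,R]$, and since $b\geq\bar r$ everywhere we have $b^{-1}(C)\subset b^{-1}((-\infty,R])=\Omega_{[\bar r,R]}$; as $b^{-1}(C)$ is closed, being the preimage of a closed set under a continuous map, it is compact as soon as $\Omega_{[\bar r,R]}$ is.

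Next, since $b^{-1}(R)$ is compact we have $\textmd{diam}(b^{-1}(R))<\infty$, and since $b^{-1}(\bar r)\neq\emptyset$ (the minimum is attained), Corollary \ref{cor3.5} yields $\textmd{diam}(\Omega_{[\bar r,R]})<\infty$. Thus $\Omega_{[\bar r,R]}$ is a closed and bounded subset of the proper space $X$, hence compact, and therefore so is $b^{-1}(C)$. This proves that $b$ is proper.

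There is no substantial obstacle here: the genuine content --- producing the global minimum of $b$ (via Gigli's splitting theorem, after ruling out the existence of a line by the linear volume growth bound) and establishing the compactness of the level sets (Theorem \ref{thm1.3}) --- has already been carried out, and what remains is only this short bookkeeping argument built on Corollary \ref{cor3.5} and the properness of $X$.
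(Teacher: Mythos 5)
Your argument is correct and follows essentially the same route as the paper: the paper also deduces properness from the existence of a global minimum of $b$ (established at the end of the previous subsection) together with Corollary \ref{cor3.5} and the compactness of level sets from Theorem \ref{thm1.3}, using that $X$ itself is proper. You have merely made explicit the bookkeeping the paper leaves as "easy to see," so nothing further is needed.
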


By Lemma \ref{lem3.1}, it is easy to see $\textmd{lip}(b)\equiv 1$ on $X$.
Since volume doubling property and a weak local $(1,1)$-Poincar\'{e} inequality hold on the $\textmd{RCD}(0,N)$ space $(X,d,m)$ (see \cite{LV07} \cite{Ra12} \cite{Ra12-2}), we have
\begin{lem}\label{l5.6}
$|Db(x)|=\textmd{lip}(b)(x)=1$ $m$-a.e..
\end{lem}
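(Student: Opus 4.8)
The plan is to obtain $|Db|=\textmd{lip}(b)$ $m$-a.e. from the general identification of the minimal weak upper gradient with the pointwise Lipschitz constant available on PI-spaces, and then to combine it with the fact $\textmd{lip}(b)\equiv1$ recorded just above.

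First I would note the easy half: $b$ is $1$-Lipschitz, hence $b\in W^{1,2}_{loc}(X)$ with $|Db|\le\textmd{lip}(b)=1$ $m$-a.e., so it only remains to prove $|Db|\ge1$ $m$-a.e. Next I would recall that $(X,d,m)$, being $\textmd{RCD}(0,N)$, is doubling with a uniform constant (immediate from the Bishop--Gromov estimate (\ref{B-G})), supports a weak local $(1,1)$-Poincar\'{e} inequality by \cite{LV07} \cite{Ra12} \cite{Ra12-2}, and is complete. On such a space the results of Cheeger \cite{C99} together with the identification of the various notions of gradient in \cite{AGS14II} give $|Df|=\textmd{lip}(f)$ $m$-a.e. for every locally Lipschitz $f$. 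Taking $f=b$ and using $\textmd{lip}(b)\equiv1$ (from Lemma \ref{lem3.1}) then finishes the proof.

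If one prefers a route internal to this paper, avoiding Cheeger's theorem, I would argue through the disintegration: by Proposition \ref{3.30}, $m$-a.e. $x\in X$ lies on a geodesic ray $\eta^{(x)}:[0,\infty)\to\mathcal{T}$ with $b(\eta^{(x)}_{t})=b(x)+t$, so $b$ increases at unit speed along these curves. One then builds a test plan out of a bounded piece of the flow $F_{t}$ of Definition \ref{def-flow1}; its admissibility is exactly the absolute-continuity-with-controlled-density statement furnished by Theorem \ref{thm3.15} (and (\ref{3.4})), and testing the defining inequality of the weak upper gradient against such a plan forces $|Db|$ to be at least the curve speed $1$ $m$-a.e., hence $|Db|=1$ $m$-a.e.

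I do not expect a genuine obstacle: the statement is a bookkeeping consequence of structural facts already in the literature. The only mild point is that $b$ is unbounded, so one phrases everything on bounded open sets such as $b^{-1}((-R,R))$ and lets $R\to\infty$ — harmless since $|Db|$ and $\textmd{lip}(b)$ are local quantities. The slightly more delicate verification, under the second route, is the admissibility of the $F_{t}$-test plan, which is precisely where Theorem \ref{thm3.15} is needed.
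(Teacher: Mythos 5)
Your first route is exactly the paper's argument: $\textmd{lip}(b)\equiv 1$ from Lemma \ref{lem3.1} together with the identification $|Df|=\textmd{lip}(f)$ $m$-a.e.\ for Lipschitz functions on doubling spaces with a weak local $(1,1)$-Poincar\'{e} inequality (via \cite{C99}, \cite{AGS14II}, with the PI hypotheses from \cite{LV07} \cite{Ra12} \cite{Ra12-2}), so the proposal is correct and takes essentially the same approach. The alternative disintegration route you sketch is not needed.
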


We recall the follow proposition, which is a consequence of the Laplacian comparison estimates for the distance function (\cite{Gig15}).

\begin{prop}[Proposition 5.19 in \cite{Gig15}]\label{subharmonic}
Let $(X,d,m)$ be an $\textmd{RCD}(0,N)$ space.
Let $\gamma$ be a geodesic ray and $b$ the Busemann function associated to it.
Then $b\in D(\mathbf{\Delta})$ and $\mathbf{\Delta}b\geq0$.
\end{prop}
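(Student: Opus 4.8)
The plan is to obtain $\mathbf{\Delta}b\geq 0$ by passing to the limit in the Laplacian comparison estimates for the pre-Busemann functions $b_{t}$. Recall the Laplacian comparison for the distance function on $\textmd{RCD}(0,N)$ spaces (see \cite{Gig15}): for every $y\in X$ one has $d_{y}:=d(\cdot,y)\in D(\mathbf{\Delta}, X\setminus\{y\})$ with $\mathbf{\Delta}d_{y}|_{X\setminus\{y\}}\leq \frac{N-1}{d_{y}}m$. Since $b_{t}(x)=t-d(x,\gamma_{t})$, this gives $b_{t}\in D(\mathbf{\Delta}, X\setminus\{\gamma_{t}\})$ and $\mathbf{\Delta}b_{t}|_{X\setminus\{\gamma_{t}\}}\geq -\frac{N-1}{d_{\gamma_{t}}}m$. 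Now fix a bounded open set $U$ and put $C_{U}:=\sup_{x\in U}d(x,\gamma_{0})<\infty$; for $t>C_{U}$ we have $\gamma_{t}\notin U$ and $d(x,\gamma_{t})\geq t-C_{U}$ for all $x\in U$, so that
\[
-\int\langle\nabla b_{t},\nabla\varphi\rangle\,dm=\int\varphi\,d(\mathbf{\Delta}b_{t})\geq -\frac{N-1}{t-C_{U}}\int\varphi\,dm
\]
for every nonnegative Lipschitz $\varphi$ with $\textmd{supp}(\varphi)\subset\subset U$.

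Next I pass to the limit along a sequence $t_{n}\to\infty$. Each $b_{t}$ is $1$-Lipschitz, hence $|Db_{t}|\leq 1$ $m$-a.e.\ and $\{b_{t_{n}}\}$ is bounded in $W^{1,2}$ on every bounded set; together with the locally uniform convergence $b_{t}\to b$ recalled in Section \ref{sec3.1}, a standard weak-compactness argument in $W^{1,2}$ of a bounded neighbourhood of $\textmd{supp}(\varphi)$ shows that $b\in W^{1,2}_{loc}(X)$ and that $\nabla b_{t_{n}}\rightharpoonup\nabla b$ weakly in $L^{2}_{loc}$ (along any subsequence the weak $W^{1,2}$-limit must coincide with the $L^{2}$-limit $b$, so no subsequence is needed and the full net converges). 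Letting $t_{n}\to\infty$ in the displayed inequality yields
\[
-\int\langle\nabla b,\nabla\varphi\rangle\,dm\geq 0
\]
for all nonnegative compactly supported Lipschitz $\varphi$.

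Finally I upgrade this to $b\in D(\mathbf{\Delta})$ with $\mathbf{\Delta}b\geq 0$. The linear functional $L(\varphi):=-\int\langle\nabla b,\nabla\varphi\rangle\,dm$ on compactly supported Lipschitz functions is nonnegative on nonnegative functions; given a bounded open $V$, take a Lipschitz cut-off $\chi$ with values in $[0,1]$, equal to $1$ on $\overline{V}$ and with compact support (Lemma \ref{lem5.13} provides even a test function with these properties). Writing $\varphi=\|\varphi\|_{\infty}\chi-(\|\varphi\|_{\infty}\chi-\varphi)$ as a difference of nonnegative functions gives $|L(\varphi)|\leq L(\chi)\|\varphi\|_{\infty}$ for every Lipschitz $\varphi$ supported in $V$, so $L$ extends to a bounded linear functional on $C_{c}(X)$ and, being nonnegative, is represented by a nonnegative Radon measure; by definition this measure is $\mathbf{\Delta}b$, and it is $\geq 0$. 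The step that requires genuine care is the passage to the limit: the weak $W^{1,2}_{loc}$-convergence of $b_{t_{n}}$ to $b$ is what forces the left-hand side to converge, while the degeneration of the one-sided bound $-\frac{N-1}{t-C_{U}}m$ to zero is what makes the right-hand side vanish; the cut-off argument is then needed because a priori one controls $L$ only against Lipschitz test functions. Everything else is a direct consequence of the Laplacian comparison for $d_{y}$.
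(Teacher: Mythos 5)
Your argument is correct and follows essentially the same route the paper relies on: the paper simply cites Proposition 5.19 of \cite{Gig15}, whose proof is exactly this — apply the Laplacian comparison to $b_{t}=t-d(\cdot,\gamma_{t})$, note the lower bound $-\frac{N-1}{t-C_{U}}m$ degenerates to $0$, and pass to the limit using the uniform Lipschitz bound and local uniform convergence $b_{t}\to b$. Your weak $W^{1,2}$-compactness plus Riesz-representation step is just a hands-on version of the stability lemma for measure-valued Laplacian lower bounds used in \cite{Gig15}, so there is nothing essentially different here.
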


Combining Propositions \ref{prop5.3}, \ref{subharmonic} and Lemma \ref{l5.6}, we can prove the following:

\begin{prop}
$\mathbf{\Delta} b =0$ on $b^{-1}((0,\infty))$.
\end{prop}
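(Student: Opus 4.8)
The plan is to use that, by Proposition \ref{subharmonic}, $\mathbf{\Delta}b$ is a nonnegative Radon measure, and that on $b^{-1}((0,\infty))$ the first order behaviour of $b$ is completely rigid: $|Db|=1$ $m$-a.e. by Lemma \ref{l5.6}, and $b$ pushes $m$ forward to a constant multiple of the Lebesgue measure by Proposition \ref{prop5.3}. I will pair $\mathbf{\Delta}b$ with test functions of the form $\phi\circ b$, show that the pairing vanishes, and then deduce from $\mathbf{\Delta}b\ge 0$ that $\mathbf{\Delta}b$ charges no subset of $b^{-1}((0,\infty))$.

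First I would fix $\phi\in C_{c}^{\infty}((0,\infty))$ with $\phi\ge 0$. By Theorem \ref{thm1.3} and Corollary \ref{cor3.5} the set $b^{-1}(\textmd{supp}\,\phi)$ is compact, so $\phi\circ b$ is a Lipschitz function with compact support contained in $b^{-1}((0,\infty))$, hence an admissible test function for $\mathbf{\Delta}b$. Applying the chain rule $\nabla(\phi\circ b)=(\phi'\circ b)\nabla b$ together with Lemma \ref{l5.6},
\begin{align*}
\int_{X}(\phi\circ b)\,d\mathbf{\Delta}b=-\int_{X}\langle\nabla b,\nabla(\phi\circ b)\rangle\,dm=-\int_{X}(\phi'\circ b)\,|Db|^{2}\,dm=-\int_{X}(\phi'\circ b)\,dm .
\end{align*}
By Proposition \ref{prop5.3} (equivalently, by (\ref{5.2})) the map $b$ sends $m|_{b^{-1}((0,\infty))}$ to $c\,\mathcal{L}^{1}|_{(0,\infty)}$ with $c=m_{-1}(\Xi_{0}(K))$; since $\phi'$ is compactly supported in $(0,\infty)$, the last integral equals $c\int_{0}^{\infty}\phi'(r)\,dr=0$. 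Hence $\int_{X}(\phi\circ b)\,d\mathbf{\Delta}b=0$ for every nonnegative $\phi\in C_{c}^{\infty}((0,\infty))$.

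To finish I would fix $0<s<t$, take $\phi_{n}\in C_{c}^{\infty}((0,\infty))$ with $0\le\phi_{n}\le 1$ and $\phi_{n}\uparrow\mathbf{1}_{(s,t)}$ pointwise, and apply the monotone convergence theorem (legitimate because $\mathbf{\Delta}b\ge 0$) to obtain
\begin{align*}
\mathbf{\Delta}b\bigl(b^{-1}((s,t))\bigr)=\lim_{n\to\infty}\int_{X}(\phi_{n}\circ b)\,d\mathbf{\Delta}b=0 .
\end{align*}
Letting $s\downarrow 0$ and $t\uparrow\infty$ and using $\sigma$-additivity gives $\mathbf{\Delta}b\bigl(b^{-1}((0,\infty))\bigr)=0$, which is the assertion.

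The proof is essentially a bookkeeping of the three quoted results, so no step is a serious obstacle. The mildly delicate points are the verification that $\phi\circ b$ is genuinely a compactly supported test function, which is where the minimal volume growth hypothesis enters through Theorem \ref{thm1.3}, and the precise form of the pushforward $b_{*}\bigl(m|_{b^{-1}((0,\infty))}\bigr)$, which is read off from Proposition \ref{prop5.3}; note that $r_{0}=0$ has been assumed, so by Proposition \ref{p5.2} all of $b^{-1}((0,\infty))$ is contained in $\Xi(K)$ and the disintegration of Section \ref{sec3} is available on the whole region in question.
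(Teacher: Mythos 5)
Your proof is correct and follows essentially the same route as the paper: pair $\mathbf{\Delta}b$ with test functions of the form $\phi\circ b$, use $|Db|^{2}=1$ and the fact that $b_{*}(m|_{b^{-1}((0,\infty))})=c\,\mathcal{L}^{1}$ (from Proposition \ref{prop5.3}/(\ref{5.2})) to see the pairing vanishes, then invoke $\mathbf{\Delta}b\geq 0$. Your additional care about the compact support of $\phi\circ b$ and the monotone-convergence exhaustion only spells out details the paper leaves implicit.
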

\begin{proof}
Let $\varphi: \mathbb{R}^{+}\rightarrow[0,1]$ be a Lipschitz function with $\textmd{supp}(\varphi)\subset\subset(0,\infty)$.
Thus
\begin{align}
0 &=\int_{\mathcal{S}}\biggl(\int_{0}^{\infty}\varphi'(s)ds\biggr)\nu'(dy) \nonumber\\ &=\int_{b^{-1}((0,\infty))}\varphi'(b(x)) dm \nonumber\\
&=\int_{b^{-1}((0,\infty))}\varphi'(b(x))|Db(x)|^{2} dm \nonumber\\
&=-\int_{b^{-1}((0,\infty))}\varphi(b(x))d\mathbf{\Delta}b\leq0. \nonumber
\end{align}
Hence $\int_{b^{-1}((0,\infty))}\varphi(b(x))d\mathbf{\Delta}b=0$.
By $\mathbf{\Delta}b\geq0$ and the arbitrariness of $\varphi$, we obtain $\mathbf{\Delta} b =0$ on $b^{-1}((0,\infty))$.
\end{proof}

Evidently, $b$ is not in $W^{1,2}(X)$ but only in $W^{1,2}_{loc}(X)$.
Given any $\tilde{R}>\tilde{r}>0$, let $\tilde{\varphi}\in C^{\infty}(\mathbb{R})$ be a smooth function satisfying $\tilde{\varphi}\equiv0$ on $(-\infty,\frac{\tilde{r}}{2}]\cup[2\tilde{R},\infty)$, $\tilde{\varphi}(x)=x$ on $[\bar{r},\tilde{R}]$.
Define $\tilde{b}:X\rightarrow\mathbb{R}$ to be
$$\tilde{b}(x)=\tilde{\varphi}\circ b(x).$$

\begin{prop}
$\tilde{b}\in \textmd{Test}(X)$ and $\Delta\tilde{b}\in W^{1,2}(X)\cap L^{\infty}(X)$.
\end{prop}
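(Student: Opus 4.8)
The plan is to verify the three defining conditions of $\textmd{Test}(X)$ for $\tilde b=\tilde\varphi\circ b$, namely that $\tilde b\in D(\Delta)$, that $\tilde b,|D\tilde b|\in L^\infty(X)$, and that $\Delta\tilde b\in W^{1,2}(X)$; the extra claim $\Delta\tilde b\in L^\infty(X)$ will come out of the same computation. First I would record that $\tilde b$ is globally Lipschitz with bounded support: since $b$ is $1$-Lipschitz and $\tilde\varphi$ is smooth with $\tilde\varphi'$ bounded and compactly supported, $\tilde b$ is bounded, $|D\tilde b|=|\tilde\varphi'(b)|\,|Db|\le\|\tilde\varphi'\|_\infty$ $m$-a.e.\ by the chain rule for the differential together with Lemma \ref{l5.6}, and $\textmd{supp}(\tilde b)\subset b^{-1}([\tfrac{\tilde r}{2},2\tilde R])$, which is compact by properness of $b$ (Theorem \ref{thm1.3}). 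In particular $\tilde b\in W^{1,2}(X)$ with bounded support, so $\tilde b,|D\tilde b|\in L^\infty(X)$.

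Next I would compute $\Delta\tilde b$ via the chain rule for the Laplacian. We already know $b\in D(\mathbf\Delta)$ with $\mathbf\Delta b=0$ on $b^{-1}((0,\infty))$ and $\mathbf\Delta b\ge 0$ in general (Proposition \ref{subharmonic}). Since $\tilde\varphi\equiv 0$ on a neighborhood of $(-\infty,0]$, the function $\tilde b$ vanishes on a neighborhood of $b^{-1}((-\infty,0])$, so by locality of the Laplacian all of the action of $\Delta\tilde b$ takes place inside $b^{-1}((0,\infty))$, where $b$ is harmonic in the measure-valued sense. The chain rule then gives, on $b^{-1}((0,\infty))$,
\begin{align}
\mathbf\Delta\tilde b=\tilde\varphi''(b)\,|Db|^2\,m+\tilde\varphi'(b)\,\mathbf\Delta b=\tilde\varphi''(b)\,m,
\end{align}
using $\mathbf\Delta b|_{b^{-1}((0,\infty))}=0$ and $|Db|=1$ $m$-a.e. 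Hence $\mathbf\Delta\tilde b=h\,m$ with $h:=\tilde\varphi''\circ b$ a bounded Borel function (zero outside a compact set), so $h\in L^2(X)\cap L^\infty(X)$; since $(X,d,m)$ is a proper infinitesimally Hilbertian space and $\tilde b\in W^{1,2}(X)$, Proposition 4.24 of \cite{Gig15} (quoted in the preliminaries) gives $\tilde b\in D(\Delta)$ with $\Delta\tilde b=h\in L^\infty(X)$.

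It remains to check $\Delta\tilde b\in W^{1,2}(X)$, i.e.\ $\tilde\varphi''\circ b\in W^{1,2}(X)$. The function $\psi:=\tilde\varphi''$ is smooth and compactly supported on $\mathbb R$, hence Lipschitz; so $\psi\circ b$ is Lipschitz with bounded support, therefore lies in $W^{1,2}_{loc}(X)$ with $|D(\psi\circ b)|\le\textmd{Lip}(\psi)\,|Db|\le\textmd{Lip}(\psi)$ $m$-a.e., and since its support $\subset b^{-1}([\tfrac{\tilde r}{2},2\tilde R])$ is compact, both $\psi\circ b$ and $|D(\psi\circ b)|$ are in $L^2(X)$; thus $\Delta\tilde b=\psi\circ b\in W^{1,2}(X)$. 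Putting the three pieces together, $\tilde b\in D(\Delta)$, $\tilde b,|D\tilde b|\in L^\infty(X)$ and $\Delta\tilde b\in W^{1,2}(X)$, so $\tilde b\in\textmd{Test}(X)$, and moreover $\Delta\tilde b\in L^\infty(X)$. The main subtlety — and the only place where real content enters — is the passage from the measure-valued identity $\mathbf\Delta b=0$ on $b^{-1}((0,\infty))$ to the $L^2$-Laplacian statement for $\tilde b$, i.e.\ justifying the chain rule for $\mathbf\Delta$ on the open set $b^{-1}((0,\infty))$ and then invoking Proposition 4.24 of \cite{Gig15}; everything else is bookkeeping about Lipschitz functions of a proper Busemann function with compact sublevel sets.
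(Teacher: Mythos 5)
Your proposal is correct and follows essentially the same route as the paper: establish that $\tilde{b}$ is Lipschitz with support in the compact set $b^{-1}([\tfrac{\tilde{r}}{2},2\tilde{R}])$, apply the chain rule for the measure-valued Laplacian together with $|Db|=1$ $m$-a.e. and $\mathbf{\Delta}b=0$ on $b^{-1}((0,\infty))$ to get $\mathbf{\Delta}\tilde{b}=\tilde{\varphi}''\circ b\, m$, and then identify this with the $L^{2}$-Laplacian (via the Proposition 4.24 criterion from \cite{Gig15} quoted in the preliminaries), noting $\tilde{\varphi}''\circ b\in W^{1,2}(X)\cap L^{\infty}(X)$. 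The only differences are that you spell out details the paper leaves implicit (the passage from $\mathbf{\Delta}$ to $\Delta$ and why $\tilde{\varphi}''\circ b\in W^{1,2}$), which is harmless.
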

\begin{proof}
Evidently $\tilde{b}$ is Lipschitz with $\textmd{supp}(\tilde{b})\subset b^{-1}([\frac{\tilde{r}}{2},2\tilde{R}])$.
Recall the facts that $|Db|^{2}=1$ $m$-a.e. and $\mathbf{\Delta} b =0$ on $b^{-1}((0,\infty))$, then by the chain rule for the distributional Laplacian (see Proposition 4.11 in \cite{Gig15}), we have
$$\mathbf{\Delta}\tilde{b}=\tilde{\varphi}''\circ b|Db|^{2}m+ \tilde{\varphi}'\circ b \mathbf{\Delta} b =\tilde{\varphi}''\circ b m.$$
Obviously $\tilde{\varphi}''\circ b\in W^{1,2}(X)\cap L^{\infty}(X)$, hence $\tilde{b}\in D(\Delta)$ with $\Delta\tilde{b}\in W^{1,2}(X)\cap L^{\infty}(X)$.
\end{proof}

\begin{prop} [Euler equation for $b$]\label{prop5.11}
Let $f,g\in Test(X)$ with $\textmd{supp}(f)\subset\subset b^{-1}((0,\infty))$.
Then
\begin{align}\label{5.8}
\int\Delta f\langle\nabla b,\nabla g\rangle dm=\int f\langle\nabla b,\nabla \Delta g\rangle dm.
\end{align}
\end{prop}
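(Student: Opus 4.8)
The plan is to reduce (\ref{5.8}) to an identity between integrals of test functions, then to use that the Hessian of $b$ vanishes on $b^{-1}((0,\infty))$, and to close the computation with the Koszul-type identity for the Hessian together with two integrations by parts. First I would replace $b$ by a test function: since $\textmd{supp}(f)$ is a compact subset of the open set $b^{-1}((0,\infty))$, pick $0<\tilde r<\min_{x\in\textmd{supp}(f)}b(x)\le\max_{x\in\textmd{supp}(f)}b(x)<\tilde R$ and take $\tilde b=\tilde\varphi\circ b$ as in the preceding proposition, with $\tilde\varphi(x)=x$ on $[\tilde r,\tilde R]$. Then $\tilde b\in\textmd{Test}(X)$; on the open set $U:=b^{-1}((\tilde r,\tilde R))\supset\textmd{supp}(f)$ one has $\nabla b=\nabla\tilde b$ $m$-a.e., $|D\tilde b|\equiv1$ and $\Delta\tilde b=\tilde\varphi''\circ b\equiv0$ $m$-a.e.. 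Since $f$, and hence $\Delta f$, vanishes on $X\setminus\textmd{supp}(f)$, we have $\textmd{supp}(\Delta f)\subset\textmd{supp}(f)\subset U$, so neither side of (\ref{5.8}) changes when $b$ is replaced by $\tilde b$; it thus suffices to prove $\int\Delta f\,\langle\nabla\tilde b,\nabla g\rangle\,dm=\int f\,\langle\nabla\tilde b,\nabla\Delta g\rangle\,dm$.

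The geometric input is that $\textmd{Hess}(\tilde b)=0$ $m$-a.e. on $U$. I would obtain this from the Bochner inequality in the strengthened form carrying $|\textmd{Hess}(\cdot)|^2$ on the right-hand side (valid on $\textmd{RCD}(0,N)$ spaces through the second-order calculus of \cite{Gig14-2}): for every nonnegative cut-off $h\in\textmd{Test}(X)$ with $\textmd{supp}(h)\subset\subset U$ (such $h$ exist by Lemma \ref{lem5.13}),
$$\tfrac12\int\Delta h\,|D\tilde b|^2\,dm\ \ge\ \int h\Big(\tfrac{(\Delta\tilde b)^2}{N}+\langle\nabla\tilde b,\nabla\Delta\tilde b\rangle+|\textmd{Hess}(\tilde b)|^2\Big)\,dm .$$
On $\textmd{supp}(h)$ we have $|D\tilde b|^2=1$ and $\Delta\tilde b=0$, so the left side equals $\tfrac12\int\Delta h\,dm=0$ and the right side collapses to $\int h\,|\textmd{Hess}(\tilde b)|^2\,dm$; letting $h$ exhaust the compact subsets of $U$ forces $\textmd{Hess}(\tilde b)=0$ $m$-a.e. on $U$ (letting $\tilde r\downarrow0$, $\tilde R\uparrow\infty$ this even gives $\textmd{Hess}(b)=0$ $m$-a.e. on $b^{-1}((0,\infty))$, but only the local statement is used).

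For the computation, the scalar products $\langle\nabla\tilde b,\nabla g\rangle$, $\langle\nabla f,\nabla\tilde b\rangle$ and $\langle\nabla f,\nabla g\rangle$ all lie in $W^{1,2}(X)$ (algebra of test functions), and $f\,\nabla\tilde b\in D(\textmd{div})$ with $\textmd{div}(f\nabla\tilde b)=\langle\nabla f,\nabla\tilde b\rangle$ because $f\Delta\tilde b=0$; integrating by parts twice then yields
$$\int\Delta f\,\langle\nabla\tilde b,\nabla g\rangle\,dm-\int f\,\langle\nabla\tilde b,\nabla\Delta g\rangle\,dm=-\int\Big(\langle\nabla f,\nabla\langle\nabla\tilde b,\nabla g\rangle\rangle+\langle\nabla g,\nabla\langle\nabla\tilde b,\nabla f\rangle\rangle\Big)\,dm .$$
By the Koszul-type identity for $\textmd{Hess}(\tilde b)$ (a pointwise consequence of (\ref{def-hess})), the integrand equals $2\,\textmd{Hess}(\tilde b)(\nabla f,\nabla g)+\langle\nabla\tilde b,\nabla\langle\nabla f,\nabla g\rangle\rangle$. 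The second summand integrates to $-\int\langle\nabla f,\nabla g\rangle\,\Delta\tilde b\,dm=0$, since $\langle\nabla f,\nabla g\rangle$ is supported in $\textmd{supp}(f)\subset U$ where $\Delta\tilde b=0$; the first integrates to $0$ because $\textmd{Hess}(\tilde b)(\nabla f,\nabla g)=0$ $m$-a.e. on $\textmd{supp}(f)$ by the previous paragraph and $=0$ $m$-a.e. on $X\setminus\textmd{supp}(f)$, where $\nabla f=0$ and $\textmd{Hess}$ is tensorial. Hence the difference vanishes, which is (\ref{5.8}) (restored to $b$ by the reduction above).

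The main obstacle is the Hessian-vanishing step: it genuinely uses the improved Bochner inequality carrying the $|\textmd{Hess}|^2$ term, rather than the bare form (\ref{2.14}); and throughout one must verify that the scalar products of gradients appearing above really belong to $W^{1,2}(X)$ and keep careful track of the supports when discarding the terms involving $\Delta\tilde b$.
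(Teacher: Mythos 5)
Your proof is correct, but it runs in the opposite logical direction from the paper's. The paper proves (\ref{5.8}) by a first-variation argument: it applies the bare Bochner inequality (\ref{2.14}) to the pair $f\geq 0$ and $\tilde b+\epsilon g$, observes that equality holds at $\epsilon=0$ (because $|D\tilde b|^2=1$ and $\Delta\tilde b=0$ on the relevant region make both sides vanish there), expands in $\epsilon$ and divides by $\epsilon\to 0^{\pm}$; the vanishing of $\textmd{Hess}(b)$ (Lemma \ref{lem5.10}) is then \emph{deduced afterwards from} (\ref{5.8}). You instead establish $\textmd{Hess}(\tilde b)=0$ first, via the self-improved Bochner inequality carrying the $|\textmd{Hess}|^2$ term (valid on $\textmd{RCD}$ spaces by Gigli's second-order calculus in \cite{Gig14-2}, though not stated in this paper), and then obtain (\ref{5.8}) by two integrations by parts and the pointwise Koszul identity for the Hessian of a test function; this is not circular, since your Hessian-vanishing step does not use (\ref{5.8}). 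What each route buys: the paper's argument is self-contained relative to its stated preliminaries (only (\ref{2.14}) is needed) and follows the splitting-theorem strategy of \cite{Gig13}, while yours is more geometric and makes Lemma \ref{lem5.10} an input rather than a corollary, at the cost of invoking the stronger Bochner inequality. One small caveat: the inequality you display, with both $\frac{(\Delta\tilde b)^2}{N}$ and $|\textmd{Hess}(\tilde b)|^2$ on the right-hand side simultaneously, is not the standard statement and is questionable in general; but this is harmless here since $\Delta\tilde b=0$ on $\textmd{supp}(h)$, so only the $\textmd{RCD}(K,\infty)$-type improved Bochner with the Hessian term is actually needed, and the rest of your bookkeeping (locality of $\Delta f$ and of $\nabla f$ on $\{f=0\}$, membership of the carr\'e du champ terms in $W^{1,2}(X)$, the Leibniz rule giving $\textmd{div}(f\nabla\tilde b)=\langle\nabla f,\nabla\tilde b\rangle$) is accurate.
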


\begin{proof}
Evidently to conclude we only need to prove the case $f\geq0$.
Choose $\tilde{R}>\tilde{r}>0$ such that $\textmd{supp}(f)\subset b^{-1}([\tilde{r},\tilde{R}])$, and construct $\tilde{b}$ as in the previous paragraphs.
Let $\epsilon\in\mathbb{R}$, since $f, \tilde{b}+\epsilon g\in \textmd{Test}(X)$ and $f\geq0$, we can apply the Bochner inequality (\ref{2.14}):
\begin{align}\label{5.23}
\frac{1}{2}\int\Delta f|D(\tilde{b}+\epsilon g)|^{2}dm\geq\int f\bigl[\frac{(\Delta (\tilde{b}+\epsilon g))^{2}}{N}+\langle\nabla (\tilde{b}+\epsilon g),\nabla\Delta (\tilde{b}+\epsilon g)\rangle\bigr]dm.
\end{align}
Using the facts that $|D\tilde{b}|^{2}=1$ and $\Delta \tilde{b} =0$ $m$-a.e. on $b^{-1}([\tilde{r},\tilde{R}])$, it is easy to see the equality in (\ref{5.23}) holds when $\epsilon=0$.
Expand (\ref{5.23}), we obtain
$$
\int\Delta f\bigl(\epsilon\langle\nabla g,\nabla \tilde{b}\rangle+ \frac{\epsilon^{2}}{2}|Dg|^{2}\bigr)dm\geq\int f\biggl(\epsilon\langle\nabla \tilde{b},\nabla \Delta g\rangle+ \epsilon^{2}\bigl(\frac{(\Delta g)^{2}}{N}+\langle\nabla g,\nabla\Delta g\rangle\bigr)\biggr)dm.
$$
Divide by $\epsilon>0$ (resp. $\epsilon<0$) and let $\epsilon\downarrow0$ (resp. $\epsilon\uparrow0$) we obtain (\ref{5.8}).
\end{proof}

Using the Euler equation (\ref{5.8}), we obtain the information on $\textmd{Hess}(b)$, which was indicated formally in Remark 4.15 of \cite{Gig13}.

\begin{lem}\label{lem5.10}
$\textmd{Hess}(b)=0$ $m$-a.e. on $b^{-1}((0,\infty))$.
\end{lem}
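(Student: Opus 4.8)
The plan is to derive $\mathrm{Hess}(b)=0$ $m$-a.e.\ on $b^{-1}((0,\infty))$ from the Euler equation \eqref{5.8}, using the definition of the Hessian together with the facts that $|Db|^2=1$ and $\mathbf{\Delta}b=0$ on this region. The key identity is the ``polarized'' Bochner formula: for a test function $b$ (after the cutoff construction making $\tilde b\in\mathrm{Test}(X)$), applied with test functions $g_1,g_2,h$ supported in $b^{-1}((0,\infty))$, one has
\begin{align*}
\int h\,\mathrm{Hess}(b)(\nabla g_1,\nabla g_2)\,dm
=\ &\frac12\int\Big[\langle\nabla g_1,\nabla(\langle\nabla b,\nabla g_2\rangle)\rangle
+\langle\nabla g_2,\nabla(\langle\nabla b,\nabla g_1\rangle)\rangle\\
&\qquad-\langle\nabla b,\nabla(\langle\nabla g_1,\nabla g_2\rangle)\rangle\Big]h\,dm,
\end{align*}
which is exactly the defining formula \eqref{def-hess} rewritten via integration by parts (i.e.\ moving the divergences back onto the gradients). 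So it suffices to show the right-hand side vanishes.

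First I would fix $g_1,g_2,h\in\mathrm{Test}(X)$ with supports in $b^{-1}((\tilde r,\tilde R))$ for some $0<\tilde r<\tilde R$, and replace $b$ by the test function $\tilde b=\tilde\varphi\circ b$ constructed above, which agrees with $b$ on a neighborhood of the supports; by the locality of $\mathrm{Hess}$ it is enough to prove $\mathrm{Hess}(\tilde b)=0$ there. Then I would expand the Euler equation \eqref{5.8}: apply it with the roles played by the various test functions to get relations of the form $\int\Delta f\,\langle\nabla b,\nabla g\rangle\,dm=\int f\,\langle\nabla b,\nabla\Delta g\rangle\,dm$, and also use it with $g$ replaced by products $g_1g_2$ (which lie in $\mathrm{Test}(X)$) and with the Leibniz rule for $\langle\nabla b,\nabla(g_1g_2)\rangle=g_1\langle\nabla b,\nabla g_2\rangle+g_2\langle\nabla b,\nabla g_1\rangle$. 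The point is that the Euler equation encodes precisely the statement that the vector field $\nabla b$ is ``parallel'' tested against gradients, and combining it with the definition of the Hessian term by term will cause all three terms in the bracket above to cancel. Concretely, I would write $\langle\nabla b,\nabla(\langle\nabla g_1,\nabla g_2\rangle)\rangle$ using that $\langle\nabla g_1,\nabla g_2\rangle\in\mathrm{Test}(X)$ (or at least in $W^{1,2}\cap L^\infty$ with $L^2$-Laplacian, after a further cutoff/approximation via Lemma~\ref{lem2.13}), and apply \eqref{5.8} to transfer the $\nabla b$-derivative; the resulting expressions match those coming from the first two terms after integrating by parts using $\Delta\tilde b=0$.

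The main obstacle I anticipate is regularity bookkeeping rather than a conceptual difficulty: the Euler equation \eqref{5.8} is stated for $f,g\in\mathrm{Test}(X)$ with $\mathrm{supp}(f)\subset\subset b^{-1}((0,\infty))$, but in the Hessian formula the natural ``$g$'' is something like $\langle\nabla g_1,\nabla g_2\rangle$, which need not be a test function. I would handle this by a routine approximation argument: cut off with a function $\chi$ from Lemma~\ref{lem5.13} equal to $1$ on the supports, approximate in $W^{1,2}$ by test functions via Lemma~\ref{lem2.13}, and pass to the limit using the $L^\infty$ and $W^{1,2}$ bounds on $b$, $\Delta\tilde b$, and the $g_i$'s. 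Once all the pieces are legitimately test (or $W^{2,2}$) functions, the algebraic cancellation is forced, and one concludes $\int h\,\mathrm{Hess}(b)(\nabla g_1,\nabla g_2)\,dm=0$ for all such $g_1,g_2,h$; since gradients of test functions generate $L^2(TX)$ as a module and $h$ is arbitrary, $\mathrm{Hess}(b)=0$ $m$-a.e.\ on $b^{-1}((0,\infty))$.
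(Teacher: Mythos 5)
Your proposal is correct and follows essentially the same route as the paper's proof: localize via the cutoff $\tilde{b}\in\textmd{Test}(X)$, combine the Euler equation \eqref{5.8} with the defining identity \eqref{def-hess}, use $|D\tilde{b}|^{2}=1$ and $\Delta\tilde{b}=0$ on the relevant region together with the Leibniz rules and integration by parts, and conclude by polarization and density of gradients of test functions. Your symmetric rewriting of \eqref{def-hess} is just the paper's integration by parts performed up front, and the cancellation you assert is exactly the content of the two integral identities computed in the paper; the only caution is that $\langle\nabla g_{1},\nabla g_{2}\rangle$ need not belong to $\textmd{Test}(X)$ (its Laplacian is in general only a measure), so the transfer of the $\nabla b$-derivative should be routed through the products $g_{1}g_{2}\in\textmd{Test}(X)$, as you also indicate, rather than through a $W^{1,2}$-approximation of $\langle\nabla g_{1},\nabla g_{2}\rangle$ itself, since \eqref{5.8} involves $\nabla\Delta g$ and does not pass to such limits.
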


\begin{proof}

Let $g, h\in \textmd{Test}(X)$ with compact support such that $\textmd{supp}(g),\textmd{supp}(h)\subset b^{-1}([\tilde{r},\tilde{R}])$ for some $\tilde{R}>\tilde{r}>0$.
Construct $\tilde{b}$ as in the previous paragraphs.
Using the Euler equation (\ref{5.8}), the Leibniz rule for the divergence, the facts that $|D\tilde{b}|^{2}=1$ and $\Delta \tilde{b} =0$ $m$-a.e. on $b^{-1}([\tilde{r},\tilde{R}])$, as well as some integration by parts we obtain:
\begin{align}
&\int \textmd{div}(h\nabla g)\langle \nabla \tilde{b},\nabla g\rangle dm \nonumber\\
=&\int\Delta(h g)\langle \nabla \tilde{b},\nabla g\rangle dm-\int g\Delta h\langle \nabla \tilde{b},\nabla g\rangle dm-\int\langle \nabla h,\nabla g\rangle\langle \nabla \tilde{b},\nabla g\rangle dm\nonumber\\
=&\int gh\langle \nabla \tilde{b},\nabla \Delta g\rangle dm-\int\Delta h\langle \nabla \tilde{b},\nabla \bigl( \frac{g^{2}}{2}\bigr)\rangle dm-\int\langle \nabla h,\nabla g\rangle \langle \nabla \tilde{b},\nabla g\rangle dm\nonumber\\
=&\int \langle \nabla \tilde{b},\nabla (gh\Delta g)\rangle dm-\int \Delta g\langle \nabla \tilde{b},\nabla (gh)\rangle dm-\int\langle \nabla h,\nabla g\rangle\langle \nabla \tilde{b},\nabla g\rangle dm-\int\Delta h\langle \nabla \tilde{b},\nabla \bigl( \frac{g^{2}}{2}\bigr)\rangle dm\nonumber\\
=&-\int g\Delta g\langle \nabla \tilde{b},\nabla h)\rangle dm-\int h\Delta g\langle \nabla \tilde{b},\nabla g\rangle dm-\int\langle \nabla h,\nabla g\rangle\langle \nabla \tilde{b},\nabla g\rangle dm-\int\Delta h\langle \nabla \tilde{b},\nabla \bigl( \frac{g^{2}}{2}\bigr)\rangle dm\nonumber\\
=&-\int g\Delta g\langle \nabla \tilde{b},\nabla h)\rangle dm-\int \textmd{div}(h\nabla g)\langle \nabla \tilde{b},\nabla g\rangle dm-\int\Delta h\langle \nabla \tilde{b},\nabla \bigl( \frac{g^{2}}{2}\bigr)\rangle dm.\nonumber
\end{align}
Hence,
\begin{align}
\int \textmd{div}(h\nabla g)\langle \nabla \tilde{b},\nabla g\rangle dm =-\frac{1}{2}\int g\Delta g\langle \nabla \tilde{b},\nabla h)\rangle dm-\frac{1}{2}\int\Delta h\langle \nabla \tilde{b},\nabla \bigl( \frac{g^{2}}{2}\bigr)\rangle dm.
\end{align}
On the other hand,
\begin{align}
&\int h\langle\nabla \tilde{b},\nabla(\frac{|\nabla g|^{2}}{2})\rangle dm\nonumber\\
=&\int \langle\nabla \tilde{b},\nabla\bigl(h\frac{|\nabla g|^{2}}{2}\bigr) dm- \int\frac{|\nabla g|^{2}}{2} \langle\nabla \tilde{b},\nabla h\rangle dm\nonumber\\
=&- \int \Delta\bigl(\frac{g^{2}}{2}\bigr)\langle\nabla \tilde{b},\nabla h\rangle dm + \int \frac{|\nabla g|^{2}}{2}\langle\nabla \tilde{b},\nabla h\rangle dm + \int g\Delta g \langle\nabla \tilde{b},\nabla h\rangle dm \nonumber\\
=&- \int \frac{g^{2}}{2} \langle\nabla \tilde{b},\nabla\Delta h\rangle dm + \int \frac{|\nabla g|^{2}}{2}\langle\nabla \tilde{b},\nabla h\rangle dm + \int g\Delta g \langle\nabla \tilde{b},\nabla h\rangle dm\nonumber\\
=& \int\Delta h\langle\nabla \tilde{b},\nabla\bigl(\frac{g^{2}}{2}\bigr)\rangle dm - \int h\langle\nabla \tilde{b},\nabla(\frac{|\nabla g|^{2}}{2})\rangle dm + \int g\Delta g \langle\nabla \tilde{b},\nabla h\rangle dm.\nonumber
\end{align}
Hence
\begin{align}
\int h\langle\nabla \tilde{b},\nabla(\frac{|\nabla g|^{2}}{2})\rangle dm=
\frac{1}{2}\int\Delta h\langle\nabla \tilde{b},\nabla\bigl(\frac{g^{2}}{2}\bigr)\rangle dm + \frac{1}{2} \int g\Delta g \langle\nabla \tilde{b},\nabla h\rangle dm.
\end{align}
By (\ref{def-hess}) and polarization, we obtain $\textmd{Hess}(b)=0$ $m$-a.e. on $b^{-1}((0,\infty))$.
\end{proof}

\subsection{Effect of $F_{t}$ on the Dirichlet energy and metric}

Because we only know information on $b^{-1}((0,\infty))$, we need some cut-off argument.
Let $\bar{r}$ be a fixed positive number.
Let $\bar{\varphi}\in C^{\infty}(\mathbb{R})$ be a smooth increasing function satisfying $\bar{\varphi}\equiv0$ on $(-\infty,\frac{\bar{r}}{2}]$, $\bar{\varphi}(x)=x$ on $[\bar{r},\infty)$.
Define $\bar{b}:X\rightarrow\mathbb{R}$ to be
$$\bar{b}(x)=\bar{\varphi}\circ b(x).$$
Note that $\bar{b}$ is Lipschitz, with support in $b^{-1}([\frac{\bar{r}}{2},\infty))$ and equal to $b$ on $b^{-1}([\bar{r},\infty))$.
We define the reparametrization function $\textmd{rep}:\mathbb{R}^{+}\times\mathbb{R}\rightarrow \mathbb{R}^{+}$, $\textmd{rep}_{t}(r)=\textmd{rep}(t,r)$ by requiring it satisfying $\textmd{rep}_{0}(r)=0$ and
\begin{align}\label{5.10}
\partial_{t}\textmd{rep}_{t}(r)=\bar{\varphi}'(r+\textmd{rep}_{t}(r))
\end{align}
for every $t\geq0$.
It's easy to see $\textmd{rep}_{t}(r)=0$ for $(t,r)\in\mathbb{R}^{+}\times(-\infty,\frac{\bar{r}}{2}]$ and $\textmd{rep}_{t}(r)=t$ on $(t,r)\in\mathbb{R}^{+}\times[\bar{r},\infty)$.
We then define the flow $\bar{F}:\mathbb{R^{+}}\times (X\setminus\mathcal{N})\rightarrow X\setminus\mathcal{N}$ to be
$$
\bar{F}_{t}(x)=F_{\textmd{rep}_{t}(b(x))}(x).
$$
$\bar{F}_{t}: X\setminus\mathcal{N}\rightarrow X\setminus\mathcal{N}$ is invertible for every $t\geq0$.
Thus for $t > 0$ the map $\bar{F}_{-t}=(\bar{F}_{t})^{-1}: X\setminus\mathcal{N}\rightarrow X\setminus\mathcal{N}$ is well defined.
We also suitably define $\bar{F}_{t}$ and $\bar{F}_{-t}$ on $\mathcal{N}$ to make them Borel maps on $X$.

\begin{lem}\label{prop5.9}
The following properties hold:
\begin{description}
  \item[(i)] $\bar{F}_{t}$ is the identity on $b^{-1}((-\infty,\frac{\bar{r}}{2}])\setminus\mathcal{N}$, $\bar{F}_{t}$ sends $b^{-1}((0,\infty))\setminus\mathcal{N}$ into itself for every $t\geq0$.
  \item[(ii)] $\bar{F}_{t}$ coincides with $F_{t}$ in $b^{-1}([\bar{r},\infty))\setminus\mathcal{N}$.
  \item[(iii)] For every $x\in X\setminus\mathcal{N}$ the curve $[0,\infty)\ni t\mapsto\eta^{(x)}_{t}:=\bar{F}_{t}(x)$ satisfies
  \begin{align}\label{5.11}
  \bar{b}(\eta^{(x)}_{t})=\bar{b}(\eta^{(x)}_{s})+\frac{1}{2}\int_{s}^{t}\bigl[|\dot{\eta}^{(x)}_{r}|^{2}+\textmd{lip}(\bar{b})^{2} (\eta^{(x)}_{r})\bigr]dr,
  \qquad \forall 0\leq s\leq t.
  \end{align}
  In particular, the speed of $\eta^{(x)}_{t}$ is equal to $\textmd{lip}(\bar{b})(\bar{F}_{t}(x))$ for every $t$, thus granting that $t\rightarrow \bar{F}_{t}(x)$ is $Lip(\bar{b})$-Lipschitz for every $x\in X\setminus\mathcal{N}$.
  \item[(iv)] For every $t\geq0$,
  \begin{align}\label{5.9}
  c(t)m\leq(\bar{F}_{t})_{*}m\leq C(t)m,
  \end{align}
  where $c,C:\mathbb{R}^{+}\rightarrow(0,\infty)$ are continuous functions.
  \item[(v)] The maps $\bar{F}_{t}$ form a group, i.e. $\bar{F}_{0}= \textmd{Id}$ $m$-a.e. and
  $$\bar{F}_{t+s}=\bar{F}_{t}\circ \bar{F}_{s}\quad m\text{-a.e.}$$
  for every $t,s\in \mathbb{R}$.
\end{description}
\end{lem}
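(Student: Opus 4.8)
The plan is to reduce every assertion to elementary properties of the scalar reparametrization and of the one‑dimensional flow $\Phi_{t}(r):=r+\textmd{rep}_{t}(r)$ on $\mathbb{R}$. Unwinding the definitions of $\bar{F}_{t}$ and of the ray map, for $m$-a.e.\ $x$ one has
\begin{align}
\bar{F}_{t}(x)=F_{\textmd{rep}_{t}(b(x))}(x)=g\bigl(f(x),\Phi_{t}(b(x))\bigr),\nonumber
\end{align}
so that, off the fixed $m$-negligible set $\mathcal{N}\supset X\setminus\mathcal{T}$, $\bar{F}_{t}$ is the identity on $b^{-1}((-\infty,\bar{r}/2])$ (since $\textmd{rep}_{t}\equiv 0$ there) and equals $g\circ(\textmd{Id}\times\Phi_{t})\circ g^{-1}$ on $b^{-1}((\bar{r}/2,\infty))$. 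The properties of $\textmd{rep}_{t}$ I would record first are: $\textmd{rep}_{t}(r)\geq 0$ and $0\leq\partial_{t}\textmd{rep}_{t}(r)=\bar{\varphi}'(r+\textmd{rep}_{t}(r))\leq\parallel\bar{\varphi}'\parallel_{L^{\infty}}$ from $\bar{\varphi}'\geq 0$ and the defining ODE; the endpoint values $\textmd{rep}_{t}(r)=0$ for $r\leq\bar{r}/2$ and $\textmd{rep}_{t}(r)=t$ for $r\geq\bar{r}$ already noted above; and, differentiating the ODE in $r$, that $w(t,r):=\partial_{r}\Phi_{t}(r)$ solves $\partial_{t}w=\bar{\varphi}''(\Phi_{t}(r))\,w$ with $w(0,r)=1$, hence
\begin{align}
\partial_{r}\Phi_{t}(r)=\exp\Bigl(\int_{0}^{t}\bar{\varphi}''(\Phi_{\tau}(r))\,d\tau\Bigr)\in[e^{-\kappa t},e^{\kappa t}],\qquad \kappa:=\parallel\bar{\varphi}''\parallel_{L^{\infty}}<\infty,\nonumber
\end{align}
so that $\Phi_{t}$ is an increasing $C^{1}$-diffeomorphism of $(\bar{r}/2,\infty)$ onto itself.

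With these in hand, parts (i), (ii) and (v) are soft. For (i): $\textmd{rep}_{t}\geq 0$ gives $b(\bar{F}_{t}(x))=b(x)+\textmd{rep}_{t}(b(x))\geq b(x)$, so $b^{-1}((0,\infty))\setminus\mathcal{N}$ is mapped into itself, while $\textmd{rep}_{t}\equiv 0$ on $(-\infty,\bar{r}/2]$ gives the identity statement; part (ii) is immediate from $\textmd{rep}_{t}(r)=t$ on $[\bar{r},\infty)$. For (v), $\bar{F}_{0}=\textmd{Id}$ is clear, and the group law reduces to the cocycle identity $\textmd{rep}_{t+s}(r)=\textmd{rep}_{s}(r)+\textmd{rep}_{t}(r+\textmd{rep}_{s}(r))$, which holds because both sides, viewed as functions of $t$, solve $y'=\bar{\varphi}'(r+y)$ with the same value $\textmd{rep}_{s}(r)$ at $t=0$ and $\bar{\varphi}'$ is Lipschitz; combined with $F_{a}\circ F_{c}=F_{a+c}$ (see (\ref{3.6})) this gives $\bar{F}_{t+s}=\bar{F}_{t}\circ\bar{F}_{s}$ $m$-a.e.\ for $t,s\geq 0$, and one extends to all $t,s\in\mathbb{R}$ by invertibility.

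For (iii), the key remark is that the orbit $a\mapsto F_{a}(x)$ is a unit-speed geodesic ray --- this is exactly (\ref{3.7}) together with $b(F_{a}(x))=b(x)+a$ --- so $\eta^{(x)}_{t}=\bar{F}_{t}(x)=F_{\textmd{rep}_{t}(b(x))}(x)$ is its reparametrization by $t\mapsto\textmd{rep}_{t}(b(x))$, hence locally Lipschitz with $|\dot{\eta}^{(x)}_{t}|=\partial_{t}\textmd{rep}_{t}(b(x))=\bar{\varphi}'(b(\eta^{(x)}_{t}))$. Since $\textmd{lip}(b)\equiv 1$ on $X$ and $\bar{\varphi}\in C^{1}$ is nondecreasing, the chain rule for pointwise Lipschitz constants gives $\textmd{lip}(\bar{b})(z)=\bar{\varphi}'(b(z))$, so the speed of $\eta^{(x)}$ at time $t$ equals $\textmd{lip}(\bar{b})(\eta^{(x)}_{t})$ and in particular $t\mapsto\bar{F}_{t}(x)$ is $\textmd{Lip}(\bar{b})$-Lipschitz. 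Finally
\begin{align}
\frac{d}{dt}\bar{b}(\eta^{(x)}_{t})=\frac{d}{dt}\bar{\varphi}\bigl(b(x)+\textmd{rep}_{t}(b(x))\bigr)=\bar{\varphi}'(b(\eta^{(x)}_{t}))^{2}=\frac{1}{2}\Bigl(|\dot{\eta}^{(x)}_{t}|^{2}+\textmd{lip}(\bar{b})^{2}(\eta^{(x)}_{t})\Bigr),\nonumber
\end{align}
and integrating over $[s,t]$ yields (\ref{5.11}).

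The actual work is part (iv), i.e.\ (\ref{5.9}), which I expect to be the main obstacle, since it requires tracking how $m$ transforms under $\bar{F}_{t}$ in the ray-map coordinates. On $b^{-1}((-\infty,\bar{r}/2])$ the map is the identity, so $(\bar{F}_{t})_{*}m=m$ there. On $b^{-1}((\bar{r}/2,\infty))\subset b^{-1}((0,\infty))$ I would use $m=g_{*}(\nu'\otimes\mathcal{L}^{1})$ from Proposition \ref{prop5.3}, the identity $\bar{F}_{t}=g\circ(\textmd{Id}\times\Phi_{t})\circ g^{-1}$, and the change of variables $v=\Phi_{t}(u)$: for $0\leq\varphi\in C_{c}(b^{-1}((\bar{r}/2,\infty)))$,
\begin{align}
\int\varphi\,d(\bar{F}_{t})_{*}m=\int_{\mathcal{S}}\int_{\bar{r}/2}^{\infty}\varphi\bigl(g(y,\Phi_{t}(u))\bigr)\,du\,d\nu'(y)=\int_{\mathcal{S}}\int_{\bar{r}/2}^{\infty}\varphi\bigl(g(y,v)\bigr)\,(\Phi_{t}^{-1})'(v)\,dv\,d\nu'(y),\nonumber
\end{align}
so $(\bar{F}_{t})_{*}m=g_{*}\bigl((\Phi_{t}^{-1})'\,\nu'\otimes\mathcal{L}^{1}\bigr)$ on that region; since $(\Phi_{t}^{-1})'(v)=1/\partial_{r}\Phi_{t}(\Phi_{t}^{-1}(v))\in[e^{-\kappa t},e^{\kappa t}]$ by the bound above, combining the two regions gives $e^{-\kappa t}m\leq(\bar{F}_{t})_{*}m\leq e^{\kappa t}m$, so one may take $c(t)=e^{-\kappa t}$ and $C(t)=e^{\kappa t}$. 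Throughout one must keep $\mathcal{N}$ fixed so that all displayed identities are genuine statements about bijections of $X\setminus\mathcal{N}$, and invoke the already-established facts that $b$ is proper and attains its global minimum, which ensures that outside $b^{-1}((0,\infty))$ only the trivial region $b^{-1}((-\infty,\bar{r}/2])$ occurs.
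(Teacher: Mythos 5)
Your proposal is correct and follows essentially the same route as the paper: (i), (ii), (v) from the elementary properties of $\textmd{rep}_{t}$ and the flow $F_{t}$, (iii) by computing the speed of the reparametrized ray and the chain rule $\textmd{lip}(\bar{b})=\bar{\varphi}'\circ b$ (using $\textmd{lip}(b)\equiv1$), and (iv) by pushing $m=g_{*}(\nu'\otimes\mathcal{L}^{1})$ through the ray map and bounding $\partial_{r}f_{t}^{-1}$ via the $r$-derivative of the ODE (\ref{5.10}). The only (harmless) deviation is that you solve the resulting linear ODE for $\partial_{r}(r+\textmd{rep}_{t}(r))$ explicitly to get the bounds $e^{\pm\kappa t}$ with $\kappa=\parallel\bar{\varphi}''\parallel_{L^{\infty}}$, whereas the paper obtains the two-sided bound by a comparison-principle argument; both yield continuous positive $c(t),C(t)$.
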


\begin{proof}
Properties (i), (ii), (v) follows from the definition and the obvious properties of the reparametrization function $\textmd{rep}$ and the flow $F_{t}$.

Now we prove (iii).
Note that for $x\in X\setminus\mathcal{N}$,
$$b(\eta^{(x)}_{t})=b(\bar{F}_{t}(x))=b(F_{\textmd{rep}_{t}(b(x))}(x))=b(x)+\textmd{rep}_{t}(b(x)).$$
Recall that $\bar{b}=\bar{\varphi}\circ b$, we have $$\textmd{lip}(\bar{b})(\eta^{(x)}_{t})=|\bar{\varphi}'|(b(\eta^{(x)}_{t}))\textmd{lip}(b)(\eta^{(x)}_{t})= |\bar{\varphi}'(b(x)+\textmd{rep}_{t}(b(x)))|.$$
On the other hand, $|\dot{\eta}^{(x)}_{t}|=|\dot{\sigma}^{(x)}_{s}||\partial_{t}\textmd{rep}_{t}(b(x))|=|\partial_{t}\textmd{rep}_{t}(b(x))|$, where $[0,\infty)\ni s\mapsto\sigma^{(x)}_{s}:=F_{s}(x)$ is the geodesic ray emitting from $x$.
Hence from (\ref{5.10}), we have $|\dot{\eta}^{(x)}_{t}|=\textmd{lip}(\bar{b})(\eta^{(x)}_{t})$.
Note that $$\frac{d}{dt}\bar{b}(\eta^{(x)}_{t})=\bar{\varphi}'(b(\eta^{(x)}_{t}))\frac{d}{dt}b(\eta^{(x)}_{t}) =\bar{\varphi}'(b(\eta^{(x)}_{t}))\frac{\partial}{\partial t}\textmd{rep}_{t}(b(x))=\frac{1}{2}[|\dot{\eta}^{(x)}_{t}|^{2}+\textmd{lip}(\bar{b})^{2}(\eta^{(x)}_{t})],$$
integrate over $[s,t]\subset[0,\infty)$ and we obtain (\ref{5.11}).

Now we prove (iv).
Since $\bar{F}_{t}$ is identity on $b^{-1}((-\infty,\frac{\bar{r}}{2}])\setminus\mathcal{N}$, we have $$(\bar{F}_{t})_{*}m|_{(-\infty,\frac{\bar{r}}{2}])}=m|_{(-\infty,\frac{\bar{r}}{2}])},\qquad \forall t\geq0.$$
In the following we consider the behavior of $\bar{F}_{t}$ on $b^{-1}((0,\infty))$.

For $t\geq0$, denote by $f_{t}(s):=s+\textmd{rep}_{t}(s)$.
By definition, we have
$$\bar{F}_{t}(y)=g(x',s+\textmd{rep}_{t}(s))=g(x',f_{t}(s))$$
for any $y=g(x',s)\in b^{-1}((0,\infty))$.

Let $\varphi$ be any Borel function with $\textmd{supp}(\varphi)\subset b^{-1}((0,\infty))$, then
\begin{align}
&\int_{b^{-1}((0,\infty))}\varphi d(\bar{F}_{t})_{*}m\nonumber\\
=&\int_{\mathcal{S}}\int_{0}^{\infty}\varphi(\bar{F}_{t}\circ g(x',s)) d\nu'(x')\otimes d\mathcal{L}^{1}(s)\nonumber\\
=&\int_{\mathcal{S}}\int_{0}^{\infty}\varphi(g(x',f_{t}(s))) d\nu'(x')\otimes d\mathcal{L}^{1}(s)\nonumber\\
=&\int_{\mathcal{S}}\int_{0}^{\infty}\varphi(g(x',s))(\partial_{s}f^{-1}_{t}(s)) d\nu'(x')\otimes d\mathcal{L}^{1}(s).\nonumber
\end{align}

Thus to complete the proof of (iv) we only need to prove that there are functions $c(t),C(t):\mathbb{R}^{+}\rightarrow \mathbb{R}^{+}$ such that $\forall r\in\mathbb{R}^{+}$ we have
\begin{align}\label{5.12}
c(t)\leq\partial_{r}f^{-1}_{t}(r)\leq C(t).
\end{align}

Denote by $h_{t}(r):=\partial_{r}\textmd{rep}_{t}(r)=\partial_{r}f_{t}(r)-1$.
Differentiate (\ref{5.10}) in $r$ to deduce that $h_{t}(r)$ solves the equation
\begin{align}\label{5.13}
\partial_{t}h_{t}(r)=\bar{\varphi}''(r+\textmd{rep}_{t}(r))(1+h_{t}(r))
\end{align}
with the initial condition $h_{0}(r)=0$.
Note that the function identically $-1$ is a solution of (\ref{5.13}), also note that $\bar{\varphi}''$ has compact support, by comparison principle we are easy to derive
$$h_{t}(r)\geq-1+\tilde{c}(t), \qquad \forall r\geq0,$$
for a function $\tilde{c}(t)>0$.
On the other hand, since $\textmd{rep}_{t}(r)=t$ for $r\geq\bar{r}$, we know $h_{t}(r)=0$ for all $r\geq\bar{r}$.
Also note that $h_{t}(0)=0$.
Then by the smoothness of $h_{t}(r)$, we have $h_{t}(r)\leq \tilde{C}(t)$ for any $r\in\mathbb{R}^{+}$.
Hence
$$
\frac{1}{1+\tilde{C}(t)}\leq\partial_{r}f^{-1}_{t}(r)\leq \frac{1}{\tilde{c}(t)}
$$
for any $r\in\mathbb{R}^{+}$.
This completes the proof of (iv).
\end{proof}

The properties in Lemma \ref{prop5.9} are similar to the properties a)- f) in Proposition 3.9 of \cite{GigPhi15}.
Using these properties we can establish the following lemma, which is very similar to Lemma 3.11 in \cite{GigPhi15}.

\begin{lem}\label{lem5.11}
If $f\in L^{p}(X)$, $p <\infty$, then $t\mapsto f\circ \bar{F}_{t}\in L^{p}(X)$ is continuous.

If $f\in W^{1,2}(X)$, then $t\mapsto f\circ \bar{F}_{t}\in L^{2}(X)$ is $C^{1}$ and its derivative is given by
\begin{align}\label{5.16}
\frac{d}{dt}f\circ \bar{F}_{t} = \langle \nabla f,\nabla \bar{b} \rangle\circ \bar{F}_{t}.
\end{align}
\end{lem}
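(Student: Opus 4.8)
The plan is to deduce everything from the quantitative properties of the flow $\bar{F}_{t}$ recorded in Lemma \ref{prop5.9}, following Lemma 3.11 of \cite{GigPhi15}. The first observation is soft: by (iv) we have $(\bar{F}_{t})_{*}m\leq C(t)m$, and since $\bar{F}_{t}$ is a Borel bijection of $X\setminus\mathcal{N}$ (whose complement is $m$-negligible) also $(\bar{F}_{-t})_{*}m\leq c(t)^{-1}m$; hence for every $t$ and every $p\in[1,\infty)$ the composition $h\mapsto h\circ\bar{F}_{t}$ is well defined on $L^{p}(X)$ modulo $m$-negligible sets and satisfies $\parallel h\circ\bar{F}_{t}\parallel_{L^{p}}\leq C(t)^{1/p}\parallel h\parallel_{L^{p}}$, with $t\mapsto C(t)$ locally bounded. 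Moreover, for $f\in W^{1,2}(X)$ one has $\langle\nabla f,\nabla\bar{b}\rangle\in L^{2}(X)$: indeed $\nabla\bar{b}=\bar{\varphi}'(b)\nabla b$ by the chain rule, so by Lemma \ref{l5.6} $|\nabla\bar{b}|=|\bar{\varphi}'(b)|\leq\sup|\bar{\varphi}'|$ $m$-a.e., whence $|\langle\nabla f,\nabla\bar{b}\rangle|\leq(\sup|\bar{\varphi}'|)|\nabla f|$ $m$-a.e.

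For the first assertion I would first treat $f$ that is Lipschitz with bounded support. By (iii) every curve $t\mapsto\bar{F}_{t}(x)$ is $\textmd{Lip}(\bar{b})$-Lipschitz, so $|f\circ\bar{F}_{t}-f\circ\bar{F}_{s}|\leq\textmd{Lip}(f)\,\textmd{Lip}(\bar{b})\,|t-s|$ pointwise; since over a compact interval of parameters all the functions $f\circ\bar{F}_{t}$ are supported in one fixed bounded set of finite $m$-measure, this gives $\parallel f\circ\bar{F}_{t}-f\circ\bar{F}_{s}\parallel_{L^{p}}\to0$ as $s\to t$. For general $f\in L^{p}(X)$, approximate $f$ in $L^{p}$ by Lipschitz functions of bounded support and conclude with a three-$\epsilon$ argument using the uniform bound $\parallel h\circ\bar{F}_{t}\parallel_{L^{p}}\leq C(t)^{1/p}\parallel h\parallel_{L^{p}}$.

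For the $C^{1}$ statement I would reduce to the derivative at $t=0$. By the group law (v), $f\circ\bar{F}_{t+h}-f\circ\bar{F}_{t}=(f\circ\bar{F}_{h}-f)\circ\bar{F}_{t}$, so once we know that $\tfrac{1}{h}(f\circ\bar{F}_{h}-f)\to\langle\nabla f,\nabla\bar{b}\rangle$ in $L^{2}$ as $h\to0$, the $L^{2}$-boundedness of composition with $\bar{F}_{t}$ gives $\tfrac{1}{h}(f\circ\bar{F}_{t+h}-f\circ\bar{F}_{t})\to\langle\nabla f,\nabla\bar{b}\rangle\circ\bar{F}_{t}$ in $L^{2}$ for every $t$, and the first assertion applied to $\langle\nabla f,\nabla\bar{b}\rangle\in L^{2}(X)$ shows this derivative depends continuously on $t$, i.e. $t\mapsto f\circ\bar{F}_{t}$ is $C^{1}$. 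To prove the $t=0$ limit, start with $f$ Lipschitz of bounded support in $W^{1,2}(X)$. Writing $x=g(x',s)\in b^{-1}((0,\infty))$, by (iii) the curve $h\mapsto\bar{F}_{h}(x)$ is the reparametrization $h\mapsto g(x',f_{h}(s))$ of a ray of the ray map, with $\partial_{h}f_{h}(s)|_{h=0}=\bar{\varphi}'(s)=\bar{\varphi}'(b(x))$; since along $\sigma\mapsto g(x',\sigma)$ the function $b$ grows at unit speed while $|\nabla b|=1$ $m$-a.e., the equality case in the chain-rule inequality for minimal weak upper gradients forces $\tfrac{d}{d\sigma}f(g(x',\sigma))=\langle\nabla f,\nabla b\rangle(g(x',\sigma))$ for a.e. $\sigma$ and $\tilde{\nu}$-a.e. $x'$. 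Combining with the chain rule and the reparametrization yields $\tfrac{d}{dh}f(\bar{F}_{h}(x))|_{h=0}=\bar{\varphi}'(b(x))\langle\nabla f,\nabla b\rangle(x)=\langle\nabla f,\nabla\bar{b}\rangle(x)$ for $m$-a.e. $x$, while the difference quotients are bounded by $\textmd{Lip}(f)\,\textmd{Lip}(\bar{b})$ and supported in a fixed bounded set, so dominated convergence upgrades this to $L^{2}$ convergence. For general $f\in W^{1,2}(X)$, approximate $f$ in $W^{1,2}$ by Lipschitz functions $f_{n}$ of bounded support (possible since $\textmd{Test}(X)$ is dense and test functions are Lipschitz by the Sobolev-to-Lipschitz property, after multiplying by the cutoffs of Lemma \ref{lem5.13}); then $f_{n}\circ\bar{F}_{t}\to f\circ\bar{F}_{t}$ and $\langle\nabla f_{n},\nabla\bar{b}\rangle\circ\bar{F}_{t}\to\langle\nabla f,\nabla\bar{b}\rangle\circ\bar{F}_{t}$ in $L^{2}$, uniformly for $t$ in compact intervals (using the uniform composition bound together with $|\langle\nabla(f_{n}-f),\nabla\bar{b}\rangle|\leq(\sup|\bar{\varphi}'|)|\nabla(f_{n}-f)|$), so $t\mapsto f\circ\bar{F}_{t}$ is $C^{1}$ with derivative $\langle\nabla f,\nabla\bar{b}\rangle\circ\bar{F}_{t}$.

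The main obstacle is the identification, for nice $f$, of $\tfrac{d}{dh}f(\bar{F}_{h}(x))|_{h=0}$ with $\langle\nabla f,\nabla\bar{b}\rangle(x)$ $m$-a.e. — equivalently, that the restriction of a Sobolev function to $m$-a.e. ray of the Busemann ray map is a one-dimensional Sobolev function whose derivative along the ray is exactly $\langle\nabla f,\nabla b\rangle$, and not merely bounded by $|\nabla f|$. This rests on the full strength of the disintegration developed in Section \ref{sec3}, in particular the measure-preserving property of the ray map (Proposition \ref{prop5.3}), together with $|\nabla b|=1$ $m$-a.e. and $\textmd{Hess}(b)=0$ on $b^{-1}((0,\infty))$ (Lemmas \ref{l5.6}, \ref{lem5.10}), or equivalently the Euler equation of Proposition \ref{prop5.11}; it is precisely here that we follow Lemma 3.11 of \cite{GigPhi15} closely. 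Everything else — the $L^{p}$-continuity, the reduction to $t=0$, and the density argument — is routine given the quantitative control in Lemma \ref{prop5.9}.
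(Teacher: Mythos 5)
Your proposal is essentially the argument the paper intends: the paper gives no proof of Lemma \ref{lem5.11} at all, deferring to Lemma 3.11 of \cite{GigPhi15}, and your outline ($L^{p}$-continuity by Lipschitz approximation plus the bounds in Lemma \ref{prop5.9}(iii)--(iv), reduction to the derivative at $t=0$ via the group law, identification of the derivative along the rays using the measure-preserving ray map of Proposition \ref{prop5.3}, then a $W^{1,2}$-density argument) is exactly that scheme. One caveat on the only delicate step: the a.e. identity $\frac{d}{d\sigma}f(g(x',\sigma))=\langle\nabla f,\nabla b\rangle(g(x',\sigma))$ does not follow from ``the equality case in the chain-rule inequality'' alone --- the upper-gradient bound along rays only gives $|\partial_{\sigma}(f\circ g)|\leq|\nabla f|\circ g$; to pin down the derivative you must apply that bound to $f+\epsilon b$ (or a truncation of $b$) for all $\epsilon$ and use $|\nabla(f+\epsilon b)|^{2}=|\nabla f|^{2}+2\epsilon\langle\nabla f,\nabla b\rangle+\epsilon^{2}$ together with $b(g(x',\sigma))=\sigma$, which is precisely the ``plans representing gradients'' / first-order differentiation formula that \cite{GigPhi15} invoke (note also that $\textmd{Hess}(b)=0$ and the Euler equation are not needed for this first-order statement).
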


We omit the detailed proof of Lemma \ref{lem5.11}, the reader can refer to \cite{GigPhi15} for the very same argument.

Our next goal is to study the effect of $\bar{F}_{t}$ on the Dirichlet energy $\mathcal{E}$, see Theorem \ref{thm5.15}.
The main tool is the heat flow, which is the $L^{2}$-gradient flow of $\mathcal{E}$.
A problem arises in the proof of Theorem \ref{thm5.15} is that the support of $h_{s}(f)$ will not stay in $b^{-1}((\bar{r},\infty))$ even if the support of $f$ does.
Thus we need suitable cut-off functions and controlling the error terms.
Our argument follows \cite{GigPhi15} closely.

Firstly we recall the following lemma from \cite{GigPhi15}:

\begin{lem}[Lemma 3.14 in \cite{GigPhi15}]\label{lem5.12}
Suppose $(X,d,m)$ is an $\textmd{RCD}(0,N)$ space.
For every $r>0$ there is a constant $C(r)>0$ such that the following holds.
Given $K\subset U$ with $K$ compact and $U$ open such that $\inf_{x\in K,y\in U^{c}}d(x,y)\geq r$.
If $f\in L^{2}(X)$ with $\textmd{supp}(f)\subset K$, then for every $t>0$ the quantities
\begin{align}\label{5.25}
&\int_{U^{c}}|h_{t}(f)|^{2}dm,\quad \int_{0}^{t}\int_{U^{c}}|h_{s}(f)|^{2}dmds, \quad \int_{0}^{t}\int_{U^{c}}|Dh_{s}(f)|^{2}dmds,\\
& \int_{0}^{t}\int_{U^{c}}|\Delta h_{s}(f)|^{2}dmds, \quad
\int_{0}^{t}\int_{U^{c}}|D\Delta h_{s}(f)|^{2}dmds\nonumber
\end{align}
are all bounded from above by $C(r)t^{2}\parallel f\parallel^{2}_{L^{2}}$.
\end{lem}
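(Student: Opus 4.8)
The plan is to follow the proof of Lemma 3.14 in \cite{GigPhi15} line by line, the only ingredient specific to our setting being the quantitative cut-off functions of Lemma \ref{lem5.13}. First I would reduce, by density, to the case $f\in\textmd{Test}(X)$ with $\textmd{supp}(f)\subset K$: such functions are dense in $\{g\in L^{2}(X)\mid\textmd{supp}(g)\subset K\}$ (after enlarging $K$ slightly and shrinking $r$ by a definite factor, if needed), the heat flow depends continuously on its $L^{2}$-datum, and $g\mapsto\nabla\Delta h_{s}(g)$ is bounded from $L^{2}(X)$ to $L^{2}(TX)$ for each fixed $s>0$, so once the five bounds in (\ref{5.25}) are proved for such $f$ with a constant depending only on $r$ and $\parallel f\parallel_{L^{2}}$ one passes to the general case using Fatou's lemma in the $s$-variable. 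Since $X$ is proper, I would then fix the open sets $A_{j}:=\{x\in X\mid d(x,K)<jr/6\}$ for $j=1,\dots,5$, with $A_{0}:=K$, so that $A_{5}\subset U$ and $\inf\{d(x,y)\mid x\in A_{j},\,y\in A_{j+1}^{c}\}\geq r/6$ and $\overline{A_{j}}$ is compact; applying Lemma \ref{lem5.13} with parameter $r/6$ to the pairs $(\overline{A_{j}},A_{j+1})$ gives test functions $\chi_{j}$ equal to $1$ on $\overline{A_{j}}$, supported in $A_{j+1}$, with $\textmd{Lip}(\chi_{j})+\parallel\Delta\chi_{j}\parallel_{L^{\infty}}\leq C(r/6)$. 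Setting $\psi_{j}:=1-\chi_{j}$ we have $\psi_{j}\equiv0$ on $A_{j}$, $\psi_{j}\equiv1$ on $A_{j+1}^{c}\supset U^{c}$, and $\nabla\psi_{j}$ supported in $A_{j}^{c}$; all constants below will be products of finitely many copies of $C(r/6)$, hence of the form $C(r)$.

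The basic estimate is the differential inequality for $F_{j}(s):=\int\psi_{j}^{2}|h_{s}(f)|^{2}\,dm$: using $h_{s}(f)\in D(\Delta)$, the Leibniz rule, an integration by parts and Young's inequality one finds $F_{j}'(s)\leq 2\int|\nabla\psi_{j}|^{2}|h_{s}(f)|^{2}\,dm$, while $F_{j}(0)=0$ (since $\psi_{j}$ vanishes on $\textmd{supp}(f)$) and $\parallel h_{s}(f)\parallel_{L^{2}}\leq\parallel f\parallel_{L^{2}}$ by (\ref{5.24-2}); integrating gives $\int_{A_{j+1}^{c}}|h_{s}(f)|^{2}\,dm\leq C(r)s\parallel f\parallel_{L^{2}}^{2}$, and feeding this back into the inequality for the next cut-off (whose gradient lives where the last bound holds) upgrades it to $\int_{U^{c}}|h_{s}(f)|^{2}\,dm\leq C(r)s^{2}\parallel f\parallel_{L^{2}}^{2}$, which is the first bound in (\ref{5.25}); the second one follows by integrating the $O(s)$-bound in time.

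For the remaining three quantities I would iterate the same mechanism one differential order at a time. Rearranging the identity for $F_{j}$ via Young's inequality and integrating over $[0,t]$ bounds $\int_{0}^{t}\int\psi_{j}^{2}|\nabla h_{s}(f)|^{2}\,dm\,ds$ by a multiple of the time-integral of $\int_{A_{j}^{c}}|h_{s}(f)|^{2}\,dm$, hence by $C(r)t^{2}\parallel f\parallel_{L^{2}}^{2}$. Differentiating $\int\psi_{j}^{2}|\nabla h_{s}(f)|^{2}\,dm$ in $s$, integrating by parts and again using Young's inequality then produces, modulo an error term of the form $\int|\nabla\psi_{j}|^{2}|\nabla h_{s}(f)|^{2}\,dm$ supported in $A_{j}^{c}$, a negative multiple of $\int\psi_{j}^{2}(\Delta h_{s}(f))^{2}\,dm$; integrating over $[0,t]$, the $s=0$ boundary term is $\int\psi_{j}^{2}|\nabla f|^{2}\,dm$, which is $0$ because $|\nabla f|=0$ $m$-a.e. on $X\setminus K$ by locality while $\psi_{j}\equiv0$ on $K$ --- this is precisely why I reduced to $f\in\textmd{Test}(X)$. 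Hence $\int_{0}^{t}\int_{A_{j+1}^{c}}|\Delta h_{s}(f)|^{2}\,dm\,ds\leq C(r)t^{2}\parallel f\parallel_{L^{2}}^{2}$, and one further pass of the same kind, now starting from $\int\psi_{j}^{2}|\Delta h_{s}(f)|^{2}\,dm$ (whose $s=0$ boundary term $\int\psi_{j}^{2}|\Delta f|^{2}\,dm$ again vanishes, since $\Delta f=0$ on $X\setminus K$) and using $\Delta h_{s}(f)\in W^{1,2}(X)$ for $s>0$, controls $\int_{0}^{t}\int_{U^{c}}|\nabla\Delta h_{s}(f)|^{2}\,dm\,ds$ by $C(r)t^{2}\parallel f\parallel_{L^{2}}^{2}$. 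Then I would remove the extra regularity on $f$ by the density argument of the first paragraph.

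I expect the main obstacle to be exactly the behaviour as $s\downarrow0$: for a general $L^{2}$ datum the quantities $\int|\Delta h_{s}(f)|^{2}\,dm$ and $\int|\nabla\Delta h_{s}(f)|^{2}\,dm$ blow up (like $s^{-2}$ and $s^{-3}$, by (\ref{5.24-1})--(\ref{5.24-3}) and analyticity of the heat semigroup) and are not time-integrable near $s=0$, so one cannot afford any nonzero boundary term at $s=0$ in the energy identities above; the scheme circumvents this precisely by first treating $f\in\textmd{Test}(X)$, for which those boundary terms are identically zero by the locality of $\nabla$ and $\Delta$, and only afterwards invoking density together with the uniformity in $f$ of the constants. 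Beyond this point the proof is routine Caccioppoli-type bookkeeping --- arranging the finitely many nested cut-offs so that each error term falls inside the region where the preceding estimate is already available --- and I would refer the reader to \cite{GigPhi15} for the details.
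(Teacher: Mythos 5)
The paper does not actually prove this lemma --- it is quoted verbatim as Lemma 3.14 of \cite{GigPhi15} --- and your scheme of nested quantitative cut-offs from Lemma \ref{lem5.13} together with the iterated Caccioppoli/Gronwall bookkeeping (mass bound $O(s)$, upgraded to $O(s^2)$, then the gradient, Laplacian and gradient-of-Laplacian bounds each fed by the previous one through the error term supported on the annulus) is precisely the argument of that reference, and as written it is correct. The only inessential deviation is the preliminary reduction to $f\in\textmd{Test}(X)$: it is legitimate (truncation, heat-flow mollification and multiplication by a cut-off, using that test functions form an algebra, followed by Fatou in $s$), but it can also be bypassed, since the time-integrability of the weighted gradient (resp. Laplacian) quantity obtained at the previous stage already gives $\liminf_{s\downarrow0}$ of the corresponding initial boundary term equal to zero by a mean-value argument, so the estimates close for general $f\in L^{2}(X)$ with $\textmd{supp}(f)\subset K$ directly.
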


Applying Lemma \ref{lem5.12}, we can prove the following estimate.

\begin{prop}\label{prop5.15}
Given any $r>0$ there is a constant $C(r)$ such that
for any $f\in L^{2}(X)$ with $\textmd{supp}(f)\subset b^{-1}((\bar{r}+r,\infty))$ and any $s\in(0,1)$ we have
\begin{align}\label{5.15}
\biggl|\int\langle \nabla h_{2s}(f),\nabla\bar{b} \rangle f dm\biggr|\leq  s^{2} C(r)\parallel f \parallel^{2}_{L^{2}}.
\end{align}
\end{prop}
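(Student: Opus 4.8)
The plan is to follow the argument of \cite{GigPhi15}; the point is that an integration by parts based on the Euler equation \eqref{5.8} would formally force the integral to vanish, so that only a cut-off error survives, and that error can be made $O(s^{2})$ by the localization estimates of Lemma \ref{lem5.12}. First I would reduce to the case $f\in\textmd{Test}(X)$ with $K:=\textmd{supp}(f)$ compact and $K\subset b^{-1}((\bar r+r,\infty))$: for fixed $s$ the map $f\mapsto\int\langle\nabla h_{2s}(f),\nabla\bar b\rangle f\,dm$ is a bounded quadratic expression on $L^{2}(X)$ (since $h_{2s}\colon L^{2}(X)\to W^{1,2}(X)$ is bounded and $|\nabla\bar b|\le1$ $m$-a.e.), so it is enough to prove \eqref{5.15} on a dense class, and by truncating supports and invoking Lemma \ref{lem2.13} such test functions are dense among the admissible $f$.

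For such $f$, the chain rule together with $\mathbf{\Delta}b=0$ on $b^{-1}((0,\infty))$ gives $\mathbf{\Delta}\bar b=\bar\varphi''\circ b\,m$, so $\textmd{supp}(\mathbf{\Delta}\bar b)\subset b^{-1}((-\infty,\bar r])$ lies at distance $\ge r$ from $K$; moreover \eqref{5.8} continues to hold with $\bar b$ in place of $b$ whenever $\textmd{supp}(\psi)\subset\subset b^{-1}((\bar r,\infty))$, because there $\nabla\bar b=\bar\varphi'\circ b\,\nabla b=\nabla b$ and $\textmd{supp}(\Delta\psi)\subset\textmd{supp}(\psi)$ by locality. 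I would then set, for $t\in[0,2s]$,
\begin{align}
\Psi(t):=\int\langle\nabla h_{t}(f),\nabla\bar b\rangle\,h_{2s-t}(f)\,dm,\nonumber
\end{align}
which is $C^{1}$ on $[0,2s]$ by heat-semigroup regularity for $f\in\textmd{Test}(X)$. Here $\Psi(2s)$ is precisely the quantity in \eqref{5.15}, while
\begin{align}
\Psi(s)=\tfrac12\int\big\langle\nabla(h_{s}f)^{2},\nabla\bar b\big\rangle\,dm=-\tfrac12\int(h_{s}f)^{2}\,\bar\varphi''\circ b\,dm,\nonumber
\end{align}
so the first bound of Lemma \ref{lem5.12} (with this $K$ and $U=\{d(\cdot,K)<r\}$) yields $|\Psi(s)|\le C(r)s^{2}\|f\|_{L^{2}}^{2}$. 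It then remains to estimate $\Psi(2s)-\Psi(s)=\int_{s}^{2s}\Psi'(t)\,dt$.

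To control the latter I would pick, via Lemma \ref{lem5.13} applied to $K'=\{d(\cdot,K)\le r/4\}$ and $U'=\{d(\cdot,K)<r/2\}$, a test function $\chi$ with $0\le\chi\le1$, $\chi\equiv1$ on $K'$, $\textmd{supp}(\chi)\subset\subset U'\subset\subset b^{-1}((\bar r,\infty))$ and $\textmd{Lip}(\chi)+\|\Delta\chi\|_{L^{\infty}}\le C(r)$. Differentiating $\Psi$, splitting $h_{2s-t}(f)=\chi h_{2s-t}(f)+(1-\chi)h_{2s-t}(f)$ in the term carrying $\nabla\Delta h_{t}(f)$, applying \eqref{5.8} for $\bar b$ with $g=h_{t}(f)$ and $\psi=\chi h_{2s-t}(f)$ (test functions with $\textmd{supp}(\psi)\subset\subset b^{-1}((\bar r,\infty))$), and using the Leibniz rule for $\Delta(\chi h_{2s-t}(f))$, the two ``$\chi\,\Delta h_{2s-t}(f)$'' contributions cancel and one is left with four terms, in each of which exactly one of $\nabla\chi$, $\Delta\chi$, $1-\chi$ occurs; hence each is supported in $E:=\{x:d(x,K)\ge r/4\}$. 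Using $|\nabla\bar b|\le1$, Cauchy--Schwarz in space bounds each such term by a product of an $L^{2}(E)$-norm of one of $h_{t}f,\,Dh_{t}f,\,\Delta h_{t}f,\,D\Delta h_{t}f$ with an $L^{2}(E)$-norm of one of $h_{2s-t}f,\,Dh_{2s-t}f,\,\Delta h_{2s-t}f$; integrating over $t\in[s,2s]$ and applying Cauchy--Schwarz in $t$ (with $\sigma=2s-t$ in the second factor) bounds $\int_{s}^{2s}|\Psi'(t)|\,dt$ by a finite sum of products $C(r)\big(\int_{0}^{2s}\!\int_{E}|\cdot|^{2}dm\,dt\big)^{1/2}\big(\int_{0}^{s}\!\int_{E}|\cdot|^{2}dm\,d\sigma\big)^{1/2}$, each factor being one of the five space-time integrals of Lemma \ref{lem5.12}. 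Since $E\subset U^{c}$ for $U=\{d(\cdot,K)<r/4\}$ with $\inf_{K,U^{c}}d\ge r/4$, Lemma \ref{lem5.12} (applied with $r/4$) bounds each of these integrals by $C(r)s^{2}\|f\|_{L^{2}}^{2}$, so each factor is $\le C(r)s\|f\|_{L^{2}}$ and thus $\int_{s}^{2s}|\Psi'(t)|\,dt\le C(r)s^{2}\|f\|_{L^{2}}^{2}$; together with the bound on $\Psi(s)$ this gives \eqref{5.15}.

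The step I expect to be the main obstacle is this last one: one has to notice that \emph{every} surviving error term is localized on $E$, away from $\textmd{supp}(f)$, so that \emph{both} heat-flow factors occurring in it are small by Lemma \ref{lem5.12}, and that pairing the two resulting time-integrated bounds through Cauchy--Schwarz in $t$ produces the factor $s^{2}$ and not merely $s$. A secondary technical point is to verify that all functions that arise ($\chi h_{2s-t}(f)$, $\Delta h_{t}(f)$, and products of test functions and their heat flows) indeed belong to $\textmd{Test}(X)$, and that the passage from \eqref{5.8} for $b$ to the same identity for $\bar b$ is legitimate.
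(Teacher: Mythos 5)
Your proposal is correct and follows essentially the same route as the paper's proof: the identity $\Psi(2s)=\Psi(s)+\int_s^{2s}\Psi'(t)\,dt$, the estimate of $\Psi(s)$ through $\mathbf{\Delta}\bar b=\bar\varphi''\circ b\,m$, the cancellation of the main term in $\Psi'$ via the Euler equation applied to $\chi h_{2s-t}(f)$, and the double use of Lemma \ref{lem5.12} with Cauchy--Schwarz in space-time to produce the factor $s^{2}$. The only (cosmetic) differences are that your cut-off $\chi$ is built from metric neighborhoods of $\mathrm{supp}(f)$ rather than from level sets of $b$ as in the paper, and the harmless slip $|\nabla\bar b|\le 1$, which should read $|\nabla\bar b|\le \mathrm{Lip}(\bar b)$ (a fixed constant), exactly as absorbed into the constant $S$ in the paper's estimate.
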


\begin{proof}
By (\ref{5.24-3}), it is easy to check that the term $\int\langle \nabla h_{2s}(f),\nabla\bar{b} \rangle f dm$ vary continuously as $f$ varies in $L^{2}(X)$.
Hence by Lemma \ref{lem2.13} and a simple density argument we only need to prove the conclusion of the proposition holds for $f\in \textmd{Test}(X)$ with support contained in a compact set $K$ with $K\subset b^{-1}((\bar{r}+r,R))$.
Denote by $U=b^{-1}((\bar{r}+\frac{r}{2},2R))$ and note that $\inf_{x\in K,y\in U^{c}}d(x,y)\geq \frac{r}{2}$.

Since the function $t\mapsto \int \langle \nabla h_{s+t}(f),\nabla\bar{b}\rangle h_{s-t}(f) dm$ is absolutely continuous on $[0,s]$, we have
\begin{align}\label{5.22}
\int \langle \nabla h_{2s}(f),\nabla\bar{b}\rangle f dm=\int \langle \nabla h_{s}(f),\nabla\bar{b}\rangle h_{s}(f) dm + \int_{0}^{s}\frac{d}{dt}\int \langle \nabla h_{s+t}(f),\nabla\bar{b}\rangle h_{s-t}(f) dm dt.
\end{align}
Note that
\begin{align}\label{5.28}
&\int \langle \nabla h_{s}(f),\nabla\bar{b}\rangle h_{s}(f) dm= \int \langle \nabla \frac{|h_{s}(f)|^{2}}{2},\nabla\bar{b}\rangle dm \\
\leq & \parallel\Delta \bar{b}\parallel_{L^{\infty}} \int_{U^{c}}  \frac{|h_{s}(f)|^{2}}{2} dm
\leq C(r)s^{2}\parallel\Delta \bar{b}\parallel_{L^{\infty}}\parallel f\parallel^{2}_{L^{2}}, \nonumber
\end{align}
where we use Lemma \ref{lem5.12} in the last inequality.

Let $\chi$ be the cut-off function given by Lemma \ref{lem5.13} relative to the compact set $b^{-1}([\bar{r}+\frac{r}{2},2R])$ and the open set  $b^{-1}((\bar{r},4R))$, then

\begin{align}\label{5.26}
&\int_{0}^{s}\frac{d}{dt}\int \langle \nabla h_{s+t}(f),\nabla\bar{b}\rangle h_{s-t}(f) dm dt \\
=&\int_{0}^{s}\int \bigl(\langle \nabla \Delta h_{s+t}(f),\nabla\bar{b}\rangle h_{s-t}(f)-\langle \nabla h_{s+t}(f),\nabla\bar{b}\rangle \Delta h_{s-t}(f)\bigr) dm dt \nonumber\\
=&\int_{0}^{s}\int \bigl(\langle \nabla \Delta h_{s+t}(f),\nabla\bar{b}\rangle (\chi h_{s-t}(f))-\langle \nabla h_{s+t}(f),\nabla\bar{b}\rangle \Delta (\chi h_{s-t}(f))\bigr) dm dt \nonumber\\
&+ \int_{0}^{s}\int \bigl(\langle \nabla \Delta h_{s+t}(f),\nabla\bar{b}\rangle (1-\chi)h_{s-t}(f)-\langle \nabla h_{s+t}(f),\nabla\bar{b}\rangle \Delta ((1-\chi) h_{s-t}(f))\bigr) dm dt \nonumber\\
\leq& S^{2} \int_{0}^{s}\int_{U^{c}}\bigl(|h_{s-t}(f)||D\Delta h_{s+t}(f)|+|D h_{s+t}(f)|[|h_{s-t}(f)|+|D h_{s-t}(f)|+|\Delta h_{s-t}(f)|]\bigr)dmdt \nonumber\\
\leq& C(r)s^{2}\parallel f\parallel^{2}_{L^{2}},  \nonumber
\end{align}
where $S:=\max\{1, \textmd{Lip}(\bar{b}), \textmd{Lip}(χ\chi), \parallel\Delta\chi\parallel_{L^{\infty}}\}$, with $S\leq C(r)$ by Lemma \ref{lem5.13}.
In (\ref{5.26}) we use the fact that $\textmd{supp}(\chi h_{s-t}(f))\subset\subset b^{-1}((\bar{r},\infty))$ as well as Proposition \ref{prop5.11} to conclude that the first integral in the third equality vanishes, and use Lemma \ref{lem5.12} in the last inequality.

Combining (\ref{5.22}) (\ref{5.28}) and (\ref{5.26}) we obtain the estimate (\ref{5.15}).
\end{proof}

\begin{thm}\label{thm5.15}
Suppose $f\in L^{2}(X)$ satisfies $\textmd{supp}(f)\subset b^{-1}((\bar{r}+T,\infty))$ for some $T\geq0$. Then
\begin{align}\label{5.27}
\mathcal{E}(f\circ \bar{F}_{t})=\mathcal{E}(f), \quad \forall t\leq T.
\end{align}
In particular, $f\in W^{1,2}(X)$ if and only if $f\circ \bar{F}_{t}\in W^{1,2}(X)$.
\end{thm}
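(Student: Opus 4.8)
The plan is to follow the strategy of \cite{GigPhi15}, using the heat semigroup $h_s$ as a regularization and exploiting that on $b^{-1}((\bar r,\infty))$ one has $\bar b=b$, $|Db|=1$, $\mathbf{\Delta}b=0$ and $\textmd{Hess}(b)=0$ (Lemma \ref{l5.6}, Lemma \ref{lem5.10}, and $\mathbf{\Delta}b=0$ on $b^{-1}((0,\infty))$), together with the fact that there $\bar F_t=F_t$ is measure preserving (Lemma \ref{prop5.9}(ii), Proposition \ref{prop5.3}). First I would reduce to the model case: $f\in\textmd{Test}(X)$ with $\textmd{supp}(f)\subset b^{-1}((\bar r+r,R))$ for some $r>0$, $R<\infty$, and $|t|<r$. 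Indeed, if $\textmd{supp}(f)\subset b^{-1}((\bar r+T,\infty))$ and $|t|\le T$ then, since $\bar F_{\pm\tau}$ changes $b$-levels by at most $\tau$ and is the identity on $b^{-1}((-\infty,\tfrac{\bar r}{2}])$, the support of $f\circ\bar F_t$ still lies in $b^{-1}((\bar r,\infty))$; level sets being compact by Theorem \ref{compact-level-set}, $f$ can be approximated in $L^2(X)$ — and in $W^{1,2}(X)$ when $f\in W^{1,2}(X)$ — by test functions with supports in $b^{-1}((\bar r+r',R'))$ for $r'$ slightly below $T$ (truncate, regularize by $h_\varepsilon$, multiply by a cut-off from Lemma \ref{lem5.13}). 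Lower semicontinuity of $\mathcal{E}$ and the $L^2$-boundedness of $g\mapsto g\circ\bar F_t$ (Lemma \ref{prop5.9}(iv), Lemma \ref{lem5.11}) then upgrade the model case first to all $f\in W^{1,2}(X)$, using the group law $\bar F_{-t}\circ\bar F_t=\textmd{Id}$ (Lemma \ref{prop5.9}(v)) to get the reverse inequality as well, and finally to general $f\in L^2(X)$ (if $f\notin W^{1,2}(X)$ but $f\circ\bar F_t\in W^{1,2}(X)$ one derives $f=(f\circ\bar F_t)\circ\bar F_{-t}\in W^{1,2}(X)$, a contradiction).

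For the model case set $u_t:=f\circ\bar F_t$. Then $\textmd{supp}(u_t)\subset b^{-1}((\bar r,\infty))$ and, since $\bar F_t=F_t$ is measure preserving there, $\|u_t\|_{L^2}=\|f\|_{L^2}$ is constant in $t$; moreover $t\mapsto u_t\in L^2(X)$ is $C^1$ with $\tfrac{d}{dt}u_t=\langle\nabla f,\nabla\bar b\rangle\circ\bar F_t$ (Lemma \ref{lem5.11}), a function supported in $b^{-1}((\bar r,\infty))$ on which it equals $\langle\nabla u_t,\nabla\bar b\rangle$ because there $\bar b=b$ and the flow runs along $\nabla b$. I would then differentiate the regularized energy: for $s\in(0,1)$,
\begin{align}
\frac{d}{dt}\mathcal{E}(h_s u_t)=\int\big\langle\nabla h_s u_t,\nabla h_s\big(\tfrac{d}{dt}u_t\big)\big\rangle\,dm=-\int \Delta h_{2s}u_t\,\langle\nabla u_t,\nabla\bar b\rangle\,dm .
\end{align}

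The heart of the proof is to bound the right-hand side by $C(r)\,s\,\|f\|_{L^2}^2$, uniformly in the allowed $t$. I would do this exactly as in the proof of Proposition \ref{prop5.15}: using the Euler equation for $b$ (Proposition \ref{prop5.11}), $|Db|=1$, and $\mathbf{\Delta}b=0$, $\textmd{Hess}(b)=0$ on $b^{-1}((\bar r,\infty))$, together with integration by parts, rewrite the integrand as a term of the type $\int\langle\nabla h_{\sigma}(v),\nabla\bar b\rangle\,v\,dm$ handled by Proposition \ref{prop5.15}, plus commutator/error terms whose integrands are supported off $b^{-1}((\bar r,\infty))$ and are controlled by Lemma \ref{lem5.12} once a cut-off $\chi$ from Lemma \ref{lem5.13} — equal to $1$ near $\textmd{supp}(f)$, supported in $b^{-1}((\bar r,\infty))$ — is inserted. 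Granting this, integration in $t$ gives $|\mathcal{E}(h_s u_t)-\mathcal{E}(h_s f)|\le C(r)\,r\,s\,\|f\|_{L^2}^2$, and letting $s\downarrow0$ — using the standard fact that $\mathcal{E}(h_s g)\uparrow\mathcal{E}(g)$ in $[0,\infty]$ for every $g\in L^2(X)$ — yields $\mathcal{E}(u_t)=\mathcal{E}(f)$, i.e. (\ref{5.27}). Finally, since $f\circ\bar F_t\in L^2(X)$ always by Lemma \ref{prop5.9}(iv), the equality $\mathcal{E}(f\circ\bar F_t)=\mathcal{E}(f)$ gives at once that $f\circ\bar F_t\in W^{1,2}(X)$ if and only if $f\in W^{1,2}(X)$.

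I expect the main obstacle to be precisely this uniform-in-$t$ estimate of $\tfrac{d}{dt}\mathcal{E}(h_s u_t)$: the heat flow $h_{2s}u_t$ does not keep its support inside $b^{-1}((\bar r,\infty))$, so the cancellations from $\mathbf{\Delta}b=0$ and $\textmd{Hess}(b)=0$ are only available up to error, and one must carefully balance the $O(s^{-1})$ produced by $\Delta h_{2s}$ against the $O(s^2)$ decay of the escaping mass (Lemma \ref{lem5.12}) and of the Euler-type remainder (Proposition \ref{prop5.15}). This bookkeeping, organized through the cut-off construction of Lemma \ref{lem5.13}, is the technical core; the remaining steps are soft.
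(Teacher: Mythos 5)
Your overall strategy (heat-flow regularization, the cancellation coming from Proposition \ref{prop5.15}, the escape estimates of Lemma \ref{lem5.12}, cut-offs from Lemma \ref{lem5.13}, measure preservation, and the group law to get the ``if and only if'') is the same as the paper's, but the functional you differentiate is not, and this is where a genuine gap appears. You differentiate $\mathcal{E}(h_s u_t)$ in $t$ and claim the resulting quantity $-\int \dot u_t\,\Delta h_{2s}u_t\,dm$ can be bounded by $C(r)\,s\,\|f\|_{L^2}^2$ ``exactly as in the proof of Proposition \ref{prop5.15}''. That proposition controls a structurally different expression, namely $\int\langle\nabla h_{2s}(g),\nabla\bar b\rangle\,g\,dm$; it does not control a pairing of $\Delta h_{2s}u_t$ (of size $O(1/s)$ in $L^2$) with a first-order term. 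To convert your quantity into one of that form you would either have to integrate by parts against $\Delta h_{2s}u_t$, which requires $W^{1,2}$-regularity of $\langle\nabla f,\nabla\bar b\rangle\circ\bar F_t$ (unavailable: the Lipschitz/Sobolev regularity of the flow maps is only established in Theorem \ref{thm5.16}, which relies on the present theorem), or to use second-order information on $u_t$, which is likewise not at hand -- indeed even writing $\frac{d}{dt}u_t=\langle\nabla u_t,\nabla\bar b\rangle$ presupposes $u_t\in W^{1,2}$, part of the conclusion. Moreover the off-support bounds of Lemma \ref{lem5.12} for $|Dh_\sigma(f)|^2$ and $|\Delta h_\sigma(f)|^2$ are only time-integrated, so they do not yield a pointwise-in-$s$ estimate of the kind you need.

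The paper sidesteps precisely this difficulty by differentiating instead
\begin{align}
G(t,s):=\int\frac{|f_t|^{2}-|h_{s}(f_t)|^{2}}{4s}\,dm=\frac{1}{s}\int_{0}^{s}\mathcal{E}(h_{\sigma}(f_t))\,d\sigma ,
\end{align}
i.e.\ the $s$-average of your regularized energies. Its $t$-derivative is computed (via the semigroup property and (\ref{5.16}) applied to $h_{2s}(f_t)$, which is regular) to be exactly $\frac{1}{2s}\int f_t\langle\nabla h_{2s}(f_t),\nabla\bar b\rangle\,dm$, which Proposition \ref{prop5.15} bounds by $Cs\|f\|_{L^2}^2$; then $G(t,s)\uparrow\mathcal{E}(f_t)$ as $s\downarrow 0$ concludes. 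In other words, the $O(s)$ bound is available for the $s$-average of $\frac{d}{dt}\mathcal{E}(h_\sigma u_t)$ but is not established (and is not obviously true) for each fixed $\sigma$, which is what your argument requires. If you replace $\mathcal{E}(h_s u_t)$ by $G(t,s)$ your scheme becomes the paper's proof; as written, the core estimate is unsupported. The soft parts of your proposal (reduction to nice supports, lower semicontinuity, the contradiction argument when $\mathcal{E}(f)=+\infty$) are fine modulo the usual care that the constant in Proposition \ref{prop5.15} depends on the width of the gap to $b^{-1}(\bar r)$ and that the backward flow must be handled via the group law, as in the paper.
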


\begin{proof}
Let $f_{t}:=f\circ \bar{F}_{t}$ and notice that $\textmd{supp}(f_{t})\subset b^{-1}((\bar{r}+T-t,\infty))\subset b^{-1}((\bar{r},\infty))$ for every $t\leq T$.
By Proposition \ref{prop5.3} we have
$\int|f_{t}|^{2} dm=\int|f|^{2} dm$ and hence
$\frac{d}{dt}\int|f_{t}|^{2} dm=0$
for any $t\leq T$.

Recall that by Lemma \ref{lem5.11}, $t\mapsto f_{t} \in L^{2}(X)$ is continuous.
By (\ref{5.24-2}) and (\ref{5.24-3}), we have
$$\parallel h_{s}(f_{t_{1}})-h_{s}(f_{t_{0}})\parallel_{W^{1,2}}\leq \frac{2}{\sqrt{2s}}\parallel f_{t_{1}}-f_{t_{0}}\parallel_{L^{2}},$$
for any $s\in(0,\frac{1}{2})$.
Thus by Lemma \ref{lem5.11} again, for any fixed $s\in(0,\frac{1}{2})$, $t\rightarrow h_{s}(f_{t}) \in L^{2}(X)$ is $C^{1}$.
Hence for $t\leq T$ we have
\begin{align}\label{5.34}
\frac{1}{2}\frac{d}{dt}\int|h_{s}(f_{t})|^{2}dm &=\lim_{h\rightarrow0}\int h_{s}(f_{t})\frac{h_{s}(f_{t}\circ \bar{F}_{h})-h_{s}(f_{t})}{h} dm\\
&=\lim_{h\rightarrow0}\int h_{2s}(f_{t})\frac{f_{t}\circ \bar{F}_{h}-f_{t}}{h} dm \nonumber\\
&=\lim_{h\rightarrow0}\int f_{t}\frac{h_{2s}(f_{t})\circ \bar{F}_{-h}-h_{2s}(f_{t})}{h}  dm \nonumber\\
&=-\lim_{h\rightarrow0}\int f_{t}\langle \nabla h_{2s}(f_{t}),\nabla \bar{b} \rangle  dm, \nonumber
\end{align}
where we use (\ref{5.16}) in the last equality.

Denote by
$$G(t,s):=\int\frac{|f_{t}|^{2}-|h_{s}(f_{t})|^{2}}{4s}dm.$$
For any $s\in(0,\frac{1}{2})$ the map $t\mapsto G(t, s)\in L^{2}(X)$ is $C^{1}$ with
\begin{align}\label{5.29}
\frac{d}{dt}G(t, s)=\frac{1}{2s}\lim_{h\rightarrow0}\int f_{t}\langle \nabla h_{2s}(f_{t}),\nabla \bar{b} \rangle  dm
\end{align}
for $t\leq T$.
By Proposition \ref{prop5.15} we have
\begin{align}
\biggl|\int\langle \nabla h_{2s}(f_{t}),\nabla\bar{b} \rangle f_{t} dm\biggr|\leq  C s^{2}\parallel f_{t} \parallel^{2}_{L^{2}},
\end{align}
hence
\begin{align}\label{5.32}
|\frac{d}{dt}G(t, s)|\leq C s\parallel f_{t} \parallel^{2}_{L^{2}}\leq C \parallel f \parallel^{2}_{L^{2}}
\end{align}
for $s\in(0,\frac{1}{2})$.

Suppose $f\in W^{1,2}(X)$.
By (\ref{5.24-1}), we have $G(0, s)\leq \mathcal{E}(f)<\infty$.
By (\ref{5.32}), we deduce
\begin{align}\label{5.31}
G(t, s)\leq\mathcal{E}(f)+C|t|\parallel f \parallel^{2}_{L^{2}}
\end{align}
for $t\leq T$ and $s\in(0,\frac{1}{2})$.
Note that $G(t,s)\uparrow\mathcal{E}(f_{t})$ as $s\downarrow0$, by (\ref{5.31}) we have
$\mathcal{E}(f_{t})\leq\mathcal{E}(f)+C|t|\parallel f \parallel^{2}_{L^{2}}<\infty$.
Thus $f_{t}\in W^{1,2}(X)$ for every $t\leq T$.
Passing to the limit as $s\downarrow0$ in (\ref{5.29}), we obtain
$$\frac{d}{dt}\mathcal{E}(f_{t})=0$$
for $t\leq T$.
Hence we obtain (\ref{5.27}).

Suppose $f\notin W^{1,2}(X)$, i.e. $\mathcal{E}(f)=+\infty$.
Then it is impossible that $f_{t}\in W^{1,2}(X)$ for any $t\leq T$.
Otherwise change the role of $f$ and $f_{t}$ in the above argument, we have $\mathcal{E}(f)\leq\mathcal{E}(f_{t})+C|t|\parallel f \parallel^{2}_{L^{2}}<\infty$, which is a contradiction.
Thus we have complete the proof.
\end{proof}

Theorem \ref{thm5.15} can be `localized' as follows:

\begin{cor}\label{cor5.15}
Suppose $f\in L^{2}(X)$ with $\textmd{supp}(f)\subset b^{-1}((T,\infty))$ with $T\geq0$, then $f\in W^{1,2}(X)$ if and only if $f\circ F_{t}\in W^{1,2}(X)$ for any $t\leq T$ and in this case
\begin{align}\label{5.39}
|D(f\circ F_{t} )| = |Df|\circ F_{t}\qquad m\text{-a.e.}.
\end{align}
\end{cor}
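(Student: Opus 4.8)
The plan is to obtain Corollary \ref{cor5.15} as a version of Theorem \ref{thm5.15} localized in the $b$-variable, exploiting that the truncated flow $\bar{F}_{t}$ of Lemma \ref{prop5.9} coincides with $F_{t}$ on $b^{-1}([\bar{r},\infty))$ once $\bar{r}$ is chosen small. First I would note that, since $b$ is proper and bounded below on $X$ (shown above in this section), the sublevel set $b^{-1}((-\infty,T])$ is compact; being disjoint from the closed set $\textmd{supp}(f)$, it therefore has positive distance to it, and combined with Corollary \ref{cor3.4} this forces $\textmd{supp}(f)\subset b^{-1}([T+\delta,\infty))$ for some $\delta>0$. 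Applying Theorem \ref{thm5.15} with cut-off parameter $\bar{r}=\delta/2$ and with the role of ``$T$'' played by $T$ itself (so that indeed $\textmd{supp}(f)\subset b^{-1}((\bar{r}+T,\infty))$), one gets $f\in W^{1,2}(X)\Leftrightarrow f\circ\bar{F}_{t}\in W^{1,2}(X)$ together with $\mathcal{E}(f\circ\bar{F}_{t})=\mathcal{E}(f)$ for every $t\le T$.

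The next step is to replace $\bar{F}_{t}$ by $F_{t}$. For $t\le T$, on the set $\{f\circ\bar{F}_{t}\neq0\}$ one has $b(\bar{F}_{t}(x))\ge T+\delta$, and since $b(\bar{F}_{t}(x))=b(x)+\textmd{rep}_{t}(b(x))\le b(x)+t$ this gives $b(x)\ge\delta>\bar{r}$, whence $\textmd{rep}_{t}(b(x))=t$ and $\bar{F}_{t}(x)=F_{t}(x)$ by Lemma \ref{prop5.9}; the same estimate holds on $\{f\circ F_{t}\neq0\}$, where $b(x)=b(F_{t}(x))-t\ge\delta$. Hence $f\circ F_{t}=f\circ\bar{F}_{t}$ $m$-a.e., which already yields the asserted $W^{1,2}$-equivalence together with $\mathcal{E}(f\circ F_{t})=\mathcal{E}(f)$. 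Since this reasoning applies verbatim to every $u\in W^{1,2}(X)\cap L^{\infty}(X)$ with $\textmd{supp}(u)\subset b^{-1}((T,\infty))$, and since $F_{t},F_{-t}$ are measure-preserving by Proposition \ref{prop5.3}, the map $\Theta\colon u\mapsto u\circ F_{t}$ is a product-preserving linear bijection, preserving the Dirichlet energy $\mathcal{E}$, from $\{u\in W^{1,2}\cap L^{\infty}:\textmd{supp}(u)\subset b^{-1}((T,\infty))\}$ onto the analogous class over $b^{-1}((T-t,\infty))$.

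It remains to upgrade the integral identity $\mathcal{E}(f\circ F_{t})=\mathcal{E}(f)$ to the pointwise one (\ref{5.39}); this is the only genuinely delicate point. Polarization of $\mathcal{E}(\Theta u)=\mathcal{E}(u)$ gives $\int\langle\nabla\Theta u_{1},\nabla\Theta u_{2}\rangle\,dm=\int\langle\nabla u_{1},\nabla u_{2}\rangle\,dm$ for $u_{1},u_{2}$ in the above class; applying this with $u_{1}=uv$ (a product of two class functions) and $u_{2}=w$, expanding both sides by the Leibniz rule and rewriting the right-hand side via the change of variables $\int h\circ F_{t}\,dm=\int h\,dm$, one arrives at
\[
\int(\Theta u)\,Q(v,w)\,dm+\int(\Theta v)\,Q(u,w)\,dm=0,\qquad Q(\phi,\psi):=\langle\nabla\Theta\phi,\nabla\Theta\psi\rangle-\langle\nabla\phi,\nabla\psi\rangle\circ F_{t},
\]
valid for all $u,v,w$ in the class, with $Q$ symmetric. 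Taking $u=v$ and then combining this relation with its cyclic permutations in $u,v,w$ (using the symmetry of $Q$) forces $\int(\Theta u)\,Q(v,w)\,dm=0$ for all $u,v,w$; since the range of $\Theta$ is dense enough — it contains all Lipschitz functions with compact support in $b^{-1}((T-t,\infty))$, on which $Q(v,w)$ is supported — this gives $Q(v,w)=0$ $m$-a.e. Choosing $v=w=f$ yields $|D(f\circ F_{t})|^{2}=|Df|^{2}\circ F_{t}$ $m$-a.e., first for $f\in L^{\infty}$ and then in general by truncating $f$ and passing to the limit, which is legitimate because $\Theta$ preserves both the $L^{2}$-norm and $\mathcal{E}$. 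The main obstacle is precisely this last passage from preserved global energy to a pointwise comparison of minimal weak upper gradients; the polarization–Leibniz bootstrap above is one way to carry it out, and is the $\textmd{RCD}(0,N)$-analogue of the corresponding step in \cite{GigPhi15}.
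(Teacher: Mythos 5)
Your proposal is correct, and its skeleton coincides with the paper's: reduce $F_{t}$ to the truncated flow $\bar{F}_{t}$ by choosing $\bar{r}$ strictly below the support of $f$, apply Theorem \ref{thm5.15}, and use locality of the minimal weak upper gradient to pass between $f\circ\bar{F}_{t}$ and $f\circ F_{t}$. (One small imprecision: Corollary \ref{cor3.4} only controls the distance between level sets as sets, so by itself it does not give $\textmd{supp}(f)\subset b^{-1}([T+\delta,\infty))$; that inclusion follows directly from the properness of $b$ — compactness of $b^{-1}((T,T+1])$ — or from Proposition \ref{p5.2}, which places every point of $b^{-1}((0,\infty))$ on a transport ray; the conclusion you need is true.) The genuine difference is in the decisive step, upgrading $\mathcal{E}(f\circ F_{t})=\mathcal{E}(f)$ to the pointwise identity (\ref{5.39}): the paper simply defers this to the proof of Corollary 3.17 in \cite{GigPhi15} (or Lemma 4.17 in \cite{Gig13}), where the preserved Dirichlet form is localized via a carr\'e-du-champ type identity, e.g. $\int g|Df|^{2}dm=\int\langle\nabla f,\nabla(fg)\rangle dm-\tfrac{1}{2}\int\langle\nabla(f^{2}),\nabla g\rangle dm$, which yields $\int \Theta g\,Q(f,f)\,dm=0$ in one line; you instead polarize the preserved energy on the algebra of bounded supported functions, expand products by the Leibniz rule, change variables via Proposition \ref{prop5.3}, and then eliminate the resulting trilinear relation by the symmetrization trick before testing against functions in the range of $\Theta$. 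The two devices are equivalent in substance, so your route is a self-contained variant of the cited argument: it costs the extra symmetrization and a justification that $\Theta$ is onto the class over $b^{-1}((T-t,\infty))$ (which you should spell out exactly as in the forward direction, applying Theorem \ref{thm5.15} to $g\circ F_{-t}$), and it buys independence from the external reference, using only tools already in the paper (infinitesimal Hilbertianity, Leibniz rule, measure preservation). Your final truncation step is also fine, and is in fact simpler than you indicate, since truncation commutes with composition by $F_{t}$, so both sides of (\ref{5.39}) converge $m$-a.e. by locality.
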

\begin{proof}[Sketch of the proof]
Choose $\bar{r}>0$ such that $\textmd{supp}(f)\subset b^{-1}([\bar{r}+T,\infty))$ and then build a corresponding function $\bar{b}$ and the flow $\bar{F}_{t}$ as we have done in this subsection.
Obviously, $f\circ \bar{F}_{t}=f\circ F_{t}$ $m$-a.e., and by the locality property of minimal weak upper gradient, we have $|D(f\circ \bar{F}_{t})|=|D(f\circ F_{t})|$ $m$-a.e..
By Theorem \ref{thm5.15} we deduce that $f\in W^{1,2}(X)$ if and only if $f\circ \bar{F}_{t}\in W^{1,2}(X)$.
Thus we only need to prove $|D(f\circ \bar{F}_{t} )| = |Df|\circ \bar{F}_{t}$ $m$-a.e. for $f\in W^{1,2}(X)$ with $\textmd{supp}(f)\subset b^{-1}((T,\infty))$.
The later can be obtained by the same proof of Corollary 3.17 in \cite{GigPhi15} or Lemma 4.17 in \cite{Gig13}.
\end{proof}

The notion of Sobolev-to-Lipschitz property is a key to deduce metric information from the study of Sobolev functions.
See Proposition 4.20 in \cite{Gig13}.

In Corollary \ref{cor5.15}, (\ref{5.39}) holds only for $f\in W^{1,2}(X)$ with $\textmd{supp}(f)\subset b^{-1}((T,\infty))$, we obtain that $F_{t}$'s are local isometry instead of isometry, see Theorem \ref{thm5.16}.
The arguments here follows that of Theorem 3.18 in \cite{GigPhi15}.

\begin{thm}\label{thm5.16}
If we define the map $F:\mathbb{R}^{+}\times b^{-1}((0,\infty))\rightarrow b^{-1}((0,\infty))$ to be $F(t,x)=F_{t}(x)$, then
$F$ admits a locally Lipschitz representative w.r.t. the measure $\mathcal{L}^{1}\times m$.
We still denote such representative by $F$.
Furthermore, $F$ satisfies the following:
\begin{enumerate}
  \item for every $t\in \mathbb{R}^{+}$, $F_{t}$ is an invertible locally isometry from $b^{-1}((0,\infty))$ to $b^{-1}((t,\infty))$;
  \item for every $t, s \in \mathbb{R}^{+}$ and $x\in b^{-1}((0,\infty))$, we have
  \begin{align}
  &F_{t+s}(x)=F_{t}(F_{s}(x)),\label{5.37}\\
  &d(F_{s}(x),F_{t}(x))=|s-t|;\label{5.38}
  \end{align}

  \item for any curve $\eta:[0,1]\ni s\mapsto b^{-1}((0,\infty))$, putting $\tilde{\eta} := F_{t}\circ\eta$ then $\eta$ is absolutely continuous if and only if $\tilde{\eta}$ is and in this case $|\dot{\tilde{\eta}}_{s}|=|\dot{\eta}_{s}|$ for a.e. $s\in[0, 1]$.
\end{enumerate}
\end{thm}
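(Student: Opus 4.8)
The plan is to follow the scheme of Theorem 3.18 in \cite{GigPhi15}, the two essential inputs being Corollary \ref{cor5.15} and the Sobolev-to-Lipschitz property of $\textmd{RCD}(0,N)$ spaces, together with the $m$-a.e. statements about the flow $F_{t}$ already established: (\ref{3.6}), (\ref{3.7}) and Proposition \ref{prop5.3}.

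\textbf{Step 1: for each fixed $t$, $F_{t}$ agrees $m$-a.e. with a local isometry.} Fix $t>0$ and $x_{0}\in b^{-1}((0,\infty))$, and choose $T>0$, $\delta>0$ so that $B:=B_{x_{0}}(\delta)\subset b^{-1}((T,\infty))$; note $F_{t}(B)\subset b^{-1}((T,\infty))$ too. For any $1$-Lipschitz $v$ with $\textmd{supp}(v)\subset b^{-1}((T,\infty))$, Corollary \ref{cor5.15} gives $v\circ F_{t}\in W^{1,2}(X)$ with $|D(v\circ F_{t})|=|Dv|\circ F_{t}\leq1$ $m$-a.e., so by the Sobolev-to-Lipschitz property $v\circ F_{t}$ has a $1$-Lipschitz representative. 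Taking $v=(\delta-d(F_{t}(x_{0}),\cdot))_{+}$ and using that $F_{t}$ is measure-preserving and $m$-a.e. bijective from $b^{-1}((0,\infty))$ onto $b^{-1}((t,\infty))$ (Proposition \ref{prop5.3}), one deduces $d(F_{t}(x),F_{t}(y))\leq d(x,y)$ for $m\times m$-a.e. $(x,y)\in B\times B$. Running the same argument with $F_{-t}$ in place of $F_{t}$ — legitimate by Corollary \ref{cor5.15}, since $\textmd{supp}(g\circ F_{-t})\subset b^{-1}((T+t,\infty))$ when $\textmd{supp}(g)\subset b^{-1}((T,\infty))$ — yields the reverse inequality on $F_{t}(B)$. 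Hence $F_{t}$ restricted to $B$ is $m$-a.e. an isometry onto its image, and since $x_{0}$ was arbitrary and $F_{t}$ is $m$-a.e. surjective onto $b^{-1}((t,\infty))$, $F_{t}$ agrees $m$-a.e. with a local isometry onto $b^{-1}((t,\infty))$; in particular the local Lipschitz constants of $x\mapsto F_{t}(x)$ are uniformly $\leq1$.

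\textbf{Step 2: joint local Lipschitz regularity and choice of the representative.} By (\ref{3.7}) the curves $t\mapsto F_{t}(x)$ are $1$-Lipschitz; combined with the uniform local Lipschitz bound from Step 1 in the $x$-variable, $F$ has a representative which is locally Lipschitz with respect to $\mathcal{L}^{1}\times m$ on $\mathbb{R}^{+}\times b^{-1}((0,\infty))$. We fix and relabel this continuous representative as $F$. Since the $m$-a.e. identity ``$F_{t}$ is a local isometry'' passes to the continuous representative (using $\textmd{supp}(m)=X$ and a density argument), item (1) holds for every $t$, and $F_{t}$ is invertible by (\ref{3.6}) with inverse $F_{-t}$; this proves the first assertion.

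\textbf{Step 3: the remaining identities.} The semigroup law $F_{t+s}(x)=F_{t}(F_{s}(x))$ holds $m$-a.e. by (\ref{3.6}), and both sides being continuous it holds everywhere, giving (\ref{5.37}); (\ref{5.38}) is (\ref{3.7}) for the continuous representative. For item (3): if $\eta:[0,1]\to b^{-1}((0,\infty))$ is absolutely continuous then $\tilde\eta=F_{t}\circ\eta$ is absolutely continuous since $F_{t}$ is locally Lipschitz, and $|\dot{\tilde\eta}_{s}|=|\dot\eta_{s}|$ for a.e. $s$ because $F_{t}$ is a local isometry; applying the same to $F_{-t}$ gives the converse. \textbf{The main obstacle} is Step 1: Corollary \ref{cor5.15} only controls $f\circ F_{t}$ when $\textmd{supp}(f)\subset b^{-1}((T,\infty))$ and $t\leq T$, so one must work locally near points where $b$ is large and then patch the resulting local isometries over all of $b^{-1}((0,\infty))$, and one must carefully upgrade the $m$-a.e. metric statements to pointwise statements for a continuous representative.
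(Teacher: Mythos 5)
Your overall strategy coincides with the paper's (both follow Theorem 3.18 of \cite{GigPhi15}: Corollary \ref{cor5.15} plus the Sobolev-to-Lipschitz property, then an upgrade of $m$-a.e.\ statements to a continuous representative), but Step 1 as written has a genuine gap. The single test function $v=(\delta-d(F_{t}(x_{0}),\cdot))_{+}$ only tells you, after passing to the $1$-Lipschitz representative of $v\circ F_{t}$, that $\bigl|(\delta-d(F_{t}(x_{0}),F_{t}(x)))_{+}-(\delta-d(F_{t}(x_{0}),F_{t}(y)))_{+}\bigr|\leq d(x,y)$ off a negligible set; this controls differences of distances to the \emph{fixed} center $F_{t}(x_{0})$ and does not yield $d(F_{t}(x),F_{t}(y))\leq d(x,y)$ for $m\times m$-a.e.\ $(x,y)\in B\times B$ — neither measure preservation nor a.e.\ bijectivity of $F_{t}$ (Proposition \ref{prop5.3}) bridges this. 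To recover the mutual distance you must let the center, equivalently the $1$-Lipschitz test function, vary, and since each choice carries its own exceptional null set you need a countable family with one common null set. This is exactly the paper's device: a countable dense set $\Lambda$ of $1$-Lipschitz functions supported in $B_{x_{0}}(5r)$ with $d(y_{0},y_{1})=\sup_{f\in\Lambda}|f(y_{1})-f(y_{0})|$ for $y_{0},y_{1}\in B_{x_{0}}(r)$, a single $m$-negligible set $N$ outside of which every $f\circ F_{t}$, $f\in\Lambda$, is $1$-Lipschitz, hence $d(F_{t}(x_{1}),F_{t}(x_{2}))=\sup_{f\in\Lambda}|f(F_{t}(x_{1}))-f(F_{t}(x_{2}))|\leq d(x_{1},x_{2})$ for $x_{1},x_{2}\in F_{t}^{-1}(B_{x_{0}}(r)\setminus N)$, followed by a covering argument over the $\sigma$-compact set $b^{-1}((t,\infty))$.

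A secondary inaccuracy: you invoke Corollary \ref{cor5.15} for every $1$-Lipschitz $v$ with $\textmd{supp}(v)\subset b^{-1}((T,\infty))$, where $T$ is chosen from $x_{0}$; but the corollary requires $t\leq T$, which fails for large $t$. The paper avoids this by localizing in the \emph{image}: the test functions are supported in a ball $B_{x_{0}}(5r)\subset b^{-1}((t,\infty))$ around a point of the target, so the support hypothesis holds with $T=t$. Your concrete choice $v=(\delta-d(F_{t}(x_{0}),\cdot))_{+}$ does satisfy this (its support lies in $b^{-1}((t+T-\delta,\infty))$), so the repair is simply to phrase the localization in the image and to replace the single cone function by the countable family above. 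Steps 2 and 3, and your treatment of the reverse inequality via $F_{-t}$ using $\textmd{supp}(g\circ F_{-t})\subset b^{-1}((T+t,\infty))$, match the paper and are fine once Step 1 is fixed.
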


\begin{proof}[Sketch of the proof]
(2) (3) and the conclusion that $F$ admits a locally Lipschitz representative are easy to prove once we can prove $F_{t}$ has a locally isometric representative for every $t\geq 0$ .
For any $t\in \mathbb{R}^{+}$ and $x_{0}\in b^{-1}((t,\infty))$, choose $r > 0$ such that $B_{x_{0}}(5r)\subset b^{-1}((t,\infty))$.
Let $\Lambda$ be a countable dense subset of the set of 1-Lipschitz functions with support in $B_{x_{0}}(5r)$.
We can prove $d(y_{0},y_{1}) = \sup_{f\in \Lambda} |f(y_{1})-f(y_{0})|, \forall y_{0}, y_{1}\in B_{x_{0}}(r)$, as in \cite{GigPhi15}.
By Corollary \ref{cor5.15}, for any $f\in \Lambda$, $|D(f\circ F_{t})|=|Df|\circ F_{t}\leq1$ $m$-a.e., then by the Sobolev-to-Lipschitz property of $X$, $f\circ F_{t}$ has a $1$-Lipschitz representative.
Since $\Lambda$ is countable, there exists a $m$-negligible Borel set $N$ such that the restriction of $f\circ F_{t}$ to $X\setminus N$ is $1$-Lipschitz for every $f\in \Lambda$.
Hence for any $x_{1}, x_{2}\in F_{t}^{-1}(B_{x_{0}}(r)\setminus N)$, we have $d(F_{t}(x_{1}),F_{t}(x_{2}))=\sup_{f\in\Lambda}|f(F_{t}(x_{1}))-f(F_{t}(x_{2}))|\leq d(x_{1},x_{2}).$
Since $b^{-1}((t,\infty))$ is $\sigma$-compact, by an easy covering argument, $F_{t}$ is locally $1$-Lipschitz on $b^{-1}((0,\infty))\setminus N'$ for some $m$-negligible set $N'$.
The same argument can be applied to $F_{t}^{-1}:b^{-1}((t,\infty))\rightarrow b^{-1}((0,\infty))$.
Hence $F_{t}$ has an invertible locally isometric representative.
\end{proof}

\subsection{Equipped Z with the induced distance and measure}

Now we consider the cross section $Z=b^{-1}(r')$.
Recall that $Z$ is compact by Theorem \ref{compact-level-set}.

Suppose $Z$ consists of exactly one point, i.e. $Z=\{x\}$, it is easy to see in this case $X=R(x)$. 
From Theorem \ref{thm5.16}, it is easy to see  $b^{-1}([0,\infty))$ consists of a geodesic ray emitting from $b^{-1}(0)$.
Then combining with Lemma \ref{lem5.1}, we know $(b^{-1}([0,\infty)),d,m)$ is isomorphic to $(\mathbb{R}^{+},d_{Ecul},c\mathcal{L}^{1})$ with $c=m(b^{-1}([0,1]))$. 

We are going to prove that $b$ is an injective function on $X$. 
If this holds, then combining with the fact that $b$ is a proper function, we know $X$ is isometric to a half-line. 
Suppose $b$ is not injective on $X$, then we can find three distinct points $x_{0},x_{1},x_{2}\in X$ such that $b(x_{1})=b(x_{2})<b(x_{0})\leq 0$, and $b^{-1}([b(x_{0}),\infty))$ is isometric to a half-line. 
It is easy to see $d(x,x_{1})=d(x,x_{1})$ for any $x$ with $b(x)\geq b(x_{0})$. 
Let $\mu=\frac{1}{m(b^{-1}([0,1]))}m|_{b^{-1}([0,1])}$, $\nu=\frac{1}{2}(\delta_{x_{1}}+\delta_{x_{2}})$. 
It is easy to check that the plan $\mu\times \nu$ is optimal, but it is not induced by a map. 
This contradicts to Theorem \ref{3.25}.

In conclusion, we have

\begin{lem}
If $Z$ consists of exactly one point, then $(b^{-1}([0,\infty)),d,m)$ is isomorphic to $(\mathbb{R}^{+},d_{Ecul},c\mathcal{L}^{1})$, where $c=m(b^{-1}([0,1]))$. 
Furthermore, $(X, d)$ is isometric to some $([\bar{r},\infty),d_{Ecul})$ with $\bar{r}\leq 0$.
\end{lem}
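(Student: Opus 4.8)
The plan is to dispatch the two assertions in turn: the first is a bookkeeping consequence of the disintegration picture already in place, while the second rests on a genuine rigidity argument, which is where the work lies.

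For the first assertion I would argue as follows. By Proposition \ref{p5.2} we have $b^{-1}((0,\infty))\subset\Xi(K)$, so every point of $b^{-1}((0,\infty))$ lies on a transport ray $R(y)$ with $y\in K$. Since $\tilde\nu$-a.e. such ray satisfies $(0,\infty)\subset\textmd{Dom}(g(y,\cdot))$ and $b\bigl(g(y,t)\bigr)=t$, it meets the level set $b^{-1}(r')=Z=\{x\}$ in the single point $x$; hence all these rays coincide with the unique geodesic ray through $x$, and $b^{-1}((0,\infty))$ is exactly the image of one geodesic ray parametrized by $b$. Being the image of a geodesic, it is, as a metric subspace of $X$, isometric to $\bigl((0,\infty),d_{Eucl}\bigr)$. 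Passing to the closure in $X$ — and noting $\overline{b^{-1}((0,\infty))}=b^{-1}([0,\infty))$ because each point of $b^{-1}(0)$ is a limit along its own Busemann ray, whose $b$-value is immediately positive — one concludes that $b^{-1}(0)$ is a single point and $\bigl(b^{-1}([0,\infty)),d\bigr)$ is isometric to $\bigl([0,\infty),d_{Eucl}\bigr)$ with $b$ as coordinate; the identity $b_{*}\bigl(m|_{b^{-1}([0,\infty))}\bigr)=c\mathcal{L}^{1}$ with $c=m(b^{-1}([0,1]))$ then follows from (\ref{5.2}), or equivalently from Lemma \ref{lem5.1}, the disintegration $\tilde m_{r}$ being here a family of Dirac masses.

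For the second assertion the plan is to reduce to proving that $b$ is injective on $X$. Granting injectivity, $b$ is a continuous bijection of $X$ onto $[\bar r,\infty)$ with $\bar r:=\min_{X}b\le 0$ (note $b^{-1}(0)\neq\emptyset$ by the first part, and $b$ attains its minimum as shown above); since $b$ is also proper and all its level sets are singletons, Corollary \ref{cor3.4} gives $d(u,v)=\textmd{dist}\bigl(b^{-1}(b(u)),b^{-1}(b(v))\bigr)=|b(u)-b(v)|$ for all $u,v\in X$, so $b$ is an isometry of $(X,d)$ onto $\bigl([\bar r,\infty),d_{Eucl}\bigr)$. To prove injectivity, suppose it fails. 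By the first part $b$ is injective on $b^{-1}([0,\infty))$, so there is a level $s'<0$ carrying two distinct points $x_{1}\neq x_{2}\in b^{-1}(s')$; write $w_{0}$ for the unique point of $b^{-1}(0)$. The geometric core of the argument — the step I expect to be the main obstacle, since it is precisely what makes the two transported Dirac masses interchangeable — is this: the Busemann ray $\gamma^{p}$ issuing from any $p\in b^{-1}(s')$ satisfies $b(\gamma^{p}_{t})=s'+t$ by Lemma \ref{lem3.1}, hence passes through $b^{-1}(0)=\{w_{0}\}$ at parameter $-s'$, which forces $d(p,w_{0})=-s'$; in particular $d(x_{1},w_{0})=d(x_{2},w_{0})=-s'$. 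Moreover, for any $u\in b^{-1}([0,1])$ a continuous path from $u$ to $x_{i}$ has its $b$-value running from $b(u)\ge 0$ down to $s'<0$, hence passes through $b^{-1}(0)=\{w_{0}\}$; splitting a length-minimizing such path at $w_{0}$ gives $d(u,x_{i})=d(u,w_{0})+d(w_{0},x_{i})=d(u,w_{0})-s'$. Combining the two, $d(u,x_{1})=d(u,x_{2})$ for every $u\in b^{-1}([0,1])$.

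The contradiction then comes from Theorem \ref{3.25}. I would set $\mu:=\frac{1}{m(b^{-1}([0,1]))}\,m|_{b^{-1}([0,1])}$, a probability measure with compact support and $\mu\ll m$ (with $m(b^{-1}([0,1]))>0$ since $\textmd{supp}(m)=X$), and $\nu:=\tfrac12(\delta_{x_{1}}+\delta_{x_{2}})$. Because $d(\cdot,x_{1})=d(\cdot,x_{2})$ on $\textmd{supp}(\mu)$, disintegrating an arbitrary coupling of $\mu$ and $\nu$ over its second marginal shows that every transport plan between $\mu$ and $\nu$ has the same cost $\int\frac{d^{2}(\cdot,x_{1})}{2}\,d\mu$; in particular the product plan $\mu\otimes\nu$ is $W_{2}$-optimal. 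But $\mu\otimes\nu$ is manifestly not induced by a map — from $\mu$-a.e. point it splits mass onto both $x_{1}$ and $x_{2}$ — contradicting Theorem \ref{3.25}, which asserts that for $\mu\ll m$ the optimal plan is unique and induced by a map. Hence $b$ is injective, and the proof is complete with $\bar r=\min_{X}b\le 0$.
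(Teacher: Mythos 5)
Your second half is exactly the paper's argument: locate $x_{1}\neq x_{2}$ on a common level $s'<0$ (injectivity on $b^{-1}([0,\infty))$ coming from the first part), show $d(u,x_{1})=d(u,x_{2})$ for all $u\in b^{-1}([0,1])$ by splitting geodesics at the unique point of $b^{-1}(0)$, and contradict Theorem \ref{3.25} with $\mu=\frac{1}{m(b^{-1}([0,1]))}m|_{b^{-1}([0,1])}$, $\nu=\tfrac12(\delta_{x_{1}}+\delta_{x_{2}})$, the product plan being optimal but not induced by a map; your passage from injectivity plus properness and Corollary \ref{cor3.4} to the isometry with $[\bar r,\infty)$ is also fine, and the measure identification via (\ref{5.2})/Lemma \ref{lem5.1} matches the paper.

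The weak point is the first half, where you claim that, since $\tilde\nu$-a.e.\ transport ray has $(0,\infty)\subset\textmd{Dom}(g(y,\cdot))$ and meets $b^{-1}(r')=\{x\}$, ``all these rays coincide with the unique geodesic ray through $x$'' and hence $b^{-1}((0,\infty))$ is the image of a single ray. Two things are missing: (i) the domain statement is only for $\tilde\nu$-a.e.\ $y$, whereas you must account for every point of $b^{-1}((0,\infty))$; and (ii) even for rays that do pass through $x$, the uniqueness of Proposition \ref{3.30} identifies them only forward of $x$ (and is stated for points of $\mathcal{T}$, which $x$ need not belong to), so nothing you cite excludes two distinct points $z_{1}\neq z_{2}$ on a level $s\in(0,r')$, i.e.\ branching below $x$. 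The paper closes precisely this step by invoking Theorem \ref{thm5.16}: the everywhere-defined representative of the flow is invertible and sends $b^{-1}(s)$ into $b^{-1}(s+t)$, so $F_{r'-s}$ injects $b^{-1}(s)$ into $\{x\}$ and every level set with $s>0$ is a singleton; then Corollary \ref{cor3.4} gives the isometry with $(0,\infty)$ and your closure argument and (\ref{5.2}) finish as you wrote. Alternatively, your own device from the second half repairs it: if $z_{1}\neq z_{2}$ lie on a level $s\in(0,r')$, every geodesic from a point $u$ with $b(u)>r'$ down to $z_{i}$ must cross $b^{-1}(r')=\{x\}$ and $d(x,z_{i})=r'-s$, so $d(u,z_{1})=d(u,z_{2})$, and the same optimal-transport contradiction with a reference measure on, say, $b^{-1}([r'+1,r'+2])$ applies (and symmetrically for levels above $r'$ using Proposition \ref{p5.2}). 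With either repair the proof is complete and follows the paper's route.
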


From now on, we will always assume $Z$ contains at least 2 points.

We define the projection map $\textmd{Pj}:b^{-1}((0,\infty))\rightarrow Z$ to be
$$\textmd{Pj}(x)=F_{r'-b(x)}(x).$$
By Theorem \ref{thm5.16} the map $\textmd{Pj}$ is well defined and locally Lipschitz.

Suppose $x,y\in Z$, by Lemma \ref{lem3.2}, there is a Lipschitz curve $\eta:[0,1]\ni t\mapsto\eta_{t}\in b^{-1}([r',\infty))$ with $\eta(0)=x'$, $\eta(1)=y'$.
By Theorem \ref{thm5.16}, the curve $\sigma=\textmd{Pj}(\eta)$ is a Lipschitz curve connecting $x'$ and $y'$ in $Z$.
In conclusion, we have

\begin{cor}\label{cor5.17}
If $Z$ consists of more than one point, then for every $x', y' \in Z$ there is a Lipschitz curve $\sigma:[0,1]\rightarrow Z$ with $\sigma_{0}=x'$, $\sigma_{1}=y'$.
In particular, $Z$ is a path-connected space with Hausdorff dimension at least $1$.
\end{cor}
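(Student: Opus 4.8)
The plan is to obtain the desired curve in $Z$ by \emph{projecting}, via the map $\textmd{Pj}$, a connecting curve lying in a slightly lower superlevel set, exploiting that $\textmd{Pj}$ is locally Lipschitz by Theorem \ref{thm5.16}. First I would fix $x',y'\in Z=b^{-1}(r')$, recalling $r'>0$. Since $b(x')=b(y')=r'>\tfrac{r'}{2}$, both points lie in the open superlevel set $\Omega_{r'/2}=\{b>\tfrac{r'}{2}\}$, and $\Omega_{r'/2}\subset b^{-1}((0,\infty))$ because $\tfrac{r'}{2}>0$. By Lemma \ref{lem3.2} there is a Lipschitz curve $\eta:[0,1]\to\Omega_{r'/2}$ with $\eta_0=x'$ and $\eta_1=y'$; in particular $\eta$ takes values in the domain $b^{-1}((0,\infty))$ of $\textmd{Pj}$.

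Next I would set $\sigma:=\textmd{Pj}\circ\eta$ and verify the required properties. By Theorem \ref{thm5.16} the flow $F$ admits a locally Lipschitz representative on $\mathbb{R}^{+}\times b^{-1}((0,\infty))$, and since $x\mapsto(r'-b(x),x)$ is locally Lipschitz ($b$ being $1$-Lipschitz), the composition $\textmd{Pj}(x)=F_{r'-b(x)}(x)$ is locally Lipschitz on $b^{-1}((0,\infty))$. As $\eta([0,1])$ is a compact subset of the open set $b^{-1}((0,\infty))$, a finite covering by balls on which $\textmd{Pj}$ is Lipschitz shows that $\textmd{Pj}$ is genuinely Lipschitz on a neighbourhood of $\eta([0,1])$, hence $\sigma:[0,1]\to X$ is Lipschitz. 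Moreover $b(F_t(x))=b(x)+t$ (from the definition of the flow and the ray map, extended to the locally Lipschitz representative by continuity) gives $b(\textmd{Pj}(x))=r'$ for every $x$, so $\sigma$ takes values in $Z$; and $\sigma_0=\textmd{Pj}(x')=F_0(x')=x'$, $\sigma_1=\textmd{Pj}(y')=y'$ since $F_0=\textmd{Id}$. This produces the Lipschitz curve, and in particular shows $Z$ is path-connected.

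For the Hausdorff dimension bound I would pick $x'\neq y'$ in $Z$, which is possible since $Z$ has more than one point, and consider $f:=d(x',\cdot)|_Z$, which is $1$-Lipschitz with $f(x')=0$ and $f(y')=d(x',y')>0$. Since $Z$ is path-connected, $f(Z)$ is a connected subset of $\mathbb{R}$ containing both $0$ and $d(x',y')$, hence $f(Z)\supseteq[0,d(x',y')]$. Because Lipschitz maps do not increase Hausdorff dimension, $\dim_{\mathcal H}(Z)\ge\dim_{\mathcal H}(f(Z))\ge\dim_{\mathcal H}([0,d(x',y')])=1$.

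I do not expect a genuine obstacle here: the substantive content is entirely contained in Theorem \ref{thm5.16} (the local-isometry and local-Lipschitz properties of the flow $F$, hence of $\textmd{Pj}$) and Lemma \ref{lem3.2} (connectedness of superlevel sets of $b$). The only points requiring a little care are ensuring the connecting curve $\eta$ stays inside the domain $b^{-1}((0,\infty))$ of $\textmd{Pj}$ — which is why I work in $\Omega_{r'/2}$ and use $r'>0$ — and upgrading "locally Lipschitz" to "Lipschitz" along the compact curve $\eta([0,1])$ by a standard Lebesgue-number/covering argument.
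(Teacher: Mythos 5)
Your proposal is correct and takes essentially the same route as the paper: connect $x'$ and $y'$ by a Lipschitz curve in a superlevel set of $b$ via Lemma \ref{lem3.2} and push it into $Z$ with the locally Lipschitz projection $\textmd{Pj}$ furnished by Theorem \ref{thm5.16} (the paper routes the curve through $b^{-1}([r',\infty))$ rather than $\Omega_{r'/2}$, which is immaterial, and note that for points with $b(x)>r'$ one uses $F_{-(b(x)-r')}=(F_{b(x)-r'})^{-1}$, also a local isometry by Theorem \ref{thm5.16}). Your explicit Hausdorff-dimension argument via the $1$-Lipschitz map $d(x',\cdot)|_{Z}$ is the standard justification that the paper leaves implicit.
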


We define a new distance $d'$ on $Z$:
$$d'(x', y'):=\inf_{\sigma}\biggl(\int_{0}^{1}|\dot{\sigma}_{t}|^{2} dt\biggr)^{\frac{1}{2}}$$
for $x', y'\in Z$,
where the infimum is taken among all Lipschitz curves $\sigma: [0, 1]\rightarrow Z \subset X$ and the metric speed is computed w.r.t. the distance $d$.

By Corollary \ref{cor5.17}, $d'$ is always finite and is a distance on $Z$.
It is easy to prove that a curve $\sigma:[0,1]\rightarrow Z\subset X$ is absolutely continuous w.r.t. $d'$ if and only if it is absolutely continuous w.r.t. $d$ and in this case the metric speeds computed w.r.t. the two distances are the same,
so we still use $|\dot{\sigma}_{t}|$ to denote the metric speed if there is no ambiguity.
We can easily check that the map $\textmd{Pj}$ is still locally Lipschitz even if $Z$ equipped with the new distance $d'$.

The measure $m'$ on $Z$ is chosen to be $m':=\tilde{m}_{r'}$. 
By Lemma \ref{lem5.1}, we are easy to check 
\begin{align}\label{5.5}
\textmd{Pj}_{*}(m|_{b^{-1}([c,d])}) = m(b^{-1}([c,d]))m'.
\end{align}
holds for any $d>c>0$. 

Combining the facts that $X$ is doubling, that $\textmd{Pj}$ is locally Lipschitz, and Lemma \ref{lem5.1}, we can derive that $(Z,d',m')$ is also doubling, see Proposition 3.26 in \cite{GigPhi15} for similar proof.

Denote by $Y=Z\times (0,\infty)$, we endow $Y$ with the product measure $m'\otimes \mathcal{L}^{1}$ and the product distance $d'\times d_{Eucl}$ defined by
$$d'\times d_{Eucl}((x',t),(y',s)):=\sqrt{d'(x',y')^{2}+|t-s|^{2}}.$$
We denote $(Y,d'\times d_{Eucl},m'\otimes\mathcal{L}^{1})$ by $(Y,d_{Y},m_{Y})$ for simplicity.

For $0\leq c<d\leq\infty$, we denote by $Y_{(c,d)}\subset Y$ to be $Y_{(c,d)}:=Z\times(c,d)$ for simplicity, similar notations are $Y_{[c,d]}$ e.t.c..

We introduce the maps $T:Y\rightarrow b^{-1}((0,\infty))$ and $S:b^{-1}((0,\infty))\rightarrow Y$ to be
$$T(x',t):=F_{t-r'}(x'),$$
$$S(x):=(\textmd{Pj}(x),b(x)).$$

It's easy to see that $S\circ T=\textmd{Id}|_{Y}$ and $T\circ S=\textmd{Id}|_{b^{-1}((0,\infty))}$.
From Lemma \ref{lem5.1}, it is easy to check
\begin{align}\label{5.43}
T_{*}m_{Y}=m|_{b^{-1}((0,\infty))},\qquad \text{ and }\qquad S_{*}m|_{b^{-1}((0,\infty))}=m_{Y}.
\end{align}

By Theorem \ref{thm5.16}, both $S$ and $T$ are locally Lipschitz.

\subsection{Estimate on the speed of the projection}

As in Section 3.6.2 of \cite{GigPhi15}, we also need some suitable cut-off function and reparametrization function.

For any fixed $\hat{r},\hat{R}$ with $\hat{R}>r'>\hat{r}>0$,
pick a function $\hat{\varphi}\in C^{\infty}(\mathbb{R})$ with support in $(\frac{\hat{r}}{2},2\hat{R})$ so that
$$\hat{\varphi}(z)=(z-r')^{2} \qquad \text{ on }[\hat{r},\hat{R}].$$
Define $\hat{b}:X\rightarrow\mathbb{R}$ as $\hat{b}(x)=\hat{\varphi}\circ b(x)$.
Define the reparametrization function $\textmd{rep}:\mathbb{R}^{+}\times\mathbb{R}\rightarrow \mathbb{R}$, $\textmd{rep}_{t}(r)=\textmd{rep}(t,r)$ satisfying $\textmd{rep}_{0}(r)=0$ and
\begin{align}\label{rep_2}
\partial_{t}\textmd{rep}_{t}(r)=\hat{\varphi}'(r-\textmd{rep}_{t}(r))
\end{align}
for any $r$.
We then define the flow $\hat{F}:\mathbb{R^{+}}\times X\rightarrow X$ to be
$$
\hat{F}_{t}(x)=F_{-\textmd{rep}_{t}(b(x))}(x).
$$

Obviously, $\hat{F}_{t}$ is the identity on $b^{-1}((-\infty,\frac{\hat{r}}{2}])\cup b^{-1}([2\hat{R},\infty))$, $\hat{F}_{t}$ sends $b^{-1}([\hat{r},\hat{R}])$ into itself for every $t\geq0$;
$\hat{F}_{t}: X\rightarrow X$ is invertible for every $t\geq0$, and for $t > 0$ we define $\hat{F}_{-t}=(\hat{F}_{t})^{-1}$.
For every $x\in X$, let $\eta^{(x)}:[0,\infty)\mapsto X$ be the curve given by $\eta^{(x)}_{t}:=\hat{F}_{t}(x)$.
Properties for $\hat{F}_{t}$ similar to b) d) e) in Section 3.6.2 of \cite{GigPhi15} can be easily established.
Furthermore, it is easy to obtain
$$\frac{d}{dt}\hat{b}(\eta^{(x)}_{t})=-(\hat{\varphi}'\circ b)^{2}(\eta^{(x)}_{t})=-4\hat{b}(\eta^{(x)}_{t})\qquad\text{ if }x\in b^{-1}([\hat{r},\hat{R}]),$$
thus $|b(\eta^{(x)}_{t})-r'|$ decreases exponentially as $t\rightarrow+\infty$, hence $|\hat{F}_{t}(x)-\textmd{Pj}(x)|\rightarrow0$ uniformly for $x\in b^{-1}([\hat{r},\hat{R}])$ as $t\rightarrow+\infty$.
Similar to f) in Section 3.6.2 of \cite{GigPhi15}, we can derive that there exist positive constants $C$ and $\delta$ such that $\textmd{Lip}(\hat{F}_{s})\leq 1+|s|C$ for every $s\in[-\delta,\delta]$.

On the other hand, from the chain rule for Laplacian we are easy to see that $\hat{b}\in \textmd{Test}(X)$ and $\Delta\hat{b}\in L^{\infty}(X)$.
Furthermore, recall that by Lemma \ref{lem5.10}, $\textmd{Hess}(b)=0$ on $(b^{-1}(0,\infty))$, hence by the chain rules for Hessian, we have
\begin{align}\label{5.44}
\textmd{Hess}(\hat{b})=\hat{\varphi}'\circ b\textmd{Hess}(b)+\hat{\varphi}''\circ b\nabla b\otimes\nabla b=2\nabla b\otimes\nabla b,
\end{align}
holds on $b^{-1}([\hat{r},\hat{R}])$.

By the very similar arguments in Proposition 3.30 of \cite{GigPhi15}, using the properties of $\hat{b}$ and $\hat{F}_{t}$ here, we can establish the following proposition:

\begin{prop}\label{prop5.23}
Let $v\in L^{2}(TX)$ and put $v_{s}:=d\hat{F}_{s}(v)$.
Then the map $s\mapsto \frac{1}{2}|v_{s}|^{2}\circ\hat{F}_{s}\in L^{1}(X)$ is $C^{1}$ and its derivative is given by
\begin{align}\label{5.46}
\frac{1}{2}\frac{d}{ds}|v_{s}|^{2}\circ \hat{F}_{s}=-\textmd{Hess}(\hat{b})(v_{s},v_{s})\circ \hat{F}_{s},
\end{align}
the incremental ratios being converging both in $L^{1}(X)$ and $m$-a.e..
\end{prop}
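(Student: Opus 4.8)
The plan is to transcribe the proof of Proposition 3.30 in \cite{GigPhi15}; all the structural inputs used there are available here: $\hat{b}\in\textmd{Test}(X)$ with $\Delta\hat{b}\in L^{\infty}(X)$, the explicit formula (\ref{5.44}) for $\textmd{Hess}(\hat{b})$ (which, combined with Lemma \ref{lem5.10}, gives $\textmd{Hess}(\hat{b})=\hat{\varphi}''\circ b\,\nabla b\otimes\nabla b$ on all of $b^{-1}((0,\infty))$, hence $\textmd{Hess}(\hat{b})\in L^{\infty}(X)$), the fact that $\hat{F}_{s}$ is the identity outside the compact set $b^{-1}([\frac{\hat{r}}{2},2\hat{R}])$, and the quantitative bounds $\textmd{Lip}(\hat{F}_{s})\leq 1+C|s|$ and $c(s)m\leq(\hat{F}_{s})_{*}m\leq C(s)m$ for $|s|$ small. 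I will also use, just as Lemma \ref{lem5.11}, Theorem \ref{thm5.15} and Corollary \ref{cor5.15} were obtained for $\bar{F}$, the corresponding statements for $\hat{F}$: $s\mapsto f\circ\hat{F}_{-s}$ is continuous in $L^{p}$, and for $f\in W^{1,2}(X)$ it is $C^{1}$ into $W^{1,2}(X)$ with the expected derivative $\langle\nabla f,\nabla\hat{b}\rangle\circ\hat{F}_{-s}$ up to a controlled cut-off error.

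First I would reduce to the diagonal bilinear form and then to special $v$. Outside $b^{-1}([\frac{\hat{r}}{2},2\hat{R}])$ both $\hat{F}_{s}$ and $\textmd{Hess}(\hat{b})$ are trivial, so after multiplying $v$ by a fixed cut-off there is no loss in assuming $\textmd{supp}(v)\subset b^{-1}([\frac{\hat{r}}{2},2\hat{R}])$. Since $v\mapsto d\hat{F}_{s}(v)\circ\hat{F}_{s}$ is a bounded operator on $L^{2}(TX)$ of norm $\leq\textmd{Lip}(\hat{F}_{s})\leq 1+C|s|$, uniformly for $|s|$ small, and since finite sums $\sum_{i}\chi_{i}\nabla g_{i}$ with $\chi_{i},g_{i}\in\textmd{Test}(X)$ are dense in $L^{2}(TX)$, it suffices to prove the $C^{1}$-statement and the identity (\ref{5.46}) for $v=\nabla g$ with $g\in\textmd{Test}(X)$: the general case follows because $v^{(n)}\to v$ in $L^{2}(TX)$ forces $v^{(n)}_{s}\circ\hat{F}_{s}\to v_{s}\circ\hat{F}_{s}$ in $L^{2}(X)$ and $\textmd{Hess}(\hat{b})(v^{(n)}_{s},v^{(n)}_{s})\circ\hat{F}_{s}\to\textmd{Hess}(\hat{b})(v_{s},v_{s})\circ\hat{F}_{s}$ in $L^{1}(X)$ (here using $\textmd{Hess}(\hat{b})\in L^{\infty}$), both uniformly in $s$ on small intervals, so uniform convergence of the $C^{1}$ maps together with uniform convergence of their derivatives passes to the limit.

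For $v=\nabla g$ the heart of the matter is the nonsmooth incarnation of the smooth identity $\frac{d}{ds}\bigl(\hat{F}_{s}^{*}\langle\cdot,\cdot\rangle\bigr)=\hat{F}_{s}^{*}\mathcal{L}_{-\nabla\hat{b}}\langle\cdot,\cdot\rangle=-2\,\hat{F}_{s}^{*}\textmd{Hess}(\hat{b})$, which holds because $-\nabla\hat{b}$ is the generator of $\hat{F}$ on $b^{-1}([\hat{r},\hat{R}])$ (built into the reparametrization ODE (\ref{rep_2})). Following \cite{GigPhi15}, I would differentiate $s\mapsto\frac{1}{2}|v_{s}|^{2}\circ\hat{F}_{s}$ by rewriting its incremental ratio in terms of $g\circ\hat{F}_{-s}$ and $\langle\nabla g,\nabla\hat{b}\rangle\circ\hat{F}_{-s}$ via the $C^{1}$-in-$W^{1,2}$ differentiability recalled above, then manipulating with the chain and Leibniz rules for $\langle\nabla\cdot,\nabla\cdot\rangle$ into a form to which the Bochner inequality (\ref{2.14}) applies with a perturbation $g\mapsto g+\epsilon\hat{b}$; because $\textmd{Hess}(b)=0$ on $b^{-1}((0,\infty))$, $\Delta\hat{b}\in L^{\infty}$, and $\textmd{Hess}(\hat{b})$ is given explicitly by (\ref{5.44}), the two one-sided Bochner inequalities obtained by letting $\epsilon\downarrow 0$ and $\epsilon\uparrow 0$ coincide and produce exactly (\ref{5.46}). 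Since the supports of the flowed functions leak out of $b^{-1}([\hat{r},\hat{R}])$ in finite time, I will insert the cut-off of Lemma \ref{lem5.13} and absorb the resulting error terms using the heat-flow estimates of Lemma \ref{lem5.12} — the very device already used in the proof of Theorem \ref{thm5.15}.

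Finally, for the two modes of convergence: the difference quotients are bounded in $L^{1}(X)$ by a fixed multiple of $\sup_{|h|\leq\delta}|v_{s+h}|^{2}\circ\hat{F}_{s+h}$, which lies in $L^{1}(X)$ by the uniform bi-Lipschitz and measure-comparison bounds, so dominated convergence gives $L^{1}$-convergence; $m$-a.e. convergence is then extracted along a subsequence and promoted to full convergence using the smooth ODE (\ref{rep_2}) governing $\textmd{rep}_{s}$ and the resulting regularity of $s\mapsto\hat{F}_{s}(x)$ along each flow line, as in \cite{GigPhi15}. \textbf{The main obstacle} is the rigorous treatment of the pushed-forward, merely $L^{2}$-regular vector field $v_{s}=d\hat{F}_{s}(v)$ and the control of the cut-off error terms; both are handled precisely by the RCD$(0,N)$ Bochner inequality (\ref{2.14}), the vanishing of $\textmd{Hess}(b)$, and the quantitative estimates of Lemmas \ref{lem5.12}--\ref{lem5.13}. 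Granting these — all established above for our $\hat{b}$ and $\hat{F}$ — the argument of \cite{GigPhi15} carries over without change.
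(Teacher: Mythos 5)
Your proposal follows the paper's route exactly: the paper gives no self-contained proof of Proposition \ref{prop5.23}, but simply invokes the argument of Proposition 3.30 in \cite{GigPhi15} together with the properties of $\hat{b}$ and $\hat{F}_{t}$ established beforehand (Test-regularity of $\hat{b}$, the Hessian formula (\ref{5.44}) combined with Lemma \ref{lem5.10}, the Lipschitz and measure bounds for $\hat{F}_{s}$, the Bochner inequality (\ref{2.14}) and the cut-off/heat-flow estimates of Lemmas \ref{lem5.12}--\ref{lem5.13}), which are precisely the ingredients you assemble. So the approach is essentially the same as the paper's, and your verification that the required inputs are available is correct.
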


We have the following corollary of Proposition \ref{prop5.23}:

\begin{cor}\label{cor5.24}
Let $v\in L^{2}(TX)$ be concentrated on $b^{-1}([\hat{r},\hat{R}])$ and put $v_{s}:=d \hat{F}_{s}(v)$. Then for every $s_{2}>s_{1}\geq0$ we have
\begin{align}\label{5.57}
|v_{s_{2}}|^{2}\circ \hat{F}_{s_{2}}\leq|v_{s_{1}}|^{2}\circ \hat{F}_{s_{1}},\qquad m\text{-a.e..}
\end{align}
\end{cor}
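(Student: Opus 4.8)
The plan is to integrate the differential identity (\ref{5.46}) of Proposition \ref{prop5.23} and to exploit the nonnegativity of $\textmd{Hess}(\hat{b})$ on the slab $b^{-1}([\hat{r},\hat{R}])$. The first point I would record is a localization: since $\hat{F}_{s}$ maps $b^{-1}([\hat{r},\hat{R}])$ into itself and $v$ is concentrated on $b^{-1}([\hat{r},\hat{R}])$, the field $v_{s}=d\hat{F}_{s}(v)$ is concentrated on $\hat{F}_{s}(b^{-1}([\hat{r},\hat{R}]))\subset b^{-1}([\hat{r},\hat{R}])$ for every $s\geq 0$. On this set formula (\ref{5.44}) gives $\textmd{Hess}(\hat{b})=2\nabla b\otimes\nabla b$, so
$$\textmd{Hess}(\hat{b})(v_{s},v_{s})=2\langle\nabla b,v_{s}\rangle^{2}\geq 0\qquad m\text{-a.e. on }b^{-1}([\hat{r},\hat{R}]),$$
while $\textmd{Hess}(\hat{b})(v_{s},v_{s})=0$ $m$-a.e. off the support of $v_{s}$; hence $\textmd{Hess}(\hat{b})(v_{s},v_{s})\geq 0$ $m$-a.e. on all of $X$. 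Composing with the invertible Borel map $\hat{F}_{s}$ (which satisfies $(\hat{F}_{s})_{*}m\leq C(s)m$, so preimages of $m$-null sets are $m$-null) preserves this, giving $\textmd{Hess}(\hat{b})(v_{s},v_{s})\circ\hat{F}_{s}\geq 0$ $m$-a.e. for each $s\geq 0$.

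Next I would use that, by Proposition \ref{prop5.23}, $s\mapsto\tfrac12|v_{s}|^{2}\circ\hat{F}_{s}$ is a $C^{1}$ curve in $L^{1}(X)$ with derivative $-\textmd{Hess}(\hat{b})(v_{s},v_{s})\circ\hat{F}_{s}$. Integrating over $[s_{1},s_{2}]$ yields the $L^{1}(X)$-identity
$$\tfrac12|v_{s_{2}}|^{2}\circ\hat{F}_{s_{2}}-\tfrac12|v_{s_{1}}|^{2}\circ\hat{F}_{s_{1}}=-\int_{s_{1}}^{s_{2}}\textmd{Hess}(\hat{b})(v_{s},v_{s})\circ\hat{F}_{s}\,ds,$$
the integral on the right being a Bochner integral in $L^{1}(X)$ of functions that, by the previous paragraph, are nonnegative $m$-a.e. for every $s$. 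To pass from this to a pointwise statement I would test against an arbitrary nonnegative $\phi\in L^{\infty}(X)$ and apply Fubini:
$$\int_{X}\phi\,\Bigl(\int_{s_{1}}^{s_{2}}\textmd{Hess}(\hat{b})(v_{s},v_{s})\circ\hat{F}_{s}\,ds\Bigr)\,dm=\int_{s_{1}}^{s_{2}}\Bigl(\int_{X}\phi\,\textmd{Hess}(\hat{b})(v_{s},v_{s})\circ\hat{F}_{s}\,dm\Bigr)\,ds\geq 0,$$
so the integral is nonnegative $m$-a.e., and therefore $|v_{s_{2}}|^{2}\circ\hat{F}_{s_{2}}\leq|v_{s_{1}}|^{2}\circ\hat{F}_{s_{1}}$ $m$-a.e., which is exactly (\ref{5.57}).

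This argument is essentially routine once Proposition \ref{prop5.23} is available; the only two places needing a word of care are the localization of $v_{s}$ to $b^{-1}([\hat{r},\hat{R}])$, which is what allows the use of the explicit expression (\ref{5.44}) (and hence the sign of the Hessian), and the passage from the $L^{1}$-valued identity to the pointwise $m$-a.e.\ monotonicity, handled by the duality against nonnegative test functions above. I do not anticipate a genuine obstacle beyond making these two steps precise.
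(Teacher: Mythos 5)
Your argument is correct and follows essentially the same route as the paper: apply Proposition \ref{prop5.23}, use (\ref{5.44}) together with the fact that $v_{s}$ stays concentrated on $b^{-1}([\hat{r},\hat{R}])$ (where the flow maps the slab into itself) to see that the derivative $-\textmd{Hess}(\hat{b})(v_{s},v_{s})\circ\hat{F}_{s}$ is nonpositive $m$-a.e., and then conclude monotonicity. Your extra care in passing from the $L^{1}$-valued identity to the pointwise a.e.\ inequality (Bochner integration plus duality with nonnegative $L^{\infty}$ functions) just makes explicit the step the paper compresses into ``and we conclude.''
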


\begin{proof}
By Proposition \ref{prop5.23}, the map $s\mapsto \frac{1}{2}|v_{s}|^{2}\circ\hat{F_{s}}\in L^{1}(X)$ is $C^{1}$, and its derivative is given by
$\frac{d}{ds}|v_{s}|^{2}\circ \hat{F}_{s}=-2\textmd{Hess}(\hat{b})(v_{s},v_{s})\circ \hat{F}_{s}.$
Note that for $s\geq0$, $\frac{1}{2}|v_{s}|^{2}\circ\hat{F_{s}}$ is $0$ $m$-a.e. on $X\setminus b^{-1}([\hat{r},\hat{R}])$.
Furthermore by (\ref{5.44}),
$\textmd{Hess}(\hat{b})(v_{s},v_{s})\circ \hat{F}_{s}=2\langle\nabla b,v\rangle^{2}$ on $b^{-1}([\hat{r},\hat{R}])$.
Hence $\frac{d}{ds}|v_{s}|^{2}\circ \hat{F}_{s}\leq0$ holds for $s\geq0$ and we conclude.
\end{proof}

The following proposition can be proved by repeating verbatim the proof of Proposition 3.32 in \cite{GigPhi15}, Corollary \ref{cor5.24} is used in place of Corollary 3.31 in \cite{GigPhi15} in the argument.

\begin{prop}\label{prop5.25}
Let $[c,d]\subset(0,\infty)$ and $\pi$ be a test plan on $X$ such that $b(\eta_{t})\in[c, d]$ for every $t\in[0, 1]$ and $\pi$-a.e. $\eta$.
Then for $\pi$-a.e. $\eta$ the curve $\tilde{\eta}:= \textmd{Pj}\circ\eta$ is absolutely continuous and satisfies
\begin{align}\label{5.60}
|\dot{\tilde{\eta}}_{t}|\leq|\dot{\eta}_{t}|,\qquad  \text{for a.e. } t\in[0,1].
\end{align}
\end{prop}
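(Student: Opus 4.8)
The plan is to transfer the monotonicity of Corollary~\ref{cor5.24}, which is an $m$-a.e. statement about vector fields, to the tangent vectors of the curves of the test plan $\pi$, following verbatim the proof of Proposition~3.32 in \cite{GigPhi15} (with Corollary~\ref{cor5.24} playing the role of Corollary~3.31 there). First I would fix $\hat{r},\hat{R}$ with $0<\hat{r}<\min\{c,r'\}$ and $\hat{R}>\max\{d,r'\}$, and construct $\hat{b}$, the reparametrization $\textmd{rep}_{t}$ and the flow $\hat{F}_{t}$ as in the previous subsection. Since $b(\eta_{t})\in[c,d]$ for $\pi$-a.e. $\eta$ and every $t\in[0,1]$, and since $\hat F_{t}$ moves $b$-values monotonically toward $r'$ while keeping $b^{-1}([\hat{r},\hat{R}])$ invariant, for $\pi$-a.e. $\eta$ all the curves $\eta^{t}:=\hat{F}_{t}\circ\eta$, $t\geq0$, have image in the fixed compact set $b^{-1}([\hat{r},\hat{R}])$, on which $\hat{F}_{t}$ is an invertible self-map, locally Lipschitz with a Lipschitz constant bounded by some $L(t)$, with $\hat{F}_{t}\to\textmd{Pj}$ uniformly as $t\to+\infty$, and with $(\hat{F}_{t})_{*}m\leq C(t)m$. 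Thus each $\eta^{t}$ is absolutely continuous, $\eta^{0}=\eta$, and $\eta^{t}\to\textmd{Pj}\circ\eta=\tilde{\eta}$ uniformly.

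Next I would check that the curve-wise push-forward $\pi^{t}:=(\hat{F}_{t})_{*}\pi$ is again a test plan: the compression estimate follows from $(e_{s})_{*}\pi^{t}=(\hat{F}_{t})_{*}(e_{s})_{*}\pi\leq C(\hat{F}_{t})_{*}m\leq CC(t)m$, and the finiteness of the kinetic energy from $\int\int_{0}^{1}|\dot{\eta}^{t}_{s}|^{2}\,ds\,d\pi(\eta)\leq L(t)^{2}\int\int_{0}^{1}|\dot{\eta}_{s}|^{2}\,ds\,d\pi(\eta)<\infty$. This compression bound is exactly what lets one read off $\pi^{t}$-a.e. information along curves from $m$-a.e. facts about vector fields.

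The key step is to show that for $\pi$-a.e. $\eta$ and a.e. $s\in[0,1]$ the function $t\mapsto|\dot{\eta}^{t}_{s}|$ is non-increasing on $[0,\infty)$. Here I would argue as in \cite{GigPhi15}: along $\pi$-a.e. curve the metric speed of $\eta^{t}$ is identified, through the differential calculus of \cite{Gig14-2}, with the pointwise norm of $d\hat{F}_{t}(\mathbf{v})$ evaluated along the curve, where $\mathbf{v}\in L^{2}(TX)$ is the velocity of $\pi$; after a localisation reducing to a vector field $v\in L^{2}(TX)$ concentrated on $b^{-1}([\hat{r},\hat{R}])$, Corollary~\ref{cor5.24} gives that $|d\hat{F}_{t}(v)|^{2}\circ\hat{F}_{t}$ is non-increasing in $t$, hence so is $|\dot{\eta}^{t}_{s}|^{2}$ for a.e.\ $s$ and $\pi$-a.e.\ $\eta$. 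I expect this to be the main obstacle, since it requires making rigorous the chain rule for the metric differential through the merely locally Lipschitz maps $\hat{F}_{t}$ along a.e.\ curve of the test plan; this is precisely the content that \cite{GigPhi15} carries out, and which we invoke verbatim.

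Granted the monotonicity, the conclusion is routine. For $\pi$-a.e.\ $\eta$ set $\ell(s):=\lim_{t\to+\infty}|\dot{\eta}^{t}_{s}|$, a limit which exists by monotonicity and satisfies $\ell(s)\leq|\dot{\eta}^{0}_{s}|=|\dot{\eta}_{s}|\in L^{1}(0,1)$. Then, for $0\leq s<s'\leq1$, using the uniform convergence $\eta^{t}\to\tilde{\eta}$ and monotone convergence,
\begin{align*}
d(\tilde{\eta}_{s'},\tilde{\eta}_{s})=\lim_{t\to+\infty}d(\eta^{t}_{s'},\eta^{t}_{s})\leq\lim_{t\to+\infty}\int_{s}^{s'}|\dot{\eta}^{t}_{u}|\,du=\int_{s}^{s'}\ell(u)\,du\leq\int_{s}^{s'}|\dot{\eta}_{u}|\,du,
\end{align*}
so $\tilde{\eta}=\textmd{Pj}\circ\eta$ is absolutely continuous with $|\dot{\tilde{\eta}}_{s}|\leq\ell(s)\leq|\dot{\eta}_{s}|$ for a.e.\ $s$; the measurability in $\eta$ being standard, this establishes (\ref{5.60}) for $\pi$-a.e.\ $\eta$.
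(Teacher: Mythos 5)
Your proposal is correct and follows essentially the same route as the paper, which proves this proposition by repeating verbatim the argument of Proposition 3.32 in \cite{GigPhi15} with Corollary \ref{cor5.24} substituted for Corollary 3.31 there. Your expanded sketch (cut-off flow $\hat{F}_{t}$, push-forward test plans, monotonicity of speeds via Corollary \ref{cor5.24}, and the limit $t\to+\infty$ using $\hat{F}_{t}\to\textmd{Pj}$) is precisely that argument, and the step you flag as the main obstacle is exactly the part delegated to \cite{GigPhi15}.
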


\subsection{Sobolev Spaces on Y and X}

By Proposition \ref{prop5.25}, we can understand the Sobolev Spaces on $Z$ well.

\begin{prop}\label{prop5.26}
Let $[c,d]\subset(0,\infty)$, $h\in \textmd{Lip}(\mathbb{R})$ with $\textmd{supp}(h)\subset\subset(0,\infty)$ and identically $1$ on $[c,d]$.
Suppose $f\in L^{2}(X)$ is of the form $f(x)=g(\textmd{Pj}(x))h(b(x))$ for some $g\in L^{2}(Z)$.
Then $f\in W^{1,2}(X)$ if and only if $g\in W^{1,2}(Z)$ and in this case we have
\begin{align}\label{5.69}
|Df|_{X}(x)= |Dg|_{Z}(\textmd{Pj}(x))
\end{align}
for $m$-a.e. $x$ such that $b(x)\in [c,d]$.
\end{prop}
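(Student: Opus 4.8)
The plan is to prove the two implications by complementary test‑plan arguments — pushing test plans \emph{down} from $X$ to $Z$ for the ``only if'' part and lifting them \emph{up} for the ``if'' part — and then to match the minimal weak upper gradients by an integral identity coming from the measure‑preserving maps $S$ and $T$.

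\textbf{The direction $g\in W^{1,2}(Z)\Rightarrow f\in W^{1,2}(X)$.} I would write $f=u\cdot(h\circ b)$ with $u:=g\circ\textmd{Pj}$ on $b^{-1}((0,\infty))$. The factor $h\circ b$ is Lipschitz with compact support (as $b$ is proper), lies in $W^{1,2}(X)\cap L^{\infty}(X)$, and satisfies $|D(h\circ b)|=|h'\circ b|$ $m$-a.e.\ by the chain rule together with $|Db|=1$ (Lemma~\ref{l5.6}). To handle $u$, I would fix $[c_{0},d_{0}]\subset(0,\infty)$ and a test plan $\pi$ on $X$ concentrated on curves with image in $b^{-1}([c_{0},d_{0}])$; Proposition~\ref{prop5.25} says the projected curve $\widetilde\eta:=\textmd{Pj}\circ\eta$ is absolutely continuous with $|\dot{\widetilde\eta}_{t}|\le|\dot\eta_{t}|$, and $(\ref{5.5})$ (which holds on any subinterval of $(0,\infty)$) makes the push‑forward $\widetilde\pi$ of $\pi$ under $\eta\mapsto\textmd{Pj}\circ\eta$ a test plan on $Z$: its marginals are bounded by $C\,m(b^{-1}([c_{0},d_{0}]))\,m'$ and its kinetic energy by that of $\pi$. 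Feeding $\widetilde\pi$ into the weak‑upper‑gradient inequality for $g$ and translating back would yield $\int|u(\eta_{1})-u(\eta_{0})|\,d\pi\le\int\!\int_{0}^{1}(|Dg|_{Z}\circ\textmd{Pj})(\eta_{t})\,|\dot\eta_{t}|\,dt\,d\pi$; by the standard localization argument (restriction and gluing of test plans, as in \cite{GigPhi15}) this gives $u\in W^{1,2}_{loc}(b^{-1}((0,\infty)))$ with $|Du|\le|Dg|_{Z}\circ\textmd{Pj}$. Since $\textmd{supp}(f)$ is the compact set $b^{-1}(\textmd{supp}\,h)\subset b^{-1}((0,\infty))$, the Leibniz rule then gives $f\in W^{1,2}(X)$ with $|Df|_{X}\le|g\circ\textmd{Pj}|\,|h'\circ b|+(h\circ b)(|Dg|_{Z}\circ\textmd{Pj})$ $m$-a.e., the $L^{2}$-integrability of the right‑hand side following from $(\ref{5.43})$; on $b^{-1}([c,d])$, where $h\equiv1$ and hence $h'\circ b=0$ $m$-a.e.\ (the endpoints $b^{-1}(\{c,d\})$ being $m$-null by $(\ref{5.1})$), this reduces to $|Df|_{X}\le|Dg|_{Z}\circ\textmd{Pj}$.

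\textbf{The direction $f\in W^{1,2}(X)\Rightarrow g\in W^{1,2}(Z)$.} Here I would start from an arbitrary test plan $\widetilde\pi$ on $Z$ and lift it to $X$. The crucial difficulty — which dictates the construction — is that lifting $\widetilde\pi$ onto a single level set $b^{-1}(\xi)$ produces a plan whose marginals are multiples of $\widetilde m_{\xi}$, hence $m$-singular, so it is not a test plan. The remedy is to smear over heights: pick $\xi$ uniformly in $[c,d]$, independently of $\widetilde\eta\sim\widetilde\pi$, and set $\eta_{t}:=T(\widetilde\eta_{t},\xi)=F_{\xi-r'}(\widetilde\eta_{t})$. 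By Theorem~\ref{thm5.16} each $F_{\xi-r'}$ is an invertible local isometry, so the lifted curves are absolutely continuous with $|\dot\eta_{t}|=|\dot{\widetilde\eta}_{t}|$ and the law $\pi$ of $\eta$ has finite kinetic energy; moreover $(e_{t})_{*}\pi\le\tfrac{C}{d-c}\int_{c}^{d}(F_{\xi-r'})_{*}m'\,d\xi$, which by $(F_{\xi-r'})_{*}m'=(F_{\xi-r'})_{*}\widetilde m_{r'}=\widetilde m_{\xi}$ (Lemma~\ref{lem5.1}) and $(\ref{5.1})$ is a constant multiple of $m|_{b^{-1}([c,d])}$, so $\pi$ is a test plan. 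Because $b(\eta_{t})\equiv\xi\in[c,d]$, where $h\equiv1$, and $\textmd{Pj}\circ F_{\xi-r'}=\textmd{id}$ on $Z$, one has $f(\eta_{i})=g(\widetilde\eta_{i})$; plugging $\pi$ into the weak‑upper‑gradient inequality for $f$ and integrating over $\xi$ (Fubini, nonnegative integrand) would give $\int|g(\widetilde\eta_{1})-g(\widetilde\eta_{0})|\,d\widetilde\pi\le\int\!\int_{0}^{1}\bar G(\widetilde\eta_{t})\,|\dot{\widetilde\eta}_{t}|\,dt\,d\widetilde\pi$, where $\bar G(x'):=\tfrac{1}{d-c}\int_{c}^{d}|Df|_{X}(F_{\xi-r'}(x'))\,d\xi$; Fubini applied to $|Df|_{X}\circ T\in L^{2}(Y,m_{Y})$ — which belongs there by $(\ref{5.43})$ — shows $\bar G\in L^{2}(Z,m')$, whence $g\in W^{1,2}(Z)$ with $|Dg|_{Z}\le\bar G$ $m'$-a.e.

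\textbf{Matching the gradients.} Finally, to upgrade these one‑sided bounds to the claimed equality I would integrate. Using $(\ref{5.43})$ and $T(x',\xi)=F_{\xi-r'}(x')$, $\int_{b^{-1}([c,d])}|Df|_{X}\,dm=\int_{Z}\big(\int_{c}^{d}|Df|_{X}(F_{\xi-r'}(x'))\,d\xi\big)\,dm'(x')=(d-c)\int_{Z}\bar G\,dm'$, while $\int_{b^{-1}([c,d])}(|Dg|_{Z}\circ\textmd{Pj})\,dm=(d-c)\int_{Z}|Dg|_{Z}\,dm'$. The two inequalities $|Df|_{X}\le|Dg|_{Z}\circ\textmd{Pj}$ on $b^{-1}([c,d])$ and $|Dg|_{Z}\le\bar G$ then force $\int_{b^{-1}([c,d])}|Df|_{X}\,dm\le(d-c)\int_{Z}|Dg|_{Z}\,dm'\le(d-c)\int_{Z}\bar G\,dm'=\int_{b^{-1}([c,d])}|Df|_{X}\,dm$, so all three quantities coincide; in particular $|Df|_{X}=|Dg|_{Z}\circ\textmd{Pj}$ for $m$-a.e.\ $x$ with $b(x)\in[c,d]$, which is $(\ref{5.69})$. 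I expect the main obstacle to be precisely the lifting step above — engineering a lifted test plan on $X$ that is absolutely continuous with respect to $m$ rather than concentrated on a single level set — for which the averaging over $\xi\in[c,d]$ together with the weak continuity and $F_{t}$-equivariance of the disintegration $r\mapsto\widetilde m_{r}$ (Lemma~\ref{lem5.1}) are essential.
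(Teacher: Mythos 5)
Your proposal is correct, and its skeleton is the one the paper imports from Theorem 3.24 of \cite{GigPhi15}: the implication $g\in W^{1,2}(Z)\Rightarrow f\in W^{1,2}(X)$ with $|Df|_{X}\leq |Dg|_{Z}\circ\textmd{Pj}$ on the slab is exactly the paper's indicated route (project slab-confined test plans to $Z$ via Proposition \ref{prop5.25}, verify the compression bound through (\ref{5.5}), then use the Leibniz rule with the cutoff $h\circ b$). Where you genuinely depart is the converse direction: the paper asserts the pointwise bound $|Df|_{X}\geq|Dg|_{Z}\circ\textmd{Pj}$ ``directly from definitions and Theorem \ref{thm5.16}'', whereas you lift a test plan of $Z$ to $X$ by smearing the height uniformly over $[c,d]$, which only yields the averaged bound $|Dg|_{Z}\leq\bar G$, and you then recover (\ref{5.69}) a posteriori by integrating both one-sided bounds over $b^{-1}([c,d])$ and invoking $T_{*}m_{Y}=m$ from (\ref{5.43}). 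This is a legitimate and arguably more elementary closing step: it exploits the fact that in the present cylindrical situation $f\circ T(\cdot,\xi)=g$ is independent of $\xi\in[c,d]$, so no shrinking-interval or Lebesgue-point argument in the transversal variable is needed, at the price of obtaining the gradient identity only after combining the two inequalities — which is all the statement requires. Two points of hygiene: the identity $\int_{c}^{d}(F_{\xi-r'})_{*}m'\,d\xi=m|_{b^{-1}((c,d))}$ that makes your lifted plan a test plan is cleanest quoted directly from (\ref{5.43}), since Lemma \ref{lem5.1} as stated gives $(F_{t})_{*}\tilde m_{r}=\tilde m_{r+t}$ only for a.e. $r$ at fixed $t$; and the localization you invoke in the first direction (upgrading the upper-gradient inequality tested on slab-confined plans to $g\circ\textmd{Pj}\in S^{2}_{loc}(b^{-1}((0,\infty)))$) is indeed the standard restriction/gluing machinery used at the corresponding point of \cite{GigPhi15}, so deferring to it is consistent with the level of detail of the paper's own proof.
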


The proof can be carried out by an easy adaption from Theorem 3.24 of \cite{GigPhi15},
$|Df|_{X}(x)\geq|Dg|_{Z}(\textmd{Pj}(x))$ can be obtained directly from definitions as well as Theorem \ref{thm5.16}, while the proof of $|Df|_{X}(x)\leq |Dg|_{Z}(\textmd{Pj}(x))$ need to use Proposition \ref{prop5.25} in place of Proposition 3.32 in \cite{GigPhi15}.

In \cite{GigHan15}, the authors introduce the notion of `measured-length space', and prove that if a metric measure space is locally doubling and measured-length, then it has the Sobolev-to-Lipschitz property (see Proposition 3.18 in \cite{GigHan15}).

\begin{prop}
$(Z,d',m')$ is infinitesimally Hilbertian and a measured-length space.
\end{prop}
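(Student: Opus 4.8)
The plan is to transport both properties from $X$ to $Z$ along the projection $\textmd{Pj}$, paralleling the corresponding statement in \cite{GigPhi15}. For infinitesimal Hilbertianity, recall that since $(X,d,m)$ is $\textmd{RCD}(0,N)$ it is infinitesimally Hilbertian, so the pointwise parallelogram identity
\begin{align}\label{plan-par}
|D(u+v)|^{2}+|D(u-v)|^{2}=2|Du|^{2}+2|Dv|^{2}\qquad m\text{-a.e.}
\end{align}
holds for all $u,v\in W^{1,2}(X)$. Given $g_{1},g_{2}\in W^{1,2}(Z)$, I would fix an interval $[c,d]\subset(0,\infty)$ with $r'\in(c,d)$ and a cut-off $h\in\textmd{Lip}(\mathbb{R})$ with $\textmd{supp}(h)\subset\subset(0,\infty)$ and $h\equiv1$ on $[c,d]$, and set $f_{i}(x):=g_{i}(\textmd{Pj}(x))h(b(x))$. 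By (\ref{5.5}) each $f_{i}$, as well as $f_{1}\pm f_{2}=(g_{1}\pm g_{2})(\textmd{Pj}(\cdot))\,h(b(\cdot))$, lies in $L^{2}(X)$, and by Proposition \ref{prop5.26} it lies in $W^{1,2}(X)$ with $|Df_{i}|_{X}=|Dg_{i}|_{Z}\circ\textmd{Pj}$ and $|D(f_{1}\pm f_{2})|_{X}=|D(g_{1}\pm g_{2})|_{Z}\circ\textmd{Pj}$ $m$-a.e.\ on $b^{-1}([c,d])$. Inserting these into (\ref{plan-par}) for the pair $f_{1},f_{2}$ and restricting to $b^{-1}([c,d])$ yields the parallelogram identity for the functions $|Dg_{i}|_{Z}\circ\textmd{Pj}$ on $b^{-1}([c,d])$; pushing this $m$-a.e.\ identity forward by $\textmd{Pj}$ and using (\ref{5.5}) once more, it holds $m'$-a.e.\ on $Z$. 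Since $g_{1},g_{2}$ are arbitrary, $\mathcal{E}$ on $Z$ is a quadratic form, i.e.\ $(Z,d',m')$ is infinitesimally Hilbertian.

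For the measured-length property, we already know $(Z,d',m')$ is doubling, and by \cite{GigHan15} the $\textmd{RCD}(0,N)$ space $(X,d,m)$ is itself a measured-length space. To pass this to $Z$ I would follow \cite{GigPhi15}: given probability measures $\mu_{0},\mu_{1}$ on $Z$ with bounded density w.r.t.\ $m'$ and bounded support, lift them to the level set $b^{-1}(r'+t_{0})$ by $(F_{t_{0}})_{\sharp}$, choosing $t_{0}$ large enough that — since $b$ is $1$-Lipschitz and the supports are bounded — every geodesic between points of the lifted supports stays inside a fixed slab $b^{-1}([c,d])\subset b^{-1}((0,\infty))$. Using that $X$ is measured-length, pick a test plan $\pi$ on $X$ joining the lifts, concentrated on curves contained in $b^{-1}([c,d])$ (with all intermediate marginals in this slab), whose average length is close to the transport ($W_{1}$) distance of the lifts. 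Put $\pi':=\textmd{Pj}_{\sharp}\pi$. Since $\textmd{Pj}\circ F_{t_{0}}=\textmd{Id}_{Z}$, $\pi'$ joins $\mu_{0}$ and $\mu_{1}$; by Proposition \ref{prop5.25} its curves are absolutely continuous in $(Z,d')$ with metric speed bounded by that of the corresponding curves of $\pi$, so $\pi'$ has small average length; and by (\ref{5.5}) together with Theorem \ref{thm5.16} the marginals $(e_{t})_{\sharp}\pi'$ stay bounded multiples of $m'$, so $\pi'$ is a test plan on $(Z,d',m')$. Finally, since $d\le d'$ on $Z$, the $W_{1}$-distance in $X$ between the lifts does not exceed the $W_{1}$-distance in $(Z,d')$ of $\mu_{0},\mu_{1}$, which gives the length bound required by the definition of a measured-length space.

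The hard part is the second half: one must verify that $\pi'=\textmd{Pj}_{\sharp}\pi$ is genuinely a test plan on $(Z,d',m')$ — namely that $(e_{t})_{\sharp}\pi'\le C\,m'$ for every $t$ and that its $2$-action is finite — starting only from the analogous bounds for $\pi$ on $X$; this forces one to keep \emph{all} intermediate marginals of $\pi$ inside a slab $b^{-1}([c,d])$ with $c>0$ (which is precisely why one lifts to a high level) so that (\ref{5.5}) is applicable, and to use Proposition \ref{prop5.25} to control the speeds of the projected curves. Once this bookkeeping is in place, the remaining estimates are routine and run exactly as in \cite{GigPhi15}, to which we refer for the details.
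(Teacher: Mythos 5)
Your first half is exactly what the paper means by calling infinitesimal Hilbertianity ``a direct consequence of Proposition \ref{prop5.26}'': lifting $g_{i}$ to $f_{i}=(g_{i}\circ\textmd{Pj})\,(h\circ b)$, using (\ref{5.5}) and Proposition \ref{prop5.26} to identify the weak gradients on a slab $b^{-1}([c,d])$, and pushing the pointwise parallelogram identity forward by $\textmd{Pj}$ is the intended argument, and it is correct. Your second half also follows the paper's route (project plans from $X$ to $Z$ by $\textmd{Pj}$, control speeds by Proposition \ref{prop5.25} and marginals by (\ref{5.5}) and Lemma \ref{lem5.1}), but the step in which you produce the plan on $X$ has a genuine gap.

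The measured-length property of $X$ (in the sense of \cite{GigHan15}) only provides, for a.e.\ pair of points, test plans joining the normalized restrictions of $m$ to shrinking balls around those two points, with $2$-action converging to the squared distance; it gives neither a plan between your specific lifted measures $(F_{t_{0}})_{*}\mu_{0}$, $(F_{t_{0}})_{*}\mu_{1}$, nor any guarantee that the curves stay in the slab $b^{-1}([c,d])$, nor control of the $2$-action (your bound is phrased via $W_{1}$/average length, while the definition in \cite{GigHan15} requires the $2$-action to converge to the squared $d'$-distance). The correct device --- and the one the paper points to by citing \cite{RaSt14} \cite{GRS13} (see also \cite{Ra12-2}) --- is the optimal geodesic plan between the lifted ball-restrictions: on an $\textmd{RCD}(0,N)$ space such a plan exists, has uniformly bounded compression (hence is a test plan), and is concentrated on constant-speed geodesics, so its $2$-action equals $W_{2}^{2}$ of the lifts; since the endpoints lie near level $r'+t_{0}$ and $b$ is $1$-Lipschitz, these geodesics automatically remain in a slab $b^{-1}([c,d])$ with $c>0$ once $t_{0}$ is large; and by the speed preservation in Theorem \ref{thm5.16}(3) (not merely ``$d\leq d'$ on $Z$'' --- the lifts do not live on $Z$) the $W_{2}$-distance of the lifts is bounded by the $d'$-based $W_{2}$-distance of $\mu_{0},\mu_{1}$. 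Projecting by $\textmd{Pj}$ and using Proposition \ref{prop5.25} then gives exactly the asymptotic $2$-action bound the definition asks for; with this replacement the rest of your bookkeeping is sound and runs as in \cite{GigPhi15}.
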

The infinitesimal Hilbertianity of $(Z,d',m')$ is a direct consequence of Proposition \ref{prop5.26}.
To check that $(Z,d',m')$ is a measured-length space, we just follow the lines in the proof of Proposition 3.26 in \cite{GigPhi15}, whose idea is to use the good properties of optimal transport on $X$ (see \cite{RaSt14} \cite{GRS13}), and use $\textmd{Pj}$ to map test plans on $X$ to test plans on $Z$.
Besides the good properties of optimal transport on $X$, the fundamental properties in Lemma \ref{lem5.1}, Proposition \ref{prop5.25} and that $\textmd{Pj}$ is locally Lipschitz are also used in the proof, we omit the details here.

Now we can apply the consequences of Section 3 in \cite{GigHan15} to conclude that $Y$ is infinitesimally Hilbertian and has the Sobolev-to-Lipschitz property.

Now we can compare the Sobolev spaces of $X$ and $Y$.
Given an open set $U\subset X$, we denote by $W^{1,2}_{0}(U)$ the $W^{1,2}(X)$-completion of the space of functions in $W^{1,2}(X)$ with support in $U$.
Similar notation is used for $W^{1,2}_{0}(V)$ with open set $V\subset Y$.

\begin{thm}\label{thm5.29}
Suppose $0<c<d$, then $f\in W^{1,2}_{0}(Y_{(c,d)})$ if and only if $f\circ S\in W^{1,2}_{0}(b^{-1}((c,d)))$ and in this case
\begin{align}\label{5.78}
|Df|_{Y}\circ S = |D(f\circ S)|_{X}, \quad m\text{-a.e. on } b^{-1}((c,d)).
\end{align}
\end{thm}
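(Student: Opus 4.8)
The plan is to reduce the statement to the already-established local isometric properties of the flow maps $S$ and $T$. Recall from Theorem \ref{thm5.16} that $F_{t}$ is, for every $t$, an invertible local isometry from $b^{-1}((0,\infty))$ onto $b^{-1}((t,\infty))$, that $S$ and $T = S^{-1}$ are locally Lipschitz, and from (\ref{5.43}) that both are measure-preserving between $(b^{-1}((0,\infty)),m)$ and $(Y,m_{Y})$. The key point to exploit is Proposition \ref{prop5.26}: for a function on $X$ of the product form $g(\textmd{Pj}(x))h(b(x))$ with $h$ supported in $(0,\infty)$ and $\equiv 1$ on $[c,d]$, membership in $W^{1,2}(X)$ is equivalent to $g\in W^{1,2}(Z)$, with $|Df|_{X} = |Dg|_{Z}\circ\textmd{Pj}$ where $b\in[c,d]$. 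Together with Corollary \ref{cor5.15} (or rather the Sobolev/weak-upper-gradient invariance under $F_{t}$) this should let us compute $|D(f\circ S)|_{X}$ on $b^{-1}((c,d))$ in terms of the product structure of $Y$.

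First I would treat one direction in detail. Suppose $f\in W^{1,2}_{0}(Y_{(c,d)})$. By density it suffices to handle $f$ of the form $f(x',t) = g(x')h(t)$ (or finite sums thereof) with $g\in W^{1,2}(Z)$ Lipschitz and $h\in\textmd{Lip}(\mathbb{R})$ compactly supported in $(c,d)$, since such tensor products are dense in $W^{1,2}_{0}(Y_{(c,d)})$ because $Y$ is a product of doubling infinitesimally Hilbertian spaces with the Sobolev-to-Lipschitz property. For such an $f$, one computes $f\circ S(x) = g(\textmd{Pj}(x))h(b(x))$, which is exactly the form covered by Proposition \ref{prop5.26}; hence $f\circ S\in W^{1,2}(X)$ with support in $b^{-1}((c,d))$, so $f\circ S\in W^{1,2}_{0}(b^{-1}((c,d)))$. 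For the pointwise gradient identity one compares $|Df|_{Y}$ computed via the product structure — here $|Df|_{Y}^{2} = |Dg|_{Z}^{2}\,h^{2} + g^{2}(h')^{2}$ at points where we need it, but since $h\equiv$ something on the relevant range, one reduces to the case where the $b$-derivative contributes correctly — against (\ref{5.69}) and the fact that $b\circ S = \pi_{2}$ and $\textmd{Pj}\circ S = \pi_{1}$. The cleanest route is to establish (\ref{5.78}) first for $f$ depending only on the $Z$-variable and for $f$ depending only on the $\mathbb{R}$-variable (using that $b$ has $|Db|=1$ and $\textmd{Hess}(b)=0$, so $S$ respects the Pythagorean split of the gradient), then combine by the locality and the tensorization of the Cheeger energy on the product $Y$. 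Passing to general $f$ by approximation uses that $S$ is measure-preserving, so $\|f_{n}\circ S - f\circ S\|_{L^{2}} = \|f_{n}-f\|_{L^{2}}$, together with lower semicontinuity and the uniform gradient bound from the already-proven tensor case.

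For the converse direction, suppose $f\circ S\in W^{1,2}_{0}(b^{-1}((c,d)))$. Since $T = S^{-1}$ and $T$ is also locally Lipschitz and measure-preserving, we may apply the same argument with the roles of $X$ and $Y$ interchanged: a function $u$ on $X$ supported in $b^{-1}((c,d))$ pulls back under $T$ to $u\circ T = u\circ S^{-1}$ on $Y_{(c,d)}$, and one shows $u\circ T\in W^{1,2}_{0}(Y_{(c,d)})$ with $|D(u\circ T)|_{Y}\circ S = |Du|_{X}$; applied to $u = f\circ S$ this gives $f = (f\circ S)\circ T\in W^{1,2}_{0}(Y_{(c,d)})$ and the reverse inequality in (\ref{5.78}). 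Combining the two inequalities gives the equality $m$-a.e. on $b^{-1}((c,d))$.

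The main obstacle I expect is the pointwise gradient identity (\ref{5.78}) rather than the qualitative membership statement. Because $S$ mixes the $Z$-direction and the $b$-direction, one must verify that it does so \emph{orthogonally}: the identity $|Df|_{Y}^{2} = |D_{Z}f|^{2} + |\partial_{t}f|^{2}$ on $Y$ must match $|D(f\circ S)|_{X}^{2} = |Dg|_{Z}^{2}\circ\textmd{Pj} + (h'\circ b)^{2}$ on $X$, and the cross term must vanish. This is exactly where $\textmd{Hess}(b)=0$ on $b^{-1}((0,\infty))$ (Lemma \ref{lem5.10}) and $|Db|=1$ (Lemma \ref{l5.6}) enter, via the Euler-equation type identities and the behaviour of $\langle\nabla b,\nabla\cdot\rangle$ under $F_{t}$ from Corollary \ref{cor5.15}. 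The technical care needed to push this through the density argument — ensuring the tensor-product functions used are genuinely dense in $W^{1,2}_{0}(Y_{(c,d)})$ and that the gradient identity is stable under the approximation — is the part that requires the most attention, but it parallels closely the corresponding step (Theorem 3.24 and its surrounding lemmas) in \cite{GigPhi15}, so I would invoke that strategy, substituting Proposition \ref{prop5.26}, Theorem \ref{thm5.16} and (\ref{5.43}) for the analogous ingredients there.
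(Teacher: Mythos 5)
Your treatment of the direction $f\in W^{1,2}_{0}(Y_{(c,d)})\Rightarrow f\circ S\in W^{1,2}_{0}(b^{-1}((c,d)))$ is in the spirit of what the paper intends (the paper itself only records that the theorem follows ``by the very same arguments'' of \cite{Gig13}, \cite{GigPhi15}): tensor products $g\otimes h$, Proposition \ref{prop5.26}, vanishing of the cross term via $\textmd{Hess}(b)=0$ and the flow-invariance of $g\circ\textmd{Pj}$ (so that $\langle\nabla(g\circ\textmd{Pj}),\nabla b\rangle=0$ by (\ref{5.16})), and density of tensor products in $W^{1,2}(Y)$ through the warped-product description of \cite{GigHan15}. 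One inaccuracy: a tensor product with $h$ varying on $(c,d)$ is not ``exactly the form covered by Proposition \ref{prop5.26}'', which requires $h\equiv1$ on $[c,d]$ and only gives (\ref{5.69}) there; your alternative route (treat functions of the $Z$-variable and of the $t$-variable separately, then combine by locality) is the correct fix, though you leave the combination unproved.

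The genuine gap is the converse direction. You dispose of it with ``apply the same argument with the roles of $X$ and $Y$ interchanged'', but the forward argument rests on structure that only $Y$ possesses: the product decomposition, the density of tensor products, and Proposition \ref{prop5.26}; none of these has a counterpart on $X$ --- producing such structure on $X$ is essentially what the theorem asserts. Nor can you shortcut it by treating $T=S^{-1}$ as a local isometry: at this stage only the locally Lipschitz property of $S,T$ and the fact that $F_{t}$ is a local isometry of $X$ into itself (Theorem \ref{thm5.16}) are available, while the statement that $S,T$ are local isometries between $X$ and $Y$ is deduced in the proof of Theorem \ref{thm1.2} \emph{from} Theorem \ref{thm5.29} together with the Sobolev-to-Lipschitz property, so invoking it here would be circular. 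In the proofs the paper defers to, the implication $u\in W^{1,2}_{0}(b^{-1}((c,d)))\Rightarrow u\circ T\in W^{1,2}_{0}(Y_{(c,d)})$ with $|D(u\circ T)|_{Y}\leq|Du|_{X}\circ T$ is obtained by a different mechanism: the Beppo-Levi/tensorization characterization of $W^{1,2}(Y)$ from \cite{GigHan15} (so that $|Dv|_{Y}^{2}=|D_{Z}v|^{2}+|\partial_{t}v|^{2}$), the vertical derivative of $u\circ T$ computed along the flow via (\ref{5.16})/Corollary \ref{cor5.15}, and the horizontal derivative controlled by the projection-speed estimate of Proposition \ref{prop5.25} --- a tool your proposal never uses. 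Note also that without this direction even your forward pointwise identity (\ref{5.78}) is incomplete: the density/lower-semicontinuity limit only yields $|D(f\circ S)|_{X}\leq|Df|_{Y}\circ S$, and the reverse pointwise inequality is exactly the missing $X\rightarrow Y$ estimate.
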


Theorem \ref{thm5.29} can be proved by the very same arguments in \cite{Gig13} or \cite{GigPhi15}.

\subsection{Back to the metric properties and conclusion}

\begin{proof}[Proof of Theorem \ref{thm1.2}]
We will handle Case (2) here.
We know $T$ and $S$ are measure preserving in (\ref{5.43}).
Combining Theorem \ref{thm5.29} with the fact that both $X$ and $Y$ have the Sobolev-to-Lipschitz property, the same argument as in the proof of Theorem \ref{thm5.16} can be applied to show that $T$ and $S$ are locally isometries.
We omit the details here.
Now we will derive the additional conclusions.

For any $x_{0},x_{1}\in b^{-1}((0,\infty))$, note that $(Y, d_{Y})$ is a geodesic space, thus there is always a shortest geodesic $\sigma:[0,1]\rightarrow Y$ with $\sigma(0)=S(x_{0})$, $\sigma(1)=S(x_{1})$.
Since $T$ is a local isometry, $T\circ\sigma:[0,1]\rightarrow X$ is a Lipschitz curve of length $d_{Y}(S(x_{0}),S(x_{1}))$ connecting $x_{0}$ and $x_{1}$, thus $d(x_{0},x_{1})\leq d_{Y}(S(x_{0}),S(x_{1}))$.

Now let $r_{1}=\frac{\textmd{diam}'(Z)}{2}$, and suppose $x_{0},x_{1}\in b^{-1}((r_{1},\infty))$.
Let $\eta:[0,1]\rightarrow X$ be a shortest geodesic with $\eta(0)=x_{0}$, $\eta(1)=x_{1}$.
$\eta$ has to be contained in $b^{-1}((0,\infty))$, for otherwise there is a point $z=\eta_{t}\in b^{-1}((-\infty,0])$, then
\begin{align}
&d(x_{0},x_{1})=d(z,x_{0})+d(z,x_{1})\geq b(x_{0})+b(x_{1})\nonumber\\
> &|b(x_{1})-b(x_{0})|+\textmd{diam}'(Z)\geq d_{Y}(S(x_{0}),S(x_{1})),\nonumber
\end{align}
which is a contradiction.
Now $S\circ\eta:[0,1]\rightarrow Y$ is a Lipschitz curve of length $d(x_{0},x_{1})$, and hence $d(x_{0},x_{1})\geq d_{Y}(S(x_{0}),S(x_{1}))$.
Thus $S:(b^{-1}((r_{1},\infty)),d)\rightarrow(Z\times(r_{1},\infty), d_{Y})$ is an isometry.

Note that for any $x_{0},x_{1}\in b^{-1}((r_{1},\infty))$, suppose $\sigma:[0,1]\rightarrow Z\times(r_{1},\infty)$  is a shortest geodesic connecting $S(x_{0})$ and $S(x_{1})$, then $T\circ \sigma$ is a shortest geodesic connecting $x_{0}$ and $x_{1}$, hence $b^{-1}((r_{1},\infty))$ is a geodesic space.
Thus by Proposition 7.7 in \cite{AMS14}, $(b^{-1}((r_{1},\infty)),d,m)$ is $\textmd{RCD}(0,N)$.
By the isomorphism $S:(b^{-1}((r_{1},\infty)),d,m)\rightarrow(Z\times(r_{1},\infty), d_{Y}, m_{Y})$ and a natural isomorphism between $(Z\times(r_{1},\infty), d_{Y}, m_{Y})$ and $(Y, d_{Y}, m_{Y})$, we know $(Y, d_{Y}, m_{Y})$ is also an $\textmd{RCD}(0,N)$ space.
By Corollary 2.5 in \cite{St06II}, the Hausdorff dimension of $(Y, d_{Y}, m_{Y})$ is at most $N$.
On the other hand, from Corollary \ref{cor5.17}, $Y$ has Hausdorff dimension at least $2$, hence $N\geq2$.

Since $(Y, d_{Y}, m_{Y})$ is an $\textmd{RCD}(0,N)$ space, and $(Y, d_{Y}, m_{Y})$ is the product of $(Z, d', m')$ and $(\mathbb{R}^{+},d_{Eucl},\mathcal{L}^{1})$, the argument in Corollary 5.30 and Theorem 7.4 of \cite{Gig13} shows that $(Z,d',m')$ is an $\textmd{RCD}(0,N-1)$ space.

The proof is completed.
\end{proof}

\end{document}